\address{Simons Center for Geometry and Physics,
State University of New York, Stony Brook, NY 11794-3636 U.S.A.} \email{kfukaya@scgp.stonybrook.edu}
\def\l@section{\@tocline{1}{0pt}{3mm}{8mm}{}}
\def\l@subsection{\@tocline{2}{0pt}{6mm}{10mm}{}}
\def\l@subsubsection{\@tocline{3}{0pt}{9mm}{11mm}{}}
\def\E{\ifmmode{\mathbb E}\else{$\mathbb E$}\fi} 
\def\N{\ifmmode{\mathbb N}\else{$\mathbb N$}\fi} 
\def\R{\ifmmode{\mathbb R}\else{$\mathbb R$}\fi} 
\def\Q{\ifmmode{\mathbb Q}\else{$\mathbb Q$}\fi} 
\def\C{\ifmmode{\mathbb C}\else{$\mathbb C$}\fi} 
\def\H{\ifmmode{\mathbb H}\else{$\mathbb H$}\fi} 
\def\Z{\ifmmode{\mathbb Z}\else{$\mathbb Z$}\fi} 
\def\P{\ifmmode{\mathbb P}\else{$\mathbb P$}\fi} 
\def\T{\ifmmode{\mathbb T}\else{$\mathbb T$}\fi} 
\def\SS{\ifmmode{\mathbb S}\else{$\mathbb S$}\fi} 
\def\DD{\ifmmode{\mathbb D}\else{$\mathbb D$}\fi} 
\def\K{\ifmmode{\mathbb K}\else{$\mathbb K$}\fi}
\theoremstyle{theorem}
\newtheorem{thm}{Theorem}[section]
\newtheorem{cor}[thm]{Corollary}
\newtheorem{lem}[thm]{Lemma}
\newtheorem{sublem}[thm]{Sublemma}
\newtheorem{prop}[thm]{Proposition}
\newtheorem{conj}[thm]{Conjecture}
\theoremstyle{definition}
\newtheorem{defn}[thm]{Definition}
\newtheorem{rem}[thm]{Remark}
\newtheorem{exm}[thm]{Example}
\newtheorem{defnlem}[thm]{Lemma-Definition}
\numberwithin{equation}{section}
\begin{document}

\title[Gromov-Hausdorff distance  \linebreak between $A_{\infty}$ categories 1]{
Gromov-Hausdorff distance between filtered $A_{\infty}$ categories 1: 
Lagrangian Floer theory}
\author{Kenji Fukaya}

\thanks{Kenji Fukaya is supported partially by Simons Collaboration on homological Mirror symmetry.
}

\begin{abstract}
In this paper we introduce and study a distance, Gromov-Hausdorff distance, 
which measures how two filtered $A_{\infty}$ categories 
are far away each other.
\par
In symplectic geometry the author associated a 
filtered $A_{\infty}$ category, Fukaya category, to a finite set 
of Lagrangian submanifolds.
The Gromov-Hausdorff distance then gives a new 
invariant of a finite set of Lagrangian submanifolds.
\par
One can estimate it by the Hofer distance of 
Hamiltonian diffeomorphisms needed to send 
one Lagrangain submanifold to the other.
\par
A motivation to introduce Gromov-Hausdorff distance
is to obtain a certain completion of Fukaya category.
If we have a sequence of sets of Lagrangian submanifolds, 
which is a Cauchy sequence in the sense of Hofer metric,
then the associated filtered $A_{\infty}$ categories 
also form a Cauchy sequence in Gromov-Hausdorff distance.
In this paper we develop a theory to obtain an 
inductive limit of such a sequence of 
filtered $A_{\infty}$ categories.
In other words, we  give an affirmative answer to  
\cite[Conjecture 15.34]{FuFu6}.
\par
We can use this result to associate a 
filtered $A_{\infty}$ category to a finite set of 
Lagrangian submanifolds, {\it without} assuming transversality 
conditions between Lagrangian submanifolds. 
\par
The author believes that the story of Gromov-Hausdorff distance 
between filtered $A_{\infty}$ categories will play an important role 
in the study of homological Mirror symmetry over Novikov {\it ring}.
Recently homological Mirror symmetry over Novikov field is much studied.
However there is practically no study of homological Mirror symmetry over Novikov ring
at this stage.
\end{abstract}
\maketitle

\date{Feb., 2021}


\maketitle
\newpage
\tableofcontents
\newpage

\section{Introduction}
\label{sec;intro}

Let $(X,\omega)$ be a symplectic manifold that is compact or tame.
We consider a finite set $\mathbb L$ of relatively 
spin (embedded and compact) Lagrangian submanifolds. 
When we assume that, for $L,L' \in \mathbb L$ with $L\ne L'$,
$L$ is transversal to $L'$,
we obtain a filtered $A_{\infty}$ category 
$\frak{F}(\mathbb L)$ whose 
object is a pair $(L,b)$ such that $L \in \mathbb L$
and $b$ is a bounding cochain of $L$ in the sense of 
\cite[Definition 3.6.4]{fooobook}.
(\cite{AFOOO,FuFu6}.)

In this paper we introduce a way to measure 
how two filtered $A_{\infty}$ categories 
are close each other.
It is an analogue of 
Gromov-Hausdorff distance \cite{greenbook} 
between metric spaces and we call 
it Gromov-Hausdorff distance between filtered 
$A_{\infty}$ categories.
We denote by
$
d_{\rm GH}(\mathscr C_1,\mathscr C_2)
$ 
the Gromov-Hausdorff distance 
between filtered $A_{\infty}$ categories 
$\mathscr C_1$ and $\mathscr C_2$.
There are a few different versions of Gromov-Hausdorff distance.
See Definitions \ref{GromovHausdorff}, \ref{weakGH}, \ref{GHinftygapped},
and \ref{inftywaekGH}.
\par
Gromov-Hausdorff distance satisfies the triangle 
inequality.
It is actually a pseudo-distance. 
Namely 
$d_{\rm GH}(\mathscr C_1,\mathscr C_2) = 0$ 
may not 
imply that $\mathscr C_1$ is isomorphic to $\mathscr C_2$.
\par
We prove (in Corollaries \ref{corghchikachika} and \ref{corghchikachik2a}) that 
if $\mathbb L = \{L_1,\dots,L_K\}$ is a finite set of 
relatively spin Lagrangian submanifolds which are 
mutually transversal 
and if $\varphi_i$, $i=1,\dots,K$ is a 
finite set of Hamiltonian diffeomorphisms
then
$$
d_{\rm GH}(\frak F(L_1,\dots,L_K),\frak F(\varphi_1(L_1),\dots,
\varphi_K(L_K)))
\le
\max\{ d_{\rm Hof}({\rm id},\varphi_i) \mid i=1,\dots, K\},
$$
where $d_{\rm Hof}$ is  Hofer distance \cite{hofer}.
In other words, the map which associates a filtered $A_{\infty}$ category
$\frak F(L_1,\dots,L_K)$ to $L_1,\dots,L_K$ is 1-Lipschitz with respect 
to Gromov-Hausdorff distance and the Hofer distance 
of $L_i$.
A similar 1-Lipschitz property was proved
for Floer homology in \cite{fooobook,fooo:bidisk}.
In this paper we enhance it so that multiplicative 
structures are included.

\par
One motivation for the author to introduce
Gromov-Hausdorff distance 
between filtered $A_{\infty}$ categories 
is to extend the definition of 
$\frak F(L_1,\dots,L_n)$ 
to the case when $L_i$ is a limit of Lagrangian submanifolds 
with respect to Hofer distance.
\begin{defn}
For a positive integer $K$ and a symplectic manifold $(X,\omega)$ 
that is compact or tame, let $\mathfrak{Lag}^K$ be the set of 
all $k$-tuples $\mathbb L = (L_1,\dots,L_K)$ of immersed relatively spin Lagrangian submanifolds 
of $X$, such that $L_i$ has clean self intersection.
\footnote{More precisely we fix a back ground datum of $X$ (that is, a real vector 
bundle $V$ on a 3 skelton of $X$ such that $w^2(V) = w^2(TX)$, where 
$w^2$ stands for Stiefel-Whiteny class.) and 
$L_i$ is relatively spin with respect to it.  See \cite[Definition 8.1.2]{fooobook2}.} 
\par
We denote by $\mathfrak{Lag}^K_{\rm trans}$ its subset
of elements $(L_1,\dots,L_K) \in \mathfrak{Lag}^K_{\rm trans}$ 
such that $L_i$ is transversal to $L_j$ for all $i,j$ with $L_i\ne L_j$.
\end{defn}
For $\mathbb L \in \mathfrak{Lag}^k_{\rm trans}$
the filtered $A_{\infty}$ category $\frak F(\mathbb L)$ 
is defined (\cite{AFOOO,AJ,FuFu6,fooobook}) such that its object is a pair
$(L,b)$, where $L \in \mathbb L$ and $b$ is its bounding cochain.
The next definition is a variant of one in \cite{chekanov2}.
\begin{defn}
When $\mathbb L^1,\mathbb L^2 \in \mathfrak{Lag}^K$ we define the Hofer distance
between them $d_{\rm Hof}(\mathbb L^1,\mathbb L^2)$ to be the 
infimum of the positive numbers $\epsilon$ with the following 
properties.  There exists $k$-tuple of Hamiltonian diffeomorphisms $\varphi_i$,
$i=1,\dots,k$ such that:
\begin{enumerate}
\item $\varphi_i(L^1_i) = L^2_i$.
\item $d_{\rm Hof}(\varphi_i,{\rm identity}) < \epsilon$.
\end{enumerate}
Here $d_{\rm Hof}$ is the Hofer metric (\cite{hofer}) of the group of Hamiltonian diffeomorphisms.
\end{defn}
\begin{thm}\label{Lagcatecomplete}
For a Cauchy sequence $\{\mathbb L^n\}$ of elements of $\mathfrak{Lag}^K_{\rm trans}$
with respect to  Hofer distance, the sequence $\{\frak F(\mathbb L^n)\}$
of gapped unital filtered $A_{\infty}$ categories 
converges to a unital completed DG-category in Gromov-Hausdorff infinite distance.
The limit is unique up to equivalence. 
\end{thm}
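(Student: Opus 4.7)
My first move would be to apply Corollaries \ref{corghchikachika} and \ref{corghchikachik2a} (the $1$-Lipschitz property of $\frak F$) to the given Hofer-Cauchy sequence $\{\mathbb L^n\}$: for Hamiltonian diffeomorphisms $\varphi_i^{n,m}$ realizing the Hofer distance between $\mathbb L^n$ and $\mathbb L^m$, one obtains $d_{\rm GH}(\frak F(\mathbb L^n),\frak F(\mathbb L^m))\le \max_i d_{\rm Hof}({\rm id},\varphi_i^{n,m})\to 0$, so that $\{\frak F(\mathbb L^n)\}$ is itself Cauchy in Gromov-Hausdorff infinite distance. The theorem thereby reduces to a purely algebraic completeness statement: any Cauchy sequence of gapped unital filtered $A_\infty$ categories admits a limit in Gromov-Hausdorff infinite distance, unique up to equivalence. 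This is precisely the affirmative answer to \cite[Conjecture 15.34]{FuFu6} promised in the abstract, and the bulk of the work goes into establishing this general construction elsewhere in the paper.

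\textbf{Construction of the limit.} After passing to a subsequence I would arrange $d_{\rm GH}(\frak F(\mathbb L^n),\frak F(\mathbb L^{n+1}))<2^{-n}$. Unwinding the definition of Gromov-Hausdorff distance then produces, at each step, filtered $A_\infty$ functors $F_n:\frak F(\mathbb L^n)\to\frak F(\mathbb L^{n+1})$ which respect the $A_\infty$ structures modulo energy $2^{-n}$, together with pseudo-isotopies that measure the discrepancy. A naive strict colimit would fail because the discrepancies are only homotopical; instead the limit should be built as a homotopy colimit, a mapping-telescope-type object whose morphism spaces are completions of the inductive system $\{F_n\}_*$ of hom-complexes and whose structure maps are assembled from the $F_n$ together with the chosen pseudo-isotopies. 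The geometric decay $\sum 2^{-n}<\infty$ of the filtration valuation is exactly what makes the completed hom-complexes and the assembled structure maps well-defined, and it is what guarantees that the tower of pseudo-isotopies glues into an honest $A_\infty$ structure on the limit rather than an ever-refining sequence of approximations.

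\textbf{DG structure, unitality, and uniqueness.} To see that the limit can be taken to be a DG-category rather than genuinely $A_\infty$, I would exploit the fact that at each telescope stage the higher operations $m_k$, $k\ge 3$, contribute only at energy $\ge 2^{-n}$, and apply a homological-perturbation-type strictification over the completed Novikov ring to replace the assembled $A_\infty$ structure by a quasi-equivalent strictly associative one. Strict unitality is inherited from the unital approximants, provided the pseudo-isotopies are chosen to preserve the unit; this is possible by the standard obstruction-theoretic argument for filtered $A_\infty$ functors in the unital setting. Uniqueness is then an interleaving argument: given two candidate limits $\mathscr C$ and $\mathscr C'$, one splices the approximating functors $\frak F(\mathbb L^n)\to\mathscr C$ and $\frak F(\mathbb L^n)\to\mathscr C'$ via a second telescope to produce a filtered $A_\infty$ functor $\mathscr C\to\mathscr C'$, and by symmetry a two-sided homotopy inverse.

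\textbf{Main obstacle.} The substantive difficulty lies entirely in the construction step: producing energy-controlled pseudo-isotopies of filtered $A_\infty$ functors and carrying out the careful bookkeeping needed to absorb the failure of each $F_n$ to be strict into the telescope's higher structure, so that \emph{all} $A_\infty$ relations hold on the completed limit and not merely modulo $T^{2^{-n}}$ at each finite stage. Once this abstract machinery of Gromov-Hausdorff distance and its associated pseudo-isotopies has been built up earlier in the paper, specialization to the specific sequence $\{\frak F(\mathbb L^n)\}$ arising from a Hofer-Cauchy sequence of Lagrangian tuples is essentially formal.
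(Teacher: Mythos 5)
Your high-level outline (Lipschitz estimate forces GH-Cauchyness, then build a limit, then an interleaving argument for uniqueness) is in the right spirit, but it glosses over two points that the paper treats as the real substance of the theorem, and it gets the algebraic mechanism wrong.

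\textbf{The unitality obstruction is not handled by ``standard obstruction theory.''} You propose to inherit strict unitality from the approximants by choosing the pseudo-isotopies unitally, citing the standard obstruction argument. The paper explicitly warns (in the remark opening Section~\ref{Hofinfhomoto}) that this argument breaks down over $\Lambda_0$: the usual route reduces to the $\frak m_1$-cohomology, which has torsion here, so a cohomology-level unit need not promote to a homotopy unit. The resolution in the paper is the introduction of \emph{Hofer infinite distance} (Definition~\ref{infhomoequiv}) and the proof of Theorem~\ref{estimateHofLaginf2}, which produce \emph{infinite homotopy equivalences}, and it is precisely these — via Lemma~\ref{Lemma3434unit} and Theorem~\ref{GrokaraIndunit} — that yield $\#$-homotopically unital functors and hence a unital limit. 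Your proposal cites Corollary~\ref{corghchikachik2a} (the $d_{\rm GH,\infty}$ estimate) without registering that this is a genuinely stronger statement than Corollary~\ref{corghchikachika}, whose proof is the entire pseudo-holomorphic-curve argument of Section~\ref{sec;hoferinf1}. Relatedly, a filtered $A_\infty$ functor ``with energy loss $\rho$'' satisfies the $A_\infty$ relations \emph{exactly} (Definition~\ref{filfuncenergy}(3)); the loss is only in the filtration shift, not in the equations, so there are no ``discrepancies measured by pseudo-isotopies'' to absorb in the functors themselves.

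\textbf{The reduction to ``abstract completeness'' is the opposite of how the paper proceeds, and the paper disclaims that reduction.} You assert that the theorem reduces to a purely algebraic statement that GH-Cauchy sequences of gapped unital filtered $A_\infty$ categories have limits, with the specialization to the geometric sequence being ``essentially formal.'' The paper's Remark~\ref{infhikkuri} explicitly states that the author does not know how to prove a Lipschitz inequality $d_{\rm GH,\infty}(\varinjlim \mathscr C^n_{(1)},\varinjlim\mathscr C^n_{(2)})\le\liminf d_{\rm GH,\infty}(\mathscr C^n_{(1)},\mathscr C^n_{(2)})$ from only homotopy commutativity; Lemma~\ref{1044lm}, which drives uniqueness, requires the \emph{strict} equality $\Phi^n_{(0)}\circ\Psi^n_i=\Psi^{n+1}_i\circ\Phi^n_{(i)}$. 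This exact intertwining is achieved only by the geometric construction in Section~\ref{sec;compgeo}: one builds the ambient categories $\frak F(\mathbb L^n\sqcup\mathbb L^{n+1})$, $\frak F(\mathbb L^{n,1}\sqcup\mathbb L^{n+1,1}\sqcup\mathbb L^{n,2}\sqcup\mathbb L^{n+1,2})$, etc., with compatible Kuranishi structures and CF-perturbations extending the given ones, so that the relevant inclusions are on-the-nose full embeddings. So the geometric input is not a decoration on an abstract argument; it is what makes uniqueness provable at all with the available machinery.

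\textbf{A different mechanism.} Finally, the paper's limit is not a mapping telescope followed by a strictification. It passes to the Bar resolution (Section~\ref{sec;bar}), where the morphisms in the inductive system become \emph{linear} maps of doubly filtered DG co-categories, takes the inductive limit there (Definition~\ref{indlimDFDF}), and then applies the (reduced) Co-Bar resolution (Sections~\ref{sec;back},~\ref{sec;reduced}); the result is automatically a DG-category because the Co-Bar only has $\frak M_1,\frak M_2$. Theorem~\ref{theorem63} then identifies its cohomology with the naive completed limit of hom-complexes. There is no perturbation-lemma strictification step. This is a genuinely different — and more explicit — route than what you sketch.
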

Let $\overline{\mathfrak{Lag}^K}$ be the completion of $\mathfrak{Lag}^K$
with respect to the Hofer metric.
\begin{cor}\label{cor144}
To an element $\mathbb L$ of $\overline{\mathfrak{Lag}^K}$ we can associate a unital completed DG-category
$\frak F(\mathbb L)$ such that $\mathbb L \mapsto \frak F(\mathbb L)$ is 
$1$-Lipschitz 
with respect to the Hofer distance and Gromov-Hausdorff infinite distance.
\end{cor}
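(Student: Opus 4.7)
The plan is to reduce Corollary \ref{cor144} to Theorem \ref{Lagcatecomplete} together with the Lipschitz estimate on $\mathfrak{Lag}^K_{\rm trans}$ recorded in Corollaries \ref{corghchikachika} and \ref{corghchikachik2a}, via a standard density and continuity argument. First I would check that $\mathfrak{Lag}^K_{\rm trans}$ is dense in $\mathfrak{Lag}^K$ with respect to the Hofer metric: given any $\mathbb L = (L_1,\dots,L_K)\in \mathfrak{Lag}^K$ and $\epsilon>0$, one can find Hamiltonian diffeomorphisms $\varphi_i$ with $d_{\rm Hof}(\varphi_i,{\rm id})<\epsilon$ such that $(\varphi_1(L_1),\dots,\varphi_K(L_K))\in \mathfrak{Lag}^K_{\rm trans}$, because mutual transversality of finitely many cleanly self-intersecting immersed Lagrangians is a generic condition that can be achieved by arbitrarily $C^{\infty}$-small (hence arbitrarily Hofer-small) Hamiltonian perturbations of the $L_j$. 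Consequently every $\mathbb L\in\overline{\mathfrak{Lag}^K}$ is the Hofer limit of some Cauchy sequence $\{\mathbb L^n\}$ with $\mathbb L^n\in\mathfrak{Lag}^K_{\rm trans}$, to which Theorem \ref{Lagcatecomplete} associates a unital completed DG-category; I would define $\frak F(\mathbb L)$ to be that limit.

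To see the definition does not depend on the approximating sequence, I would take two such Cauchy sequences $\{\mathbb L^n\}$ and $\{\mathbb L'^n\}$ in $\mathfrak{Lag}^K_{\rm trans}$ converging to the same $\mathbb L$, and interleave them into a single sequence which is again Cauchy in $\mathfrak{Lag}^K_{\rm trans}$; the uniqueness clause of Theorem \ref{Lagcatecomplete} applied to this interleaved sequence then forces the two limits to be equivalent. For the $1$-Lipschitz property, given $\mathbb L,\mathbb L'\in \overline{\mathfrak{Lag}^K}$ I would choose approximating Cauchy sequences $\{\mathbb L^n\},\{\mathbb L'^n\}\subset\mathfrak{Lag}^K_{\rm trans}$ and observe, via the triangle inequality for $d_{\rm Hof}$, that $d_{\rm Hof}(\mathbb L^n,\mathbb L'^n)\to d_{\rm Hof}(\mathbb L,\mathbb L')$. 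By Corollaries \ref{corghchikachika} and \ref{corghchikachik2a} we have $d_{\rm GH}(\frak F(\mathbb L^n),\frak F(\mathbb L'^n))\le d_{\rm Hof}(\mathbb L^n,\mathbb L'^n)$, while $\frak F(\mathbb L^n)\to \frak F(\mathbb L)$ and $\frak F(\mathbb L'^n)\to\frak F(\mathbb L')$ in Gromov-Hausdorff infinite distance by construction; applying the triangle inequality for $d_{\rm GH}$ and letting $n\to\infty$ gives $d_{\rm GH}(\frak F(\mathbb L),\frak F(\mathbb L'))\le d_{\rm Hof}(\mathbb L,\mathbb L')$.

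The only nontrivial step is the density assertion, and specifically its compatibility with the immersed and clean-intersection setup of $\mathfrak{Lag}^K$; this should nevertheless be a routine symplectic transversality argument, since clean self-intersections are preserved by small Hamiltonian perturbation and transversality between distinct members of the tuple is an open dense condition. Everything else reduces to the triangle inequality for the two (pseudo-)metrics involved, the uniqueness clause of Theorem \ref{Lagcatecomplete}, and the Lipschitz estimate on transversal tuples. The main conceptual point to verify carefully is that the version of Gromov-Hausdorff distance appearing in Theorem \ref{Lagcatecomplete} (the infinite-distance variant) is the same one to which the estimates in Corollaries \ref{corghchikachika} and \ref{corghchikachik2a} apply, so that the limiting triangle inequality is legitimate.
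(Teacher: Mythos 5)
Your high-level plan (density, approximation, and a soft triangle-inequality passage to the limit) is close in spirit to the paper's, but there are two concrete gaps, and the paper actually routes around both of them with a sharper tool.

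First, your application of Corollaries \ref{corghchikachika}/\ref{corghchikachik2a} in the estimate $d_{\rm GH,\infty}(\frak F(\mathbb L^n),\frak F(\mathbb L'^n))\le d_{\rm Hof}(\mathbb L^n,\mathbb L'^n)$ requires more than $\mathbb L^n,\mathbb L'^n\in\mathfrak{Lag}^K_{\rm trans}$: those corollaries (via Theorem \ref{estimateHofLag}, and, more importantly, via the geometric setup of Section \ref{sec;compgeo}) presuppose that there is an ambient filtered $A_{\infty}$ category containing \emph{both} $\frak F(\mathbb L^n)$ and $\frak F(\mathbb L'^n)$ as full subcategories, built from $\frak F(\mathbb L^n\sqcup\mathbb L'^n)$ with compatible Kuranishi/CF data. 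That forces $\mathbb L^n\sqcup\mathbb L'^n\in\mathfrak{Lag}^{2K}_{\rm trans}$, and in fact the paper's Lemma \ref{lem147} needs the still stronger ``strongly transversal pair'' hypothesis (Definition \ref{defn1555} together with its pair version), because consecutive elements of both sequences must be mutually transversal to build the inductive system. Your density claim only produces approximating tuples that are transversal within each $\mathbb L^n$, which does not by itself give this and so does not let you invoke the per-$n$ estimate.

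Second, your interleaving argument for well-definedness is not how the paper proceeds and is not obviously valid. Interleaving the two sequences and passing to subsequences only gives $d_{\rm GH,\infty}(\varinjlim\,\frak F(\mathbb L^n),\varinjlim\,\frak F(\mathbb L'^n))=0$, because $d_{\rm GH,\infty}$ is a pseudo-metric. The paper's notion of equivalence (Definitions \ref{GHinftygapped}, \ref{inftywaekGH}) is strictly stronger than vanishing of $d_{\rm GH,\infty}$, and this is what the uniqueness clause of Theorem \ref{Lagcatecomplete} actually asserts. Moreover, the inductive limits associated to $\{\mathbb L^n\}$ and to the interleaved sequence are built from genuinely different inductive systems, and these are not a priori cofinal in a way that would transport equivalence. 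The paper instead introduces a third strongly transversal auxiliary sequence $\{\mathbb L^{n,3}\}$, so that $(\{\mathbb L^{n,1}\},\{\mathbb L^{n,3}\})$ and $(\{\mathbb L^{n,2}\},\{\mathbb L^{n,3}\})$ are both strongly transversal, and then applies Lemma \ref{lem147} twice --- the same lemma that simultaneously yields the $1$-Lipschitz estimate via the bound $d_{\rm GH,\infty}(\varinjlim\,\frak F(\mathbb L^{n,1}),\varinjlim\,\frak F(\mathbb L^{n,2}))\le\liminf_n d_{\rm Hof}(\mathbb L^{n,1},\mathbb L^{n,2})$. Lemma \ref{lem147} is in turn a geometric instance of the algebraic Lemma \ref{1044lm}, and crucially uses the \emph{exact} equality $\Phi_{(0)}^n\circ\Psi_i^n=\Psi_i^{n+1}\circ\Phi_{(i)}^n$, not merely a homotopy. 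So the missing ingredient in your proposal is precisely this exact-compatibility set-up of the two inductive systems inside a common ambient one, which is what strong transversality buys you and what the softer triangle-inequality route circumvents only at the price of checking a transversality condition you did not impose.
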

The proof of Theorem \ref{Lagcatecomplete} and Corollary \ref{cor144} 
is in Section \ref{sec;compgeo}.
The notion of a Gromov-Hausdorff infinite distance is defined in 
Section \ref{Hofinfhomoto}.\footnote{It is a stronger version 
of Gromov-Hausdorff distance which contains 
an extra information related to the unitality of 
filtered $A_{\infty}$ functors.}
\par
A competed DG-category is a filtered DG-category $\mathscr C$ together with 
an additional structure which gives a 
subcomplex $\overset{\circ}{\mathscr C}(a,b)$
of a morphism complex ${\mathscr C}(a,b)$ 
such that ${\mathscr C}(a,b)$ is its completion.
(See Definition \ref{degb219}.)
We will use this extra structure to avoid 
certain pathological phenomena which could occur 
when Floer homology is not finitely generated.
\par
The equivalence 
is defined in Definition \ref{inftywaekGH}.
If $\mathscr C$ is equivalent to $\mathscr C'$
then it implies, for example, that there exists an isometry 
$I: \frak{OB}(\mathscr C)  \to \frak{OB}(\mathscr C')$
between metric spaces of objects,
such that $H(\mathscr C(a,b),\frak m_1)$ is almost isomorphic\footnote{Definition 
\ref{defn113}.} 
to $H(\mathscr C'(I(a),I(b)),\frak m_1)$ (Theorem \ref{them123333}).
\par
In \cite[Section 6.5.4]{fooobook}  we generalized 
the definition of  Floer cohomology 
$HF((L_1,b_1),(L_2,b_2);\Lambda_0)$
to the case when the intersection of $L_1$ and $L_2$ 
is not transversal. 
(We do not assume it is clean either. Namely 
there is {\it no} assumption on the way how $L_1$ and $L_2$
intersect.)
Corollary \ref{maintheorem1} generalizes this construction to a construction 
of a filtered $A_{\infty}$ category 
{\it without} assuming tranversality.

\begin{cor}\label{maintheorem1}
To $\mathbb L \in \mathfrak{Lag}^K$, we can associate a unital completed DG-category 
$\frak{F}(\mathbb L)$ with the following properties.
\begin{enumerate}
\item
An object $\frak{OB(\frak{F}(\mathbb L))}$
is a pair $(L,b)$ such that $L \in \mathbb L$
and $b$ is a bounding cochain of $L$.
\item
For objects $(L_1,b_1),(L_2,b_2)$ of $\frak{F}(\mathbb L)$,
the completed $\frak m_1$ cohomology group
${HF}((L_1,b_1),(L_2,b_2))$ 
of the morphism complex is 
almost isomorphic to the Floer cohomology
$HF((L_1,b_1),(L_2,b_2);\Lambda_0)$
defined in \cite[Section 6.5.4]{fooobook}.\footnote{
The notions appearing in this item will be defined below.}
\end{enumerate}
\par
The filtered $A_{\infty}$ category $\frak{F}(\mathbb L)$ is independent 
of the choices up to equivalence.
\end{cor}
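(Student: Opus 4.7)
The plan is to realize every $\mathbb L\in\mathfrak{Lag}^K$ as a Hofer-limit of transversal tuples and then invoke Corollary \ref{cor144}. First I would establish that $\mathfrak{Lag}^K_{\rm trans}$ is Hofer-dense in $\mathfrak{Lag}^K$: given $\mathbb L=(L_1,\dots,L_K)$ and $\epsilon>0$, a generic choice of $C^\infty$-small Hamiltonian functions $H_i$ with $\|H_i\|<\epsilon$ produces time-one maps $\varphi_i^\epsilon$ whose images perturb each $L_i$ to pairwise transversality (no self-transversality is required since $L_i$ already has clean self-intersection), with $d_{\rm Hof}(\varphi_i^\epsilon,\mathrm{id})<\epsilon$. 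Taking $\epsilon_n\to 0$ yields a Cauchy sequence $\mathbb L^n\in\mathfrak{Lag}^K_{\rm trans}$ with $d_{\rm Hof}(\mathbb L^n,\mathbb L)\to 0$. Hence $\mathbb L\in\overline{\mathfrak{Lag}^K}$, and Corollary \ref{cor144} assigns to it a unital completed DG-category, which I take as the definition of $\frak F(\mathbb L)$. Independence of the choice of approximating sequence follows because any two such sequences can be interleaved into a single Hofer-Cauchy sequence in $\mathfrak{Lag}^K_{\rm trans}$, whose unique limit under Theorem \ref{Lagcatecomplete} is equivalent to both.

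Item (1) then follows from the Hamiltonian-invariance of the space of bounding cochains of a single Lagrangian. Since each $L_i$ has clean self-intersection, its curved filtered $A_\infty$ algebra is defined intrinsically, and a Hamiltonian isotopy induces a filtered $A_\infty$ quasi-isomorphism between the curved algebras of $L_i$ and $\varphi_i^n(L_i)$, hence a bijection of the sets of bounding cochains. An object $(L_i,b_i)$ of $\frak F(\mathbb L)$ is thereby identified with the coherent family $(\varphi_i^n(L_i),b_i^n)$ of objects in $\frak F(\mathbb L^n)$ supplied by these bijections. For item (2), by construction of the Hofer completion the morphism complex $\mathscr C((L_1,b_1),(L_2,b_2))$ of $\frak F(\mathbb L)$ is the completion of the directed system of perturbed Floer complexes; passing to $\frak m_1$-cohomology, the almost-isomorphism criterion (Theorem \ref{them123333}) identifies it with the Hofer-limit of $HF((\varphi_1^n(L_1),b_1^n),(\varphi_2^n(L_2),b_2^n);\Lambda_0)$. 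But this Hofer-limit is precisely the definition of the Floer cohomology for the non-transversal pair $(L_1,L_2)$ given in \cite[Section 6.5.4]{fooobook}, yielding the required almost-isomorphism.

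The main technical obstacle will be the coherent construction of the bounding cochains $b_i^n$ along the approximating sequence so that the perturbed objects $(\varphi_i^n(L_i),b_i^n)$ form a Cauchy sequence in the \emph{unital} Gromov-Hausdorff infinite distance, and not merely in the weaker Gromov-Hausdorff distance. This is what is needed in order to obtain a \emph{unital} completed DG-category in the limit and to invoke Corollary \ref{cor144} rather than only Theorem \ref{Lagcatecomplete}. I expect this to be handled by combining the 1-Lipschitz estimate (Corollaries \ref{corghchikachika}, \ref{corghchikachik2a}) with the fact that under a Hofer-small isotopy the space of bounding cochains varies continuously in the appropriate filtered sense, so that $b_i^n$ can be chosen consistently and the PSS-type units of the $\frak F(\mathbb L^n)$ are compatible to leading order, which is sufficient for convergence in the infinite-distance version introduced in Section \ref{Hofinfhomoto}.
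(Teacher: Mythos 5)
Your overall route — approximate $\mathbb L$ by Hofer-small Hamiltonian perturbations into $\mathfrak{Lag}^K_{\rm trans}$, apply the limit construction of Theorem~\ref{Lagcatecomplete}/Corollary~\ref{cor144}, then read off items (1) and (2) — is indeed the paper's approach (the paper's own proof of Corollary~\ref{maintheorem1} is a one-liner that appeals to the machinery already built in Steps~1--6 of Section~\ref{sec;compgeo} and to Corollary~\ref{cor15252}).

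However there are two genuine gaps. First, your uniqueness argument ``any two approximating sequences can be interleaved into a single Hofer-Cauchy sequence in $\mathfrak{Lag}^K_{\rm trans}$'' does not close the argument. The inductive system in Theorem~\ref{Lagcatecomplete} is built from \emph{strongly transversal} sequences (Definition~\ref{defn1555}): one needs $\mathbb L^n\sqcup\mathbb L^{n+1}\in\mathfrak{Lag}^{2K}_{\rm trans}$ in order to construct the moduli spaces defining the fully faithful embedding $\frak F(\mathbb L^n)\to\frak F(\mathbb L^n\sqcup\mathbb L^{n+1})$ and hence the functor $\Phi^n$. An interleaving of two strongly transversal sequences $\{\mathbb L^{n,1}\}$, $\{\mathbb L^{n,2}\}$ is \emph{not} strongly transversal unless $\mathbb L^{n,1}\pitchfork\mathbb L^{n,2}$, which you have not arranged; and even after a further perturbation, the original sequences are no longer literal subsystems of the interleaved one (the connecting functors differ), so you still cannot conclude that the three limits agree. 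The paper resolves this by constructing a third strongly transversal Cauchy sequence $\{\mathbb L^{n,3}\}$ that is pairwise strongly transversal with both, and applying Lemma~\ref{lem147} twice; Lemma~\ref{lem147} in turn rests on Lemma~\ref{1044lm}, which crucially requires the \emph{exact} equality $\Phi^n_{(0)}\circ\Psi_i^n=\Psi_i^{n+1}\circ\Phi^n_{(i)}$, not merely homotopy-commutativity. Your interleaving argument supplies no such exact compatibility.

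Second, for item~(2) you cite Theorem~\ref{them123333}, but that result compares the cohomologies of two \emph{equivalent} completed DG-categories. What you actually need is a comparison between the $\frak m_1$-cohomology of the limit morphism complex and the [FOOO]~$\Lambda_0$-Floer cohomology $\Lambda_0(\vec a^\infty)$, which is not a morphism complex in any of the categories under discussion. The correct input is Corollary~\ref{cor15252}: it gives $f(\lambda;H_\infty)\le f(\lambda-\epsilon_n;H_n)$ and the reverse inequality with $\epsilon_n\to0$; combined with Example~\ref{ex6.5.31} (which computes $f(\lambda;\Lambda_0(\vec a))=\#\{i:a_i>\lambda\}$) and the definition~(\ref{fingHF}) of $\vec a^\infty$ as the limit of the $\vec a^n$, one deduces that $f(\cdot;H_\infty)$ and $f(\cdot;\Lambda_0(\vec a^\infty))$ agree off a discrete set, which is exactly the almost-isomorphism of Definition~\ref{defn113}. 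You correctly identify the unitality/infinite-Hofer-distance issue as the key technical point; the paper handles it via Theorem~\ref{estimateHofLaginf2} (the geometric construction of infinite homotopy equivalences in Section~\ref{sec;hoferinf1}) and Theorem~\ref{GrokaraIndunit}, rather than by the continuity of bounding cochains alone.
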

Corollary \ref{maintheorem1}  is proved at the end of Section \ref{sec;compgeo}.
\begin{rem}
Note that it is important that we take the Novikov {\it ring}
$\Lambda_0$ as the coefficient ring of  Floer 
cohomology. (See for example \cite[Definition 2.1]{FuFu6} 
for the definition of the (universal) Novikov ring $\Lambda_0$.)
When the coefficient ring is the Novikov {\it field} 
$\Lambda$, this result was known already by the following reason.
The Floer cohomology $HF((L_1,b_1),(L_2,b_2);\Lambda)$ 
over $\Lambda$ is invariant under  Hamiltonian diffeomorphisms.
(This is  \cite[Theorem G (G4)]{fooobook}.)
Therefore we can replace $\mathbb L$ by 
$\{\varphi_L(L) \mid L \in \mathbb L\}$ by taking an appropriate Hamiltonian 
diffeomorphism $\varphi_L$ (which depends on $L$) for $L \in \mathbb L$
so that  $\varphi_L(L)$ is transversal to $\varphi_{L'}(L')$
for two elements $L,L' \in \mathbb L$.
\par
The Floer cohomology $HF((L_1,b_1),(L_2,b_2);\Lambda_0)$ 
is {\it not} invariant under Hamiltonian diffeomorphisms.
We also like to mention that the torsion part 
(the part which is killed by multiplying $T^{\alpha}$)
contains many important informations of Lagrangian submanifolds.
\end{rem}
\par
We also provide an example of 
$n$-tuples of Lagrangian submanifolds $L_1,\dots,L_n$ 
and $L'_1,\dots,L'_n$ of a certain symplectic manifold $X$
such that:
\begin{enumerate}
\item 
For each $i$, $L_i$ is Hamiltonian isotopic to $L'_i$.
\item
For each $i$, there exists symplectic diffeomorphism 
$\varphi_i : X \to X$ such that
$\varphi_i(L_j) = L'_j$ for $j\ne i$.
\item
$d_{\rm GH}(
\frak F(L_1,\dots,L_n),\frak F(L'_1,\dots,L'_n)) \ne 0$. 
\end{enumerate} 
See Theorem \ref{themexample}.
\par
To define the notion appearing in Corollary \ref{maintheorem1} (2) above, 
we first review a certain part of \cite[Section 6.5.4]{fooobook}.
\begin{defn}
Let $\vec a = (a_1,a_2,\dots)$
be a sequence such that
$a_i \in \Z_{\ge 0} \cup \{\infty\}$
and that $a_i \ge a_{i+1}$.
We then define a $\Lambda_0$ module $\Lambda_0(\vec a)$ by
$$
\Lambda_0(\vec a)
= \underset{i=1}{\widehat{\overset{\infty}{\bigoplus}}} \frac{\Lambda_0}{T^{a_i}\Lambda_0}.
$$
Here $\widehat{\bigoplus}$ is the $T$-adic completion of the direct sum.
We set 
${\Lambda_0}/{T^{a_i}\Lambda_0} = \Lambda_0$ if $a_i = \infty$
and 
${\Lambda_0}/{T^{a_i}\Lambda_0} = 0$ if $a_i = 0$.
\end{defn}
\par
For two Lagrangian submanifolds $L_1,L_2$ which are transversal 
and $b_1$, $b_2$ bounding cochains of $L_1,L_2$ respectively, it is proved in 
\cite[Theorem 6.1.20]{fooobook} that there exists $\vec a$ such that:
\begin{equation}\label{struFl}
HF((L_1,b_1),(L_2,b_2);\Lambda_0)
\cong \Lambda_0(\vec a).
\end{equation}
Moreover $a_i = 0$ except finitely many natural numbers $i$.
Furthermore, the following result is proved in \cite{fooobook,fooo:bidisk}. 
We assume $L_1$ is transversal to $L_2$. Let
$\varphi_1$, $\varphi_2$ be Hamiltonian diffeomorphisms 
such that $\varphi_1(L_1)$ is transversal to $\varphi_2(L_2)$.
We put
$$
\aligned
HF((L_1,b_1),(L_2,b_2);\Lambda_0)
&\cong \Lambda_0(\vec a),\\
HF((\varphi_1(L_1),(\varphi_1)_{*}(b_1)),(\varphi_2(L_2),(\varphi_2)_{*}(b_2));\Lambda_0)
&\cong \Lambda_0(\vec a').
\endaligned$$
Then we have: 
\begin{equation}\label{Lipshich}
\vert a_i - a'_i \vert \le d_{\rm Hof}(\varphi_1,\varphi_2).
\end{equation}
Now the definition of Floer cohomology in the case when $L_1$ is not transversal to $L_2$ 
is as follows.
We take a sequence of Hamiltonian diffeomorphisms $\varphi_n$ such that
$\varphi_n(L_1)$ is transversal to $L_2$ 
and that $\epsilon_n = d_{\rm Hof}(\varphi_n,\varphi_{n+1})$ 
satisfies $\sum \epsilon_n < \infty$.
We assume furthermore that $\lim_{n\to \infty}d_{\rm Hof}({\rm id},\varphi_n) = 0$.
\par
We put
$$
HF((\varphi_n(L_1),(\varphi_n)_{*}(b_1)),(L_2,b_2);\Lambda_0)
= \Lambda_0(\vec a^n).
$$
Then by (\ref{Lipshich}) $n \mapsto a^n_i$ is a Cauchy sequence.
We put
$a^{\infty}_i = \lim_{n\to \infty} a^n_i$  and define:
\begin{equation}\label{fingHF}
HF((L_1,b_1),(L_2,b_2);\Lambda_0): = \Lambda_0(\vec a^{\infty}).
\end{equation}
Note that infinitely many numbers $a^{\infty}_i$
can be non-zero. (See \cite[Example 6.5.40]{fooobook}.)
In such a case the Floer cohomology may not be finitely generated.
\begin{conj}
If $L_1,L_2$ are real analytic then the Floer cohomology 
(\ref{fingHF}) is finitely generated.
The filtered $A_{\infty}$ category $\frak{F}(\mathbb L)$ is gapped 
if all the elements of $\mathbb L$ are real analytic.
Moreover the `almost isomorphic' in Theorem \ref{maintheorem1} (2)
can be replaced by `isomorphic'.
\end{conj}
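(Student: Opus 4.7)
The plan is to address the three assertions in turn, exploiting at each stage the tameness of the real analytic category. First, for gappedness: choose a real analytic compatible almost complex structure $J$ on $X$ (which exists by a density argument since compatibility is an open condition). With real analytic Lagrangian boundary and a real analytic $J$, moduli spaces of pseudoholomorphic polygons become real analytic after suitable perturbation of their defining equations, and Gromov compactness together with a subanalytic finiteness theorem (in the spirit of \L{}ojasiewicz, Bierstone--Milman) implies that, for each $E>0$, the set of symplectic areas $\le E$ realized by stable disks with boundary on tuples from $\mathbb L$ is finite. Since $\frak F(\mathbb L)$ is gapped exactly when the energies appearing in the $\frak m_k$ lie in a discrete subset of $\R_{\ge 0}$, this establishes the second assertion.

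Second, for finite generation of (\ref{fingHF}): approximate $L_1$ by Hamiltonian images $\varphi_n(L_1)$, where each $\varphi_n$ is the time-one map of a real analytic Hamiltonian, chosen so that $\varphi_n(L_1)$ is transverse to $L_2$ and so that $\sum d_{\rm Hof}(\varphi_n,\varphi_{n+1})<\infty$. The key claim is that the torsion spectrum $\vec a^n$ stabilizes, in the sense that only finitely many $a^\infty_i$ are nonzero. I would argue this by studying the real analytic family of intersections $\bigcup_{n}\varphi_n(L_1)\cap L_2$, which, together with its $n\to\infty$ limit point set $L_1\cap L_2$, is subanalytic and of total dimension controlled by the (possibly singular) intersection locus; stratifying by subanalytic pieces shows that the chain groups $CF(\varphi_n(L_1),L_2)$ have uniformly bounded rank for large $n$. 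Combined with the $1$-Lipschitz estimate (\ref{Lipshich}), this forces $a^\infty_i=0$ for $i$ exceeding this bound.

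Third, for replacing ``almost isomorphic'' by ``isomorphic'' in Corollary \ref{maintheorem1}(2): once finite generation is in hand, one can in fact show by a \L{}ojasiewicz-type lower bound that the discrepancies $|a^n_i-a^\infty_i|$ vanish for all sufficiently large $n$ on each fixed $i$, because the set of possible torsion exponents appearing in the real analytic family is itself a discrete subanalytic subset of $\R_{\ge 0}$. Thus the Cauchy sequence defining $\Lambda_0(\vec a^\infty)$ eventually becomes constant component by component, and the comparison of the completed $\frak m_1$ cohomology of the morphism complex with $HF((L_1,b_1),(L_2,b_2);\Lambda_0)$ upgrades from an almost isomorphism to a genuine isomorphism.

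The main obstacle is clearly the second paragraph: one needs a quantitative subanalytic/\L{}ojasiewicz control on \emph{both} the intersection loci $\varphi_n(L_1)\cap L_2$ and on the energies of the pseudoholomorphic strips contributing to $\frak m_1$, uniformly in $n$. Without real analyticity either ingredient can fail: smooth tangencies can generate intersection points of arbitrarily high multiplicity in the limit, and smooth Lagrangians admit disks of arbitrarily small positive area in certain situations. Thus the real analytic hypothesis should be used twice, to bound the dimension of the configuration space of generators and to bound from below the energies of the relevant pseudoholomorphic configurations. Making these two bounds precise and proving that they combine to give the stabilization of $\vec a^n$ is the heart of what the conjecture is asserting, and would likely require a careful development of Kuranishi-type perturbation theory compatible with the real analytic structure on $X$, $\mathbb L$, and $J$.
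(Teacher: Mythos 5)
The paper does not prove this statement: it is labeled a \emph{Conjecture} and left open, so there is no proof in the paper to compare against. Your text is therefore a proposal for a genuinely open problem, and you are candid that the middle step is ``the heart of what the conjecture is asserting.'' Given that, let me flag the specific places where the sketch as written would not survive scrutiny.

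The gappedness argument has a mismatch with the actual construction. The completed DG-category $\frak F(\mathbb L)$ of Corollary~\ref{maintheorem1} is built as an inductive limit over transversal perturbations $\varphi_n(L_i)$ (Steps~1--4 of Section~\ref{sec;compgeo}), and the structure operations of the limit are those of $A\mathscr B^{\infty}$; these mix contributions at all stages $n$. Even if, for a fixed $n$, the energies of polygons with boundary on $\bigcup_i\varphi_n(L_i)$ lie in a discrete monoid $G_n$, the union $\bigcup_n T^{\pm\epsilon_n}G_n$ that feeds the limit need not be discrete. Subanalytic finiteness for the original real analytic $\mathbb L$ does not directly transfer to the perturbed tuples unless one can arrange the whole inductive system to be uniformly gapped, and nothing in your first paragraph engages with this.

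The claim that ``the chain groups $CF(\varphi_n(L_1),L_2)$ have uniformly bounded rank for large $n$'' is almost certainly false, even with real analytic Hamiltonians. Hofer-smallness of $\varphi_n$ gives no $C^1$ control, and a real analytic Hamiltonian perturbation near a tangency locus of $L_1\cap L_2$ can create arbitrarily many transversal intersection points as $n\to\infty$. Subanalyticity of the individual pair $(\varphi_n(L_1),L_2)$ gives a finiteness statement for each fixed $n$, not a uniform bound in $n$. What the conjecture asserts is that the \emph{homology} ranks and torsion exponents stabilize, and that has to come from cancellations in the boundary operator, not from a bound on chain-level generators; your argument skips exactly this. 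Relatedly, the claim in the third paragraph that the $a^n_i$ eventually become constant because their possible limits lie in a discrete subanalytic set is not supported: the $a^n_i$ are action gaps for the pair $(\varphi_n(L_1),L_2)$, which is a different (perturbed) real analytic pair for each $n$, so there is no single discrete subanalytic set they all must belong to. The Lipschitz estimate (\ref{Lipshich}) gives convergence, not eventual constancy.

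In short: the mechanisms you name (\L ojasiewicz, subanalytic stratification, discrete energy spectra for real analytic data) are indeed the natural toolkit one would bring to this conjecture, and that matches the spirit in which the conjecture is stated. But the sketch has two concrete missing ideas -- a uniformity-in-$n$ principle compatible with the inductive limit construction, and a homology-level (not chain-level) finiteness argument -- and at least one step (uniform chain rank) that appears to be outright false.
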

We now define the notion `almost isomorphic' appearing 
in Theorem \ref{maintheorem1} (2).
\begin{defn}\label{energyfilteredmodule}
Let $V$ be a $\Lambda_0$  module. 
Its {\it energy filter} is a system of  submodules
$\frak F^{\lambda}V \subseteq V$ for $\lambda \ge 0$ such that:
\begin{enumerate}
\item 
If $\lambda < \lambda'$ then $\frak F^{\lambda'}V \subseteq \frak F^{\lambda}V$.
\item
For $c>0$, $T^c\frak F^{\lambda}V \subseteq \frak F^{c+\lambda}V$.
\item 
$V$ is complete with respect to the topology induced by the filtration.
\end{enumerate}
We say $V$ is a filtered $\Lambda_0$ module if it has an energy filter.
\par
We consider a cochain complex $(C,d)$ over $\Lambda_0$.
We assume that $C$ is energy filtered and $d$ preserves the energy filtration.
We call such $(C,d)$ a {\it filtered cochain complex over $\Lambda_0$}.
\par
We say a filtered $\Lambda_0$ module $V$ is {\it zero energy generated} if the following holds.
We put $V_{\Lambda}:= V \otimes_{\Lambda_0} \Lambda$.
We assume that there exists a filtration $\frak F^{\lambda}V_{\Lambda}$
for $\lambda \in \mathbb R$ such that items (1)(2)(3) above 
hold. Moreover we require:
\begin{enumerate}
\item[(4)] The $\Lambda_0$ module $V$ coincides with $\frak F^0V_{\Lambda}$.
The energy filter of  $V$ coincides with the restriction of one on $V_{\Lambda}$.
\end{enumerate}
Hereafter when we study filtered chain complex over $\Lambda_0$ we assume that 
it is zero energy generated, unless otherwise mentioned explicitly.
\end{defn}
The cohomology group of  filtered cochain complex over $\Lambda_0$ may not be  zero energy generated
since it has a torsion. However it has the following property.
\begin{defn}
A filtered $\Lambda_0$ module $V$ is said to be {\it divisible} if 
for any $v \in \frak F^{\lambda}(V)$ and $\lambda \ge \epsilon > 0$
there exists $w \in \frak F^{\lambda-\epsilon}(V)$ such that 
$T^{\epsilon}w = v$.
\end{defn}
\begin{defn}
For a divisible filtered $\Lambda_0$ module $V$ 
we define its {\it spectral dimension} $f(\cdot;V) : [0,\infty) \to \Z_{\ge 0} \cup \{\infty\}$
as follows.
We consider a finitely generated submodule $W$ of $\frak F^{\lambda}V$
and denote by $\#W$ the smallest number of generators of $W$.
We define $f(\lambda;V)$ to be the supremum of $\#W$
for all finitely generated submodules $W$ of $\frak F^{\lambda}V$.
\par
The function $f(\cdot;V)$ is non-increasing. 
\end{defn}
\begin{exm}\label{ex6.5.31}
\cite[Lemma 6.5.31]{fooobook} implies
$$
f(\lambda;\Lambda_0(\vec a))
= 
\#\{ i \mid a_i > \lambda\}.
$$
\end{exm}
\begin{defn}\label{defn113}
Let $V_1,V_2$ be energy  filtered $\Lambda_0$ modules.
We say $V_1$ is {\it almost isomorphic} to $V_2$ if 
for any $\lambda$
$$
\aligned
&\lim_{\epsilon>0, \epsilon \to 0} f(\lambda-\epsilon;V_1)
\ge f(\lambda;V_2),
\ge 
\lim_{\epsilon>0, \epsilon \to 0} f(\lambda+\epsilon;V_1),
\\
&\lim_{\epsilon>0, \epsilon \to 0} f(\lambda-\epsilon;V_2)
\ge f(\lambda;V_1)
\ge 
\lim_{\epsilon>0, \epsilon \to 0} f(\lambda+\epsilon;V_2).
\endaligned
$$
We remark that this condition is equivalent to the condition that 
$f(\lambda;V_1)=f(\lambda;V_2)$
for $\lambda \in (0,\infty)$ outside the discrete subset where  
$f(\cdot;V_1)$ and $f(\cdot;V_2)$ are discontinuous.
\end{defn}
Note that Example \ref{ex6.5.31} implies that 
$\Lambda_0(\vec a)$ is almost isomorphic to $\Lambda_0(\vec b)$
if and only if $\vec a = \vec b$.
Note however that $f(\lambda;\Lambda_+) \equiv 1 \equiv f(\lambda;\Lambda_0)$, 
for $\lambda > 0$.\footnote{$\Lambda_+$ is the maximal 
ideal of $\Lambda_0$.}
Therefore `almost isomorphic' does not imply `isomorphic' in general.
\par
Gromov-Hausdorff distance we introduce in this paper is a version with multiplicative
structures of the notion of a convergence of $\Lambda_0$ modules 
used in (\ref{fingHF}).
The author recently learned that 
an idea of the argument of the
proof of inequalities such as (\ref{Lipshich}) 
appeared in an earlier work by 
Ostrover \cite{Os}.
The structure theorem (\ref{struFl}) 
is of rather simple form since in Lagrangian 
Floer theory of \cite{fooobook}, 
a generator of Floer's cochain complex corresponding to an intersection 
point $[p]$ is in $\frak F^0$.
This is the zero-energy-generated-ness
of the Floer's cochain complex. 
In the generality studied in \cite{fooobook}
we do not know any other way.
However in a certain situation 
where action functional is well-defined, 
a natural way is to put the generator 
at $\frak F^{\lambda}$ where $\lambda$ is the 
value of the action functional.
In such a case the Floer cohomology 
has a structure called a persistent module.
Then (\ref{struFl}) becomes 
a normal form theorem of persistent modules
\cite{Ba}.
In such a situation a similar argument as 
\cite{fooobook} is used
by Usher \cite{Us}. Polterovich-Shelukhin \cite{PS} showed the continuity of 
barcodes associated to persistent modules in Floer 
homology of periodic Hamiltonian systems in the situation
when action functional is well-defined.
The idea to study the limit 
using the continuity, which was used 
in (\ref{fingHF}), 
appears in a recent work by 
Roux-Seyfaddini-Viterbo \cite{RSV},
where its important applications 
to Hamiltonian dynamics are given.
The estimate of the Hofer distance (of  
objects of a filtered $A_{\infty}$ category) 
by the Hofer-Chekanov's distance between 
Lagrangian submanifolds 
was proved in \cite{FuFu6} 
using the moduli space defined in \cite{fooobook}.
A similar estimate appears in \cite{BCS}.
Biran-Cornea-Shelvkin \cite{BCS} also
discuss its interesting relation to Lagrangian cobordism. 
\par
The explanation of the structure of the paper and the outline of the sections are in order.
In Section \ref{sec;limit} we define Gromov-Hausdorff distance.
In Section \ref{sec:Exam}
a simple example which shows non-triviality 
of Gromov-Hausdorff distance 
of filtered $A_{\infty}$ categories 
in Lagrangian Floer theory is given.
The algebraic part of the 
construction of the limit 
of a sequence of filtered $A_{\infty}$ categories 
is discussed in Sections 
\ref{sec;energyloss},
\ref{sec;bar},
\ref{sec;back},
and 
\ref{sec;speclo}.
Section \ref{sec;triangle} 
is devoted to the proof of 
the triangle inequality of 
Gromov-Hausdorff distance.
To show the well-defined-ness 
of the limit up to equivalence, 
we need to study unitality of the 
limit.
In Sections \ref{sec;alhomoequi} and 
\ref{sec;indhomoequi} we discuss 
algebraic part of the story of 
unitality of the limit.
Theorem \ref{mainalgtheorem2},
which is an algebraic 
story of inductive 
limit of a sequence of filtered $A_{\infty}$
categories, is proved in Section \ref{sec;indhomoequi}.
Sections \ref{Hofinfhomoto} provides 
an input needed to obtain 
{\it unital} inductive system 
of filtered $A_{\infty}$ categories.
We call it an infinite homotopy equivalence 
(between two objects of a filtered $A_{\infty}$
category).\footnote{See Definition \ref{infhomoequiv}.}
The existence of an infinite homotopy equivalence is 
proved in the situation of Lagrangian 
Floer theory in Section \ref{sec;hoferinf1}.
Section \ref{sec;spedim}
gives a relation between 
spectral dimension and Gromov-Hausdorff 
distance which is used to prove 
Theorem \ref{maintheorem1}
(2).
The proof of Theorem \ref{Lagcatecomplete}
is completed in Section \ref{sec;compgeo}
assuming the results established in Section 
\ref{sec;hoferinf1}.
Section \ref{sec;more} 
discuss a generalization of 
Theorems \ref{Lagcatecomplete}
where `a finite set of Lagrangian submanifolds'
will be replaced by 
`a separable subspace of the 
completion of the set of Lagrangian submanifolds'.
\par
We remark that Sections \ref{sec;compgeo}, \ref{sec;hoferinf1}
are the parts where various techniques and results 
developed to study Lagrangian Floer theory,
such as virtual fundamental chain, is applied.
Sections \ref{sec;limit}-\ref{sec;spedim}
(except Section \ref{sec:Exam}, which is elementary)
are of algebraic nature.
The discussion there is mostly 
self-contained except we use 
freely various results on 
homological algebra of filtered $A_{\infty}$
categories, which are obtained and explained in \cite{fu4,fooobook,FuFu6}.
The author thinks that this paper shows that homological 
algebra of filtered $A_{\infty}$ categories over $\Lambda_0$ is 
much richer that homological algebra of $A_{\infty}$ categories
over a filed.

\section{Gromov-Hausdorff distance between filtered $A_{\infty}$ categories.}
\label{sec;limit}

In this section we introduce the notion of Gromov-Hausdorff distance.
We work on the Novikov ring $\Lambda_0^R$ over a certain ground ring 
$R$.\footnote{We follow \cite[Section 2]{FuFu6}
for the notations related to the homological algebra of 
filtered $A_{\infty}$ categories.} 
In this paper we assume that the ground ring $R$ is always a field
of characteristic $0$.
When we apply it to Lagrangian Floer theory 
we put $R = \R$ since the constructions in Floer theory 
we developed  mostly use the de Rham model.  (We can work over 
the ring of integers or of rational numbers using the singular homology 
model but then the exact unit is hard to obtain.)
We hereafter omit $R$ and write $\Lambda_0$ etc. in place of $\Lambda_0^R$.
The morphism complex of a filtered $A_{\infty}$ category is 
either $\Z$ graded or $\Z/2N$ graded for a certain positive integer $N$.
\par
For an element $x$ of a filtered $\Lambda_0$ module $V$
we define its {\it valuation} $\frak v(x)$ 
to be the infimum of the numbers $\lambda$ 
such that $x \in \frak F^{\lambda}V$.
If $V$ is zero energy generated or divisible 
then $x \in \frak F^{\frak v(x)}V$.
\par
From now on in this paper we use the following convention:
the `curvature' $\frak m_0$ is always zero.
\par
We recall:
\begin{defn}
Let $\mathscr C$ be a filtered $A_{\infty}$ category.
We say $\{{\bf e}_c\}$ for ${\bf e}_c \in \mathscr C(c,c)$ with $\deg {\bf e}_c = 0$
to be a {\it strict unit}, if the following holds.
\begin{enumerate}
\item 
$\frak m_2({\bf e}_c,y) = y$, $(-1)^{\deg x}\frak m_2(x,{\bf e}_c) = x$ 
for $y \in \mathscr C(c,b)$, $x \in \mathscr C(a,c)$.
\item
$\frak m_1({\bf e}_c) = 0$. $\frak m_k(\dots,{\bf e}_c,\dots) = 0$
for $k\ge 3$.
\item $\frak v({\bf e}_c) = 0$.
\end{enumerate}
\end{defn}
We sometimes say unit in place of strict unit.
\par
For a filtered $A_{\infty}$ category $\mathscr C$ 
we  define an $A_{\infty}$ category $\mathscr C_{\Lambda}$
such that 
the sets of its objects is the same as $\mathscr C$ and that for objects $a,b$
$$
\mathscr C_{\Lambda}(a,b) = \mathscr C(a,b) \otimes_{\Lambda_0} \Lambda.
$$
\par
We  recall the next definition.
\begin{defn}{\rm (\cite[Definition 15.2]{FuFu6})}\label{defhoferdist}
Let $\mathscr C$ be a unital filtered $A_{\infty}$ category.
Let $c_1,c_2$ be ojects of $\mathscr C$.
We define the {\it Hofer distance}  $d_{\rm Hof}(c_1,c_2)$
between them to be the infimum of the positive numbers $\epsilon$ 
such that the following holds.
\begin{enumerate}
\item
There exist $t_{12} \in \mathscr C_{\Lambda}(c_1,c_2)$,
$t_{21} \in \mathscr C_{\Lambda}(c_2,c_1)$ 
of degree $0$ and
$s_{1} \in \mathscr C_{\Lambda}(c_1,c_1)$,
$s_{2} \in \mathscr C_{\Lambda}(c_2,c_2)$
of degree $-1$
such that
\begin{enumerate}
\item $\frak m_2(t_{21},t_{12})  + \frak m_1(s_2)= {\bf e}_{c_2}$.
\item $\frak m_2(t_{12},t_{21})  + \frak m_1(s_1)= {\bf e}_{c_1}$.
\item $\frak m_1(t_{21}) = 0$. $\frak m_1(t_{12}) = 0$. 
\end{enumerate}
\item
We require
$\frak v(t_{21}) > -\epsilon_1$,
$\frak v(t_{12}) > -\epsilon_2$
where 
$\epsilon_1,\epsilon_2$ are positive numbers with 
$\epsilon_1 + \epsilon_2 \le \epsilon$.
We also require
$\frak v(s_{1}) > -\epsilon$,
$\frak v(s_{2}) > -\epsilon$.
\end{enumerate}
We call $(t_{12},t_{21},s_{1},s_2)$ satisfying (1) 
a {\it homotopy equivalence} between $c_1,c_2$ 
and $(\epsilon_1,\epsilon_2)$ its {\it energy loss}. 
\end{defn}
Note that the Hofer distance between objects may be infinite.
We also remark that $d_{\rm Hof}(c_1,c_2)=0$ 
may not imply that $c_1 = c_2$.
So  Hofer distance is actually a pseudo-distance.\footnote{
Hofer distance between Hamiltonian diffeomorphisms 
defines a metric. This is an important result by Hofer.}
In the case of  $A_{\infty}$ categories 
in Lagrangian Floer theory (the transversal or clean case)
$d_{\rm Hof}((L_1,b_1),(L_2,b_2)) = 0$ implies that $L_1 =L_2$.
See \cite[Proposition 15.6]{FuFu6}.
\begin{rem}\label{Rem23232}
Let $(t_{12},t_{21},s_{1},s_2)$ be a homotopy equivalence 
with energy loss $(\epsilon_1,\epsilon_2)$.
We may replace $t_{12},t_{21}$ by
$T^{\delta}t_{12},T^{-\delta}t_{21}$ and may assume 
$\epsilon_1 = \epsilon_2 = \epsilon/2$.
In such a case we say 
energy loss of $(t_{12},t_{21},s_{1},s_2)$ is $\epsilon$.
\end{rem}
\begin{lem}\label{Hofertriangle}
For three objects $c_1,c_2,c_3$ of $\mathscr C$ the 
triangle inequality:
$
d_{\rm Hof}(c_1,c_2) + d_{\rm Hof}(c_2,c_3)
\ge d_{\rm Hof}(c_1,c_3)
$ holds.
\end{lem}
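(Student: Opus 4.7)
The plan is to compose homotopy equivalences. Fix $\eta>0$ and choose a homotopy equivalence $(t_{12},t_{21},s_1,s_2)$ between $c_1,c_2$ with energy loss below $d_{\rm Hof}(c_1,c_2)+\eta$, together with a homotopy equivalence $(t_{23},t_{32},\tilde s_2,s_3)$ between $c_2,c_3$ with energy loss below $d_{\rm Hof}(c_2,c_3)+\eta$. By Remark \ref{Rem23232} we may rescale so that the losses are split symmetrically, i.e.\ $\frak v(t_{12}),\frak v(t_{21})>-(d_{\rm Hof}(c_1,c_2)+\eta)/2$ and $\frak v(t_{23}),\frak v(t_{32})>-(d_{\rm Hof}(c_2,c_3)+\eta)/2$. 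Define the composite morphisms
$$
t_{13}:=\frak m_2(t_{12},t_{23})\in\mathscr C_{\Lambda}(c_1,c_3),\qquad t_{31}:=\frak m_2(t_{32},t_{21})\in\mathscr C_{\Lambda}(c_3,c_1).
$$

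The next step is to verify the conditions of Definition \ref{defhoferdist} for $(t_{13},t_{31})$. The $k=2$ $A_{\infty}$-relation says that $\frak m_1$ is a graded derivation of $\frak m_2$; since $\frak m_1$ kills all four $t_{ij}$, this immediately gives $\frak m_1(t_{13})=\frak m_1(t_{31})=0$. For the composition, use the $k=3$ $A_{\infty}$-relation to rewrite
$$
\frak m_2(\frak m_2(t_{12},t_{23}),\frak m_2(t_{32},t_{21}))=\frak m_2(t_{12},\frak m_2(\frak m_2(t_{23},t_{32}),t_{21}))+\frak m_1(\ast),
$$
substitute $\frak m_2(t_{23},t_{32})={\bf e}_{c_2}-\frak m_1(\tilde s_2)$, move the resulting $\frak m_1(\tilde s_2)$ past the outer $\frak m_2$'s using the derivation rule, and then substitute $\frak m_2(t_{12},t_{21})={\bf e}_{c_1}-\frak m_1(s_1)$. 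This produces an explicit cochain $s_1''$ built out of $s_1$, $\tilde s_2$, and various $\frak m_2$- and $\frak m_3$-monomials in the $t_{ij}$'s, satisfying $\frak m_2(t_{13},t_{31})={\bf e}_{c_1}+\frak m_1(s_1'')$. A symmetric calculation, using $\frak m_2(t_{32},t_{23})={\bf e}_{c_3}-\frak m_1(\tilde s_3)$ and $\frak m_2(t_{21},t_{12})={\bf e}_{c_2}-\frak m_1(s_2)$, yields $s_3''$ with $\frak m_2(t_{31},t_{13})={\bf e}_{c_3}+\frak m_1(s_3'')$.

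It then remains to estimate valuations. Writing $\alpha=d_{\rm Hof}(c_1,c_2)+\eta$ and $\beta=d_{\rm Hof}(c_2,c_3)+\eta$, the standard inequality $\frak v(\frak m_k(x_1,\ldots,x_k))\ge\sum_i\frak v(x_i)$ together with the symmetric splitting gives $\frak v(t_{13}),\frak v(t_{31})>-(\alpha+\beta)/2$. Every summand of $s_1''$ and $s_3''$ is a product of some $t_{ij}$'s (each contributing at least $-\alpha/2$ or $-\beta/2$) with one of $s_1,s_2,\tilde s_2,\tilde s_3,s_3$ (each of valuation exceeding $-\alpha$ or $-\beta$), possibly nested inside an $\frak m_3$; a direct count shows every such summand has valuation greater than $-(\alpha+\beta)$. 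Hence $(t_{13},t_{31},s_1'',s_3'')$ is a homotopy equivalence between $c_1$ and $c_3$ with energy loss below $\alpha+\beta$, and letting $\eta\to 0$ gives the inequality. The main obstacle is the bookkeeping in the second step: one must iterate the $k=3$ $A_{\infty}$-relation, apply the $\frak m_1$-derivation identity several times, and verify that every $\frak m_3$-correction term is absorbed into $s_1''$ or $s_3''$ with valuation $>-(\alpha+\beta)$; the symmetric splitting from Remark \ref{Rem23232} is exactly what makes this energy bookkeeping close up cleanly.
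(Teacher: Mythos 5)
Your proof is correct and follows essentially the same route as the paper: both define $t_{13}=\frak m_2(t_{12},t_{23})$ and $t_{31}=\frak m_2(t_{32},t_{21})$, construct the cochain homotopy by iterating the $A_{\infty}$-relations to reassociate and then substituting $\frak m_2(t_{23},t_{32})={\bf e}_{c_2}-\frak m_1(\tilde s_2)$ followed by $\frak m_2(t_{12},t_{21})={\bf e}_{c_1}-\frak m_1(s_1)$, and estimate the resulting valuations using the symmetric split of energy loss. The paper records the result as an explicit formula for $s_1'$ (which in fact contains minor typos in the indices) and says ``it is easy to see,'' while you spell out the derivation-and-substitution steps and the term-by-term valuation count; the underlying argument is identical.
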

\begin{proof}
Let $(t_{12},t_{21},s_1, s_2)$ 
(resp. $(t_{23},t_{32},s'_2, s_3)$) be a homotopy equivalence 
between $c_1$ and $c_2$ (resp. $c_2$ and $c_3$) such that its 
energy loss is 
$\epsilon_{12}$
(resp. 
$\epsilon_{23}$).
We put
$
t_{13} = \frak m_2(t_{12},t_{23})$,  $t_{31} = \frak m_2(t_{32},t_{21}) 
$ and 
$$
s'_1 = s_1 - \frak m_2(t_{12},\frak m_2(s_2,t_{21})) + 
\frak m_3(t_{12},t_{23},\frak m_2(t_{32},t_{12})) 
+ \frak m_2(t_{12},\frak m_3(t_{23},t_{31},t_{12})).
$$
It is easy to see
$
\frak m_2(t_{13},t_{31})  + \frak m_1(s'_1) = {\bf e}_{c_1}
$
and $\frak v(s'_1) \ge -  (\epsilon_{12}+\epsilon_{23})$.
We can calculate in the same way for $\frak m_2(t_{31},t_{13})$.
The proof of Lemma \ref{Hofertriangle} is complete.
\end{proof}
We will prove in Corollary \ref{Hofdishomotopyinv} that Hofer distance 
is invariant under homotopy equivalence of 
filtered $A_{\infty}$ categories.
\par
Let $A,B$ be subsets of a metric space $(Y,d)$.
We recall that the Hausdorff distance $d_{\rm H}(A,B)$ between
$A$ and $B$ is the infimum of  positive numbers $\epsilon$
such that for each $p \in A$ (resp. $q \in B$) there exists 
$q \in B$  (resp. $p \in A$) such that $d(p,q) < \epsilon$.
\par
The gapped condition introduced in \cite[Definition 3.2.26]{fooobook} 
plays an important role in the
homological algebra of filtered $A_{\infty}$ categories.
We write $\mathscr C(a,b) = \overline{\mathscr C}(a,b) \,\,\widehat{\otimes}_R\,\, \Lambda_0$.
The structure operations $\frak m_k$ are written as 
$
\frak m_k =  \sum_{i=1}^{\infty} T^{\lambda_i} \overline{\frak m}_{k,i}
$
where the maps $\overline{\frak m}_{k,i}$ are $R$ linear maps between 
tensor products of $\overline{\mathscr C}(c_i,c_j)$ (over the ground ring $R$) and real numbers
$\lambda_i$ are non-negative and converge to $+\infty$
as $i$ goes to $\infty$.
A filtered $A_{\infty}$ category is called {\it gapped} if 
there exists a discrete (additive) sub-monoid $G$ of $\R_{\ge 0}$ such that 
$\lambda_i$ appearing in the structure operations is always an element of $G$.
The gapped-ness of filtered $A_{\infty}$ functors etc. is defined in the same way. 
\par
Gromov's compactness theorem implies that if $\mathbb L$ is a finite 
set of mutually transversal or clean Lagrangian submanifolds then $\frak F(\mathbb L)$
is gapped.

\begin{defn}\label{GromovHausdorff}
Let $\mathscr C_1$, $\mathscr C_2$ be unital 
and gapped filtered $A_{\infty}$ categories.
We define the {\it Gromov-Hausdorff distance} $d_{\rm GH}(\mathscr C_1,\mathscr C_2)$
between them as follows. The inequality $d_{\rm GH}(\mathscr C_1,\mathscr C_2) < 
\epsilon$
holds if and only if there exists a unital and gapped filtered $A_{\infty}$ category $\mathscr C$ such that:
\begin{enumerate}
\item The set of objects $\frak{OB}(\mathscr C)$ is the disjoint union of $\frak{OB}(\mathscr C_1)$
and $\frak{OB}(\mathscr C_2)$.
\item  There exists a unital and gapped homotopy equivalence from the  filtered $A_{\infty}$ category $\mathscr C_1$
(resp. $\mathscr C_2$)
to the full subcategories of $\mathscr C$ so that it is the inclusion $\frak{OB}(\mathscr C_1) 
\to \frak{OB}(\mathscr C)$ (resp. $\frak{OB}(\mathscr C_2) 
\to \frak{OB}(\mathscr C)$) for objects.
\item  The Haudsorff distance between $\frak{OB}(\mathscr C_1)$ and
$\frak{OB}(\mathscr C_2)$ in $\frak{OB}(\mathscr C)$ (with respect to the Hofer distance)
is smaller than $\epsilon$.
\end{enumerate}
\end{defn}
\begin{rem}
Items (1)(3) imply that the Gromov-Hausdorff distance (\cite[Definition 3.4]{greenbook}) between 
two (pseudo) metric spaces $(\frak{OB}(\mathscr C_1),d_{\rm Hof})$
and $(\frak{OB}(\mathscr C_2),d_{\rm Hof})$ is not greater than
$d_{\rm GH}(\mathscr C_1,\mathscr C_2)$.
\end{rem}
\begin{exm}
Let $\mathscr C$ be a unital and gapped filtered $A_{\infty}$ category and 
$A,B$ two subsets of $\frak{OB}(\mathscr C)$.
Let $\mathscr C(A)$, $\mathscr C(B)$ be the full subcategories of $\mathscr C$
the set of whose objects are $A$, $B$, respectively.
Then $d_{\rm GH}(\mathscr C(A),\mathscr C(B)) \le d_{\rm H}(A,B)$.
\end{exm}
\begin{defn}\label{weakequid}
Two unital and gapped filtered $A_{\infty}$ categories $\mathscr C_1$, $\mathscr C_2$ 
are said to be {\it weakly equivalent} if there exists
a unital and gapped filtered $A_{\infty}$ category $\mathscr C$
such that (1)(2) of Definition \ref{GromovHausdorff} hold 
and 
\begin{enumerate}
\item[(3)']  The Haudsorff distance between $\frak{OB}(\mathscr C_1)$ and
$\frak{OB}(\mathscr C_2)$ in $\frak{OB}(\mathscr C)$ (with respect to  Hofer distance)
is $0$.
\end{enumerate}
\end{defn}
This is slightly stronger that $d_{\rm GH}(\mathscr C_1,\mathscr C_2) = 0$.

\begin{thm}\label{triangleGH}
Gromov-Hausdorff distance satisfies 
the triangle inequality.
Moreover the weak equivalence between gapped and unital filtered $A_{\infty}$
categories is an equivalence relation.
\end{thm}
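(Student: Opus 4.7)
The plan is to handle both assertions of the theorem by means of a single amalgamation construction in the category of unital gapped filtered $A_{\infty}$ categories. Given witnesses $\mathscr C_{12}$ and $\mathscr C_{23}$ for $d_{\rm GH}(\mathscr C_1,\mathscr C_2) < \epsilon_{12}$ and $d_{\rm GH}(\mathscr C_2,\mathscr C_3) < \epsilon_{23}$, respectively, I would build an amalgamated unital gapped filtered $A_{\infty}$ category $\mathscr C_{123}$ with object set $\frak{OB}(\mathscr C_1)\sqcup\frak{OB}(\mathscr C_2)\sqcup\frak{OB}(\mathscr C_3)$, into which both $\mathscr C_{12}$ and $\mathscr C_{23}$ embed as full subcategories agreeing on the common $\mathscr C_2$. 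The sought-for witness $\mathscr C_{13}$ would then be the full subcategory of $\mathscr C_{123}$ on $\frak{OB}(\mathscr C_1)\sqcup\frak{OB}(\mathscr C_3)$; because passage to a full subcategory preserves morphism complexes, the inclusions $\mathscr C_1\hookrightarrow\mathscr C_{12}\hookrightarrow\mathscr C_{123}$ and $\mathscr C_3\hookrightarrow\mathscr C_{23}\hookrightarrow\mathscr C_{123}$ land in $\mathscr C_{13}$ and remain unital gapped homotopy equivalences onto full subcategories by composition.

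For the morphism spaces of $\mathscr C_{123}$ between $a\in\frak{OB}(\mathscr C_1)$ and $c\in\frak{OB}(\mathscr C_3)$, I would use a two-sided bar complex over $\mathscr C_2$, namely a $T$-adically completed direct sum of tensor strings $\mathscr C_{12}(a,b_0)\otimes \overline{\mathscr C}_2(b_0,b_1)[1]\otimes \cdots \otimes \overline{\mathscr C}_2(b_{n-1},b_n)[1]\otimes \mathscr C_{23}(b_n,c)$ for $n\ge 0$ and $b_0,\dots,b_n\in\frak{OB}(\mathscr C_2)$, with $A_\infty$ structure coming from the standard bar differentials and the composition maps of $\mathscr C_{12}$, $\mathscr C_2$, $\mathscr C_{23}$. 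The $T$-adic completion, together with the gapped hypotheses on the three pieces, secures the gappedness of $\mathscr C_{123}$. That the inclusions $\mathscr C_{12}\hookrightarrow\mathscr C_{123}$ and $\mathscr C_{23}\hookrightarrow\mathscr C_{123}$ remain homotopy equivalences onto their respective full subcategories is the main $A_{\infty}$-algebraic content, and is the bar-resolution statement that the derived tensor product of $\mathscr C_{12}$ with $\mathscr C_2$ over $\mathscr C_2$ is again $\mathscr C_{12}$; the contracting homotopy is furnished by the strict units of $\mathscr C_2$, whose valuations are zero, so that gappedness is preserved.

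Granted the construction of $\mathscr C_{123}$, the Hausdorff bound is immediate from Lemma \ref{Hofertriangle}. Given $a\in\frak{OB}(\mathscr C_1)$, pick $b\in\frak{OB}(\mathscr C_2)$ with $d_{\rm Hof}^{\mathscr C_{12}}(a,b)<\epsilon_{12}$ and then $c\in\frak{OB}(\mathscr C_3)$ with $d_{\rm Hof}^{\mathscr C_{23}}(b,c)<\epsilon_{23}$; since both embeddings are by full subcategories these bounds transfer to $\mathscr C_{123}$, whence the triangle inequality for Hofer distance within $\mathscr C_{123}$ gives $d_{\rm Hof}^{\mathscr C_{123}}(a,c)<\epsilon_{12}+\epsilon_{23}$. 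Because Hofer distance is defined intrinsically from the morphism complexes in Definition \ref{defhoferdist}, passing to the full subcategory $\mathscr C_{13}$ does not change this distance between $a$ and $c$. The symmetric direction is handled identically, so $d_{\rm GH}(\mathscr C_1,\mathscr C_3)\le\epsilon_{12}+\epsilon_{23}$, and the triangle inequality follows by letting $\epsilon_{12}$, $\epsilon_{23}$ tend to $d_{\rm GH}(\mathscr C_1,\mathscr C_2)$ and $d_{\rm GH}(\mathscr C_2,\mathscr C_3)$.

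For the second assertion, reflexivity of weak equivalence is realised by the doubled category on $\frak{OB}(\mathscr C_1)\times\{1,2\}$ with $\mathscr C((a,i),(b,j)):=\mathscr C_1(a,b)$, where the strict unit ${\bf e}_a$ provides a homotopy equivalence of zero energy loss between $(a,1)$ and $(a,2)$; symmetry is built into Definition \ref{weakequid}; and transitivity is the above amalgamation argument specialised to $\epsilon_{12}=\epsilon_{23}=0$, in which one chooses intermediate mediating objects at Hofer distance $<\delta/2$ on each side for each $\delta>0$ to conclude that the Hausdorff distance in $\mathscr C_{13}$ is zero. The main obstacle throughout is the clean construction of $\mathscr C_{123}$ as a unital gapped filtered $A_{\infty}$ category: the convergence and $A_{\infty}$-axiom verification for the bar-type morphism complexes, and in particular the reconciliation of the two a priori distinct strict-unit structures on the common $\mathscr C_2$ coming from the two unital homotopy equivalences, must be handled coherently. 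This is where the homological-algebra machinery of bar constructions and unital homotopy equivalences developed in Sections \ref{sec;bar}--\ref{sec;speclo} and \ref{sec;alhomoequi}--\ref{sec;indhomoequi} of the paper will do the heavy lifting.
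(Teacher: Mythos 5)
Your global architecture matches the paper's: build an amalgamated witness category on all three object sets, deduce the Hausdorff bound from Lemma \ref{Hofertriangle}, and pass to a full subcategory. But there is a genuine gap in the step you flag as routine. You claim that ``both $\mathscr C_{12}$ and $\mathscr C_{23}$ embed as full subcategories agreeing on the common $\mathscr C_2$.'' This cannot be arranged: for $a,b\in\frak{OB}(\mathscr C_2)$ the complexes $\mathscr C_{12}(a,b)$ and $\mathscr C_{23}(a,b)$ are distinct $\Lambda_0$-modules, each containing $\mathscr C_2(a,b)$ as a subcomplex via a homotopy equivalence that is \emph{not} an isomorphism (the paper emphasizes this immediately after passing to the Co-Bar resolution: ``They induce cochain homotopy equivalences between morphism complexes. (However they are not isomorphisms.)''). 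So there is no category $\mathscr C_{123}$ whose restriction to $\mathscr C_2$-objects simultaneously equals $\mathscr C_{12}$ and $\mathscr C_{23}$. Reconciling these two inequivalent hom-complexes over $\mathscr C_2$ is precisely the content of Proposition \ref{prop104}, not a triviality to be assumed. Relatedly, the morphism complex the paper actually uses is not your two-sided bar $\mathscr C_{12}\widehat\otimes B(\mathscr C_2)\widehat\otimes\mathscr C_{23}$ with $\overline{\mathscr C}_2$-factors sandwiched in the middle; it is the quotient of an arbitrarily long \emph{alternating} string $\mathscr C_{12}\widehat\otimes\mathscr C_{23}\widehat\otimes\mathscr C_{12}\widehat\otimes\cdots$ by the contraction relations ${\bf x}\sim{\bf x}'_i,\;{\bf x}\sim{\bf x}''_i$ that absorb a factor lying in $\mathscr C_2$ into its neighbors. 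This is an amalgamated-pushout model, not the derived tensor product $\mathscr C_{12}\otimes^{\mathbb L}_{\mathscr C_2}\mathscr C_{23}$ your bar complex computes; whether the two produce quasi-isomorphic results is not obvious and would need a separate argument.

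Two further points. First, the contracting homotopy in Lemma \ref{lem85} is not ``furnished by the strict units of $\mathscr C_2$''; the paper chooses a degree-$(+1)$ map $G:\mathscr C_{12}(a,b)\to\mathscr C_{12}(a,b)$ with $dG+Gd-{\rm id}$ landing in $\mathscr C_2(a,b)$ and $G|_{\mathscr C_2}=0$, chooses compatible $R$-module splittings $\mathscr C_{12}(a,b)=\mathscr C_2(a,b)\oplus\mathscr C^*_{12}(a,b)$, and then builds $\hat G$ by the averaged insertion $\frac1m\sum_i G(x_i)$ of formula (\ref{defshrink}); the proof hinges on a sign cancellation between terms produced because $\mathscr C^*_{12},\mathscr C^*_{23}$ are not $d$-invariant, a calculation you would still have to supply. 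Second, before any of this the paper reduces to DG-categories via canonical models (where homotopy equivalences that are bijective on objects have strict inverses) and the Bar/Co-Bar resolution; this step is what makes the naive ``formal composition plus contraction'' associative and the quotient construction legitimate in the $A_\infty$-setting. Your sketch keeps everything at the $A_\infty$-level where the higher operations $\frak m_k$ would interfere with the tensor-string manipulations you need. The reflexivity and transitivity remarks for weak equivalence are fine once Proposition \ref{prop104} is in hand, and your use of Lemma \ref{Hofertriangle} is exactly the paper's.
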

We will prove Theorem \ref{triangleGH} in Section \ref{sec;triangle}.

The filtered $A_{\infty}$ category which appears in Lagrangian 
Floer theory is gapped when the intersection of Lagragian submanifolds 
are transversal or clean. When we take the limit 
or in the case intersection may not be clean, 
the gapped-ness may not hold.
In such a case various basic results on homological 
algebra of filtered $A_{\infty}$ categories do not hold.
For example the proof of Theorem \ref{triangleGH} we will give 
in Section \ref{sec;triangle}
does not work.
\par
We will modify the definition of Gromov-Hausdorff distance 
and weak equivalence below so that we can  prove 
the triangle inequality without assuming gapped-ness.
We first introduce a few notions.
\begin{defn}
Let $V,W$ be divisible $\Lambda_0$ modules and 
$\varphi: V\to W$  a filtered $\Lambda_0$ homomorphism.
\par
We say that $\varphi$ is {\it almost surjective}
if for any $x \in W$ and $\epsilon > 0$ there exists 
$y \in  V$ such that $\varphi(y) = T^{\epsilon}x$.
We say that $\varphi$ is {\it almost injective}
if for each $y \in V$ with $\varphi(y) = 0$,
and $\epsilon > 0$, $T^{\epsilon}y = 0$.
We say $\varphi$ is an {\it almost isomorphism} if 
$\varphi$ is almost surjective and almost injective.
\end{defn}
\begin{exm}
Let $C: = \Lambda_0/\Lambda_+$. The zero homomorphism from $C$ to $0$
is almost injective but is not injective.
\par
Let $C_1$ be the $T$-adic completion of the direct sum 
$(\Lambda_0)^{\mathbb N}$ and $C_2 := \Lambda_0$.
We define $\varphi: C_1 \to C_2$ by
$
\varphi(x_1,\dots,x_n,\dots) = \sum_{i=1}^{\infty} T^{1/i}x_i.
$
Then $\varphi$ is almost surjective but is not surjective.
(In fact the image is $\Lambda_+$.)
\end{exm}
The next lemma is not difficult to prove. We will prove it in Section \ref{sec;spedim}.
\begin{lem}\label{lem244}
If $\varphi: V\to W$ is an almost isomorphism between 
divisible filtered $\Lambda_0$ modules, then 
$V$ is almost isomorphic to $W$, in the sense of Definition \ref{defn113}. 
\end{lem}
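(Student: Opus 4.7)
The plan is to establish that $V$ is almost isomorphic to $W$ (in the sense of Definition~\ref{defn113}) by proving the two comparison inequalities $f(\lambda;V)\le f(\lambda-\epsilon;W)$ and $f(\lambda;W)\le f(\lambda-\epsilon;V)$ for all $\lambda>\epsilon>0$. Together with the equivalent characterization following Definition~\ref{defn113}, these force $f(\cdot;V)=f(\cdot;W)$ at every common continuity point in $(0,\infty)$, which is what we want. First I extend $\varphi$ to a $\Lambda$-linear isomorphism $\varphi_\Lambda:V_\Lambda\to W_\Lambda$: surjectivity follows from almost surjectivity after writing any $x\in W_\Lambda$ as $T^{-n}x_0$ with $x_0\in W$ and lifting $Tx_0$; injectivity follows from almost injectivity, since a kernel element $y\in V_\Lambda$ would lift, after clearing denominators, to a $T^\delta$-torsion element of $V$ for every $\delta>0$, and such an element vanishes in the localization.

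The enabling technical ingredient is a filtered lifting property: for $x\in\frak F^\mu W$ and $0<\epsilon<\mu$, there exists $y\in\frak F^{\mu-\epsilon}V$ with $\varphi(y)=x$. I obtain this by first using divisibility of $W$ to write $x=T^\mu x_0$ with $x_0\in W$, then applying almost surjectivity to find $y_0\in V$ with $\varphi(y_0)=T^\epsilon x_0$; then $y:=T^{\mu-\epsilon}y_0$ lies in $\frak F^{\mu-\epsilon}V$ and satisfies $\varphi(y)=T^\mu x_0=x$.

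With this property in hand, $f(\lambda;W)\le f(\lambda-\epsilon;V)$ is the easier half: take any finitely generated $U'\subseteq\frak F^\lambda W$ with $n=\#U'$ minimal generators $z_i$, lift each as $z_i=\varphi(y_i)$ with $y_i\in\frak F^{\lambda-\epsilon}V$, and set $U:=\sum\Lambda_0 y_i$. Then $\#U\le n$ from having $n$ generators, while the induced surjection $\varphi|_U:U\twoheadrightarrow U'$ gives $\#U\ge\#U'=n$, so $\#U=n$ and $f(\lambda-\epsilon;V)\ge n$.

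The reverse inequality $f(\lambda;V)\le f(\lambda-\epsilon;W)$ is more subtle, and is where I expect the main difficulty. Given finitely generated $U\subseteq\frak F^\lambda V$ with minimal generators $y_i$ and $\#U=n$, I use divisibility of $V$ to choose $y_i^\downarrow\in\frak F^{\lambda-\epsilon}V$ with $T^\epsilon y_i^\downarrow=y_i$, and set $U^\downarrow:=\sum\Lambda_0 y_i^\downarrow\subseteq\frak F^{\lambda-\epsilon}V$; the surjection $T^\epsilon:U^\downarrow\twoheadrightarrow U$ combined with $\#U^\downarrow\le n$ forces $\#U^\downarrow=n$. The key claim is then $\ker\varphi|_{U^\downarrow}\subseteq\Lambda_+ U^\downarrow$: writing any kernel element as $k=\sum a_i y_i^\downarrow$, almost injectivity gives $T^\epsilon k=\sum a_i y_i=0$ in $U$; since the minimal generators $\bar y_i$ form an $R$-basis of $U/\Lambda_+ U$ by Nakayama's lemma for the local ring $\Lambda_0$, this forces each $\bar a_i=0$, hence $a_i\in\Lambda_+$. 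This claim yields an induced $R$-linear isomorphism $U^\downarrow/\Lambda_+ U^\downarrow\cong\varphi(U^\downarrow)/\Lambda_+\varphi(U^\downarrow)$, so $\#\varphi(U^\downarrow)=n$ and the submodule $\varphi(U^\downarrow)\subseteq\frak F^{\lambda-\epsilon}W$ witnesses $f(\lambda-\epsilon;W)\ge n$. The hard part is precisely this final claim, where the downward shift afforded by divisibility of $V$ is used to convert the $\epsilon$-loss in almost injectivity (which only annihilates after multiplying by $T^\epsilon$) into a membership statement in $\Lambda_+ U^\downarrow$ that Nakayama can consume.
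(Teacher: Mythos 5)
Your proof is correct, and its overall skeleton matches the paper's: use divisibility plus almost surjectivity to lift a witness from $W$ downward into $V$ (your ``easier half''), and use divisibility plus almost injectivity to push a witness in $V$ downward and into $W$ (your ``harder half''). The first half is essentially identical to the paper's first inequality. The genuine divergence is in the second half. The paper does \emph{not} prove the clean estimate $f(\lambda;V)\le f(\lambda-\epsilon;W)$; instead, it picks $\lambda$ on the interior of a plateau of $f(\cdot;V)$, chooses $\lambda'>\lambda$ on the same plateau, shifts a witness from level $\lambda'$ down to level $\lambda$ by divisibility, and then uses the identity $T^{\epsilon}x_i=\sum T^{\epsilon}a_{ij}y_j$ (with $\epsilon=\lambda'-\lambda$) to conclude $f(\lambda;V)\le f(\lambda;W)$ only at continuity points $\lambda\notin S$. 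You instead prove the uniform quantitative inequality $f(\lambda;V)\le f(\lambda-\epsilon;W)$ for every $\lambda>\epsilon>0$, by shifting the minimal generators $y_i$ down to $y_i^{\downarrow}$, observing that almost injectivity makes $T^{\epsilon}\cdot\ker(\varphi|_{U^{\downarrow}})=0$, and then using Nakayama over the local ring $(\Lambda_0,\Lambda_+)$ to push the kernel into $\Lambda_+U^{\downarrow}$ and conclude $\#\varphi(U^{\downarrow})=n$. Your version buys a symmetric, pointwise estimate that yields all four inequalities of Definition~\ref{defn113} directly, without the ``plateau'' detour and without separately discussing the discrete exceptional set; the paper's version is slightly shorter but gives only an almost-everywhere equality. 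Both are valid proofs of the lemma.
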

\begin{defn}\label{defn216}
Let $(C,d)$, $(C_1,d_1)$, $(C_2,d_2)$ be filtered cochain complexes over $\Lambda_0$.
(Note that we always assume that they are zero energy generated.)
\begin{enumerate}
\item
We say a pair $(C,C_0)$ is a {\it completed complex}
if $C = (C,d)$ is a filtered $\Lambda_0$ complex and 
$C_0 \subset C$ is its subcomplex such that $C$ is a completion of 
$C_0$ with respect to the $T$-adic topology.
We call $C_0$ the {\it finite part} of $C$.
We require $C_0 \subset C_0 \otimes_{\Lambda_0} \Lambda: 
= (C_0)_{\Lambda}$ 
and $C_0 = \frak F^0(C_0)_{\Lambda}$.
\item
Let $(C_i,C_{i,0})$ be a completed complex
for $i=1,2$.
Let $\frak i: (C_1,C_{1,0}) \to (C_2,C_{2,0})$
be an injective filtered cochain homomorphism.
We say $(C_1,C_{1,0})$ is a {\it completed  subcomplex} of $(C_2,C_{2,0})$ 
and $\frak i$ a {\it completed  injection},
if $\frak i(C_{1,0}) \subseteq C_{2,0}$ and $\frak v(\frak i(x)) = \frak v(x)$.
\footnote{This implies that there is a 
splitting $C_{2,0} \cong C_{1,0}
\oplus C^*_{2,0}$ as $\Lambda_0$ module and $C^*_{2,0}$ 
is zero energy generated.}
\end{enumerate}
\end{defn}
\begin{defn}
Let $(C_1,C_{1,0})$, $(C_2,C_{2,0})$ be completed cochain complexes.
We say that a completed  injection $(C_1,C_{1,0}) \to (C_2,C_{2,0})$ is an 
{\it almost cochain homotopy equivalence} if 
for each finitely generated subcomplex $W$ of $C_{2,0}$ and $\epsilon > 0$
there exists  $G: W \to C_{1,0}$ (which is abbreviated as an $\epsilon$-$W$ 
{\it shrinker}) such that:
\begin{enumerate}
\item The image of $dG + Gd - T^{\epsilon}$ is in $C_{1,0}$.
\item  $G = 0$ on $C_{1,0}$.
\item  $G$ preserves energy filtration.
\end{enumerate}

Suppose that a 
finitely generated 
submodule $W^0 \subseteq C_{2,0}$ is zero energy generated. 
We put $W: = (W^0_{\Lambda} + d_2(W^0_{\Lambda})) \cap C_{2,0}$,
which is a finitely generated subcomplex of $C_{2,0}$.
We call an $\epsilon$-$W$ shrinker $G$ of $(C_1,C_{1,0}) \to (C_2,C_{2,0})$
an $\epsilon$-$W^0$ shrinker,
by an abuse of notation.
When $W^0 = \Lambda y \cap C_{2,0}$ we call it an 
$\epsilon$-$y$ shrinker.
\end{defn}

\begin{lem}\label{lemma217}
If $(C_1,C_{1,0})$ is a completed subcomplex of $(C_2,C_{2,0})$
and if the inclusion $(C_1,C_{1,0}) \to (C_2,C_{2,0})$ is an 
almost cochain homotopy equivalence
then the homomorphism $H(C_1) \to H(C_2)$ induced on 
cohomologies is an almost isomorphism. 
\end{lem}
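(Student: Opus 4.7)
The plan is to verify that $\frak{i}_* : H(C_1) \to H(C_2)$ is both almost surjective and almost injective, viewing the $\epsilon$-$W$ shrinker $G$ as an approximate chain contraction that plays the role a genuine chain homotopy would play in showing a chain equivalence induces a cohomology isomorphism. For almost surjectivity, given a cocycle $x \in C_2$ and $\epsilon > 0$, I want to exhibit a cocycle $y \in C_1$ with $\frak{i}(y) - T^\epsilon x \in \mathrm{Im}(d_2)$. First I would approximate $x$ by some $x' \in C_{2,0}$ with $\frak{v}(x - x') \geq N$ for large $N$, available from the $T$-adic density of $C_{2,0}$ in $C_2$; this forces $dx'$ to have valuation $\geq N$. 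Taking $W$ to be the finitely generated subcomplex of $C_{2,0}$ generated by $x'$ (which contains the appropriate rescalings of $dx'$, by zero-energy-generatedness), I apply an $\epsilon_1$-$W$ shrinker $G$ with $\epsilon_1 < \epsilon$ and set $y_1 := T^{\epsilon_1} x' - d G(x') - G(dx') \in C_{1,0}$. By filtration-preservation of $G$ together with $d^2 = 0$, the element $d_1 y_1$ has valuation $\geq N$, and $T^{\epsilon_1} x$ differs from $\frak{i}(y_1) + d_2 G(x')$ by a residual $r_1$ of valuation $\geq N$. I then iterate: writing $\epsilon = \sum_n \epsilon_n$, run the same construction on (a rescaling of) $r_n$ with smaller shrinker parameter $\epsilon_{n+1}$ and better approximation quality $N_{n+1}$, producing $y_{n+1}$, $z_{n+1}$, $r_{n+1}$. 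Setting $y := \lim_n y_n$ and $z := \sum_n (z_{n+1} - z_n)$, which converge in $C_1$ and $C_2$ respectively when $N_n$ grows fast enough, gives a cocycle $y$ with $\frak{i}(y) = T^\epsilon x + d_2 z$.

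For almost injectivity, given a cocycle $y \in C_1$ with $\frak{i}(y) = d_2 u$ in $C_2$ and $\epsilon > 0$, I want $v \in C_1$ with $d_1 v = T^\epsilon y$. I approximate $u$ by $u' \in C_{2,0}$ so that $\frak{v}(du' - \frak{i}(y)) \geq N$, then apply the $\epsilon$-$W$ shrinker for an appropriate $W \ni u'$ to obtain $T^\epsilon u' = dG(u') + G(du') + w$ with $w \in C_{1,0}$. Applying $d_2$ and using $d_2^2 = 0$ yields $T^\epsilon du' = \frak{i}(d_1 [G(du') + w])$, since $G(du'), w \in C_{1,0}$ and $\frak{i}$ intertwines $d_1$ and $d_2$. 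Combining with $du' \equiv \frak{i}(y)$ modulo $T^N$ and the valuation-preservation $\frak{v}(\frak{i}(\cdot)) = \frak{v}(\cdot)$ from Definition \ref{defn216}(2), the element $y_1 := T^\epsilon y - d_1 v_1$, with $v_1 := G(du') + w$, is a cocycle in $C_1$ of valuation $\geq N$. Moreover $\frak{i}(y_1) = d_2(T^\epsilon u - \frak{i}(v_1))$ is still exact in $C_2$, so the argument iterates: applying the same construction to $y_1$ with a smaller parameter produces a Cauchy sequence of partial primitives whose limit $v \in C_1$ satisfies $d_1 v = T^\epsilon y$.

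The principal technical obstacle is the bookkeeping inside each iteration: choosing the parameters $\epsilon_n$ and approximation qualities $N_n$ so that the produced sequence is genuinely Cauchy in the $T$-adic topology on $C_1$ and $C_2$, and checking at each step that the required finitely generated subcomplexes $W_n \subseteq C_{2,0}$ contain everything needed (the current approximation, its differential, and the relevant rescalings). The two structural assumptions that make the whole scheme go through are zero-energy-generatedness of $C_{1,0}, C_{2,0}$, which permits dividing high-valuation elements by large $T^N$ without leaving the finite part, and the equality $\frak{v}(\frak{i}(x)) = \frak{v}(x)$ built into the notion of completed subcomplex, which is what transfers valuation estimates obtained in $C_2$ back to the ambient cocycle construction happening in $C_1$.
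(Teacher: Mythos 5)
Your overall strategy — iterate shrinkers, track residuals, sum up contributions with geometrically decaying exponents — is indeed the same scheme the paper uses. However, there is a concrete technical gap that would prevent the iteration from closing, in both halves of the argument, and it is precisely where the paper is most careful.

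The issue is the control of $G(dy)$ for the approximation $y$ at stages beyond the first. In your almost-surjectivity argument, the residual $r_1$ is \emph{not} a cocycle, so when you approximate it by $y_2 \in C_{2,0}$, the element $dy_2$ is only close to $-d_1 y_1 \in C_{1,0}$ and has valuation roughly $\lambda_1$, not the larger threshold $\lambda_2$ you need. Filtration-preservation alone then gives $\frak v(G_2 dy_2) \geq \lambda_1$, not $\geq \lambda_2$, so the next residual $r_2$ has valuation stuck at $\lambda_1$ and the scheme stalls. The paper gets the sharper bound by adjoining to the shrinker set an auxiliary element $a_2 \in C_{1,0}$ with $dy_2 - a_2 \in \frak F^{\lambda_2}$ (one can take $a_2 = -d_1 y_1$ here, since $d_1 y_1 \in C_{1,0}$); the shrinker condition $G_2 = 0$ on $C_{1,0}$ then yields $G_2 dy_2 = G_2(dy_2 - a_2) \in \frak F^{\lambda_2}$. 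Your ``principal technical obstacle'' paragraph gestures at the shrinker set $W_n$ needing to contain ``the current approximation, its differential, and the relevant rescalings,'' but including $dy_n$ in $W_n$ does not help — it is the auxiliary element $a_n \in C_{1,0}$ (an approximant to $dy_n$, which is possible because $dy_n$ is close to something in $C_1$ and $C_{1,0}$ is dense) that does the work, together with the vanishing of $G_n$ on $C_{1,0}$. You never invoke this.

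The almost-injectivity half has the same problem and an additional one. You set $v_1 = G(du') + w$ and assert $G(du') \in C_{1,0}$, so that $v_1 \in C_1$. But the shrinker's defining property gives $w = T^\epsilon u' - dG(u') - G(du') \in C_{1,0}$; it does \emph{not} place $G(du')$ in $C_{1,0}$ unless you read the codomain of $G$ as $C_{1,0}$, which, combined with condition (1) of Definition \ref{defn216}, would force $T^\epsilon W \subset C_{1,0}$ for every finitely generated subcomplex $W$ and every $\epsilon > 0$ and render the whole shrinker apparatus vacuous. The paper takes the opposite route: $G_1 dy_1$ is \emph{not} pushed into the primitive; it is absorbed into the residual $z_1 = T^\epsilon(y - y_1) + G_1 dy_1$, and the $a_1$-trick is again what makes $z_1$ small. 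The primitive is built only from the $w_k$'s, each of which really does lie in $C_{1,0}$ and has increasing valuation, so the sum $\sum_k T^{2^{1-k}\epsilon} w_k$ converges in $C_1$. Without separating the $w_k$'s (which go into $C_1$) from the $G_k dy_k$'s (which go into the $z_k$ residuals, in $C_2$ only), the partial primitives you construct need not lie in $C_1$, and their $T$-adic convergence is not guaranteed.
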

\begin{proof}
We first prove $H(C_1) \to H(C_2)$ is almost surjective.
Let $x \in C_{2}$ with $dx = 0$.
We take $\lambda_i$ such that $\lambda_i \to \infty$.
\footnote{For example $\lambda_i = i$.}
Let $y_1 \in C_{2,0}$ such that $x - y_1 \in \frak F^{\lambda_1}C_2$.
Since $dx =0$ we have $dy_1 \in \frak F^{\lambda_1} C_2$.
We take $\epsilon-y_1$ shrinker $G_1$.
Then
$
T^{\epsilon}x - dG_1y_1 \in  \frak F^{\lambda_1}C_2 + C_{1,0}.
$
We put $T^{\epsilon}x - dG_1y_1 = z_1 + w_1$,
with $z_1 \in \frak F^{\lambda_1}C_2$ and $w_1 \in C_{1,0}$.
We take $y_2 \in \frak F^{\lambda_1}C_{2,0}$ such that 
$z_1 - y_2 \in \frak F^{\lambda_2}C_{2}$.
Since $dz_1 + dw_1 = 0$, there exists $a_2 \in C_{1,0}$ 
such that $dy_2 - a_2 \in \frak F^{\lambda_2}C_2$.
We take $\epsilon/2-\{y_2,a_2\}$ shrinker $G_2$.
Then
$
T^{\epsilon/2}z_1 - (dG_2+G_2d)y_2 \in \frak F^{\lambda_2}C_{2}
+ \frak F^{\lambda_1}C_{1,0}$.
Since $G_2(dy_2 - a_2) \in \frak F^{\lambda_2}C_2$ and $G_2 a_2 = 0$, 
we have $G_2dy_2 \in \frak F^{\lambda_2}C_2$.

We put $T^{\epsilon/2}z_1 - dG_2y_2 = z_2 + w_2$
with $z_2 \in \frak F^{\lambda_2}C_{2}$, 
$w_2 \in \frak F^{\lambda_1}C_{1,0}$.
Inductively we can find 
$z_k \in \frak F^{\lambda_k}C_{2}$, $y_k \in \frak F^{\lambda_{k-1}}C_{2,0}$,
$w_k \in \frak F^{\lambda_{k-1}}C_{1,0}$
such that
$$
z_{k+1} + w_{k+1} = T^{2^{-k}\epsilon}z_k - dG_{k+1} y_{k+1}.
$$
Therefore
$$
T^{2\epsilon}x = \sum_{i=1}^{\infty} T^{2^{1-i}\epsilon} w_i
+ \sum_{i=1}^{\infty}T^{2^{1-i}\epsilon} d G_i y_i.
$$
The first sum in the right hand side is a cycle in $C_1$
and the second sum in the right hand side is a boundary in $C_2$.
Therefore $[T^{2\epsilon}x]$ is in the image of $H(C_1) \to H(C_2)$
as required.
\par
We next prove that $H(C_1) \to H(C_2)$ is almost injective.
Let $x\in C_{1}$
and $dx = 0$.
Suppose $[x] = 0$ in $H_2(C_2)$.
There exists $y \in C_2$ such that $x = dy$.
We take $\lambda_i$ such that $\lambda_i \to \infty$.
Let $y_1 \in C_{2,0}$ such that $y - y_1 \in \frak F^{\lambda_1}C_2$.
Since $x - dy_1 \in \frak F^{\lambda_1}C_2$ there exists 
$a_1 \in C_{1,0}$ such that $a_1 - dy_1 \in \frak F^{\lambda_1}C_{2,0}$.
We take $\epsilon$-$\{y_1,a_1\}$ shrinker $G_1$.
Then
$$
T^{\epsilon}x = T^{\epsilon}d(y-y_1) + dG_1d y_1 + dw_1
$$
with $w_1 \in C_1$.
Note that $G_1dy_1 - G_1a_1 \in \frak F^{\lambda_1}C_2$ and $G_1a_1 = 0$
(since $a \in C_{1,0}$).
We put $z_1 = T^{\epsilon}(y-y_1) + G_1dy_1 \in \frak F^{\lambda_1}C_2$.
Let $y_2 \in \frak F^{\lambda_1}C_{2,0}$ 
with  $z_1 - y_2 \in \frak F^{\lambda_2}C_2$.
Since $dz_1 = T^{\epsilon}x - dw_1 \in C_1$, we can take $a_2 \in C_{1,0}$
such that $a_2 - dy_2 \in \frak F^{\lambda_2}C_{2,0}$.
We take $\epsilon/2$-$\{y_2,a_2\}$ shrinker $G_2$.
Then
$$
T^{\epsilon/2}dz_1 = T^{\epsilon/2}d(z_1-y_2) + dG_2d y_2 + dw_2
$$
with $w_2 \in \frak F^{\lambda_1}C_1$.
We put $z_2 = T^{\epsilon/2}(z_1-y_2) + G_2dy_2 \in \frak F^{\lambda_2}C_2$.
\par
Inductively we can find $z_k \in \frak F^{\lambda_k}C_2$,
$w_k \in \frak F^{\lambda_{k-1}}C_1$,
$y_k \in \frak F^{\lambda_{k-1}}C_2$
such that
$
T^{2^{-k}\epsilon}dz_{k-1} = dz_k + dw_k
$
and $z_{k} = T^{2^{1-k}\epsilon}(z_{k-1}-y_{k}) + G_{k}dy_{k}$.
Therefore
$$
T^{2\epsilon} x = d \left(
\sum_k T^{2^{1-k}\epsilon} w_k\right),
$$
as required.
\end{proof}
\begin{defn}\label{degb219}
A {\it completed DG-category} is a filterd DG-category $\mathscr C$
together with an assignment of $ \overset{\circ}{\mathscr C}(a,b)$ 
for each $a,b \in \frak{OB}(\mathscr C)$ such that 
$(\mathscr C(a,b),\overset{\circ}{\mathscr C}(a,b))$ is a completed 
complex and 
$
\frak m_2(\overset{\circ}{\mathscr C}(a,c) \otimes \overset{\circ}{\mathscr C}(c,b))
\subseteq
\overset{\circ}{\mathscr C}(a,c).
$
We call $\overset{\circ}{\mathscr C}(a,b)$ the {\it finite part} of $\mathscr C(a,b)$.
When $\mathscr C$ is unital we require that the unit is contained 
in the finite part.
\end{defn}
\begin{defn}\label{weakGH}
Let $\mathscr C_1$, $\mathscr C_2$ be unital 
and completed DG-categories.
We define the {\it Gromov-Hausdorff distance} $d_{\rm GH}(\mathscr C_1,\mathscr C_2)$
between them as follows. The inequality $d_{\rm GH}(\mathscr C_1,\mathscr C_2) < \epsilon$
holds if and only if there exist a unital completed DG-category 
$\mathscr C$ and a $DG$-functor $\mathscr C_i \to \mathscr C$ such that
(1)(3) of Definition \ref{GromovHausdorff} and 
the following hold.
\begin{enumerate}
\item[(2)'] 
For each $a,b \in \frak{OB}(\mathscr C_1)$ (resp. $a',b' \in \frak{OB}(\mathscr C_2)$)
we have:
\begin{enumerate}
\item[(a)]
$\mathscr C_1(a,b)$ (resp. $\mathscr C_2(a',b')$) is a completed subcomplex of 
$\mathscr C(a,b)$  (resp. $\mathscr C(a',b')$). 
\item[(b)]
The cochain maps 
$
(\mathscr C_1(a,b),\frak m_1) \to (\mathscr C(a,b),\frak m_1)$, 
$(\mathscr C_2(a',b'),\frak m_1) \to (\mathscr C(a',b'),\frak m_1)$
are almost cochain homotopy equivalences.
\end{enumerate} 
\end{enumerate} 
Two unital 
and completed DG categories $\mathscr C_1$, $\mathscr C_2$
are said to be {\it weakly equivalent} if 
there exists $\mathscr C_0$ such that (1) of Definition \ref{GromovHausdorff} 
and (2') above hold and 
$$
d_{\rm H}(\frak{OB}(\mathscr C_1),\frak{OB}(\mathscr C_2)) = 0.
$$
\par
A DG-functor $\mathscr C_1 \to \mathscr C_0$ is said 
to be an {\it almost homotopy equivalent injection} 
if (2)' above holds.
\end{defn}
\begin{thm}\label{triangleharder}
Gromov-Hausdorff distance of 
unital completed DG-categories satisfies the 
triangle inequality. 
The weak equivalence defined above is an equivalence relation.
\end{thm}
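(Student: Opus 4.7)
The plan is to construct the witnessing DG-category by amalgamating the two triangle-inequality witnesses along their common middle term; the equivalence-relation statement then follows from the same construction applied with $\epsilon_{12}=\epsilon_{23}=0$ (reflexivity and symmetry being trivially true from the symmetric definition). Explicitly, let $\mathscr C_{12}$ and $\mathscr C_{23}$ witness $d_{\rm GH}(\mathscr C_1,\mathscr C_2)<\epsilon_{12}$ and $d_{\rm GH}(\mathscr C_2,\mathscr C_3)<\epsilon_{23}$; I would build a unital completed DG-category $\mathscr C$ with $\frak{OB}(\mathscr C)=\frak{OB}(\mathscr C_1)\sqcup\frak{OB}(\mathscr C_2)\sqcup\frak{OB}(\mathscr C_3)$ amalgamating $\mathscr C_{12}$ and $\mathscr C_{23}$ over their common sub-DG-category $\mathscr C_2$, and showing $d_{\rm GH}(\mathscr C_1,\mathscr C_3)<\epsilon_{12}+\epsilon_{23}$.

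The natural model is a DG-pushout realized as a two-sided bar construction: for $a_1\in\mathscr C_1$ and $a_3\in\mathscr C_3$, define $\mathscr C(a_1,a_3)$ as the $T$-adic completion of
\begin{equation*}
\bigoplus_{n\geq 1}\;\bigoplus_{c_1,\ldots,c_n\in\frak{OB}(\mathscr C_2)} \mathscr C_{12}(a_1,c_1)\otimes\mathscr C_2(c_1,c_2)[1]\otimes\cdots\otimes\mathscr C_{23}(c_n,a_3)
\end{equation*}
equipped with the standard bar differential, and analogously in the reverse direction. Because DG-composition must close, one also enlarges $\mathscr C(a,b)$ for $a,b$ both in $\frak{OB}(\mathscr C_1)$ (respectively $\mathscr C_2$, $\mathscr C_3$) by adjoining bar-words that cycle through other layers, with the analogous differential and with finite parts built from the finite parts of the factors. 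I would then verify: (i) $\mathscr C$ is a unital completed DG-category (associativity and $\frak m_1^2=0$ are the standard bar identities, units are inherited, and $T$-adic convergence is controlled by non-negativity of valuations and by composition preserving the energy filtration); (ii) the DG-functors $\mathscr C_1\hookrightarrow\mathscr C$ and $\mathscr C_3\hookrightarrow\mathscr C$ are almost homotopy equivalent injections; and (iii) the Hausdorff distance between $\frak{OB}(\mathscr C_1)$ and $\frak{OB}(\mathscr C_3)$ in $\mathscr C$ is less than $\epsilon_{12}+\epsilon_{23}$, by composing a homotopy equivalence $a_1\leftrightarrow a_2$ of energy loss less than $\epsilon_{12}$ (from the $\mathscr C_{12}$ hypothesis) with one $a_2\leftrightarrow a_3$ of energy loss less than $\epsilon_{23}$ (from the $\mathscr C_{23}$ hypothesis), viewing both inside $\mathscr C$ and invoking Lemma \ref{Hofertriangle}.

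The main obstacle is step (ii). The enlarged complex $\mathscr C(a_1,b_1)$ for $a_1,b_1\in\mathscr C_1$ contains not only $\mathscr C_{12}(a_1,b_1)\supset\mathscr C_1(a_1,b_1)$ but also bar-words representing cycles $a_1\to c_1\to d_1\to c_2\to b_1$ with $c_i\in\mathscr C_2,\,d_1\in\mathscr C_3$ (and longer), so one must construct $\epsilon$-shrinkers that contract such cycles back to $\mathscr C_1(a_1,b_1)$ up to a factor of $T^\epsilon$. The key inputs are the $\epsilon$-shrinkers for the inclusions $\mathscr C_1\hookrightarrow\mathscr C_{12}$ and $\mathscr C_3\hookrightarrow\mathscr C_{23}$ supplied by the hypotheses, together with the homotopy equivalences between objects of $\mathscr C_2$ and $\mathscr C_3$ that are encoded in the Hausdorff clauses of the two witnesses. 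The contraction is iterative in the style of the proof of Lemma \ref{lemma217}: one peels off the rightmost $\mathscr C_{23}$-factor of a bar-word using an $\epsilon/2^k$-shrinker at the $k$-th step, absorbs the resulting boundary into a $\mathscr C_2$-detour, and shortens the word, the geometric series providing the required total bound. Once step (ii) is in place, steps (i) and (iii) reduce to routine checks on the bar algebra and to Lemma \ref{Hofertriangle}.
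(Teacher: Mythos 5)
Your overall strategy --- amalgamate $\mathscr C_{12}$ and $\mathscr C_{23}$ over the common middle term $\mathscr C_2$, and then prove the embedding $\mathscr C_1\hookrightarrow\mathscr C_{13}$ is an almost cochain homotopy equivalence by an inductive word-shortening contraction --- is exactly the paper's (the paper re-uses the amalgam $\mathscr C_{13}$ of Proposition~\ref{prop104} and supplies Lemma~\ref{lem942}). The model differs: the paper does not use a bar construction but the free amalgam, i.e.\ completed alternating tensor words in $\mathscr C_{12}$- and $\mathscr C_{23}$-factors, modulo the closure of the relations ${\bf x}-{\bf x}'_i$, ${\bf x}-{\bf x}''_i$ that absorb a $\mathscr C_2$-factor into a neighbor. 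That choice gives an honest composition (concatenate, then contract at the junction); for a pure two-sided bar construction you would still have to specify the composition once you allow words for $a,b\in\frak{OB}(\mathscr C_1)$ that cycle out through $\mathscr C_3$ and back, and your description of how ``DG-composition must close'' leaves this undefined.

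The substantive gap is in step (ii). You name as key inputs the shrinkers for $\mathscr C_1\hookrightarrow\mathscr C_{12}$ and $\mathscr C_3\hookrightarrow\mathscr C_{23}$, ``together with the homotopy equivalences encoded in the Hausdorff clauses.'' Neither of these drives the word-shortening. What the contraction actually requires are the $\epsilon$-shrinkers for the \emph{middle} inclusions $\mathscr C_2\hookrightarrow\mathscr C_{12}$ and $\mathscr C_2\hookrightarrow\mathscr C_{23}$ (supplied by Definition~\ref{weakGH} (2)'(b) applied at objects of $\mathscr C_2$): an interior factor of a long word lies in $\mathscr C_{12}(c,c')$ or $\mathscr C_{23}(c,c')$ with $c,c'\in\frak{OB}(\mathscr C_2)$, and it is the shrinker $G$ with $dG+Gd-T^{\epsilon}$ landing in $\mathscr C_2(c,c')$ that lets you push that factor into $\mathscr C_2$ and absorb it --- this is the $G$ used in (\ref{defshrink}). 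The $\mathscr C_1\hookrightarrow\mathscr C_{12}$ shrinker enters only afterward, via the composition Lemma~\ref{lem86}, and the Hausdorff clauses only in step (iii). Moreover the induction is not a bare geometric-series peel-off: a finitely generated subcomplex $W$ of the amalgam need not respect the chosen splitting $\mathscr C_{12}=\mathscr C_2\oplus\mathscr C^{*}_{12}$, so the paper must enlarge $W$ to a $W^+$ governed by a nested system of finitely generated subcomplexes $U_k(c,c')$, $U'_k(c,c')$ closed under the relevant partial compositions, and only then build a shrinker on $W^+$ that genuinely preserves the energy filtration (see the hypotheses (1)--(3) in the proof of Lemma~\ref{lem942} and the Remark following it). That bookkeeping is where the proof actually lives, and your sketch does not touch it.
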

We will prove Theorem \ref{triangleharder} in Section \ref{sec;triangle}.
\par
\begin{lem}\label{isoembHFH}
If $\mathscr C \to \mathscr C'$ is an almost homotopy equivalent injection
between completed DG-categories, then 
the map $\frak{OB}(\mathscr C) \to \frak{OB}(\mathscr C')$
is an isometric embedding, with respect to  Hofer metric.
\end{lem}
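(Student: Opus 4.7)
The easy direction is $d_{\rm Hof}^{\mathscr C'}(c_1,c_2)\le d_{\rm Hof}^{\mathscr C}(c_1,c_2)$. Writing $F:\mathscr C\to\mathscr C'$ for the almost homotopy equivalent injection, $F$ is a unital DG-functor whose component on each morphism complex is a completed injection, so $F$ preserves $\frak m_1,\frak m_2$, units, and valuations. Consequently, any quadruple $(t_{12},t_{21},s_1,s_2)$ of homotopy equivalence data in $\mathscr C_\Lambda$ with energy loss $(\epsilon_1,\epsilon_2)$ maps to one in $\mathscr C'_\Lambda$ with the same energy loss.

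For the reverse inequality fix $\epsilon>d_{\rm Hof}^{\mathscr C'}(c_1,c_2)$ and pick a homotopy equivalence $(t_{12},t_{21},s_1,s_2)$ in $\mathscr C'_\Lambda$ of total energy loss $<\epsilon$, with $\frak v(t_{12})>-\epsilon_2$, $\frak v(t_{21})>-\epsilon_1$, $\frak v(s_i)>-\epsilon$. Write $t_{12}=T^{-\epsilon_2+\eta}\hat t_{12}$ with $\hat t_{12}\in\mathscr C'(c_1,c_2)$ of non-negative valuation, and similarly for $t_{21}$. Applying the iterative construction from the proof of Lemma \ref{lemma217}, while tracking valuations term by term (each $w_k, y_k$ appearing there lies in $\frak F^{\lambda_{k-1}}$ with $\lambda_{k-1}\ge 0$), yields, for any small $\delta>0$, cocycles $\tilde t_{12}\in\mathscr C_\Lambda(c_1,c_2)$, $\tilde t_{21}\in\mathscr C_\Lambda(c_2,c_1)$ and primitives $b_{12},b_{21}$ in $\mathscr C'_\Lambda$ with
\[
T^{2\delta}t_{ij}=F(\tilde t_{ij})+\frak m_1(b_{ij}),\qquad \frak v(\tilde t_{ij}),\,\frak v(b_{ij})>-\epsilon_j+\eta.
\]

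Substituting these into $\frak m_2(t_{21},t_{12})+\frak m_1 s_2=\mathbf{e}_{c_2}$ and expanding via the DG Leibniz rule, all cross terms vanish up to $\frak m_1$-exact corrections because $\frak m_1\tilde t_{ij}=0$ and $F$ commutes with $\frak m_1$; this gives
\[
T^{4\delta}\mathbf{e}_{c_2}=F\bigl(\frak m_2(\tilde t_{21},\tilde t_{12})\bigr)+\frak m_1 V,
\]
where $V\in\mathscr C'_\Lambda(c_2,c_2)$ is an explicit expression in $s_2,b_{12},b_{21}$ with $\frak v(V)>-\epsilon+\min(4\delta,2\eta)$. The element $u:=\frak m_2(\tilde t_{21},\tilde t_{12})-T^{4\delta}\mathbf{e}_{c_2}$ lies in $\mathscr C_\Lambda(c_2,c_2)$ (the unit is in the finite part of $\mathscr C$), is an $\frak m_1$-cocycle, and satisfies $F(u)=-\frak m_1 V$. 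Applying the almost-injectivity half of the proof of Lemma \ref{lemma217} with valuation tracking produces, for any $\delta'>0$, some $\tilde s_2\in\mathscr C_\Lambda(c_2,c_2)$ with $\frak m_1\tilde s_2=T^{\delta'}u$ and $\frak v(\tilde s_2)>-\epsilon+O(\delta+\delta'+\eta)$. The symmetric argument gives $\tilde s_1$. Redistributing the powers of $T$ as in Remark \ref{Rem23232}, the quadruple $(T^{-2\delta-\delta'/2}\tilde t_{12},T^{-2\delta-\delta'/2}\tilde t_{21},-T^{-4\delta-\delta'}\tilde s_1,-T^{-4\delta-\delta'}\tilde s_2)$ is a homotopy equivalence in $\mathscr C_\Lambda$ between $c_1$ and $c_2$ of total energy loss $<\epsilon+O(\delta+\delta'+\eta)$. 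Letting $\delta,\delta',\eta\to 0$ yields $d_{\rm Hof}^{\mathscr C}(c_1,c_2)\le\epsilon$, and taking $\epsilon\to d_{\rm Hof}^{\mathscr C'}(c_1,c_2)$ completes the proof.

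The main obstacle is the bookkeeping of valuations across the multiplicative step. One has to verify that the DG expansion of $\frak m_2(F(\tilde t_{21})+\frak m_1 b_{21},F(\tilde t_{12})+\frak m_1 b_{12})$ produces only $\frak m_1$-exact corrections with $\frak v\ge -\epsilon+O(\eta)$, that each invocation of an $\epsilon$-shrinker in the iterative construction degrades valuations only by $O(\delta)$, and that the infinite $T$-adic sums defining $\tilde t_{ij},b_{ij},\tilde s_i$ converge in the complete morphism complex. The delicacy is purely quantitative: the arbitrarily small $\eta>0$ coming from the strict inequality $\frak v(t_{ij})>-\epsilon_j$ provides exactly enough slack to absorb all of the above losses if $\delta,\delta'$ are chosen small relative to $\eta$.
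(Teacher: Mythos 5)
Your proof is correct and realizes the approach the paper only gestures at: immediately after stating the lemma the paper writes ``We can use Lemma \ref{lemma217} to prove this lemma. We omit the proof since in Section \ref{Hofinfhomoto} we prove a stronger result Theorem \ref{idometric}.'' You supply the omitted argument. Two small points. First, the exponents in the final quadruple are off: with $\frak m_1\tilde s_2 = T^{\delta'}u$ and $u=\frak m_2(\tilde t_{21},\tilde t_{12})-T^{4\delta}\mathbf{e}_{c_2}$, cancellation of the $u$-terms forces $\frak m_2(T^a\tilde t_{21},T^a\tilde t_{12})-\frak m_1(T^b\tilde s_2)=T^{2a}u+T^{2a+4\delta}\mathbf{e}-T^{b+\delta'}u$, so you need $a=-2\delta$ and $b=-4\delta-\delta'$, not $a=-2\delta-\delta'/2$. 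Second, $\eta$ is fixed once the approximating homotopy equivalence in $\mathscr C'_\Lambda$ is chosen; the correct quantifier order is: fix $\epsilon>d_{\rm Hof}^{\mathscr C'}(c_1,c_2)$, extract the slack $\eta>0$ from the strict inequalities, then take $\delta,\delta'$ small relative to $\eta$ to conclude $d_{\rm Hof}^{\mathscr C}(c_1,c_2)<\epsilon$, and finally let $\epsilon$ decrease. Neither issue affects the validity of the argument.

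It is worth noting how this compares with what the paper actually proves. The explicit proof in Section \ref{Hofinfhomoto} establishes the stronger Theorem \ref{idometric} (isometry for the Hofer \emph{infinite} distance) via obstruction theory on a model of $\mathscr C\times[0,1]$: one normalizes the Maurer--Cartan system by absorbing $T^{\epsilon}$ factors into $S^k$, $T^k$, then inductively lifts the data along ${\rm Eval}_{s=1}$ and pushes it down along ${\rm Eval}_{s=0}$, using the almost surjectivity/injectivity of Lemma \ref{lemma217} at each stage to kill obstruction classes while raising the energy level by an arbitrarily small amount. Your direct route --- pull back $t_{12},t_{21}$ by almost surjectivity, expand $\frak m_2$ via the Leibniz rule to peel off $\frak m_1$-exact corrections, and produce $\tilde s_1,\tilde s_2$ by almost injectivity --- handles the two equations of Definition \ref{defhoferdist}(1) cleanly, but would become unwieldy if one tried to propagate it through the infinite tower of equations (\ref{MC11}) that define the Hofer infinite distance, because the higher $s^k$, $t^k$ feed into each other nonlinearly. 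The $\mathscr C\times[0,1]$ model packages exactly that bookkeeping into a homotopy-theoretic induction, which is why the paper routes the finite-distance statement through the infinite one rather than giving your argument directly.
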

We can use Lemma \ref{lemma217} to prove 
this lemma.
We omit the proof since in Section \ref{Hofinfhomoto}
we prove a stronger result  Theorem \ref{idometric}.
\par
The next result is the reason why the author named the notion in Definition \ref{defhoferdist}
to be the {\it Hofer} distance. 
Let $(X,\omega)$ be a symplectic manifold and 
$H: X\times  [0,1] \to \R$  a time dependent Hamiltonian function.
We denote its (time dependent) Hamiltonian vector field by
$\frak X_{H_t}$.
We take the one parameter group of transformations $\varphi^t_{H}$
generated by $\frak X_{H_t}$.  Namely we require
$
\frac{d\varphi^t_{H}}{dt} = \frak X_{H_t} \circ \varphi^t_{H},
$
and $\varphi^0_{H} = {\rm identity}.
$
Here we put $H_t(x): =H(x,t)$.
We assume that $H$ is normalized, that is, 
$
\int_{X} H(x,t) \omega^n = 0
$ (in the case when $X$ is compact), $\mathcal H =0$ outside a compact set (in the 
case when $X$ is non-compact).
We define:
\begin{equation}
E^-({H}) = \int_0^1 -\min \; {H}_t\, dt, \quad
E^+({H}) = \int_0^1 \max \; {H}_t\, dt,
\end{equation}
and $E(H) = E^-({H}) + E^+({H})$.
We put $\varphi = \varphi^1_{H}$. 
Let $(L,b)$ be a pair of a relatively spin Lagrangian submanifold
and its bounding cochain.
We assume $L$ is transversal to $\varphi(L)$ and
take a finite set $\mathbb L$ of relatively spin Lagrangian submanifolds containing $L$
and $\varphi(L)$ such that $L_1 \pitchfork L_2$ for $L_1, L_2 \in \mathbb L$, 
$L_1 \ne L_2$.
Let $\frak F(\mathbb L)$ be the filtered $A_{\infty}$ category 
whose object  is a pair $(L',b')$ such that $L' \in \mathbb L$ and $b$ is a 
bounding cochain of $L'$.
\begin{thm}\label{estimateHofLag}
The Hofer distance of $\frak F(\mathbb L)$
satisfies the inequality:
$$
d_{\rm Hof}((L,b),(\varphi(L),\varphi_*(b))) \le E({H}).
$$
\end{thm}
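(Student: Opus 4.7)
My plan is to construct the data $(t_{12},t_{21},s_1,s_2)$ of Definition \ref{defhoferdist} geometrically from the Hamiltonian isotopy $\{\varphi^t_H\}$, using moving Lagrangian boundary conditions, and then read off the valuation bounds from the standard action-energy identity. Throughout, I take $c_1=(L,b)$ and $c_2=(\varphi(L),\varphi_*(b))$, viewed as objects of $\frak F(\mathbb L)$.

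First I would construct $t_{12}\in\mathscr C_\Lambda(c_1,c_2)$ and $t_{21}\in\mathscr C_\Lambda(c_2,c_1)$ of degree zero via a PSS-type / continuation moduli space. The relevant moduli is that of holomorphic strips $u:\R\times[0,1]\to X$ with boundary conditions along a family $\{L_s\}$ that interpolates between the constant family $\{L\}$ and the family $\{\varphi^t_H(L)\}_{t\in[0,1]}$ (with the family constant outside a bounded interval in the $\R$-direction). Standard virtual fundamental chain technology, as in \cite{AFOOO,fooobook,FuFu6}, gives $t_{12}$ deformed by $(b,\varphi_*(b))$ that is a cycle: $\frak m_1(t_{12})=0$. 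The valuation bound is obtained from the usual energy identity
\begin{equation*}
\int u^*\omega \;=\; -\int_0^1 H_t(u(+\infty,t))\,dt + \int_0^1 H_t(u(-\infty,t))\,dt + \mathcal A_-,
\end{equation*}
where $\mathcal A_-$ is a nonnegative correction coming from the cutoff profile. Non-negativity of energy then gives $\frak v(t_{12})\ge -E^+(H)$. Running the analogous construction with the reverse family yields $t_{21}$ with $\frak m_1(t_{21})=0$ and $\frak v(t_{21})\ge -E^-(H)$.

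Next I construct $s_1\in\mathscr C_\Lambda(c_1,c_1)$ (and analogously $s_2$) so that $\frak m_2(t_{12},t_{21})+\frak m_1(s_1)={\bf e}_{c_1}$. For this I use a one-parameter family of moduli spaces interpolating between the juxtaposition of the two strips defining $\frak m_2(t_{12},t_{21})$ and a ``shrunken'' configuration which, by the standard PSS-unit argument, represents the strict unit ${\bf e}_{c_1}$ (the unit being realized on the nose in the de Rham model, cf.\ \cite{fooobook}). Counting configurations in this cobordism of moduli spaces with boundaries on the moving Lagrangian family yields $s_1$ of degree $-1$, and the cobordism relation reads $\frak m_2(t_{12},t_{21})-{\bf e}_{c_1}=\frak m_1(s_1)$. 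Applying the energy identity to the parametrized family, the total action defect is at most $E^-(H)+E^+(H)=E(H)$, so $\frak v(s_1)\ge -E(H)$. The same argument in the other order yields $s_2$ with $\frak v(s_2)\ge -E(H)$.

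Finally, invoking Remark \ref{Rem23232} to redistribute the powers of $T$, I can arrange $\frak v(t_{12}),\frak v(t_{21})>-\epsilon/2$ and $\frak v(s_1),\frak v(s_2)>-\epsilon$ for any $\epsilon>E(H)$, proving the inequality. The main technical obstacle is the coherent construction of the virtual fundamental chains on the parametrized moduli spaces in a way that (i) preserves the relevant energy filtration estimate and (ii) delivers the strict unit ${\bf e}_{c_1}$, not merely a cocycle cohomologous to it; this is where one imports the machinery from \cite{AFOOO,FuFu6} that was already developed to handle precisely such bifurcation arguments in unital filtered $A_\infty$ categories.
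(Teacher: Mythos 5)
Your proposal is on the right track geometrically, and it uses the same ingredients the paper uses: Chekanov-type energy estimates, moduli of strips with moving boundary/Hamiltonian perturbations, and a one-parameter degeneration to the identity.  But there is a structural difference worth flagging.  The paper does \emph{not} produce $t_{12},t_{21},s_1,s_2$ directly as cochains in the morphism complexes of $\frak F(\mathbb L)$.  Instead it first passes through the Yoneda embedding $\frak{YON}:\frak F(\mathbb L)\to\frak{Rep}(\frak F(\mathbb L)^{\rm op},\mathcal{CH})$, invokes the fact that Yoneda is a (unital) homotopy equivalence onto its image so Hofer distance is preserved (Corollary \ref{invhoferinfdist}), and then constructs the required data as \emph{right filtered $A_\infty$-module pre-homomorphisms} $t^1_{c,c'}=\{\phi_k\}$, $s^1_c=\{\psi_k\}$ with $\phi_k:CF(L,M_0)\otimes\cdots\otimes CF(M_{k-1},M_k)\to CF(\varphi(L),M_k)$.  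The moduli spaces used keep one boundary component strictly on $L$ and the other on the various $M_i\in\mathbb L$, and encode the Hamiltonian isotopy by a $(\tau,t)$-dependent almost complex structure together with a source-dependent Hamiltonian perturbation $\chi(\tau)\frak X_{H_t}$.  This is what naturally produces a whole family of operations $\phi_k$, and it never has a degenerate $\tau\to-\infty$ end where both boundaries lie on the same Lagrangian.

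Your version, by contrast, constructs a single element $t_{12}\in CF(L,\varphi(L))$ from a moving-Lagrangian PSS-type strip.  This is in principle the ``Yoneda translate'' of the paper's module homomorphism, but it has to confront the collapse of boundary conditions at $\tau\to-\infty$, which the paper deliberately avoids.  It can be made to work (via the usual disk-with-one-puncture compactification and a pushforward of the constant de Rham form), but this is exactly the sort of thing the module-category framing was introduced to bypass.  In addition, the paper spends real effort constructing $\varphi_*(b)$ itself via the time-ordered moduli with $J^{(\rho)}$ interpolation (and fixing the identification $\theta(\rho)$ so that it is compatible with the strip moduli), because $t_{12}$ only satisfies $\frak m_1(t_{12})=0$ after the bounding-cochain deformations on both ends are defined coherently; your proposal treats $\varphi_*(b)$ as a black box.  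Finally, note that the paper's choice is not merely aesthetic: the whole point of Section \ref{sec;hoferinf1} is to upgrade Theorem \ref{estimateHofLag} to the infinite homotopy equivalence (Theorem \ref{estimateHofLaginf2}), which requires producing $t^k_{c,c'},s^k_c$ for all $k$ from cell-complex-parametrized families of moduli — something that is manageable as a system of module homomorphisms but would be painful to set up at the raw cochain level.  So your argument is not wrong, but it is the less robust of the two routes, and it skips the two places (degenerate PSS end and the construction of $\varphi_*(b)$) where the details actually have content.
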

Theorem \ref{estimateHofLag} is \cite[Theorem 15.5]{FuFu6}. (The proof there is based on 
\cite[Section 5.3]{fooobook} and \cite{fooo:bidisk}.)
We will prove its stronger version Theorem \ref{estimateHofLaginf2} in Section \ref{sec;hoferinf1}.
We review the proof of Theorem \ref{estimateHofLag} in Section \ref{sec;hoferinf1}.

We remark that the Hofer distance between $\varphi$ and the identity map is 
the infimum of $E({H})$ for $H$ such that 
$\varphi^1_{H} = \varphi$.
\par
For a filtered $A_{\infty}$ category $\mathscr C$ and a subset 
$A$ of $\frak{OB}(\mathscr C)$ we denote by 
$\mathscr C(A)$ the full subcategory of $\mathscr C$ the set of 
whose objects is $A$.
\par
The next corollary is an immediate consequence of Theorem \ref{estimateHofLag}.

\begin{cor}\label{corghchikachika}
Let $\varphi_i: X \to X$ be Hamiltonian diffeomorphisms 
such that $d_{\rm H}(\varphi_i,{\rm id}) < \epsilon$ for $i=1,\dots,k$
and $L_1,\dots,L_k$ be relatively spin Lagrangian submanifolds.
Then
$$
d_{\rm GH}(\frak F(L_1,\dots,L_k), \frak F(\varphi_1(L_1),\dots,
\varphi_k(L_k))
< \epsilon.
$$
\end{cor}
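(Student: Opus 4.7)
The natural strategy is to realize $\mathscr C_1:=\frak F(L_1,\dots,L_k)$ and $\mathscr C_2:=\frak F(\varphi_1(L_1),\dots,\varphi_k(L_k))$ simultaneously as full subcategories of a single Fukaya category $\frak F(\mathbb L)$ associated to a larger mutually transverse family $\mathbb L$. Once this is done, restricting to the full subcategory on the union of objects of $\mathscr C_1$ and $\mathscr C_2$ supplies the ambient $\mathscr C$ required by Definition \ref{GromovHausdorff}: conditions (1) and (2) become automatic (the homotopy equivalences are identity inclusions of full subcategories), gapped-ness follows from Gromov compactness for a finite transverse family, and condition (3) reduces to a Hofer-distance estimate between objects inside $\frak F(\mathbb L)$, for which Theorem \ref{estimateHofLag} is precisely the required input.

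The obstacle to taking $\mathbb L=(L_1,\dots,L_k,\varphi_1(L_1),\dots,\varphi_k(L_k))$ directly is that this $2k$-tuple need not be mutually transverse: the hypothesis places $(L_i)$ and $(\varphi_i(L_i))$ separately in $\mathfrak{Lag}^k_{\rm trans}$, but says nothing about $L_i \pitchfork \varphi_j(L_j)$. To handle this I would first fix $\epsilon'<\epsilon$ with $d_{\rm H}(\varphi_i,{\rm id})<\epsilon'$ for all $i$, then pick $\delta>0$ with $\epsilon'+2\delta<\epsilon$, and choose generic Hamiltonian diffeomorphisms $\tilde\varphi_i$ with $d_{\rm H}(\tilde\varphi_i,\varphi_i)<\delta$ such that the $3k$-tuple $\mathbb L$ consisting of all $L_i$, all $\tilde\varphi_i(L_i)$, and all $\varphi_i(L_i)$ lies in $\mathfrak{Lag}^{3k}_{\rm trans}$. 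Such $\tilde\varphi_i$ exist because mutual transversality is an open dense condition among Hamiltonian perturbations.

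Now set $\widetilde{\mathscr C}:=\frak F(\mathbb L)$ and $\mathscr C_3:=\frak F(\tilde\varphi_1(L_1),\dots,\tilde\varphi_k(L_k))$. Pick Hamiltonians $\tilde H_i$ generating $\tilde\varphi_i$ with $E(\tilde H_i)<\epsilon'+\delta$ (possible since $d_{\rm H}(\tilde\varphi_i,{\rm id})<\epsilon'+\delta$) and $K_i$ generating $\tilde\varphi_i\varphi_i^{-1}$ with $E(K_i)<\delta$. Inside $\widetilde{\mathscr C}$, Theorem \ref{estimateHofLag} then gives
\[
d_{\rm Hof}\bigl((L_i,b),(\tilde\varphi_i(L_i),\tilde\varphi_{i,*}b)\bigr)<\epsilon'+\delta,\quad d_{\rm Hof}\bigl((\tilde\varphi_i(L_i),\tilde\varphi_{i,*}b),(\varphi_i(L_i),\varphi_{i,*}b)\bigr)<\delta.
\]
Restricting $\widetilde{\mathscr C}$ to the full subcategories on $\frak{OB}(\mathscr C_1)\sqcup\frak{OB}(\mathscr C_3)$ and on $\frak{OB}(\mathscr C_3)\sqcup\frak{OB}(\mathscr C_2)$ respectively, Definition \ref{GromovHausdorff} yields $d_{\rm GH}(\mathscr C_1,\mathscr C_3)<\epsilon'+\delta$ and $d_{\rm GH}(\mathscr C_3,\mathscr C_2)<\delta$. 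The triangle inequality Theorem \ref{triangleGH} then concludes $d_{\rm GH}(\mathscr C_1,\mathscr C_2)<\epsilon'+2\delta<\epsilon$.

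The main obstacle I anticipate is the transversality bookkeeping in the perturbation step: one must choose $\tilde\varphi_i$ so that the combined $3k$-Lagrangian family is mutually transverse while simultaneously controlling $d_{\rm H}(\tilde\varphi_i,{\rm id})$, $d_{\rm H}(\tilde\varphi_i,\varphi_i)$, and the energies of the generating Hamiltonians. Once this genericity is arranged, the remainder is a direct application of Theorem \ref{estimateHofLag} combined with the triangle inequality of Theorem \ref{triangleGH}.
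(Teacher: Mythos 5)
Your overall strategy---embed both categories as full subcategories of a common Fukaya category, apply Theorem \ref{estimateHofLag} to control the Hofer distances between corresponding objects, and invoke the Hausdorff distance criterion in Definition \ref{GromovHausdorff}---is the right idea, and in the paper this is indeed presented as an ``immediate consequence'' of Theorem \ref{estimateHofLag}: one takes $\mathscr C=\frak F(L_1,\dots,L_k,\varphi_1(L_1),\dots,\varphi_k(L_k))$ (implicitly assuming this $2k$-tuple is mutually transversal, as it is in every later application of the corollary), observes that $\mathscr C_1,\mathscr C_2$ sit inside as full subcategories, and uses Theorem \ref{estimateHofLag} applied to $\varphi_i$ and $\varphi_i^{-1}$ to bound the Hausdorff distance of the object sets. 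Your worry about the transversality of $L_i$ with $\varphi_j(L_j)$ is legitimate given the loose wording of the statement, but the fix you propose does not work as stated.

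The gap: you ask for a $3k$-tuple $\mathbb L$ consisting of all $L_i$, all $\tilde\varphi_i(L_i)$, and all $\varphi_i(L_i)$ to lie in $\mathfrak{Lag}^{3k}_{\rm trans}$, chosen by perturbing only the $\tilde\varphi_i$. But membership in $\mathfrak{Lag}^{3k}_{\rm trans}$ requires in particular $L_i\pitchfork\varphi_j(L_j)$ whenever $L_i\ne\varphi_j(L_j)$, and the $L_i$ and $\varphi_j(L_j)$ are fixed data: no choice of $\tilde\varphi_i$ can repair a failure of transversality between them. So the single ambient category $\widetilde{\mathscr C}=\frak F(\mathbb L)$ you rely on may simply not be defined. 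The repair is to drop the $3k$-tuple and use two separate $2k$-tuples: choose $\tilde\varphi_i$ generically so that $(L_1,\dots,L_k,\tilde\varphi_1(L_1),\dots,\tilde\varphi_k(L_k))\in\mathfrak{Lag}^{2k}_{\rm trans}$ and $(\tilde\varphi_1(L_1),\dots,\tilde\varphi_k(L_k),\varphi_1(L_1),\dots,\varphi_k(L_k))\in\mathfrak{Lag}^{2k}_{\rm trans}$ (both arrangeable, since in each tuple only the $\tilde\varphi_i$-translates need to be moved), estimate $d_{\rm GH}(\mathscr C_1,\mathscr C_3)<\epsilon'+\delta$ and $d_{\rm GH}(\mathscr C_3,\mathscr C_2)<\delta$ inside these two separate Fukaya categories, and then apply Theorem \ref{triangleGH}. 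Note also that this route leans on the triangle inequality Theorem \ref{triangleGH}, which is a much heavier tool than the paper needs here; when the $2k$-tuple is already transversal the direct one-step argument suffices, and the perturbation-plus-triangle-inequality argument is only needed to handle the non-transversal case that the paper's phrasing technically allows. A further point to tighten: $\psi_*(\varphi_{i,*}b)$ with $\psi=\tilde\varphi_i\varphi_i^{-1}$ need not literally equal $\tilde\varphi_{i,*}b$ (the pushforward depends on the chosen Hamiltonian isotopy), only gauge-equivalently, so the object correspondence should be phrased in terms of whatever object $\psi$ produces rather than presuming the identification.
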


\section{An example.}
\label{sec:Exam}

In this section, we prove Theorem \ref{themexample}, which shows 
a certain non-triviality of Gromov-Hausdorff distance we introduced in 
Section \ref{sec;limit}, in Lagrangian Floer theory.

\begin{thm}\label{themexample}
For each sufficiently small positive number $\epsilon$ and a natural number $n$ there exists a compact 
symplectic manifold $(X,\omega)$
and its spin Lagrangian submanifolds $L_1,\dots,L_n$, $L'_1,\dots,L'_n$
with the following properties.
\begin{enumerate}
\item The $1$ form $0$ is a bounding cochain of $L_1,\dots,L_n$, $L'_1,\dots,L'_n$.
\item The pair $(L_i,0)$ is Hamiltonian isotopic to $(L'_i,0)$.
\item For every $i \in \{1,\dots,n\}$, there exists a symplectic diffeomorphism
$\varphi_i : X \to X$ such that $\varphi_i(L_j) = L'_j$ for $j\ne i$. 
\item
$
d_{\rm GH} (\frak F((L_1,0),\dots,(L_n,0)),\frak F((L'_1,0),\dots,(L'_n,0))
= \epsilon.
$
\end{enumerate}
\end{thm}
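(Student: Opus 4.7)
The plan is to construct $X$ and the Lagrangians explicitly and then verify (1)--(4) one at a time. The upper bound $d_{\rm GH}\le\epsilon$ in (4) comes essentially for free from condition (2) together with Corollary \ref{corghchikachika}: choosing Hamiltonian isotopies $\psi_i$ with $\psi_i(L_i)=L'_i$ and $d_{\rm Hof}({\rm id},\psi_i)\le\epsilon$, one obtains $d_{\rm GH}(\frak F((L_1,0),\dots,(L_n,0)),\frak F((L'_1,0),\dots,(L'_n,0)))\le\epsilon$ immediately.

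For the construction itself I would take $X$ to be a product of $2$-tori with their standard area forms and pick the $L_i$ to be product Lagrangian circles (or tori) with trivial Maslov class, so that (1) holds with the zero bounding cochain and $\frak F(\mathbb L)$ is gapped. Each perturbation $L_i\leadsto L'_i$ is implemented by a small compactly supported Hamiltonian applied to a single Lagrangian at a time, of Hofer norm exactly $\epsilon$, giving (2). For (3) I would exploit linear symplectic translations on the torus: in each step $i$, a carefully chosen translation in one torus factor simultaneously moves every $L_j$ with $j\neq i$ to the corresponding $L'_j$; since translations on $T^2$ (by non-integer vectors) are symplectic but \emph{not} Hamiltonian, this yields the required symplectic $\varphi_i$ that is not forced to be Hamiltonian.

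The heart of the argument is the lower bound $d_{\rm GH}\ge\epsilon$. Suppose, for contradiction, that some common category $\mathscr C$ realized distance $\delta<\epsilon$. Then, by Theorem \ref{them123333} and Lemma \ref{isoembHFH}, the two full subcategories would inject isometrically into $\frak{OB}(\mathscr C)$ and induce almost isomorphisms between the Floer cohomology groups $HF((L_i,0),(L_j,0);\Lambda_0)$ and their primed counterparts, up to energy shifts of at most $\delta$. I would therefore engineer the configuration so that the barcode of some Floer complex, viewed as a persistent $\Lambda_0$-module, has an endpoint that shifts by exactly $\epsilon$ under the perturbation; on a surface this reduces to a computation with strip areas, and the chosen Hamiltonian of Hofer norm $\epsilon$ changes the relevant strip area by exactly $\epsilon$, producing the desired contradiction.

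The main obstacle is to make condition (3) compatible with the sharp lower bound. The point is that the map $\varphi_i$ moves $(L_j)_{j\neq i}$ to $(L'_j)_{j\neq i}$ \emph{symplectically} but not through a Hamiltonian isotopy of small Hofer norm. Hofer-type energy estimates on Floer strips (as in Theorem \ref{estimateHofLag}) apply only to Hamiltonian isotopies, so the existence of the $\varphi_i$ does not threaten the lower bound --- yet the example must still be arranged so that no single \emph{Hamiltonian} isotopy can simultaneously send all $L_i$ to all $L'_i$ with total Hofer energy $<\epsilon$. Engineering this non-cooperativity, presumably by using translations whose displacement is detected homologically and cannot be unwound by a smaller-energy Hamiltonian, is what produces the strict lower bound $\epsilon$.
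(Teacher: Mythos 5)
Your overall skeleton is reasonable (upper bound from the $1$-Lipschitz property, lower bound from a quantitative non-invariance, and condition (3) from a non-Hamiltonian symplectomorphism), but the concrete construction you sketch has two real gaps, and your mechanism for the lower bound does not match what actually produces the discrepancy.

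First, condition (3) cannot be achieved by torus translations once $n\ge 3$. If $\varphi_i$ is a translation and must send $L_j$ to $L'_j$ for \emph{every} $j\ne i$, then the displacement vectors $v_j$ with $L'_j=L_j+v_j$ must all agree for $j\ne i$; running over all $i$ forces every $v_j$ to be equal, so a single global translation sends all $L_j$ to $L'_j$ simultaneously. But a global symplectomorphism identifying $\mathbb L$ with $\mathbb L'$ makes $\frak F(\mathbb L)\cong\frak F(\mathbb L')$ and hence $d_{\rm GH}=0$, contradicting (4). The paper avoids exactly this collision by perturbing only $L_1$ (so $L'_j=L_j$ for $j\ne 1$, and $\varphi_1={\rm id}$) and then, for $i\ne 1$, building a symplectomorphism compactly supported in an annulus that threads through handles added to the surface so that it meets $L_1$ and $L_i$ but is \emph{disjoint} from the remaining $L_j$; that disjointness, not linearity, is what lets it move $L_1$ to $L'_1$ while fixing the other Lagrangians. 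Your translation idea has no analogue of this locality.

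Second, your lower bound via barcodes of the individual Floer groups $HF(L_i,L_j;\Lambda_0)$ is aimed at the wrong invariant. In a configuration of pairwise transverse circles on a surface in which each pair of neighbours meets in a single point (the vertex of the polygon), the complexes $CF(L_i,L_j)$ have trivial differential and the $\Lambda_0$-modules $HF(L_i,L_j)$ are free of rank one both before and after the perturbation; their barcodes do not move at all. The paper deliberately arranges the example so that the \emph{only} positive-energy operation is a single higher product $\frak m_{n-1}(p_{1,2},\dots,p_{n-1,n})=T^{\lambda}p_{1,n}$ coming from the $n$-gon, and it is the shift $\lambda\to\lambda'$ in that exponent — a feature of the multiplicative $A_\infty$ structure, not of any one morphism complex — that forces $d_{\rm GH}>0$. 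A linear-algebra / persistence argument on the $\frak m_1$-cohomology cannot see it. Relatedly, the paper does not prove the sharp inequalities $d_{\rm GH}\ge\epsilon$ and $d_{\rm GH}\le\epsilon$ directly: it shows the distance is a continuous function of the perturbation parameter $\lambda'$, vanishing at $\lambda'=\lambda$ and strictly positive for $\lambda'<\lambda$, and then invokes the intermediate value theorem to hit any prescribed small $\epsilon$ exactly. Trying to prove the equality by matching upper and lower bounds is much harder than needed and is not what is done.
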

\begin{proof}
Let $L_1,\dots,L_n$ be circles (smooth submanifolds diffeomorphic to $S^1$) in $S^2$ which intersect
transversally  each other and that there exists an $n$-gon 
$D_n^2$ that is a connected component of 
$S^2 \setminus \bigcup_iL_i$ such that
$\partial D_n^2$ is a union of arcs $\partial_i D_n^2$ 
$i=1,\dots,n$ and $\partial_i D_n^2 \subset L_i$.
(In particular the boundary of $\partial_i D_n^2$ 
comprises two points, the intersection points of $L_i \cap L_{i-1}$ and 
of $L_i \cap L_{i+1}$.
\par
We add handles to the union of the connected components of 
$S^2 \setminus  \bigcup_iL_i$
other than $D_n^2$ to obtain $\Sigma$ so that 
$\Sigma \setminus  \bigcup_iL_i$ 
is a union of two connected components, one is 
$D_n^2$ and the other is a bordered Riemann surface of higher genus.
We will make a particular choice for the way to add handles later so that 
(3) is satisfied.
\par
We define $L'_1$ by moving $L_1$ via a Hamiltonian isotopy 
as in Figure \ref{Figure1} below.
\begin{figure}[h]
\centering
\includegraphics[scale=0.6]{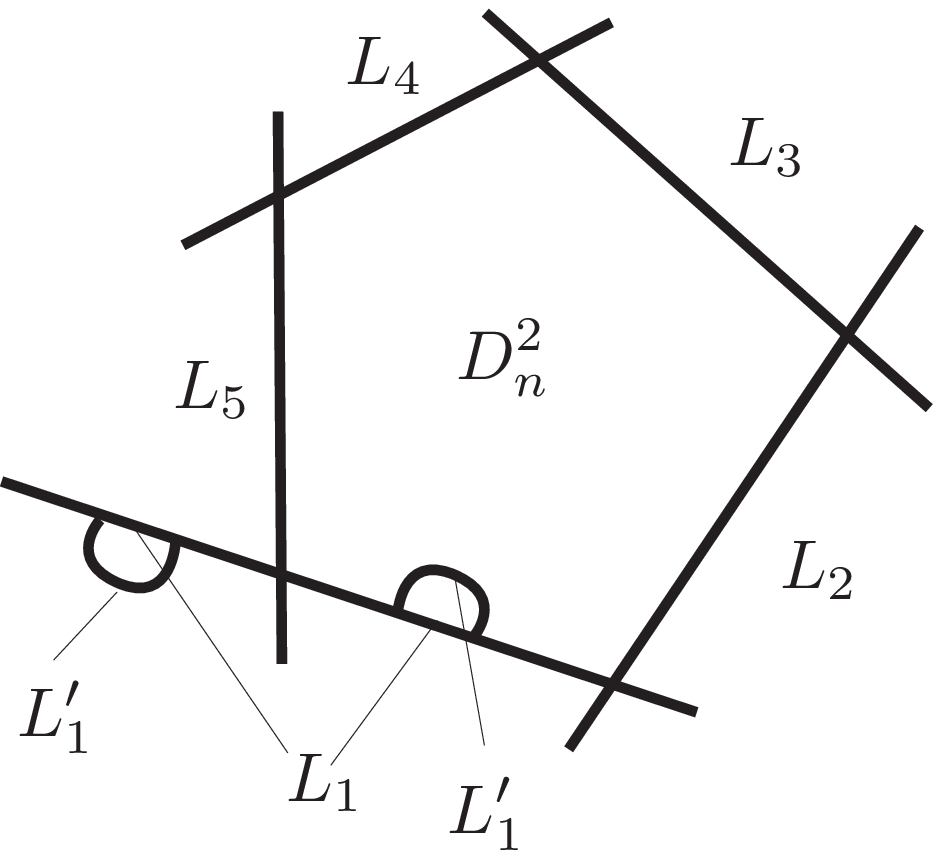}
\caption{$L'_1$}
\label{Figure1}
\end{figure}
We put $L_i = L'_i$ for $i\ne 1$.
We may take $L'_1$ so that (2) holds.
(1) is obvious from construction.
We observe that there is only one non-trivial 
operation of positive energy that is
$$
\frak m_{n-1} : 
CF(L_1,L_2) \otimes \dots \otimes CF(L_{n-1},L_n) 
\to CF(L_1,L_n)
$$ 
which is induced from the holomorphic $n$-gon $D_n^2$.
It gives
\begin{equation}\label{Anformula}
\frak m_{n-1}(p_{1,2},\dots,p_{n-1,n})
= T^{\lambda} p_{1,n}
\end{equation}
where $p_{i,i+1}$ is the generator of $CF(L_i,L_{i+1})$
corresponding to the intersection point that is a vertex 
of $D_n^2$ and $\lambda$ is the area of $D_n^2$.
\par
If we replace $L_1$ by $L'_1$ we obtain the same conclusion 
except (\ref{Anformula}) is replaced by
\begin{equation}\label{Anformula2}
\frak m_{n-1}(p_{1,2},\dots,p_{n-1,n})
= T^{\lambda'} p_{1,n}
\end{equation}
where $\lambda' < \lambda$.
This implies that the Gromov-Hausdorff distance
\begin{equation}\label{GHdist}
d_{\rm GH} (\frak F((L_1,0),\dots,(L_n,0)),\frak F((L'_1,0),\dots,(L'_n,0))
\end{equation} 
is non-zero. 
Moreover it is easy to see that 
(\ref{GHdist}) is a continuous function of $\lambda'$ 
(when we take a $\lambda'$ parametrized family of $L_1'$) and it converges to $0$ as 
$\lambda'$ converges to $\lambda$. 
(It is likely that (\ref{GHdist}) is $\lambda - \lambda'$.
We do not need to prove it to prove Theorem \ref{themexample}.)
Thus (4) holds.
\par
We finally prove (3).
Let us consider the case of the Figure \ref{Figure1} ($n=5$ in particular) 
and $i=3$.
We take domains $A_1,B_1,A_2,B_2$ as in Figure \ref{Figure2}.
\begin{figure}[h]
\centering
\includegraphics[scale=0.6]{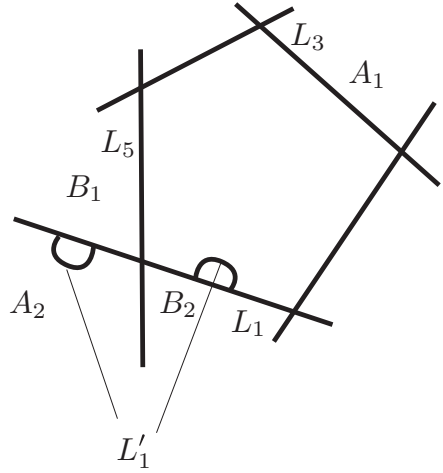}
\caption{$A_1,A_1,B_1,B_2$}
\label{Figure2}
\end{figure}
We may take $\Sigma$ so that there are 1-handles 
$H_1$, $H_2$ such that $H_i$ connects $A_i$ to $B_i$.
\par
We next take an annulus $S^1 \times [0,1]$ as follows.
It starts at the part of $D^2_n$ where we modify $L_1$ to $L'_1$. 
It crosses $L_3$ and enters into the domain $A_1$.
Then it passes through the handle $H_1$ to arrive in $B_1$.
It crosses $L_1$ at the part where we modify $L_1$ to $L'_1$ and 
enters $A_2$.
It passes through the handle $H_2$ to arrive in $B_2$.
It then comes back.
(See Figure \ref{Figure3}.)
\begin{figure}[h]
\centering
\includegraphics[scale=0.6]{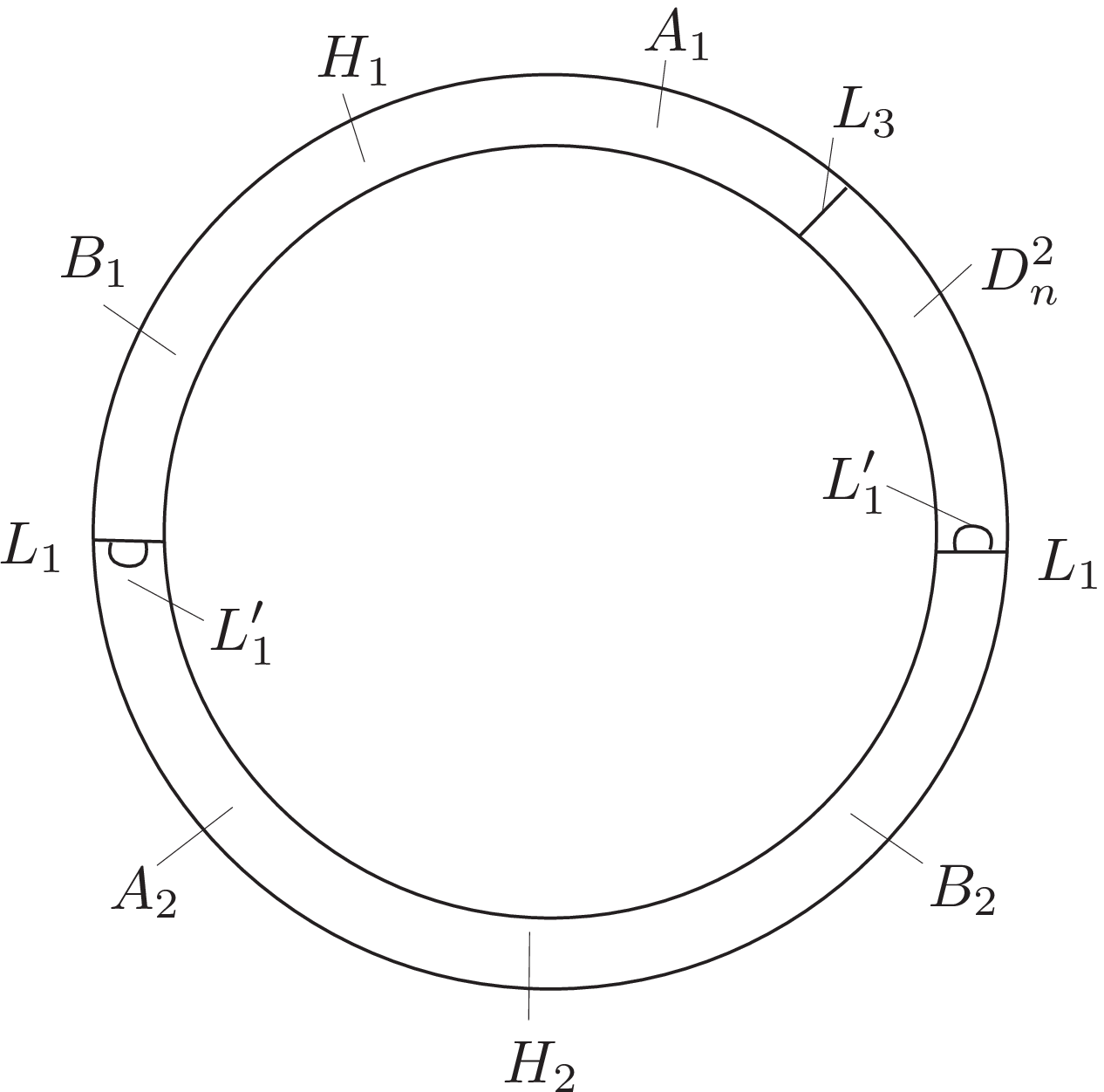}
\caption{Annulus}
\label{Figure3}
\end{figure}
\par
The intersection of the annulus $S^1 \times [0,1]$
with $L_1$ is a union of two arcs and it does not 
intersect with $L_i$ other than $L_1$ and $L_3$.
\par
Now it is easy to see that there exists a 
symplectic diffeomorphism which has a 
compact support in the interior of the annulus 
and which sends $L_1$ to $L'_1$.
The proof of Theorem \ref{themexample} is complete.
\end{proof}

\begin{rem}
We remark that in the example we gave above, 
which is the case of real 2 dimension, the property (4) 
can be checked very easily and  
the non-existence of $\varphi : \Sigma \to \Sigma$
suth that $\varphi(L_i) = L'_i$ for 
$i=1,\dots,n$ can be proved by an elementary method.
However we can replace $\Sigma$ by 
$\Sigma \times S^2$ and $L_i,L'_i$ by 
$L_i \times S^1$, $L'_i \times S^1$, respectively,
(Here $S^1$ is the equator. We take the same equator for $i=1,\dots,n$.)
the properties (1)(2)(3)(4) still hold.
In this $4$ dimensional case, to prove the 
non-existence of $\varphi : \Sigma \to \Sigma$ such that
$\varphi(L_i) = L'_i$ for 
$i=1,\dots,n$, 
it seems necessary to use 
a certain non-elementary method such as pseudo-holomorphic curves, (that is, 
the Floer theory).
\end{rem}

\section{Filtered $A_{\infty}$ functors with energy loss.}
\label{sec;energyloss}

Let us study the situation where
two filtered $A_{\infty}$ categories
are close to each other in Gromov-Hausdorff distance. 
We translate this assumption to another notion, that is, a
filtered $A_{\infty}$ functor with 
energy loss.
\par
Let $\mathscr C$ be a filtered $A_{\infty}$ category 
and $a,b$ its objects.
We define:
\begin{equation}\label{form24new}
\left\{
\aligned
&\overset{\circ}B_k\mathscr C[1](a,b) := \underset{a=c_0,c_1,\cdots,c_{k-1},c_k=b}{\bigoplus}
\mathscr C[1](c_0,c_1) \otimes \cdots \otimes \mathscr C[1](c_{k-1},c_k)
\\
&\overset{\circ}B\mathscr C[1](a,b) := \underset{k=1,2,\dots}{{\bigoplus}} \overset{\circ}B_k\mathscr C[1](a,b).
\endaligned
\right.
\end{equation}
Let 
$B\mathscr C[1](a,b)
$
(resp. $B_k\mathscr C[1](a,b)$)
be the completion of $\overset{\circ}B\mathscr C[1](a,b)
$ 
(resp. $\overset{\circ}B_k\mathscr C[1](a,b)
$) with respect to the energy filtration.
\par
Note that we do not include $k=0$ in (\ref{form24new}). This is different from the convention of \cite[Section 2]{FuFu6}.
In this paper we assume that a filtered $A_{\infty}$ category satisfies $\frak m_0 = 0$.
\par
We define a homomorphism 
\begin{equation}\label{coalgonB}
\Delta : B_k\mathscr C[1](a,b) \to \underset{k_1+k_2=k}{\widehat{\bigoplus}} 
\underset{c}{\widehat{\bigoplus}}\,\,\,
B_{k_1}\mathscr C[1](a,c) \widehat\boxtimes B_{k_2}\mathscr C[1](c,b)
\end{equation}
by
\begin{equation}\label{coalgonBdef}
\Delta(x_1\otimes\cdots\otimes x_k) = \sum_{k'=1}^{k-1} (x_1 \otimes\cdots\otimes x_{k'})
\boxtimes
(x_{k'+1} \otimes\cdots\otimes x_{k}).
\end{equation}
This map is co-associative in an obvious sense.
Here and hereafter we use $\boxtimes$ in place of $\otimes$ in the 
right hand sides of (\ref{coalgonB}),(\ref{coalgonBdef}).
In fact several different kinds of tensor products appear in this paper 
so it seems necessary to introduce several different notations 
for tensor products to distinguish them.
\par
The submodule $\overset{\circ}B\mathscr C[1](a,b)$ is preserved by $\Delta$.
We call $\overset{\circ}B\mathscr C[1](a,b)$ 
the {\it finite part} of $B\mathscr C[1](a,b)$
\par
Let $\mathscr C$, $\mathscr C'$ be filtered $A_{\infty}$
categories and $\Phi_{\rm ob} : \frak{OB}(\mathscr C) \to \frak{OB}(\mathscr C')$
a set theoretical map.
Let $a,b$ be  objects of $\mathscr C$ and 
$a' = \Phi_{\rm ob}(a)$, $b' =\Phi_{\rm ob}(b)$.
When a $\Lambda_0$ filtered module homomorphism
$
\Phi_{k}(a,b) : B_k\mathscr C[1](a,b) \to \mathscr C'[1](a',b'),
$
of degree $0$ is given for each $k,a,b$, there exists uniquely a $\Lambda_0$ module homomorphism
$
\widehat{\Phi}(a,b) : B\mathscr C[1](a,b) \to B\mathscr C'[1](a',b')
$
such that:
\begin{enumerate}
\item The map $\widehat{\Phi}(a,b)$ is a co-homomorphism. Namely 
\begin{equation}
\Delta \circ \widehat{\Phi}(a,b)
= \bigoplus_c   (\widehat{\Phi}(a,c) \boxtimes \widehat{\Phi}(c,b)) \circ  \Delta.
\end{equation}
\item
The composition of $\widehat{\Phi}(a,b)$ with 
the projection $B\mathscr C[1](a,b) \to \mathscr C[1](a,b)$
is $\Phi_{k}(a,b)$ on $B_k\mathscr C[1](a,b)$.
\item
The finite parts are preserved by $\widehat{\Phi}(a,b)$.
\end{enumerate}

\begin{defn}\label{filfuncenergy}
A {\it filtered $A_{\infty}$ functor $\Phi$ from $\mathscr C$ to $\mathscr C'$ with energy loss $\rho$}
is by definition $(\Phi_{\rm ob},\{\Phi_{k}(a,b)\})$ such that:
\begin{enumerate}
\item A set theoretical map $\Phi_{\rm ob} : \frak{OB}(\mathscr C) \to \frak{OB}(\mathscr C')$ 
is given.
\item
For objects $a,b$ and $k=1,2,\dots$ $\Phi_{k}(a,b)$
is a $\Lambda$ module homomorphism:
$$
\Phi_{k}(a,b) : B_k\mathscr C_{\Lambda}[1](a,b) \to \mathscr C'_{\Lambda}[1](a',b').
$$
(Note that here we change the coefficient ring from $\Lambda_0$ to $\Lambda$.)
\item
We obtain $
\widehat{\Phi}(a,b) : \overset{\circ}B\mathscr C_{\Lambda}[1](a,b) \to \overset{\circ}B\mathscr C'_{\Lambda}[1](a',b')
$ as above.
Then 
$$
\widehat{\Phi}(a,b) \circ \hat d = \hat d \circ \widehat{\frak F}(a,b).
$$
Here $\hat d$ is the derivation obtained from $A_{\infty}$ operations.\footnote{
We remark that we use $\overset{\circ}B\mathscr C_{\Lambda}[1](a,b)$ not its completion 
$B\mathscr C_{\Lambda}[1](a,b)$.
Note that $\Phi_{k}(a,b)$ decrease energy filtration by $k\rho$, 
which goes to infinity as $k\to\infty$.
So $\widehat{\Phi}$ cannot be extended to the completion.}
\item
We consider the energy filtration $\frak F^{\lambda}$ of 
$B\mathscr C_{\Lambda}[1](a,b)$ such that
$x_1 \otimes x_2 \otimes \dots \otimes x_k$ is in 
$\frak F^{\lambda}B\mathscr C_{\Lambda}[1](a,b)$ if and only if 
$x_i \in \frak F^{\lambda_i}(c_i,c_{i+1})$ with $\sum \lambda_i = \lambda$.
Then
$$
\Phi_{k}(a,b)(\frak F^{\lambda}B_k\mathscr C_{\Lambda}[1](a,b))
\subset 
\frak F^{\lambda-k\rho}\mathscr C'_{\Lambda}[1](a',b')
$$
\end{enumerate}
\end{defn}
\begin{defn}
We say a filtered $A_{\infty}$ functor $\Phi: \mathscr C \to \mathscr C'$ with energy loss $\rho$
between unital filtered $A_{\infty}$ categories is {\it unital} if
$$
\Phi_1({\bf e}_{a}) = {\bf e}_{\Phi_{\rm ob}(a)},
\quad \Phi_k(x_1,\dots,{\bf e}_{c_i},\dots,x_k) = 0, \quad k\ge 2.
$$ 
Here ${\bf e}_a \in \mathscr C(a,a)$ is the unit. 
\end{defn}
Note that a filtered $A_{\infty}$ functor $\Phi: \mathscr C \to \mathscr C'$ with energy loss 0
is nothing but a filtered $A_{\infty}$ functor.
In \cite[Definition 5.21]{fooobook} we defined the notion of a 
filtered $A_{\infty}$ bi-module homomorphism with energy loss.
Biran-Cornea-Shelukhin \cite{BCS} uses a related notion
which they call a weakly filtered $A_{\infty}$ functor with discrepancy.
\footnote{Biran-Cornea-Shelukhin also use the notion of 
a weakly filtered $A_{\infty}$ structure with discrepancy.
It seems difficult to use a bounding cochain
for such a structure, 
since Maurer-Cartan equation is likely to diverge. 
\cite{BCS} studies the exact or monotone cases and they do not 
use bounding cochain.}

\begin{lem}\label{energylosscup}
Let $\Phi^n : \mathscr C^n \to \mathscr C^{n+1}$ be a filtered $A_{\infty}$
functor with energy loss $\rho_n$ for $n=1,2$.
Then we can compose to obtain $\Phi^2\circ \Phi^1: \mathscr C^1 \to \mathscr C^{3}$
that is a filtered $A_{\infty}$
functor with energy loss $\rho_1 + \rho_2$.
\end{lem}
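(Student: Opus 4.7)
The plan is to define $\Phi^2 \circ \Phi^1$ in the standard coalgebra-theoretic way and then verify the three things Definition \ref{filfuncenergy} requires: it is a co-homomorphism on $\overset{\circ}B\mathscr C^1_{\Lambda}[1]$, it commutes with the codifferential, and it decreases the energy filtration by the predicted amount.

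First, I would set $(\Phi^2\circ\Phi^1)_{\rm ob} := \Phi^2_{\rm ob}\circ\Phi^1_{\rm ob}$ and define the coalgebra map $\widehat{\Phi^2\circ\Phi^1}$ as the composition $\widehat{\Phi}{}^2 \circ \widehat{\Phi}{}^1$ on the finite part. Note that because $\Phi^n_k$ drops the filtration by $k\rho_n$, which diverges with $k$, these coalgebra maps genuinely exist only on $\overset{\circ}B\mathscr C^n_{\Lambda}[1]$, not on the completion. But the composition $\widehat{\Phi}{}^2\circ\widehat{\Phi}{}^1$ is well-defined as a map $\overset{\circ}B\mathscr C^1_{\Lambda}[1] \to \overset{\circ}B\mathscr C^3_{\Lambda}[1]$, and it is automatically a co-homomorphism as the composite of two co-homomorphisms. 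Projecting to $\mathscr C^3_{\Lambda}[1]$ and restricting to $B_k\mathscr C^1_{\Lambda}[1]$ produces the explicit formula
$$
(\Phi^2\circ\Phi^1)_k(x_1,\dots,x_k) = \sum_{m=1}^{k}\,\sum_{k_1+\cdots+k_m=k}
\Phi^2_m\bigl(\Phi^1_{k_1}(x_1,\dots,x_{k_1}),\dots,\Phi^1_{k_m}(x_{k-k_m+1},\dots,x_k)\bigr),
$$
which is a finite sum for each fixed $k$, so no convergence issues arise. The relation $\widehat{\Phi^2\circ\Phi^1}\circ\hat d = \hat d\circ\widehat{\Phi^2\circ\Phi^1}$ is immediate from the two corresponding relations for $\widehat{\Phi}{}^1$ and $\widehat{\Phi}{}^2$.

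Next, I would carry out the energy-loss bookkeeping, which is the technical heart of the lemma. Take $x_1\otimes\cdots\otimes x_k \in \frak F^{\lambda}B_k\mathscr C^1_{\Lambda}[1]$ with $\lambda = \sum\lambda_i$ and $x_i \in \frak F^{\lambda_i}$. For a fixed partition $k_1+\cdots+k_m = k$, each inner application $\Phi^1_{k_i}$ outputs an element of filtration at least $(\sum_{j \in \text{block } i}\lambda_j) - k_i\rho_1$ by Definition \ref{filfuncenergy}(4); summing over blocks, the input to $\Phi^2_m$ lies in filtration $\geq \lambda - k\rho_1$. Applying $\Phi^2_m$ drops this by at most $m\rho_2$, giving output filtration
$$
\geq \lambda - k\rho_1 - m\rho_2 \;\geq\; \lambda - k(\rho_1 + \rho_2),
$$
since $m\leq k$. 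This is precisely the energy-loss bound for $\rho_1+\rho_2$, and it holds uniformly over the finite sum of partitions.

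I do not expect any serious obstacle. The one subtlety worth highlighting is conceptual rather than computational: one must work with the uncompleted bar construction $\overset{\circ}B\mathscr C^n_{\Lambda}[1]$ throughout, because the coalgebra maps $\widehat{\Phi}{}^n$ do not extend to $B\mathscr C^n_{\Lambda}[1]$. Everything in the proof respects this, since the explicit component formula above is a finite sum on each $B_k$. Thus the bound $\rho_1+\rho_2$ on the $A_{\infty}$ level passes through without further work, completing the proof.
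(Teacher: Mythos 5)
Your proof is correct and takes essentially the same route as the paper: the paper defines the composition via $\widehat{\Phi^2\circ\Phi^1}=\widehat{\Phi}{}^2\circ\widehat{\Phi}{}^1$ and remarks that Definition \ref{filfuncenergy}(4) "can be checked easily from the explicit formula," which is precisely the bookkeeping you carry out, with the key inequality $m\le k$ converting the bound $\lambda-k\rho_1-m\rho_2$ into $\lambda-k(\rho_1+\rho_2)$. Your remark about working on the uncompleted bar construction is the same caveat the paper flags in its footnote to Definition \ref{filfuncenergy}(3).
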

\begin{proof}
We can define a composition as an $A_{\infty}$
functor linear over $\Lambda$. 
(See for example \cite[Definition 7.34]{fu4}.) 
Then Definition \ref{filfuncenergy} (4) can 
be checked easily from the explicit formula.
In fact $\widehat{\Phi^2\circ \Phi^1} 
= \widehat{\Phi^2}\circ \widehat{\Phi^1}$.
\end{proof}
\begin{lem}\label{lem11300}
If $\Phi : \mathscr C_1 \to \mathscr C_2$ is a unital filtered $A_{\infty}$ functor 
with energy loss $\rho$, then 
$$
d_{{\rm Hof}}(\Phi_{\rm ob}(c_1),\Phi_{\rm ob}(c_2)) \le d_{{\rm Hof}}(c_1,c_2) + 2\rho,
$$
\end{lem}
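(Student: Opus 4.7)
The plan is to transport a near-optimal homotopy equivalence from $\mathscr C_1$ to $\mathscr C_2$ via $\Phi$ and carefully account for the extra energy lost under $\Phi_1$ and $\Phi_2$. Fix $\epsilon > d_{\rm Hof}(c_1,c_2)$ and choose a homotopy equivalence $(t_{12},t_{21},s_1,s_2)$ between $c_1$ and $c_2$ with energy loss $\epsilon$. By Remark \ref{Rem23232} I may assume the symmetric normalization $\frak v(t_{12}),\frak v(t_{21}) > -\epsilon/2$ and $\frak v(s_1),\frak v(s_2) > -\epsilon$.

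Set $t'_{12} := \Phi_1(t_{12})$, $t'_{21} := \Phi_1(t_{21})$, and
$$
s'_1 := \Phi_1(s_1) \pm \Phi_2(t_{12},t_{21}),\qquad
s'_2 := \Phi_1(s_2) \pm \Phi_2(t_{21},t_{12}),
$$
where the signs come from the $A_{\infty}$ functor relations. The verification that $(t'_{12},t'_{21},s'_1,s'_2)$ is a homotopy equivalence between $\Phi_{\rm ob}(c_1)$ and $\Phi_{\rm ob}(c_2)$ is a routine computation: the relation $\frak m_1(t'_{12})=0=\frak m_1(t'_{21})$ follows from the $k=1$ $A_\infty$ functor identity applied to $t_{12},t_{21}$, while the relation $\frak m_2(t'_{21},t'_{12})+\frak m_1(s'_2)={\bf e}_{\Phi_{\rm ob}(c_2)}$ is obtained by applying the $k=2$ $A_\infty$ functor identity to $(t_{21},t_{12})$, substituting $\frak m_2(t_{21},t_{12})={\bf e}_{c_2}-\frak m_1(s_2)$, and invoking the unitality assumption $\Phi_1({\bf e}_{c_2})={\bf e}_{\Phi_{\rm ob}(c_2)}$; the analogous computation handles $s'_1$.

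For the valuations, Definition \ref{filfuncenergy}(4) gives $\Phi_k$ decreases the energy filtration by at most $k\rho$, so
$$
\frak v(t'_{12}),\frak v(t'_{21}) > -\tfrac{\epsilon}{2}-\rho,\qquad
\frak v(\Phi_1(s_i)) > -\epsilon-\rho,\qquad
\frak v(\Phi_2(t_{ij},t_{ji})) > -\epsilon-2\rho.
$$
Hence $\frak v(s'_i) > -(\epsilon+2\rho)$, and taking $\epsilon_1=\epsilon_2=\epsilon/2+\rho$ exhibits $(t'_{12},t'_{21},s'_1,s'_2)$ as a homotopy equivalence with energy loss $\epsilon+2\rho$. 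Therefore $d_{\rm Hof}(\Phi_{\rm ob}(c_1),\Phi_{\rm ob}(c_2)) \le \epsilon+2\rho$, and letting $\epsilon \downarrow d_{\rm Hof}(c_1,c_2)$ yields the claimed inequality.

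The only point requiring care is the sign bookkeeping in the $A_\infty$ functor identities (and making sure $t_{12},t_{21}$ and the pair $(t_{21},t_{12})$ lie in the finite part $\overset{\circ}B\mathscr C_1[1]$ on which $\widehat\Phi$ is defined, which is automatic since we use only $\Phi_1$ and $\Phi_2$). There is no genuine obstacle — the content of the lemma is just that each application of $\Phi_k$ costs $k\rho$ of energy, and the worst offender in the new homotopy data is $\Phi_2(t_{ij},t_{ji})$, which contributes the additive $2\rho$.
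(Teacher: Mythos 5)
Your proof is correct and is essentially identical to the paper's, down to the same formulas $t'_{ij}=\Phi_1(t_{ij})$ and $s'_i=\Phi_1(s_i)\pm\Phi_2(t_{ij},t_{ji})$; you merely spell out the routine verification and energy bookkeeping that the paper leaves as "easy to check." (The paper's line $t'_{21}:=\Phi_2(t_{21})$ is a typo for $\Phi_1(t_{21})$, which you wrote correctly.)
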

\begin{proof}
Let $c_1,c_2 \in \frak{OB}(\mathscr C_1)$ and
$(t_{12},t_{21},s_1,s_2)$ be a homotopy equivalence between them with 
energy loss $\epsilon$.
We put
$
t'_{12}: = \Phi_1(t_{12})$, $t'_{21}: =\Phi_2(t_{21})$
$s'_{1} = \Phi_1(s_{1}) + \Phi_2(t_{12},t_{21})$,
$s'_{2} = \Phi_1(s_{2}) + \Phi_2(t_{21},t_{12})$.
It is easy to check that they become a  homotopy equivalence
between $\Phi_{\rm ob}(c_1),\Phi_{\rm ob}(c_2)$ with 
energy loss $\epsilon+2\rho$.
The lemma follows.
\end{proof}
The next corollary is immediate from Lemma \ref{lem11300}.
\begin{cor}\label{Hofdishomotopyinv}
The Hofer distance is invariant under the unital homotopy equivalence 
of unital filtered $A_{\infty}$ categories.
\end{cor}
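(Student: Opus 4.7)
The plan is to derive the corollary essentially by applying Lemma \ref{lem11300} in both directions and combining with the triangle inequality from Lemma \ref{Hofertriangle}. A unital homotopy equivalence of unital filtered $A_{\infty}$ categories consists of a pair of unital filtered $A_{\infty}$ functors $\Phi:\mathscr C_1 \to \mathscr C_2$ and $\Psi:\mathscr C_2\to\mathscr C_1$ (both with energy loss $\rho=0$) together with unital natural transformations relating $\Psi\circ\Phi$ and $\Phi\circ\Psi$ to the respective identity functors. Applying Lemma \ref{lem11300} with $\rho=0$ to $\Phi$ gives $d_{\rm Hof}(\Phi_{\rm ob}(c_1),\Phi_{\rm ob}(c_2))\le d_{\rm Hof}(c_1,c_2)$ for all $c_1,c_2\in\frak{OB}(\mathscr C_1)$, and likewise applied to $\Psi$ yields the reverse inequality after the substitution $c_i' = \Phi_{\rm ob}(c_i)$.

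To close the loop, I need to show that $d_{\rm Hof}(c,\Psi_{\rm ob}\Phi_{\rm ob}(c))=0$ for every object $c\in\frak{OB}(\mathscr C_1)$. Here is where the unital natural transformation enters: the component at $c$ of the natural transformation from $\Psi\circ\Phi$ to the identity functor of $\mathscr C_1$ produces a degree-$0$ element $t_{c}\in\mathscr C_1(\Psi_{\rm ob}\Phi_{\rm ob}(c),c)_\Lambda$ together with a homotopy inverse $t_c'$ and bounding elements $s,s'$ whose valuations can be made arbitrarily close to $0$ (because both functors have energy loss $0$ and the natural transformation is unital, so the ``leading order'' behavior is governed by the strict units). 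This gives a homotopy equivalence between $c$ and $\Psi_{\rm ob}\Phi_{\rm ob}(c)$ in the sense of Definition \ref{defhoferdist} with energy loss $0$, i.e.\ $d_{\rm Hof}(c,\Psi_{\rm ob}\Phi_{\rm ob}(c))=0$.

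Combining, for any $c_1,c_2\in\frak{OB}(\mathscr C_1)$, Lemma \ref{Hofertriangle} gives
\begin{equation*}
d_{\rm Hof}(c_1,c_2) \le d_{\rm Hof}(c_1,\Psi_{\rm ob}\Phi_{\rm ob}(c_1)) + d_{\rm Hof}(\Psi_{\rm ob}\Phi_{\rm ob}(c_1),\Psi_{\rm ob}\Phi_{\rm ob}(c_2)) + d_{\rm Hof}(\Psi_{\rm ob}\Phi_{\rm ob}(c_2),c_2),
\end{equation*}
which reduces to $d_{\rm Hof}(c_1,c_2) \le d_{\rm Hof}(\Phi_{\rm ob}(c_1),\Phi_{\rm ob}(c_2))$ after applying the $\Psi$-direction of Lemma \ref{lem11300} to the middle term and using that the two outer terms vanish. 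Together with the opposite inequality from the $\Phi$-direction, we obtain $d_{\rm Hof}(c_1,c_2) = d_{\rm Hof}(\Phi_{\rm ob}(c_1),\Phi_{\rm ob}(c_2))$, which is the desired invariance.

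The only non-routine ingredient is the construction of the homotopy equivalence between $c$ and $\Psi_{\rm ob}\Phi_{\rm ob}(c)$ with zero energy loss from the natural-transformation data; this is the main obstacle, but it is a standard computation given that the natural transformation is unital and the functors have energy loss $0$, so the component at $c$ automatically satisfies $\frak v\ge 0$ and its leading term agrees (modulo positive-energy corrections) with the strict unit ${\bf e}_c$, which suffices to verify conditions (1)(a)--(c) of Definition \ref{defhoferdist} with $\epsilon_1,\epsilon_2$ chosen arbitrarily small.
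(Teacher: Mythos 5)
Your proposal is correct and fills in exactly what the paper leaves implicit: the paper states this corollary as ``immediate from Lemma \ref{lem11300},'' with no further argument, whereas you apply Lemma \ref{lem11300} to both $\Phi$ and its homotopy inverse $\Psi$, verify $d_{\rm Hof}(c,\Psi_{\rm ob}\Phi_{\rm ob}(c))=0$ from the filtered unital natural-transformation data (all components automatically having $\frak v\ge 0$), and close with the triangle inequality from Lemma \ref{Hofertriangle}. This is the intended argument, just spelled out; no gap.
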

\begin{defn}\label{defn4646}
Let $(C,d)$, $(C',d')$ be  filtered (resp. completed) cochain complexes over $\Lambda_0$.
A {\it cochain map (resp. completed cochain map) with energy loss $\rho$} from $(C,d)$ to $(C',d')$
is by definition a cochain map (resp. completed cochain map) $\phi : C_{\Lambda} 
\to C'_{\Lambda}$ such that 
$\phi(\frak F^{\lambda}C_{\Lambda}) \subseteq \frak F^{\lambda-\rho}C'_{\Lambda}$.
\par
Let $\phi, \psi : C_{\Lambda} 
\to C'_{\Lambda}$ be cochain maps with energy loss $\rho$.
A {\it cochain homotopy with energy loss $\rho$ between $\phi, \psi$}
is $H: C_{\Lambda} 
\to C'_{\Lambda}$ such that $d'_{\Lambda} \circ H + H \circ d_{\Lambda} =  
\phi - \psi'$ and 
$H(\frak F^{\lambda}C_{\Lambda}) \subseteq \frak F^{\lambda-\rho}C'_{\Lambda}$.
In the case of completed cochain complexes we require that $H$ preserves the finite 
part.
\par
A cochain map $\phi$ with energy loss $\rho_1$ from $(C,d)$ to $(C',d')$
is said to be a {\it cochain homotopy equivalence with energy loss $\rho$}
if there exists a cochain map $\psi$ with energy loss $\rho_2$ from $(C',d')$ to $(C,d)$
with $\rho_1 + \rho_2 \le \rho$ such that the compositions $\phi \circ \psi$ and $\psi \circ \phi$ are 
cochain homotopy equivalent to the identity map 
as cochain maps with energy loss $\rho$.
\end{defn}
\begin{thm}\label{GrokaraInd}
Suppose $\mathscr C_1$ and $\mathscr C_2$ are 
gapped and unital filtered $A_{\infty}$ categories with 
$d_{\rm GH}(\mathscr C_1,\mathscr C_2) < \epsilon$.
Then there exists a gapped filtered $A_{\infty}$
functor $\Phi: \mathscr C_1 \to \mathscr C_2$ 
with energy loss $\epsilon$.
\par
Its linear part $\Phi_1 : \mathscr C_1(a,b) \to \mathscr C_2(\Phi_{\rm ob}(a),\Phi_{\rm ob}(b))$
is a cochain homotopy equivalence with energy loss $2\epsilon$.
\par
The same holds for completed DG-category (which is not necessary gapped).
\end{thm}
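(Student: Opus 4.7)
By Definition~\ref{GromovHausdorff}, $d_{\rm GH}(\mathscr C_1,\mathscr C_2)<\epsilon$ supplies a unital gapped filtered $A_{\infty}$-category $\mathscr C$ containing $\mathscr C_1,\mathscr C_2$ as full subcategories via unital gapped homotopy equivalences $\iota_i\colon\mathscr C_i\to\mathscr C$ (identity on objects), and with Hausdorff bound $<\epsilon$. Fix a selection $\Phi_{\rm ob}\colon\frak{OB}(\mathscr C_1)\to\frak{OB}(\mathscr C_2)$ and, via Remark~\ref{Rem23232}, for each $a\in\frak{OB}(\mathscr C_1)$ choose a homotopy equivalence $(t_a,\bar t_a,s_a,\bar s_a)$ in $\mathscr C$ of energy loss $\epsilon$, so that $t_a\in\mathscr C(a,\Phi_{\rm ob}(a))$ and $\bar t_a\in\mathscr C(\Phi_{\rm ob}(a),a)$ have valuation $>-\epsilon/2$ while $s_a,\bar s_a$ have valuation $>-\epsilon$. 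By the filtered Whitehead theorem for gapped unital filtered $A_{\infty}$-categories \cite{FuFu6}, I also fix a unital gapped filtered $A_{\infty}$ quasi-inverse $\jmath\colon\mathscr C|_{\frak{OB}(\mathscr C_2)}\to\mathscr C_2$ of $\iota_2$, of energy loss $0$.

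My plan is to produce $\Phi$ as the composition $\jmath\circ\Psi$, where $\Psi\colon\mathscr C_1\to\mathscr C|_{\frak{OB}(\mathscr C_2)}$ has $\Psi_{\rm ob}=\Phi_{\rm ob}$ and multilinear coefficients given by the ``sandwich'' tree sum
\[
\Psi_k(x_1,\dots,x_k)\;=\;\sum_{T}\frak m^{\mathscr C}_T\!\Bigl(\bar t_{a_0},\,\iota_1(x_1),\,s_{a_1},\,\iota_1(x_2),\dots,s_{a_{k-1}},\,\iota_1(x_k),\,t_{a_k}\Bigr),
\]
where $T$ ranges over planar rooted trees with the displayed $2k+1$ ordered leaves and internal vertices labelled by operations $\frak m_n^{\mathscr C}$. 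This is exactly the coalgebra homomorphism produced by the filtered homological-perturbation construction applied to the retraction data $(t_a,\bar t_a,s_a,\bar s_a)$; gappedness of $\mathscr C$ together with the positive valuation losses of the witnesses guarantees convergence. By Lemma~\ref{energylosscup}, $\Phi=\jmath\circ\Psi$ inherits energy loss $\epsilon$.

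The verification that $\widehat\Psi\circ\hat d^{\mathscr C_1}=\hat d^{\mathscr C}\circ\widehat\Psi$ is the main technical step. Expanding $\hat d^{\mathscr C}\widehat\Psi$ term by term: (i) contributions with $\frak m_1^{\mathscr C}$ applied to an extreme input vanish because $\frak m_1(t_a)=\frak m_1(\bar t_a)=0$; (ii) contributions with $\frak m_1^{\mathscr C}$ applied to an interior $s_{a_i}$ rewrite via $\frak m_1(s_{a_i})={\bf e}_{a_i}-\frak m_2(t_{a_i},\bar t_{a_i})$ into, on one side, a strict-unit contraction of $\iota_1(x_i),\iota_1(x_{i+1})$ that produces $\iota_1\frak m_2^{\mathscr C_1}(x_i,x_{i+1})$ and combines with the $\frak m_n^{\mathscr C}$-terms acting directly on the $\iota_1(x_j)$'s (using that $\iota_1$ is itself a filtered $A_{\infty}$ functor) to reproduce $\widehat\Psi\hat d^{\mathscr C_1}$; and on the other side a splitting $\bar t_{a_i}\cdot t_{a_i}$ matching the $\Phi_{\rm ob}(a_i)$-break of a different sandwich tree in the sum. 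Energy loss is tallied as $\epsilon$ (two $t$'s) plus $(k-1)\epsilon$ (interior $s$'s) $=k\epsilon$ on $B_k$, which is $\rho=\epsilon$ in Definition~\ref{filfuncenergy}(4); gappedness passes from $\mathscr C,\iota_1,\jmath$ to $\Phi$.

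For the linear-part claim I run the symmetric construction (swapping $t\leftrightarrow\bar t$, $s\leftrightarrow\bar s$) to obtain $\Phi'\colon\mathscr C_2\to\mathscr C_1$ of energy loss $\epsilon$. Lemma~\ref{energylosscup} then makes $\Phi'\circ\Phi$ a filtered $A_{\infty}$ endofunctor of $\mathscr C_1$ of energy loss $2\epsilon$, and on the linear part $\Phi'_1\Phi_1(x)$ differs from $x$, after composing the homotopy equivalence $a\simeq\Phi'_{\rm ob}\Phi_{\rm ob}(a)$, by an $\frak m_1$-exact term whose primitive is explicit in $s_a,\bar s_a$ and the higher $\frak m_n^{\mathscr C}$ with valuation loss $\le 2\epsilon$; this furnishes the cochain homotopy required by Definition~\ref{defn4646}, symmetrically for $\Phi_1\Phi'_1$. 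The completed DG-category case is identical with two simplifications: $\frak m_n^{\mathscr C}=0$ for $n\ge 3$ collapses the tree sum to the single iterated $\frak m_2$-product $\bar t_{a_0}\cdot\iota_1(x_1)\cdot s_{a_1}\cdots s_{a_{k-1}}\cdot\iota_1(x_k)\cdot t_{a_k}$, so no gappedness is needed, and $\jmath$ is replaced by applying Lemma~\ref{lemma217} to the almost-cochain-homotopy-equivalent injection $\iota_2$. The principal obstacle throughout is the combinatorial bookkeeping in the $A_{\infty}$-verification, which is a filtered variant of the classical homological-perturbation lemma.
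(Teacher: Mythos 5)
Your proposal takes a genuinely different route from the paper. The paper first reduces to the DG-category case (invoking gapped-ness to replace $\mathscr C$ by an equivalent filtered DG-category) and then applies the single ``sandwich'' formula $\Psi_k(x_1,\dots,x_k)=t\circ x_1\circ s_1\circ x_2\circ\cdots\circ s_{k-1}\circ x_k\circ t'$ from \cite[p.~115]{fu4}, where the verification that this is a filtered $A_\infty$ functor is a short exercise in associativity and the Leibniz rule together with $\frak m_1(s_j)=\mathbf e-\bar t_j\circ t_j$. You instead work directly in the filtered $A_\infty$ category $\mathscr C$ with a tree sum over all planar rooted trees with the $2k+1$ ordered inputs $\bar t_{a_0},\iota_1(x_1),s_{a_1},\dots,s_{a_{k-1}},\iota_1(x_k),t_{a_k}$; you correctly observe that in the DG case only binary trees survive and associativity collapses the sum to the paper's formula, so the two constructions agree there.

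However, your justification that the tree sum actually defines a filtered $A_\infty$ functor is a gap. You assert it is ``exactly the coalgebra homomorphism produced by the filtered homological-perturbation construction applied to the retraction data $(t_a,\bar t_a,s_a,\bar s_a)$,'' but the HPL is a transfer theorem for a strong deformation retract of (co)chain complexes subject to side conditions such as $h\circ h=0$, $h\circ i=0$, $p\circ h=0$; what you have is a homotopy equivalence between objects of a filtered $A_\infty$ category in the sense of Definition~\ref{defhoferdist}, which carries none of those side conditions and is not a retraction of complexes. There is no off-the-shelf statement that the naive sandwich tree sum satisfies $\widehat\Psi\circ\hat d=\hat d\circ\widehat\Psi$ in the presence of nonzero $\frak m_n$ for $n\ge 3$, and your term-by-term sketch does not control the cancellations: once $\frak m_1(s_{a_i})=\mathbf e_{a_i}-\frak m_2(t_{a_i},\bar t_{a_i})$ is inserted inside a tree with $\frak m_n$, $n\ge 3$, the resulting splittings do not obviously reassemble into other trees of your sum without extra contributions. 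The paper sidesteps precisely this combinatorial difficulty by moving to the DG model first — a route that costs the gapped-ness assumption but buys a two-line verification (compare the explicit DG computation in the $\#$-unital enhancement of this lemma in Section~\ref{Hofinfhomoto}). If you wish to keep your direct $A_\infty$ approach you would need either to pre-normalize the homotopy equivalence data to satisfy HPL-type side conditions, or to write out the $A_\infty$ verification in full, and neither step is present.
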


\begin{proof}
The main part of the proof is the following:
\begin{lem}\label{Lemma3434}
Let $\mathscr C$ be a gapped and unital filtered $A_{\infty}$
category and $\frak C_{\mathcal I} = \{c_i \mid i \in \mathcal I\}$, $
\frak C'_{\mathcal I} = \{c'_i \mid i \in \mathcal I\}$ 
are sets of objects. 
Here $\mathcal I$ is a certain index set.
We assume that 
$d_{\rm Hof}(c_i,c'_i) < \epsilon$ for $i \in \mathcal I$.
Then there exists a filtered $A_{\infty}$ functor 
with energy loss $\epsilon$,
$
\Phi: {\mathscr C}(\frak C_{\mathcal I})
\to 
{\mathscr C}(\frak C'_{\mathcal I})
$
between full subcategories.
\par
Its linear part is a cochain homotopy equivalence with energy loss $2\epsilon$.
\par
The same holds for completed DG-category (which is not necessary gapped).
\end{lem}
\begin{proof}
We may assume that $\mathscr C$ is a DG-category.
(Here we use gapped-ness.)
Note that there is an embedding of filtered $A_{\infty}$ categories
$
\frak I: {\mathscr C}(\frak C_{\mathcal K})
\to  {\mathscr C}(\frak C_{\mathcal K} \cup \frak C'_{\mathcal K}).
$
By \cite[Lemma 8.45]{fu4} 
$
\frak I_{\Lambda}: {\mathscr C}_{\Lambda}(\frak C_{\mathcal K})
\to  {\mathscr C}_{\Lambda}(\frak C_{\mathcal K} \cup \frak C'_{\mathcal K})
$
is a homotopy equivalence of $A_{\infty}$ categories over $\Lambda$.
Its homotopy inverse $\Psi$ in the case of DG-category is given 
explicitly in \cite[page 115]{fu4}, that is:
\begin{equation}\label{nhomotoyp}
\Psi_k(x_1,\dots,x_k)
= t \circ x_1 \circ s_1 \circ x_2 \circ \dots \circ x_{k-1}\circ  s_{k-1}  \circ x_{k} \circ t'.
\end{equation}
Here the symbol $\circ$ is obtained from the $\frak m_2$ operator 
by putting the sign.
Namely 
\begin{equation}\label{signchangetocirc}
x \circ y = (-1)^{\deg x}\frak m_2(x,y),\qquad
dx = \frak m_1(x).
\end{equation}
Then $\circ$ is associative and $d$ is a coderivation.
(See \cite[Proof of Lemma 8.45]{fu4}.)
\par
The morphisms $t,t',s_j$ are defined as follows. 
Let $x_j \in \mathscr C(c_{i_{j-1}},c_{i_{j}})$, $j=1,\dots,k$
and $t = t_{c'_{i_0},c_{i_0}}\in \mathscr C(c'_{i_0},c_{i_0})$, $t' = t_{c_{i_k},c'_{i_k}} \in \mathscr C(c_{i_k},c'_{i_k})$
and $s_j = s_{c_{i_j}} \in \mathscr C(c_{i_j},c_{i_j})$.
Here $(t_{c_i,c'_i},t_{c'_i,c_i},s_{c_i},s_{c'_i})$ is a 
homotopy equivalence with energy loss $\epsilon$ between $c_i$ and $c'_i$.
It is easy to check
$
\frak v(\Psi_k(x_1,\dots,x_k)) >
\sum \frak v(x_j) - k\epsilon,
$
that is, $\Psi$ has energy loss $\epsilon$.
\par
The linear part $\Psi_1 \circ \frak I_1$ is given by
$x \mapsto t\circ x \circ t'$.
Its cochain homotopy inverse is given by
$y \mapsto t'\circ x \circ t$.
In fact the composition is 
$x \mapsto t\circ t' \circ x \circ t\circ t'$ 
and
$$
t'\circ t\circ x \circ t\circ t' - x 
= -d(s \circ x \circ t\circ t' +  x \circ s')
+ (s \circ dx \circ t\circ t' +  dx \circ s').
$$
\end{proof}
Now we prove Theorem \ref{GrokaraInd}.
We take $\epsilon'$ such that 
$
d_{
\rm GH}(\mathscr C_1,\mathscr C_2) < \epsilon' < \epsilon.
$
We take $\mathscr C$ as in Definition \ref{GromovHausdorff}.
We take $\mathscr C^0_i$ the full subcategory 
of $\mathscr C_i$ with the following properties.
\begin{enumerate}
\item
The $\epsilon-\epsilon'$ neighborhood of $\frak{OB}(\mathscr C^0_i)$
contains $\frak{OB}(\mathscr C_i)$.
\item
There exists a bijection $I : \frak{OB}(\mathscr C^0_i)
\to \frak{OB}(\mathscr C^0_2)$ such that
$
d_{\rm Hof}(c,I(c)) < \epsilon'$ for any $c$.
\end{enumerate}
Applying Lemma \ref{Lemma3434} to 
$\mathscr C^0_1$ and $\mathscr C^0_2$
we obtain a filtered $A_{\infty}$ functor 
with energy loss $\epsilon'$,
$\Phi_0$, from $\mathscr C^0_1$ to $\mathscr C^0_2$.
\par
Let $J : \frak{OB}(\mathscr C_1) \to \frak{OB}(\mathscr C^0_1)$
be a set theoretical map such that $d_{\rm Hof}(c,J(c)) < \epsilon- \epsilon_1$.
(Note that $J$ may  not be continuous.) 
Using the same formula (\ref{nhomotoyp})
we obtain a filtered $A_{\infty}$ functor $\Phi_1: \mathscr C_1 \to \mathscr C_1^0$
of energy loss $\epsilon - \epsilon'$
such that it becomes $J$ for objects.
The composition of $\Phi_0 \circ \Phi_1$ with the 
inclusion $\mathscr C_1^0 \to \mathscr C_1$ is the 
required filtered $A_{\infty}$ functor.
\end{proof}

\begin{defn}\label{defnindAinf}
An {\it inductive system of filtered $A_{\infty}$ categories} is 
a pair $((\mathscr C^1,\mathscr C^2,\dots),\linebreak (\Phi^1,\Phi^2,\dots))$
such that:
\begin{enumerate}
\item
A filtered $A_{\infty}$ category $\mathscr C^n$ is given for $n=1,2,\dots$.
\item
A filtered $A_{\infty}$ functor with 
energy loss $\epsilon_n$, $\Phi^n: \mathscr C^n \to \mathscr C^{n+1}$ is given for $n=1,2,\dots$.
\item The sum $\sum \epsilon_n$ is finite.
\end{enumerate}
\end{defn}
\begin{defn}
An inductive system of filtered $A_{\infty}$ categories
is said to be {\it unital} (resp. {\it gapped}) if 
filtered $A_{\infty}$ categories and filtered $A_{\infty}$ functors 
are all unital (resp. {\it gapped}).
(We remark that the discrete monoid $G$ appearing in the 
definition of gapped-ness may depend on $n$.)
\end{defn}
\begin{thm}\label{mainalgtheorem2}
Let $(\{\mathscr C^n\},\{\Phi^n\})$ be a gapped 
inductive system of filtered $A_{\infty}$ categories.
Then 
there exists a completed DG-category 
$
{\mathscr C}^{\infty}
$
such that:
\begin{enumerate}
\item
The set of objects of ${\mathscr C}^{\infty}$
is identified 
with the 
inductive limit as the sets
$\varinjlim\frak{OB}({\mathscr C}^{n})$.
Here we use $\Phi^n_{\rm ob} : \frak{OB}({\mathscr C}^{n}) 
\to \frak{OB}({\mathscr C}^{n+1})$ to define this inductive system.
\item
We put $\epsilon'_n = \sum_{m=n}^{\infty} \epsilon_m$.
Then there exists a filtered $A_{\infty}$ functor 
$\Upsilon^{\infty,n}:  {\mathcal C}^{n} \to {\mathcal C}^{\infty}$
with energy loss $\epsilon'_n$ such that
$
\Upsilon^{\infty,n+1} \circ \Phi^{n} = \Upsilon^{\infty,n}.
$
\item
Let $a^{\infty},b^{\infty} \in \frak{OB}({\mathcal C}^{\infty})$.
We take $a^m,a^{m+1},\dots$ and $b^m,b^{m+1},\dots$
which represents $a^{\infty},b^{\infty}$.
Then the cohomology group
$
H({\mathscr C}^{\infty}(a^{\infty},b^{\infty}),\frak m_1)
$
is almost isomorphic to 
$H(\varinjlim({\mathscr C}^{n}(a^{n},b^{n}),\frak m_1).
$\footnote{
The definition of the inductive limit 
$\varinjlim({\mathscr C}^{n}(a^{n},b^{n}),\frak m_1)$ will be given in 
Definition \ref{lininductdefn}.}
\item
If ${\mathscr C}^{n}$, $\Phi^n$ are all unital then  
${\mathscr C}^{\infty}$ is homotopically unital.\footnote{
See Section \ref{sec;alhomoequi} for the definition of homotopical unitality.}
Moreover $\Upsilon^{\infty,n}$ is homotopically unital.
\end{enumerate}
\end{thm}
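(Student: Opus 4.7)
The plan is to construct $\mathscr{C}^{\infty}$ as a $T$-adic completion of a direct limit of morphism complexes, after a rectification step that replaces each filtered $A_{\infty}$ functor $\Phi^n$ by a homotopic DG-functor. On objects, set $\frak{OB}(\mathscr{C}^{\infty})=\varinjlim\frak{OB}(\mathscr{C}^{n})$ using the set maps $\Phi^n_{\mathrm{ob}}$, so every $a^{\infty}$ is represented by a tail sequence $(a^n)_{n\ge n_0}$ with $\Phi^n_{\mathrm{ob}}(a^n)=a^{n+1}$. For a pair of such sequences $(a^n),(b^n)$, the linear parts $(\Phi^n)_{1}\colon\mathscr{C}^n(a^n,b^n)\to\mathscr{C}^{n+1}(a^{n+1},b^{n+1})$ are cochain maps (the $A_{\infty}$-functor equation at $k=1$ reduces to $\Phi_{1}\frak{m}_{1}=\frak{m}_{1}\Phi_{1}$) and, by Theorem \ref{GrokaraInd}, cochain homotopy equivalences with energy loss $2\epsilon_n$. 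I would set the finite part
$$
\overset{\circ}{\mathscr{C}}^{\infty}(a^{\infty},b^{\infty}) := \varinjlim_n \mathscr{C}^n(a^n,b^n),
$$
with the convention that $x\in\mathscr{C}^n(a^n,b^n)$ of valuation $\mu$ represents an element of valuation $\mu-\epsilon'_n$, and take $\mathscr{C}^{\infty}(a^{\infty},b^{\infty})$ to be its $T$-adic completion. Summability of $\sum\epsilon_n$ makes this filtration well-defined and zero-energy-generated.

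The main obstacle is that $\Phi^n$ is only an $A_{\infty}$-functor, so $(\Phi^n)_{1}$ does not commute strictly with $\frak{m}_{2}$; the failure is measured by $(\Phi^n)_{2}$, and the naive direct limit carries no associative product. The remedy is twofold. First, the gapped hypothesis allows each $\mathscr{C}^n$ to be rectified to an equivalent filtered DG-category through the bar-construction machinery developed in Sections \ref{sec;bar} and \ref{sec;back}. Second, each $\Phi^n$ can then be replaced by a DG-functor $\tilde{\Phi}^n$ homotopic to it through filtered $A_{\infty}$-functors with slightly larger but still summable energy loss $\tilde{\epsilon}_n$; this controlled rectification is the technical heart of the argument and relies on the obstruction-theoretic bar-complex formalism together with the energy/spectral estimates of Sections \ref{sec;energyloss} and \ref{sec;speclo}. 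With DG-functors in hand the products $\frak{m}_{2}^n$ are strictly compatible across the system, so the direct limit inherits a strictly associative DG-structure that extends to the completion, and the pair $(\mathscr{C}^{\infty},\overset{\circ}{\mathscr{C}}^{\infty})$ is a completed DG-category in the sense of Definition \ref{degb219}.

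The functors $\Upsilon^{\infty,n}$ are then the composites $\cdots\circ\tilde{\Phi}^{n+1}\circ\tilde{\Phi}^n$ followed by the tautological map into the limit; their energy losses telescope to $\sum_{m\ge n}\tilde{\epsilon}_m$, which agrees with $\epsilon'_n$ up to the equivalence of completed DG-categories already taken, so claim (2) holds and the compatibility $\Upsilon^{\infty,n+1}\circ\Phi^n=\Upsilon^{\infty,n}$ is built into the construction. For claim (3), the inclusion $\overset{\circ}{\mathscr{C}}^{\infty}(a^{\infty},b^{\infty})\hookrightarrow\mathscr{C}^{\infty}(a^{\infty},b^{\infty})$ is an almost cochain homotopy equivalence by construction, so Lemma \ref{lemma217} yields an almost isomorphism of cohomologies, and cohomology of the finite part is visibly $H(\varinjlim\mathscr{C}^n(a^n,b^n))$ because cohomology commutes with filtered colimits of cochain complexes.

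For claim (4), rectification converts strict units ${\bf e}_{a^n}$ into unit classes that hold only up to prescribed homotopies, whose combination over $n$ makes sense precisely because the $\tilde{\epsilon}_n$ are summable; the resulting unit structure on $\mathscr{C}^{\infty}$ is therefore \emph{homotopical} rather than strict in the sense of Section \ref{sec;alhomoequi}, with the homotopy unit on $\mathscr{C}^{\infty}(a^{\infty},a^{\infty})$ represented by the class of ${\bf e}_{a^n}$ for any sufficiently large $n$. Homotopical unitality of each $\Upsilon^{\infty,n}$ is then immediate from the same rectification data. The hardest single step in the whole program, as flagged above, is the controlled rectification of the filtered $A_{\infty}$-functors $\Phi^n$ to DG-functors $\tilde{\Phi}^n$ with summable energy loss; once this technical input is in place, the remainder of the construction is essentially dictated.
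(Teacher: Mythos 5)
Your overall instinct---linearize via the bar construction and return to DG-categories via the co-bar construction---is the right one and is indeed the route the paper takes. But the way you've organized the argument has an inconsistency that is worth flagging, and you have missed one genuine technical input.

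First, the structural inconsistency. You set $\overset{\circ}{\mathscr{C}}^{\infty}(a^{\infty},b^{\infty}) := \varinjlim \mathscr{C}^n(a^n,b^n)$ and then propose to equip this with an associative product after ``rectifying'' each $\Phi^n$ to a DG-functor $\tilde{\Phi}^n$ using the bar/co-bar machinery. But the bar--co-bar resolution $AB\mathscr{C}^n$ has a \emph{much larger} morphism complex than $\mathscr{C}^n(a^n,b^n)$: it is the completion of $\overset{\circ}{A}B\mathscr{C}^n$, which contains cobar words in tensors of morphisms. So once you rectify to a DG-functor you have also changed the underlying complex, and the formula you wrote for the finite part no longer matches the space on which your strictified product lives. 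The paper resolves this differently: it takes the inductive limit $\mathscr{B}^{\infty} = \varinjlim B\mathscr{C}^{n}$ at the \emph{bar} level, where $B\Phi^n$ is already a strictly co-multiplicative linear map (so no homotopy-theoretic ``rectification'' is needed or even possible), then applies the co-bar construction \emph{once} to $\mathscr{B}^{\infty}$ to obtain a completed DG-category $A\mathscr{B}^{\infty}$, and finally proves (Theorem \ref{theorem63}) that the naive inductive limit $\mathscr{C}^{\infty}(a,b) = \mathfrak{S}^1 A\mathscr{B}^{\infty}(a,b)$ includes as an almost cochain homotopy equivalence. So the object with the product is $A\mathscr{B}^{\infty}$, not $\varinjlim\mathscr{C}^n(a^n,b^n)$, and claim (3) is about an \emph{almost} isomorphism of cohomologies between these two, not about commuting cohomology with filtered colimits. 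Indeed ``cohomology commutes with filtered colimits'' is not the right lens here at all, because these are not ordinary colimits: the energy loss and the $T$-adic completion interact (see Remark \ref{rem41115}, which gives an example where the completed inductive limit differs from the usual one), and this is precisely why the proof of Theorem \ref{theorem63} is a delicate filtration argument (Lemmas \ref{lem86}--\ref{lem8888}) rather than an exactness-of-colimits statement.

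Second, the missing technical input for claim (4). You say that rectification automatically converts strict units into a homotopy unit on the limit, but the paper observes (Lemma \ref{Anounitality} and the surrounding discussion) that the homotopy equivalence $\frak{I}: \mathscr{C} \to AB\mathscr{C}$ is \emph{not} known to be homotopically unital; one has to pass to the \emph{reduced} bar--co-bar resolution $A^{\rm red}B\mathscr{C}$, obtained by quotienting out the ideal $\mathscr{I}(\mathscr{C})$ generated by co-bar words containing an internal strict unit, and correspondingly use $A^{\rm red}\mathscr{B}^{\infty}$ as the limit. Without this reduction step the explicit homotopy unit formula (\ref{form1010101}) does not close up (the equation $(I_*^+ \circ \hat{d})(x, \mathbf{f}_c, y) = (\frak{M}_* \circ \widehat{I}^+)(x, \mathbf{f}_c, y)$ fails). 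Your proposal makes no mention of the reduced construction, so the unitality argument as sketched does not go through.

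So: same underlying philosophy, but your proposal takes the colimit in the wrong place (DG-categories rather than DFDGCs), consequently invokes a ``controlled rectification of $A_{\infty}$-functors to DG-functors'' which is not actually what is needed once one works with bar resolutions, and omits the reduced bar--co-bar resolution on which the homotopy unit genuinely depends.
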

Theorem \ref{mainalgtheorem2} will be proved in Sections \ref{sec;bar}-\ref{sec;indhomoequi}.
We now define the inductive limit appearing in item (3).
\begin{defn}
An {\it inductive system of filtered cochain complexes over $\Lambda_0$} 
(resp. {\it completed cochain complexes})  is $(\{C^n\},\{\varphi^n\})$
where:
\begin{enumerate}
\item
A filtered cochain complex 
 $C^n$ 
 over  $\Lambda_0$ 
  (resp. completed cochain complexes $(C^n,C^n_0)$) is given.
\item
A filtered (resp. completed) cochain map $\varphi^n: C^n \to C^{n+1}$  over $\Lambda_0$
with energy loss $\epsilon_n$ is given.
\item The sum $\sum \epsilon_n$ is finite.
\end{enumerate}
\end{defn}
\begin{defnlem}\label{lininductdefn}
We can define the inductive limit of an inductive system of filtered cochain complexes 
 over $\Lambda_0$ or completed cochain complexes. 
The inductive limit is a completed  cochain complex in the 
sense of Definition \ref{defn216}.
\end{defnlem}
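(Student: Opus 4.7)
\medskip

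\noindent\emph{Plan of proof.} Because the transition maps $\varphi^n$ do not preserve the $\Lambda_0$-structure (they only have energy loss $\epsilon_n$), one cannot directly form an algebraic colimit inside the category of filtered $\Lambda_0$-modules. The plan is to first pass to $\Lambda$-coefficients, form the usual algebraic direct limit of $\Lambda$-modules, transport the filtration via a limiting procedure, and only then extract a $\Lambda_0$-structure by Hausdorff completion. Concretely, let $\overline{C}^{\infty}_{\Lambda} := \varinjlim C^{n}_{\Lambda}$ (the ordinary colimit in the category of $\Lambda$-modules), with structure maps $\jmath^{n}\colon C^{n}_{\Lambda} \to \overline{C}^{\infty}_{\Lambda}$ satisfying $\jmath^{n+1}\circ \varphi^{n} = \jmath^{n}$. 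For $x = \jmath^{n}(x_{n}) \in \overline{C}^{\infty}_{\Lambda}$ and $\tilde\varphi^{n,m} := \varphi^{m-1}\circ \dots \circ \varphi^{n}$, set
$$
\frak v^{\infty}(x) := \lim_{m\to\infty} \frak v^{m}\bigl(\tilde\varphi^{n,m}(x_{n})\bigr).
$$
The energy-loss bound gives $\frak v^{m+1}(\tilde\varphi^{n,m+1}(x_{n})) \ge \frak v^{m}(\tilde\varphi^{n,m}(x_{n})) - \epsilon_{m}$, so the sequence is monotone-ish and bounded below by $\frak v^{n}(x_{n}) - \epsilon'_{n}$ where $\epsilon'_{n} = \sum_{k\ge n}\epsilon_{k} < \infty$; a standard diagram chase shows $\frak v^{\infty}$ is independent of the chosen representative. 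Define the energy filtration by $\frak F^{\lambda}\overline{C}^{\infty}_{\Lambda} := \{x \mid \frak v^{\infty}(x) \ge \lambda\}$.

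Next, let $C^{\infty}$ be the Hausdorff completion of $\overline{C}^{\infty}_{\Lambda}$ with respect to this filtration, extend $\frak v^{\infty}$ by continuity, and declare the finite part to be $\overset{\circ}{C}^{\infty} := \frak F^{0} C^{\infty}$. The canonical map $\jmath^{n}$ factors through a $\Lambda_0$-linear map $\iota^{n}\colon C^{n} \to C^{\infty}$: on the finite part $C^{n}_{0}\subseteq C^{n}$ one sends $x_{n}\mapsto \jmath^{n}(x_{n})$, which lies in $\frak F^{-\epsilon'_{n}}$, and extends to the completion by continuity. Since each $\varphi^{n}$ is a cochain map, the differentials $d^{n}$ assemble into a filtration-preserving differential on $\overline{C}^{\infty}_{\Lambda}$ (independence of representative is immediate from $d^{n+1}\varphi^{n} = \varphi^{n} d^{n}$), which extends uniquely to a continuous differential on $C^{\infty}$ preserving $\overset{\circ}{C}^{\infty}$. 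The compatibility $\iota^{n+1}\circ \varphi^{n} = \iota^{n}$ is built into the construction, and it is straightforward that $\iota^{n}$ has energy loss $\epsilon'_{n}$.

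The main obstacle is verifying that the pair $(C^{\infty},\overset{\circ}{C}^{\infty})$ meets all the requirements of Definition \ref{defn216}, in particular the zero-energy-generated clause of Definition \ref{energyfilteredmodule}. Concretely, one must check that $C^{\infty}$ equals $\frak F^{0}(\overset{\circ}{C}^{\infty}\otimes_{\Lambda_{0}}\Lambda)$ and that $\overset{\circ}{C}^{\infty}$ is complete for the induced filtration. This will follow from the explicit description of $C^{\infty}$ as $T$-adic completion of $\overline{C}^{\infty}_{\Lambda}$: any element of $C^{\infty}$ is by construction a Cauchy limit of a series $\sum T^{\lambda_{k}} y_{k}$ with $y_{k}$ represented in some $C^{n(k)}_{0}$ and $\lambda_{k}\to\infty$, and the monotone behaviour of $\frak v^{m}$ combined with $\sum \epsilon_{k} < \infty$ ensures that truncating to valuation $\ge 0$ yields precisely $\overset{\circ}{C}^{\infty}$. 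A secondary but routine point is to verify that the maps $\iota^{n}$ restrict to $C^{n}_{0}\to \overset{\circ}{C}^{\infty}$ after multiplication by $T^{\epsilon'_{n}}$, which is what is needed to view $\iota^{n}$ as a completed cochain map in the sense of Definition \ref{defn216}(2) in subsequent applications.
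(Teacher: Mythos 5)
The overall strategy is right — pass to $\Lambda$-coefficients, form the ordinary colimit, transport the valuation, then recover a $\Lambda_0$-structure — and your well-definedness argument for $\frak v^{\infty}$ (the shift $c_m := a_m + \sum_{k<m}\epsilon_k$ is monotone, hence the limit exists) is fine. However, you perform the two final steps in the wrong order, and this produces an object that does \emph{not} satisfy Definition~\ref{defn216}.

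You define $C^{\infty}$ as the completion of the \emph{entire} $\Lambda$-vector space $\overline{C}^{\infty}_{\Lambda}$ with respect to the energy filtration, and only afterwards set $\overset{\circ}{C}^{\infty} := \frak F^{0}C^{\infty}$. But Definition~\ref{defn216}\,(1) requires that $C$ be \emph{the completion of} $C_0$ in the $T$-adic topology. Your $C^{\infty}$ is strictly larger than the completion of your $\overset{\circ}{C}^{\infty}$ whenever the colimit has elements of negative valuation, i.e.\ essentially always. Already the example of Remark~\ref{rem41115} ($C^n=\Lambda_0$, $\varphi^n(x)=T^{1/2^n}x$) shows the problem: there $\overline{C}^{\infty}_{\Lambda}=\Lambda$, so your $C^{\infty}=\Lambda$ (already complete) and $\overset{\circ}{C}^{\infty}=\frak F^0\Lambda = \Lambda_0$, giving the pair $(\Lambda,\Lambda_0)$ — but $\Lambda$ is not the $T$-adic completion of $\Lambda_0$ ($\Lambda_0$ is already closed, and e.g.\ $T^{-1}$ is at $T$-adic distance $e$ from all of $\Lambda_0$), so the pair fails Definition~\ref{defn216}; moreover $\Lambda$ is not zero-energy-generated. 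The correct order, and what the paper does, is to set $\overset{\circ}{C}^{\infty} := \frak F^{0}\overline{C}^{\infty}_{\Lambda}$ \emph{before} completing, and then define $C^{\infty}$ as the completion of this $\Lambda_0$-module; in the example above this yields $(\Lambda_0,\Lambda_0)$, as stated in Remark~\ref{rem41115}.

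The verification you postpone to the end is also internally inconsistent: you propose to check that $\overset{\circ}{C}^{\infty}$ is complete for the induced filtration, but Definition~\ref{defn216} would then force $C^{\infty}=\overset{\circ}{C}^{\infty}$, whereas your $C^{\infty}$ is a $\Lambda$-vector space containing negative-valuation elements and so strictly contains $\frak F^{0}C^{\infty}$. (Similarly, the condition you write, $C^{\infty}=\frak F^{0}(\overset{\circ}{C}^{\infty}\otimes_{\Lambda_0}\Lambda)$, is not what Definition~\ref{defn216} asks for and cannot hold for your $C^{\infty}$.) The fix is to swap the order of completion and $\frak F^{0}$; the rest of your argument (transporting $d$, the map $\iota^{n}$ having energy loss $\epsilon'_n$, and the treatment of the completed-complex case by passing first to the finite parts $C^{n}_{0}$) then matches the paper.
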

\begin{proof}
We consider the inductive limit
$
\varinjlim\, C^n_{\Lambda}
$
that is a $\Lambda$ vector space, which
we denote  by $\overset{\circ}{C^{\infty}_{\Lambda}}$.
The co-boundary operators $d_n$ on $C^n_{\Lambda}$ induces 
a co-boundary operator $d$ on $\overset{\circ}{C^{\infty}_{\Lambda}}$.
We define a filtration $\frak F^{\lambda}\overset{\circ}{C^{\infty}_{\Lambda}}$
as follows. An element $x^{\infty}$ of $\overset{\circ}{C^{\infty}_{\Lambda}}$
is contained in $\frak F^{\lambda}\overset{\circ}{C^{\infty}_{\Lambda}}$ 
if and only if there exists a sequence $x^n \in C^n_{\Lambda}$
such that $\lim_{n\to \infty} x^n = x^{\infty}$ and that 
$\frak v(x^n) \ge \lambda_n$
with $\lim_{n\to \infty}\lambda_n = \lambda$.
We put $\overset{\circ}{C^{\infty}}: = \frak F^{0}\overset{\circ}{C^{\infty}_{\Lambda}}$.
Let $C^{\infty}$ be the completion of $\overset{\circ}{C^{\infty}}$
with respect to the energy filtration.
Then $(C^{\infty},\overset{\circ}{C^{\infty}})$
is the completed filtered cochain complex we look for.
\par
In the case of inductive system of compelted cochain complexes we first take 
the inductive limit of the finite part and then take the completion.
\end{proof}
We remark that if  $\Phi: \mathscr C \to \mathscr C'$ is a filtered $A_{\infty}$ functor with 
energy loss $\rho$ then, for objects $a,b$ of $\mathscr C$, the map 
$\Phi_1(a,b): \mathscr C(a,b) \to \mathscr C'(\Phi_{\rm ob}(a),\Phi_{\rm ob}(b))$
is a filtered cochain map of energy loss $\rho$.
Therefore the inductive limit 
$\varinjlim({\mathscr C}^{n}(a^{n},b^{n}),\frak m_1)$ 
appearing in Theorem \ref{mainalgtheorem2} (3)(b) 
is defined in Lemma-Definition \ref{lininductdefn}.

\begin{rem}\label{rem41115}
Let $(\{C^n\},\{\varphi^n\})$
be an inductive system of filtered cochain complexes over $\Lambda_0$.
Suppose in addition that the energy loss of $\varphi^n$ is $0$.
We obtain the inductive limit in the usual sense.
Namely we take the set theoretical inductive limit of $C^n$ and define 
the co-boundary operator as the limit.
This inductive limit or its completion, however, may not coincide with one 
in Lemma-Definition \ref{lininductdefn}.
\par
In fact, let us take $C^n = \Lambda_0$ with $0$ 
as co-boundary operators. We put $\varphi^n(x) =  T^{1/2^n}x$.
The inductive limit in Lemma-Definition \ref{lininductdefn} is $\Lambda_0$.
The usual inductive limit is $\Lambda_+$.
We remark that $\Lambda_+$ is closed in $\Lambda_0$ with respect to the 
$T$-adic topology.
\end{rem}

\section{The Bar resolution of a filtered $A_{\infty}$ category.}
\label{sec;bar}

In this section we describe the Bar-resolution of a filtered $A_{\infty}$ category.
The Bar resolution is a well established notion. We give its definition 
here since we need to study various filtrations carefully.

\begin{defn}\label{defn51}
A {\it doubly filtered differential graded co-category} $\mathscr B$
is the following object.
Hereafter we write {\it DFDGC} instead of doubly filtered differential graded co-category.
\begin{enumerate}
\item 
The set of objects, $\frak{OB}(\mathscr B)$, is given.
\item
For each $a,b \in \frak{OB}(\mathscr B)$ a 
completed cochain complex over $\Lambda_0$, abbreviated by 
$\mathscr B(a,b)$, is given.
Its filtration $\frak F^{\frak \lambda}\mathscr B(a,b)$ is called 
the {\it energy filtration}.
We denote the finite part of $\mathscr B(a,b)$ by $\overset{\circ}{\mathscr B}(a,b)$
\item Another filtration $\frak S^k\mathscr B(a,b)$, abbreviated by 
{\it number filtration}, is given such that:
\begin{enumerate}
\item
For $k =1,2,\dots$ $\frak S^k\mathscr B(a,b)$ 
is a completed  subcomplex of $\mathscr B(a,b)$
with $\frak S^k\overset{\circ}{\mathscr B}(a,b)$ being 
its finite part.
\item
$\frak S^{k+1}\mathscr B(a,b) \supseteq \frak S^k\mathscr B(a,b)$.
\item
We require
$
\overset{\circ}{\mathscr B}(a,b): = \bigcup_n\frak S^n\overset{\circ}{\mathscr B}(a,b).
$
\end{enumerate}
\item A map 
$
\Delta : \overset{\circ}{\mathscr B}(a,b) \to  
\underset{c}{{\bigoplus}}
\overset{\circ}{\mathscr B}(a,c) \boxtimes \overset{\circ}{\mathscr B}(c,b).
$
is given. We call it the {\it co-composition}.
It induces 
$
\Delta : {\mathscr B}(a,b) \to  
\underset{c}{\widehat{\bigoplus}}
{\mathscr B}(a,c) \widehat\boxtimes {\mathscr B}(c,b).
$
\item
The co-composition is co-associative and preserves 
co-derivation and two filtrations.
\end{enumerate}
\end{defn}
\begin{defn}
To a filtered $A_{\infty}$ category $\mathscr C$ 
we  associate a DFDGC $B\mathscr C$ as follows.
We call $B\mathscr C$ the {\it Bar resolution} of $\mathscr C$.
\begin{enumerate}
\item
The set of objects is defined by
$\frak{OB}(B\mathscr C) := \frak{OB}(\mathscr C)$.
\item 
The finite part of the morphism space $\overset{\circ}B\mathscr C(a,b)$ 
is defined by (\ref{form24new}).
$B\mathscr C(a,b)$ is its completion.
The co-boundary operator is the co-derivation $\hat d$ 
induced from the $A_{\infty}$ operations $\frak m_k$.
The energy filtration of $B\mathscr C(a,b)$ 
is induced from the energy filtration of 
$\mathscr C(a,b)$ in an obvious way.
\item
The number filtration is defined by:
$
\frak S^k\overset{\circ}B\mathscr C(a,b)
:=
\bigoplus_{\ell=1}^k \overset{\circ}B_{\ell}\mathscr C(a,b).
$
Here the right hand side is defined by (\ref{form24new}).
\item
The co-composition is defined by (\ref{coalgonBdef}).
It is obvious that it is co-associative and 
preserves two filtrations. Moreover $\hat d$ is a co-derivation
and preserves two filtrations.
\end{enumerate}
\end{defn}
\begin{defn}\label{coDBDBdemor}
Let $\mathscr B$, $\mathscr B'$ be DFDGCs.
A {\it DFDGC morphism with  energy loss $\rho$} from 
$\mathscr B$ to $\mathscr B'$ is $\Psi: = (\Psi_{\rm ob},\Psi_\Lambda)$
where:
\begin{enumerate}
\item 
A map $\Psi_{\rm ob}: \frak{OB}(\mathscr B) \to \frak{OB}(\mathscr B')$
is given.
\item
Let $a' = \Psi_{\rm ob}(a)$, $b' = \Psi_{\rm ob}(b)$.
Then a $\Lambda$ linear map 
$\Psi_\Lambda(a,b): \overset{\circ}{\mathscr B}_\Lambda(a,b) \to \overset{\circ}{\mathscr B'}_\Lambda(a',b')$
of degree $0$ is given. The map
$\Psi_\Lambda(a,b)$ preserves the number filtration and the finite part.
\item
The linear map $\Psi_{\Lambda}$ is a co-homomorphism.
Namely
$
\Delta \circ {\Psi_{\Lambda}}(a,b)
= \bigoplus_c   ({\Psi_{\Lambda}}(a,c) \boxtimes {\Psi_{\Lambda}}(c,b)) \circ  \Delta.
$
on 
$\overset{\circ}{\mathscr B}_\Lambda(a,b)$.
\item
We require
\begin{equation}
\Psi_\Lambda(a,b)(\frak F^{\lambda}\frak S^k(\overset{\circ}{\mathscr B}(a,b)))
\subseteq
\frak F^{\lambda-k\rho}\frak S^k(\overset{\circ}{\mathscr B'}(a',b')).
\end{equation}
In particular $\Psi_\Lambda(a,b)$ induces a map: 
$\frak F^{\lambda}\frak S^k({\mathscr B}(a,b))
\to
\frak F^{\lambda-k\rho}\frak S^k({\mathscr B'}(a',b'))$.
\end{enumerate}
\end{defn}
\begin{defn}
Let $\Phi: \mathscr C \to \mathscr C'$ be a filtered $A_{\infty}$ functor  with energy loss $\rho$.
It induces a DFDGC morphism with with energy loss $\rho$, which is abbreviated by
$B\Phi: B\mathscr C \to B\mathscr C'$, as follows.
\begin{enumerate}
\item
The map $(B\Phi)_{\rm ob}: \frak{OB}(B\mathscr C) \to  \frak{OB}(B\mathscr C')$
is $\Phi_{\rm ob}$.
\item
Let $a' = \Phi_{\rm ob}(a)$, $b' = \Phi_{\rm ob}(b)$.
The map $(B\Phi)_{\Lambda}(a,b): \overset{\circ}{\mathscr B}_\Lambda(a,b) 
\to \overset{\circ}{\mathscr B'}_\Lambda(a',b')$
is obtained in Definition \ref{filfuncenergy} (3).
\end{enumerate}
The fact that they satisfy (1)-(4) of Definition \ref{coDBDBdemor} is 
immediate from Definition \ref{filfuncenergy}.
\end{defn}

\begin{defn}
An {\it inductive system of DFDGC} is $(\{\mathscr B^n\},\{\Psi^n\})$
where:
\begin{enumerate}
\item
A DFDGC $\mathscr B^n$ is given for $n=1,2,\dots$.
\item
A DFDGC morphism  with energy loss 
$\epsilon_n$, $\Psi^n: \mathscr B^n \to \mathscr B^{n+1}$, is given for $n=1,2,\dots$.
\item The sum $\sum \epsilon_n$ is finite.
\end{enumerate}
\end{defn}
\begin{lem}\label{lem46}
If $(\{\mathscr C^n\},\{\Phi^n\})$ is an 
inductive system of filtered $A_{\infty}$ categories, then
$(\{B\mathscr C^n\},\{B\Phi^n\})$
is an inductive system of DFDGC.
\end{lem}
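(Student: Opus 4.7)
The proof will be a routine verification that the Bar construction and its induced functor respect all the structure demanded by Definitions \ref{defn51} and \ref{coDBDBdemor}; the one point that actually needs checking is the compatibility of the energy loss of $\Phi^n$ with the interaction between the number filtration $\frak S^k$ and the energy filtration $\frak F^{\lambda}$ on the Bar resolution.

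First I would observe that by the construction of $B\mathscr C$ given just before Definition \ref{coDBDBdemor}, each $B\mathscr C^n$ is a DFDGC: the energy filtration is inherited term-wise from $\mathscr C^n$, the number filtration is $\frak S^k\overset{\circ}B\mathscr C^n(a,b) = \bigoplus_{\ell=1}^{k}\overset{\circ}B_\ell\mathscr C^n(a,b)$, the co-composition $\Delta$ is defined by (\ref{coalgonBdef}) and is evidently co-associative, and the co-derivation $\hat d$ induced by the $A_\infty$ operations preserves both filtrations (here we use that $\frak m_0=0$, so that $\hat d$ cannot increase the number filtration).

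Next I would show that $B\Phi^n : B\mathscr C^n\to B\mathscr C^{n+1}$ is a DFDGC morphism of energy loss $\epsilon_n$. The underlying object map is $\Phi^n_{\rm ob}$, and $B\Phi^n$ is by construction a co-homomorphism that preserves the finite part, so conditions (1)-(3) of Definition \ref{coDBDBdemor} are automatic. The only real content is condition (4). Given an element $x_1\otimes\cdots\otimes x_\ell \in \frak F^{\lambda}\frak S^k\overset{\circ}B\mathscr C^n(a,b)$, so that $\ell\le k$ and $\sum_i \frak v(x_i)\ge \lambda$, the coalgebra extension formula gives
$$
(B\Phi^n)(x_1\otimes\cdots\otimes x_\ell)
= \sum_{0=j_0<j_1<\cdots<j_r=\ell}
\Phi^n_{j_1-j_0}(x_1,\dots,x_{j_1})\otimes\cdots\otimes
\Phi^n_{j_r-j_{r-1}}(x_{j_{r-1}+1},\dots,x_{j_r}).
$$
Each summand lies in $\overset{\circ}B_r\mathscr C^{n+1}$ with $r\le \ell\le k$, so the number filtration is preserved. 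By Definition \ref{filfuncenergy} (4) applied to $\Phi^n$, the $m$-th tensor factor loses energy at most $(j_m-j_{m-1})\epsilon_n$, and summing over $m$ gives total energy loss at most $\ell\epsilon_n\le k\epsilon_n$. This is exactly the inequality required by Definition \ref{coDBDBdemor} (4) with $\rho=\epsilon_n$.

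Finally, the summability $\sum_n\epsilon_n<\infty$ is inherited verbatim from the hypothesis that $(\{\mathscr C^n\},\{\Phi^n\})$ is an inductive system of filtered $A_\infty$ categories. I do not foresee any real obstacle; the mild subtlety is simply to notice that the energy loss $k\epsilon_n$ on $\frak S^k$ is the correct bookkeeping because the coalgebra extension of a map with energy loss $\epsilon_n$ on $\mathscr C^n$ accumulates one factor of $\epsilon_n$ per input letter of the bar, and the number of input letters is exactly what is controlled by $\frak S^k$.
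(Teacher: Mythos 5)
Your proof is correct and is precisely the routine verification that the paper declares ``obvious'' (it gives no actual proof), with the only nontrivial point being the energy-loss bookkeeping you carry out in checking Definition \ref{coDBDBdemor}(4). The key observation — that each summand of the coalgebra extension of $\Phi^n$ loses energy at most $\ell\epsilon_n \le k\epsilon_n$, where $\ell \le k$ is the number of tensor factors controlled by $\frak S^k$, and that the energy filtration is decreasing so $\frak F^{\lambda-\ell\epsilon_n}\subseteq\frak F^{\lambda-k\epsilon_n}$ — is exactly what makes the claim go through.
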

The proof is obvious.
Note that the maps appearing in the morphisms of DFDGC are linear, 
while the maps appearing in the filtered $A_{\infty}$ functors 
are multi-linear (or non-linear).
By this reason taking the inductive limit is much easier 
for DFDGC than for filtered $A_{\infty}$ categories.

\begin{defn}\label{indlimDFDF}
Let $(\{\mathscr B^n\},\{\Psi^n\})$
be an inductive system of DFDGC.
We define its {\it inductive limit}, 
abbreviated by $\varinjlim\,{\mathscr B}^{n}$, 
as follows.
We put ${\mathscr B}^{\infty} = \,
\varinjlim{\mathscr B}^{n}$.
\begin{enumerate}
\item
The set $\frak{OB}({\mathscr B}^{\infty})$ of its objects 
is the inductive limit 
$
\varinjlim\,\frak{OB}({\mathscr B}^{n})
$
as sets. Note that we use $\Psi^n_{\rm ob}$ to define this inductive system.
\item
Let $a^{\infty}, b^{\infty}$ be elements of ${\mathscr B}^{\infty}$.
We take sequences $a^n$, $b^n$ of elements $\frak{OB}({\mathscr B}^{n})$
representing $a^{\infty}, b^{\infty}$, respectively.
We put
\begin{equation}\label{formula44}
\frak S^k{\mathscr B}^{\infty}(a^{\infty},b^{\infty}) =
\varinjlim\,\frak S^k{\mathscr B}^{n}(a^n,b^n).
\end{equation}
Note that $\frak S^k{\mathscr B}^{n}(a^n,b^n)$ together with 
$\Psi^n_{\Lambda}(a^n,b^n)$ becomes an 
inductive system of completed cochain complexes. Therefore the inductive limit in the 
right hand side of (\ref{formula44}) is defined by  Lemma-Definition \ref{lininductdefn}.
\item
We put 
$\overset{\circ}{\mathscr B^{\infty}}(a,b): = \bigcup_n\frak S^n\overset{\circ}{\mathscr B^{\infty}}(a,b)
$
and define ${\mathscr B}^{\infty}(a,b)$ to be its completion with respect 
to the energy filtration.
\item
Since $\Psi^n$ preserves co-composition  $\Delta$ 
it induces a co-composition $\Delta$ on the inductive limit.
\end{enumerate}
It is easy to see that the axiom of DFDGC is satisfied.
\end{defn}

\section{Back to $A_{\infty}$ category via Co-Bar resolution.}
\label{sec;back}

Let $(\{\mathscr C^n\},\{\Phi^n\})$ be an 
inductive system of filtered $A_{\infty}$ categories.
By Lemma \ref{lem46} and Definition \ref{indlimDFDF}
we obtain a DFDGC $\mathscr B^{\infty}$
by
\begin{equation}
\mathscr B^{\infty}:= \varinjlim\,B{\mathscr C}^{n}.
\end{equation}
\par
Note that by Lemma-Definition \ref{lininductdefn} we have an inductive limit
\begin{equation}\label{414141}
\mathscr C^{\infty}(a,b):= \varinjlim\,{\mathscr C}^{n}(a^n,b^n),
\end{equation}
which is a completed cochain complex 
if $a = \lim a_n$ and $b = \lim b_n$.
In particular the boundary operator $\frak  m_1$ is induced on it.
However its $A_{\infty}$ operations $\frak m_k$ for $k>1$ is  not given.
\par
We define a few maps between $B{\mathscr C}^{n}(a_n,b_n)$, $\mathscr B^{\infty}(a,b)$.
Let $a_n,b_n \in \frak{OB}(\mathscr C^n)$.
For $n' > n$ we define $a_{n'},b_{n'}$
inductively by $a_{n'+1} = \Phi^{n'+1,n'}_{\rm ob}(a_{n'})$,
$b_{n'+1} = \Phi^{n'+1,n'}_{\rm ob}(b_{n'})$.
We then define
$
\hat\Phi^{n',n}(a_n,b_n) : \overset{\circ}B\mathscr C^{n}(a_n,b_n) \to 
\overset{\circ}B\mathscr C^{n'}(a_{n
'},b_{n'})
$
by
\begin{equation}\label{defnform54}
\hat\Phi^{n',n}(a_n,b_n):= \hat\Phi^{n',n'-1}(a_{n'-1},b_{n'-1}) \circ \dots \circ \hat\Phi^{n+1,n}(a_n,b_n)
\end{equation}
We put $a_{\infty} = \lim a_n$ $b_{\infty}= \lim b_n$.
We define
\begin{equation}\label{iso53444}
\hat\Phi^{\infty,n}(a_n,b_n) : \overset{\circ}B\mathscr C^{n}(a_n,b_n) \to 
\overset{\circ}{\mathscr B^{\infty}}(a_{\infty},b_{\infty})
\end{equation}
by sending ${\bf x}$ to an element represented by
$(\hat\Phi^{n',n}({\bf x}))_{n'=n}^{\infty}$.
We have the following:
\begin{equation}\label{filter666}
\hat\Phi^{\infty,n}(a_n,b_n)(\frak F^{\lambda}\frak S^k \overset{\circ}B\mathscr C^{n}(a_n,b_n))
\subseteq 
\frak F^{\lambda-k\epsilon_n}\frak S^k\overset{\circ}{\mathscr B^{\infty}}(a_{\infty},b_{\infty})
\end{equation}
with $\epsilon_n \to 0$.

We use the Co-Bar resolution to define the inductive limit 
of filtered $A_{\infty}$ categories.
The notion of a Co-Bar resolution is  classical. (See for example \cite{keller}.) We however 
give a detailed  proof since again we need to study its relation to the 
two filtrations.
We also need to use operations and notations we introduce below 
in later sections.
Let $\mathscr B$ be a DFDGC. For its objects $a,b$, we consider 
\begin{equation}\label{form24newnew}
\left\{
\aligned
\overset{\circ}A_k\mathscr B[1](a,b) &= \underset{a=c_0,c_1,\cdots,c_{k-1},c_k=b}{{\bigoplus}}
\overset{\circ}{\mathscr B}[1](c_0,c_1) \boxtimes \cdots \boxtimes \overset{\circ}{\mathscr B}[1](c_{k-1},c_k)
\\
\overset{\circ}A\mathscr B[1](a,b) &= \underset{k=1,2,\dots}{{\bigoplus}} \overset{\circ}A_k\mathscr B[1](a,b).
\endaligned
\right.
\end{equation}
The energy filtration of $\mathscr B[1](a,b)$ induces 
an energy filtration of $\overset{\circ}A\mathscr B[1](a,b)$.
Here we use $A$ and $\boxtimes$ in place of $B$ and $\otimes$ to distinguish 
Co-Bar resolution from Bar resolution. 
(The symbol $A$ stands for Adams, who introduced Co-Bar resolution in \cite{Ada}.
The symbol $F$ is used in some of the references. I avoid it 
since $F$ is used for filtration.) 
Let $A_k\mathscr B[1](a,b)$ be the completion of 
$\overset{\circ}A_k\mathscr B[1](a,b)$ with respect to the energy 
filtration.
The co-boundary operator $d$ and co-composition $\Delta$ induces operations 
\begin{equation}
\left\{
\aligned
\partial : A\mathscr B[-1](a,b) \to A\mathscr B[-1](a,b) \\
\hat \Delta : A\mathscr B[-1](a,b) \to A\mathscr B[-1](a,b)
\endaligned
\right.
\end{equation}
by
\begin{equation}
\left\{
\aligned
\partial(x_1 \boxtimes \dots \boxtimes x_k)
&= 
\sum_{i=1}^k (-1)^{*} x_1 \boxtimes \dots \boxtimes d(x_i) \boxtimes \dots\boxtimes x_k, \\
\hat \Delta(x_1 \boxtimes \dots \boxtimes x_k)
&= 
\sum_{i=1}^k (-1)^{*} x_1 \boxtimes \dots \boxtimes \Delta(x_i) \boxtimes \dots\boxtimes x_k,
\endaligned
\right.
\end{equation}
where $* = \deg'x_1 + \dots + \deg'x_{i-1}$.
Note that the co-derivation $\Delta$ appearing in the right 
hand side is a map
$
\Delta: \mathscr B[-1](a,b)
\to \widehat{\bigoplus_{c}}\,\,\, \mathscr B[-1](a,c) 
\,\,\widehat{\boxtimes}\,\, \mathscr B[-1](c,b).
$
It satisfies
$
\Delta(\overset{\circ}{\mathscr B}[-1](a,b))
\subseteq 
{\bigoplus_{c}}\,\,\, \overset{\circ}{\mathscr B}[-1](a,c) 
\,\,{\boxtimes}\,\, \overset{\circ}{\mathscr B}[-1](c,b).
$
We put
\begin{equation}
\frak M_1 := \partial + \hat\Delta: (A\mathscr B[-1](a,b))[1]
\to( A\mathscr B[-1](a,b))[1].
\end{equation}
We also define:
\begin{equation}
\frak M_2: (A\mathscr B[-1](a,c))[1]\, \oslash (A\mathscr B[-1](c,b))[1]
\to (A\mathscr B[-1](a,b))[1]
\end{equation}
by 
\begin{equation}\label{frakM2}
\frak M_2(x_1 \boxtimes \dots \boxtimes x_k \oslash x_{k+1} \boxtimes \dots \boxtimes x_m)
=
(-1)^* x_1 \boxtimes \dots \boxtimes x_k \boxtimes x_{k+1} \boxtimes \dots \boxtimes x_m
\end{equation}
with $* = \deg'x_1 + \dots + \deg'x_{i-1}+1$.\footnote{$+1$ is put here 
so that (\ref{defnofIk}) becomes an $A_{\infty}$ functor.
It also appears in several other places.
The author does not know the conceptional reason why it appears. 
Since $A\mathscr B$ is a DG-category we can change the sign of $\frak M_2$ 
and still obtain a DG-category.}
Here $\oslash$ is a tensor product. We use this symbol to distinguish it from 
$\otimes$, $\boxtimes$.
The operators $\frak M_1$ and $\frak M_2$ has degree $1$.
It is easy to see  
\begin{equation}
\left\{
\aligned
&\frak M_1 \circ \frak M_1 = 0, \\
&\frak M_1(\frak M_2({\bf x} \oslash {\bf y}))  
+ \frak M_2(\frak M_1({\bf x}) \oslash {\bf y}))
+  (-1)^{\deg'{\bf x}}\frak M_2({\bf x} \oslash \frak M_1({\bf y}))) = 0.
\endaligned
\right.
\end{equation}
Moreover $\frak M_1$ and $\frak M_2$ preserve the energy filtration.
Therefore putting $\frak M_k = 0$ for $k=3,4,\dots$ 
we obtain a filtered $A_{\infty}$ category.
We denote it by $A\mathscr B$ also.
Since $\frak M_k = 0$ for $k=3,4,\dots$, 
$A\mathscr B$ is actually a DG-category. 
However the sign convention is one of an $A_{\infty}$ category.
(In other words we need a sign change to obtain a DG-category.)
\par
We  define the {\it total number filtration} $\frak S^n\overset{\circ}A\mathscr B$
so that $\frak S^n\overset{\circ}{A\mathscr B}$ 
is the sub-$\Lambda_0$-module generated by the union of
$$
\frak S^{n_1}\overset{\circ}{\mathscr B}[1](c_0,c_1) \boxtimes \cdots \boxtimes \frak S^{n_k}\overset{\circ}{\mathscr B}[1](c_{k-1},c_k)
$$
with $n_1 + \dots + n_k \le n$.
We remark that $A\mathscr B$ is the completion of $\overset{\circ}A\mathscr B: 
= \bigcup_n \frak S^n\overset{\circ}A\mathscr B$.
\par
$A\mathscr B$ becomes a completed DG-category.
Its finite part is $\overset{\circ}A\mathscr B$.
\begin{defn}
We call the completed DG-category $A\mathscr B$ the {\it Co-Bar resolution}
of $\mathscr B$.
\end{defn}
Let $(\{\mathscr C^n\},\{\Phi^n\})$ be an 
inductive system of filtered $A_{\infty}$ categories.
We put
$
\mathscr B^{\infty}:= \varinjlim\,B{\mathscr C}^{n}.
$
Then $A\mathscr B^{\infty}$ is a completed DG-category.
\begin{defn}\label{defnindlim}
We define
$$
\varinjlim\,{\mathscr C}^{n}: = A\mathscr B^{\infty}
$$
to be the {\it inductive limit} of an inductive system of filtered $A_{\infty}$ categories.
\end{defn}
We remark that 
$
\varinjlim\,{\mathscr C}^{n}(a,b) = \frak S^1\mathscr B^{\infty}(a,b)
= \frak S^1A\mathscr B^{\infty}(a,b).
$
Here the inductive limit in the left hand side is one as filtered cochain complexes over $\Lambda_0$.
Therefore there exists a completed injection
\begin{equation}
I: ({\mathscr C}^{\infty}(a,b),\frak m_1) \to (A\mathscr B^{\infty}(a,b),\frak M_1).
\end{equation}

\begin{thm}\label{theorem63}
The completed injection $I$  
is an almost cochain homotopy equivalence.
\end{thm}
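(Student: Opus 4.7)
My plan is to reduce the existence of an $\epsilon$-$W$ shrinker for $I$ to the classical Bar-CoBar duality at a single finite level $n$ of the inductive system, and then to transfer the resulting contracting homotopy up to the limit through $\hat\Phi^{\infty,n}$, absorbing the tail energy loss into a factor of $T^\epsilon$. Fix a finitely generated subcomplex $W\subseteq\overset{\circ}{A\mathscr B^{\infty}}(a,b)$ and $\epsilon>0$. Since $W$ is finitely generated it is contained in $\frak S^K\overset{\circ}{A\mathscr B^{\infty}}(a,b)$ for some finite $K$, and each generator, being representable in $\mathscr B^{\infty}=\varinjlim B\mathscr C^n$, pulls back to a finite level. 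First I would pick $n$ large enough that (i) all generators of $W$ lift to a finitely generated subcomplex $\tilde W\subseteq\frak S^K\overset{\circ}{AB\mathscr C^n}(a_n,b_n)$ on which $\hat\Phi^{\infty,n}$ is surjective to $W$, and (ii) the tail $\epsilon'_n=\sum_{m\ge n}\epsilon_m$ satisfies $K\epsilon'_n<\epsilon$. For $w\in W$ let $\tilde w\in\tilde W$ denote a chosen lift, compatibly selected so that $d\tilde w$ is a lift of $dw$ (possible because $\tilde W$ is a subcomplex).

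At level $n$, I would invoke the classical Bar-CoBar duality for $\mathscr C^n$ to produce a filtration-preserving cochain projection $\pi^n:AB\mathscr C^n(a_n,b_n)\to\mathscr C^n(a_n,b_n)$, built from iterated $A_\infty$-operations $\frak m_k$ and splitting the inclusion $\iota^n$ of the $\frak S^1$-part, together with a filtration-preserving degree $-1$ contracting homotopy $h^n$ satisfying
\[
dh^n+h^n d=\mathrm{id}-\iota^n\pi^n,\qquad h^n\iota^n=0.
\]
I would then define
\[
G(w):=T^\epsilon\,\hat\Phi^{\infty,n}\bigl(h^n(\tilde w)\bigr).
\]
Because $\hat\Phi^{\infty,n}$ comes from a DFDGC morphism, it commutes coordinate-wise with $\partial$ and $\hat\Delta$, hence with $\frak M_1$; consequently
\[
(dG+Gd)(w)=T^\epsilon\hat\Phi^{\infty,n}\bigl((dh^n+h^n d)(\tilde w)\bigr)=T^\epsilon w-T^\epsilon\hat\Phi^{\infty,n}\bigl(\iota^n\pi^n(\tilde w)\bigr).
\]
Thus $(dG+Gd-T^\epsilon)(w)=-T^\epsilon\hat\Phi^{\infty,n}(\iota^n\pi^n(\tilde w))$. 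Since $\iota^n\pi^n(\tilde w)\in\mathscr C^n=\frak S^1AB\mathscr C^n$ and $\hat\Phi^{\infty,n}$ has energy loss at most $\epsilon'_n<\epsilon$ on $\frak S^1$, the image $\hat\Phi^{\infty,n}(\iota^n\pi^n(\tilde w))$ lies in $\mathscr C^{\infty}$, and the prefactor $T^\epsilon$ ensures non-negative valuation, so $(dG+Gd-T^\epsilon)(w)\in\overset{\circ}{\mathscr C^{\infty}}$, verifying condition~(1). Condition~(2), vanishing of $G$ on $W\cap\mathscr C^{\infty}$, follows from $h^n\iota^n=0$; condition~(3), preservation of the energy filtration, follows because the $K\epsilon'_n$ of energy loss incurred by $\hat\Phi^{\infty,n}$ on $\frak S^{\le K}$ is exactly compensated by the $T^\epsilon$-factor.

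The principal technical obstacle is the level-$n$ Bar-CoBar duality: constructing $\pi^n$ and $h^n$ simultaneously compatible with both the energy filtration and the total number filtration $\frak S^\bullet$. In the $A_\infty$ (as opposed to strictly DG) setting the formulas for both involve sums over iterated $\frak m_k$-trees, whose convergence and filtration behavior follow from the fact that each $\frak m_k$ preserves the energy filtration together with gapped-ness of $\mathscr C^n$, which guarantees that, modulo any fixed energy cutoff, only finitely many trees contribute for each fixed total number. A secondary subtlety is arranging the lifts $\tilde w$ into a $\Lambda_0$-linear, $d$-compatible section of $\hat\Phi^{\infty,n}|_{\tilde W}$, which can be achieved by further enlarging $n$ to make $\hat\Phi^{\infty,n}|_{\tilde W}$ injective.
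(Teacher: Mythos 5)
Your plan is structurally correct at the outer layer and you have independently rediscovered the content of Sublemma \ref{sublem7878}: lift a finitely generated $W$ to a finite level $n$, contract there, and push forward via $A\widehat{\Phi}^{\infty,n}$, absorbing the loss of valuation into a $T^{\epsilon}$-prefactor (the commutation of $A\widehat{\Phi}^{\infty,n}$ with $\frak M_1$ is indeed the key formal point). But the paper takes a genuinely more elementary route through the middle, and the step you call your ``principal technical obstacle'' is precisely what the paper's structure is designed to avoid. The paper never constructs a global level-$n$ contracting homotopy $h^n$ with $dh^n+h^nd=\mathrm{id}-\iota^n\pi^n$ and $h^n\iota^n=0$. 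Instead, via Lemmas \ref{lem86} and \ref{lemma8dayto}, it reduces the claim to Lemma \ref{lem8888}: the inclusion $\frak S^{m-1}A\mathscr B^{\infty}[-1]\hookrightarrow \frak S^{m}A\mathscr B^{\infty}[-1]$ is an almost cochain homotopy equivalence \emph{relative to} $\frak S^1=\mathscr C^{\infty}$, one filtration step at a time. The only homotopy operator ever used is the trivially defined map $S$ (replace one $\boxtimes$ by $\otimes$), via the identities (\ref{new7576}) and (\ref{new7676}); no perturbation series and no global retraction are needed.

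If you carry your route through you do get a single shrinker for $I$ all at once, which is cleaner to state, but it costs you a genuine construction that should not be outsourced to ``classical Bar-CoBar duality'': the classical statement does not by itself package energy-filtration preservation, finite-part preservation, and the side condition $h^n\iota^n=0$ simultaneously, yet these are exactly what your shrinker conditions (2) and (3) hinge on. The construction is in fact available: since $\delta=\sum_{k\ge 2}\partial_k$ strictly lowers the total number filtration by (\ref{new7676}), the perturbation series $h^n=H+H\delta H+\cdots$ with $H=\tfrac{1}{m-1}S$ on the $m$-th number-graded piece terminates on each $\frak S^K$, each summand preserves energy, and the side condition holds because $S$ and each $\partial_k$ ($k\ge 2$) vanish on $\frak S^1$. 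But you need to write this down; it is more than the paper's argument. Two smaller bookkeeping points: your surjective lift must also be made injective (you flag this) so that $d$-compatibility is automatic, and your single $T^{\epsilon}$ prefactor undercounts — the lift $\tilde w$ itself already has a valuation defect of up to $\epsilon_n$ by Sublemma \ref{sublem7878}\,(3), on top of the $K\epsilon'_n$ incurred by $A\widehat{\Phi}^{\infty,n}$, which is why the paper's formula (\ref{form810}) carries $T^{(k+2)\epsilon}$; since you may take $n$ as large as you like this is only a constant adjustment, not a conceptual error.
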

\begin{cor}
The map
$
I_* : H({\mathscr C}^{\infty}(a,b),\frak m_1) \to
H(A\mathscr B^{\infty}(a,b),\frak M_1)
$
is an almost isomorphism of $\Lambda_0$ modules.
\end{cor}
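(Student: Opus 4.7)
The corollary is a direct consequence of Theorem \ref{theorem63} together with Lemma \ref{lemma217}, so the plan is to verify that the setup of that lemma applies to $I$ and then invoke it.

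The first step is to confirm that the pair $(\mathscr C^{\infty}(a,b), A\mathscr B^{\infty}(a,b))$ together with $I$ fits Definition \ref{defn216}. By Lemma-Definition \ref{lininductdefn}, $\mathscr C^{\infty}(a,b) = \varinjlim (\mathscr C^n(a^n,b^n),\frak m_1)$ is a completed cochain complex over $\Lambda_0$ in the required sense, with finite part $\overset{\circ}{\mathscr C^{\infty}}(a,b)$, and by construction it is zero energy generated. On the target side, $A\mathscr B^{\infty}(a,b)$ is a completed cochain complex with finite part $\overset{\circ}{A\mathscr B^{\infty}}(a,b)$, and from the definition of the operators $\frak M_1 = \partial + \hat\Delta$ and $\frak M_2$ in Section \ref{sec;back}, the stratum $\frak S^1 A\mathscr B^{\infty}(a,b)$ is precisely $\varinjlim \mathscr C^n(a^n,b^n)$, on which $\hat\Delta$ vanishes so that $\frak M_1$ restricts to $\frak m_1$. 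Thus $I$ is a completed injection and the valuation is preserved.

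With this setup in place, the second step is simply to invoke Theorem \ref{theorem63}, which asserts that $I$ is an almost cochain homotopy equivalence. Lemma \ref{lemma217} then states exactly that, for a completed subcomplex inclusion which is an almost cochain homotopy equivalence, the induced map on cohomology is an almost isomorphism of $\Lambda_0$ modules. Applying it to $I$ immediately yields that
\[
I_{*}: H(\mathscr C^{\infty}(a,b),\frak m_1) \longrightarrow H(A\mathscr B^{\infty}(a,b),\frak M_1)
\]
is an almost isomorphism, as required.

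There is essentially no further obstacle at the level of the corollary; the substantive work is hidden inside Theorem \ref{theorem63}, which is where $\epsilon$-$W$ shrinkers must be produced for arbitrary finitely generated subcomplexes $W$ of $\overset{\circ}{A\mathscr B^{\infty}}(a,b)$. One expects these shrinkers to come from an explicit contracting homotopy built from the co-composition $\Delta$, shortening tensor length at the cost of a small energy loss coming from the sequence $\epsilon_n$ of the inductive system (finiteness of $\sum \epsilon_n$ being what makes the construction converge). The present corollary, however, uses only the output of that construction.
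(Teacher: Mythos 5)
Your proof is correct and matches the paper's own reasoning: the paper proves the corollary precisely by citing Theorem \ref{theorem63} (that $I$ is an almost cochain homotopy equivalence) and Lemma \ref{lemma217} (that such equivalences induce almost isomorphisms on cohomology). Your additional verification that $I$ is a completed injection with $\frak S^1 A\mathscr B^{\infty}(a,b) = \mathscr C^{\infty}(a,b)$ is a correct unpacking of what the paper leaves implicit.
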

This is a consequence of Theorem \ref{theorem63} and Lemma \ref{lemma217}.
We prove Theorem \ref{theorem63} in the next section.

\section{Almost cochain homotopy equivalence: the proof of Theorem \ref{theorem63}.}
\label{sec;speclo}

We begin with the following, which seems to be known to experts at 
least in the case of DG or $A_{\infty}$ category over a filed, 
such as $\Lambda$ or $R$.
We remark that if $\mathscr C$ is gapped then $AB\mathscr C$
is gapped.

\begin{thm}\label{Theorem71}
Let $\mathscr C$ be a gapped filtered $A_{\infty}$ category.
Then there exists a gapped homotopy equivalence of 
filtered $A_{\infty}$ categories 
$$
\frak I = (I_1,I_2,.\dots) : \mathscr C \to AB\mathscr C
$$ 
so that the linear part $I_1$ is the inclusion $I: \mathscr C = \frak S^1 AB\mathscr C
\subseteq A_1B\mathscr C$.
\end{thm}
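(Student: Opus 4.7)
The plan is to define $\frak{I}$ by the natural single-box formula, verify the $A_\infty$ relation by direct computation, and promote $\frak{I}$ to a homotopy equivalence via a Whitehead-type reduction to the classical Bar-Cobar duality over the ground ring $R$.

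I would set $I_1$ equal to the required inclusion and, for $k \geq 2$, define
\[
I_k(x_1,\ldots,x_k) \;=\; x_1 \otimes \cdots \otimes x_k,
\]
regarded as the single-$A$-factor element of $A_1 B_k \mathscr{C}[1](a,b) \subset AB\mathscr{C}[1](a,b)$. Since no Novikov coefficient appears, $\frak{I}$ is automatically gapped with zero energy loss and preserves the finite parts.

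To verify the $A_\infty$ functor relation, note that $\frak{M}_j = 0$ for $j \geq 3$ in $AB\mathscr{C}$, so the relation reduces to
$\frak{M}_1 \circ I_k + \sum_{k_1+k_2=k}\frak{M}_2(I_{k_1} \oslash I_{k_2}) = \sum_{i,j} \pm\, I_{k-j+1}(\ldots, \frak{m}_j(x_i,\ldots), \ldots)$.
Applied to $x_1\otimes\cdots\otimes x_k$, the $\partial$-summand of $\frak{M}_1$ in (6.4) reproduces the bar coderivation $\hat d(x_1\otimes\cdots\otimes x_k)$ inside a single $A$-factor, which is exactly $\sum \pm\, I_{k-j+1}(\ldots, \frak{m}_j(x_i,\ldots), \ldots)$; the $\hat\Delta$-summand of $\frak{M}_1$ produces $\sum_{j=1}^{k-1}(x_1\otimes\cdots\otimes x_j)\boxtimes(x_{j+1}\otimes\cdots\otimes x_k)$, which cancels against the $\frak{M}_2$ terms. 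The $+1$ in the sign convention (6.12) of $\frak{M}_2$ is chosen precisely to produce this cancellation.

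To upgrade $\frak{I}$ to a homotopy equivalence I would use the gap monoid $G \subset \R_{\geq 0}$ to reduce modulo the maximal ideal $\Lambda_+$. The reduction $\overline{\frak{I}}\colon \overline{\mathscr{C}} \to AB\overline{\mathscr{C}}$ is the classical Bar-Cobar inclusion of $A_\infty$ categories over the field $R$, which is a quasi-isomorphism by the Adams-Stasheff theorem. A Whitehead-type lifting result for gapped filtered $A_\infty$ functors, proved by obstruction-theoretic induction along the elements of $G$ in the standard way for gapped filtered $A_\infty$ structures, then promotes this $R$-linear quasi-isomorphism to a gapped homotopy equivalence of the filtered $A_\infty$ categories, as required.

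The main obstacle is the lifting step: the classical Bar-Cobar quasi-inverse $AB\mathscr{C} \to \mathscr{C}$ is not a strict DG-functor but an $A_\infty$ functor whose higher components necessarily involve the $\frak{m}_k$ of $\mathscr{C}$ (a product $y_1 \boxtimes \cdots \boxtimes y_m$ of length-$n_i$ bar elements can only be collapsed into $\mathscr{C}$ by applying higher multiplications). Constructing this inverse and the attendant cochain homotopies compatibly with the gap monoid $G$ and the Novikov filtration, and checking convergence of the resulting formal series, is where the bulk of the technical work lies.
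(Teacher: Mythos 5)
Your formula for $I_k$ and your verification of the $A_\infty$ functor relation are exactly the paper's; you even arrive at the same point that the $+1$ in the sign of $\frak M_2$ is what makes the $\hat\Delta$ summand of $\frak M_1\circ I_k$ cancel against the $\frak M_2(I_{k_1}\oslash I_{k_2})$ terms. Where you diverge is in establishing that $I_1$ is a quasi-isomorphism. You propose to reduce modulo $\Lambda_+$, invoke the classical bar-cobar quasi-isomorphism over the field $R$, and then lift by a Whitehead-type theorem for gapped filtered $A_\infty$ categories. The paper instead builds an explicit contraction at the filtered $\Lambda_0$ level: the operator $S$ of Definition \ref{defnS} (replace one $\boxtimes$ by $\otimes$) satisfies $\hat\Delta S + S\hat\Delta = (m-1)\cdot{\rm id}$ on $\frak S^m/\frak S^{m-1}$ and anticommutes with $\partial_1$, while the higher $\partial_k$, $k\ge 2$, strictly drop the total number filtration; since the ground ring contains $\Q$ this kills the cohomology of each graded quotient with $m>1$. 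Both routes are sound, and both ultimately rest on a Whitehead-type statement for gapped filtered $A_\infty$ categories, which is a citable black box rather than something you need to reprove from scratch — your worry that the lifting step is the bulk of the technical work is overcautious in the gapped setting. The real payoff of the paper's choice is that the explicit operator $S$ and its relative version $S^{\rm out}$ are reused in the proofs of Theorem \ref{theorem63} and Lemma \ref{lem12555}, where gapped-ness is unavailable and the classical argument over $R$ has no analogue; your route would not generalize to those settings. Finally, you do not address passing from the finite part $\overset{\circ}AB\mathscr C(a,b)$ to the completed $AB\mathscr C(a,b)$, which the paper flags explicitly as a separate step requiring an argument of the type used for Lemma \ref{lemma217}.
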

\begin{proof}
We write an element of $AB\mathscr C$ as 
\begin{equation}\label{defnofcirc}
{\bf x} = x_1 \odot_1 x_2 \odot_2 \dots \odot_{m-1} x_{m-1} \odot x_m
\end{equation}
where $\odot_i$ is one of $\boxtimes$ or $\otimes$ and $x_i \in \mathscr C(c,c')$
for certain elements $c,c' \in \frak{OB}(\mathscr C)$.
We define $\deg \otimes =0$, $\deg \boxtimes = 1$.
\par
The co-boundary operator $\frak M_1$ is the sum of the operations of the forms (I), (II) below.
\begin{enumerate}
\item[(I)] Replacing $\otimes$ to $\boxtimes$. Namely 
\begin{equation}
\aligned
&x_1 \odot_1 \dots \odot_{k-1} x_k \otimes x_{k+1} \odot_{k+2} \dots  \odot_k x_m \\
&\mapsto 
(-1)^*  x_1 \odot_1 \dots \odot_{k-1} x_k \boxtimes x_{k+1} \circ_{k+2} \dots \odot_k x_m
\endaligned
\end{equation}
where $* = \deg'x_1 + \deg  \odot_1 + \dots  + \deg' x_{k-1} + \deg\odot_{k-1}$.
The sum of such operations is $\hat\Delta$.
\item[(II)]
Applying $\frak m_k$ to $x_{i+1}  \otimes \dots \otimes x_{i+k}$.
In other words 
\begin{equation}
\aligned
&x_1 \odot_1 \dots \odot_{i-1} x_{i} \odot_{i} x_{i+1} 
\otimes \dots \otimes x_{i+k}\odot_{i+k+1} \dots  \odot_m x_m \\
&\mapsto 
(-1)^*  x_1 \odot_1 \dots \odot_{i} \frak m_{k}(x_{i+1},\dots,x_{i+k}) \odot_{i+k+1} \dots \odot_m x_m
\endaligned\end{equation}
Here we require 
$\odot_{i+1} = \dots = \odot_{i+k}= \otimes$. 
The sign is $* = \deg'x_1 + \deg \odot_1 + \dots +\deg' x_{k-1} +  \deg \odot_{i}$.
\end{enumerate}
For a given $k$ we denote the sum of such operations by $\partial_k$.
The sum $\widehat{\Delta} + \sum_{k=1}^{\infty}\partial_k$ is 
the co-boundary operator of $AB\mathscr C$.
\begin{defn}
We define the operator $S$ as the sum of the following operations
\begin{equation}\label{defnS}
\aligned
&x_1 \odot_1 \dots \odot_{k-1} x_k \boxtimes x_{k+1} \odot_{k+2} \dots  \odot_m x_m \\
&\mapsto 
(-1)^*  x_1 \odot_1 \dots \odot_{k-1} x_k \otimes x_{k+1} \odot_{k+2} \dots \odot_m x_m.
\endaligned
\end{equation}
Namely we replace one of $\boxtimes$ to $\otimes$.
The sign is $* = \deg'x_1 + \deg \odot_1 + \dots + \deg\odot_{i}$.
\end{defn}

One can easily check the next formula:
\begin{equation}\label{new7576}
\left\{
\aligned
&(\hat\Delta \circ S + S \circ \hat\Delta)({\bf x}) = m-1, \\
&\partial_1 \circ S + S \circ \partial_1 = 0.
\endaligned
\right.
\end{equation}
Here $m$ is as in (\ref{defnofcirc}). Moreover we have
\begin{equation}\label{new7676}
\partial_k(\frak S^m AB\mathscr C)
\subseteq \frak S^{m-1} AB\mathscr C
\end{equation}
for $k\ge 2$.
Therefore  
\begin{equation}\label{isomor77}
H\left(\frac{\frak S^m AB\mathscr C}{\frak S^{m-1} AB\mathscr C},\frak M_1\right) = 0
\end{equation}
for $m  > 1$.
(We use the fact that the ground ring is a field of characteristic $0$ 
here.)
It implies that $I = I_1$ induces an isomorphism on cohomologies.
\par
More precisely (\ref{isomor77}) implies that the cohomology of $\overset{\circ}AB\mathscr C(a,b)$ 
with $\frak M_1$ as coboundary operator is isomorphic 
to the cohomology of $\mathscr C(a,b)$ with $\frak m_1$ as coboundary operator.
We need to show that the inclusion induces an isomorphism between 
the cohomology of $\overset{\circ}AB\mathscr C(a,b)$ 
and the cohomology of its completion $AB\mathscr C(a,b)$.
The proof of this fact is similar to the proof of Lemma \ref{lemma217} or 
of Theorem \ref{theorem63}.
So we omit it.
\par
To complete the proof it suffices to find $I_k$ for $k=2,\dots$ such that 
$\frak I = (I_1,I_2,\dots)$ becomes an $A_{\infty}$ functor.
We define:
$
I_k: \mathscr C[1](c_0,c_1) \widehat\oslash \cdots \widehat\oslash \mathscr C[1](c_{k-1},c_k)
\to AB\mathscr C(c_0,c_k)
$
by
\begin{equation}\label{defnofIk}
I_k(x_1 \oslash \cdots \oslash x_k)
:=
x_1 \otimes \cdots \otimes x_k.
\end{equation}
Namely we replace all the $\oslash$ to $\otimes$.
Note that the right hand side is an element of $B_k\mathscr C[1](c_0,c_k) =B\mathscr C(c_0,c_1)
\subseteq A_1B\mathscr C(c_0,c_1)$.
It is straitforward to check that (\ref{defnofIk}) defines an $A_{\infty}$ functor.
\end{proof}
\begin{rem}
The fact that the composition of the Bar and the Co-Bar resolutions is equivalent to the 
identity functor is well established.
(See for example \cite{keller}.)  Since the author could not find the simple formula 
(\ref{defnofIk}) in the literature, he includes it here.
Note that even in the case when $\mathscr C$ is a DG-category the map $I_2$
is non-zero. Namely 
$\frak I$ is not a DG functor. So in the literature the process of inverting the 
weak equivalence is used. By using the language of $A_{\infty}$ category we can 
explicitly construct a homotopy equivalence. 
\end{rem}

\begin{proof}[Proof of Theorem \ref{theorem63}]
We begin with the following definition.
\begin{defn}
Let $(C_i,C_{i,0})$ be a completed cochain complex for $i=0,1,2$
and $\frak i_i: (C_i,C_{i,0}) \to (C_{i+1},C_{i+1,0})$
a completed injection for $i=0,1$.
We say $\frak i_1$ is {\it almost cochain homotopy equivalance 
relative to $C_0$} if the following holds.
For any finitely generated subcomplex $W$ of $C_{2,0}$ and 
$\epsilon>0$ there exists $G: W \to C_{2,0}$ such that
\begin{enumerate}
\item The image of $dG + Gd - T^{\epsilon}$ is in $C_{1,0}$.
\item  $G = 0$ on $C_{0,0}$.
\item  $G$ preserves energy filtration.
\end{enumerate}
We call $G$ a {\it $\epsilon$-$W$ shrinker relative to $C_0$}.
\end{defn}
\begin{lem}\label{lem86}
Let $(C_i,C_{i,0})$ be a completed cochain complex for $i=1,2,3$ ,
and  $\frak i_i: (C_i,C_{i,0}) \to (C_{i+1},C_{i+1,0})$ a
completed injection for $i=0,1,2$.
Suppose $\frak i_1$ and $\frak i_2$ are 
almost cochain homotopy equivalences relative to $C_0$. 
Then the composition 
$\frak i_2\circ\frak i_1: (C_1,C_{1,0}) \to (C_3,C_{3,0})$ is an almost cochain homotopy equivalence relative to $C_0$.
\end{lem}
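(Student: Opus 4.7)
The plan is to construct, given a finitely generated subcomplex $W \subseteq C_{3,0}$ and $\epsilon > 0$, the required $\epsilon$-$W$ shrinker $G : W \to C_{3,0}$ relative to $C_0$ for $\frak i_2 \circ \frak i_1$ by invoking the given shrinker hypotheses on $\frak i_2$ and on $\frak i_1$ in turn, then combining them with a $T^{\epsilon/2}$-shifted correction.

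First, apply the hypothesis that $\frak i_2$ is an almost cochain homotopy equivalence relative to $C_0$ to $W$ with parameter $\epsilon/2$. This produces $G_2 : W \to C_{3,0}$ which vanishes on $C_{0,0}$, preserves the energy filtration, and satisfies $\phi_2 := dG_2 + G_2 d - T^{\epsilon/2}$ having image in $C_{2,0}$. Because $d^2 = 0$, one checks at once that $d \phi_2 = \phi_2 d$, so $W_2 := \phi_2(W) \subseteq C_{2,0}$ is a finitely generated subcomplex. Now apply the hypothesis on $\frak i_1$ to $W_2$ with parameter $\epsilon/2$, obtaining $G_1 : W_2 \to C_{2,0}$ which vanishes on $C_{0,0}$, preserves the energy filtration, and satisfies $\phi_1 := dG_1 + G_1 d - T^{\epsilon/2}$ having image in $C_{1,0}$. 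Define
\[
G \;:=\; T^{\epsilon/2} G_2 \;-\; G_1 \circ \phi_2 \;:\; W \longrightarrow C_{3,0}.
\]

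For $x \in W$, using $\phi_2 d = d\phi_2$ to replace $G_1(\phi_2(dx))$ with $G_1(d\phi_2(x))$ inside the expression $(dG_1 + G_1 d)(\phi_2(x))$, one computes
\[
(dG + Gd)(x) = T^{\epsilon/2}\bigl(T^{\epsilon/2} x + \phi_2(x)\bigr) - \bigl(T^{\epsilon/2}\phi_2(x) + \phi_1(\phi_2(x))\bigr) = T^{\epsilon} x - \phi_1(\phi_2(x)),
\]
so $dG + Gd - T^{\epsilon}$ has image in $C_{1,0}$, as required. On $C_{0,0} \cap W$ one has $G_2 = 0$, hence $\phi_2|_{C_{0,0} \cap W} = -T^{\epsilon/2}$ and thus $G_1 \circ \phi_2|_{C_{0,0}\cap W} = -T^{\epsilon/2}\, G_1|_{C_{0,0}\cap W} = 0$; combined with $T^{\epsilon/2}G_2|_{C_{0,0}\cap W} = 0$ this gives $G|_{C_{0,0}\cap W} = 0$. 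Preservation of the energy filtration is immediate from the corresponding properties of $G_1$, $G_2$, and multiplication by $T^{\epsilon/2}$.

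No genuine difficulty is expected: the proof reduces to a routine chain-level calculation. The only point demanding care is the choice of the correction term $T^{\epsilon/2}G_2$. Setting $G = G_1 \circ \phi_2$ alone would yield $dG + Gd = T^{\epsilon/2}\phi_2 + \phi_1\phi_2$, whose leading summand $T^{\epsilon/2}\phi_2$ takes values only in $C_{2,0}$, not in $C_{1,0}$; the correction $T^{\epsilon/2}G_2$ is precisely what is needed to cancel this spurious term, at the cost of promoting the leading $T^{\epsilon/2}$ to $T^{\epsilon}$, which is exactly what the definition of an $\epsilon$-$W$ shrinker demands.
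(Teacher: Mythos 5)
Your proof is correct, and it is in fact the careful version of what the paper's printed proof attempts to do: compose the approximate retractions $T^{\epsilon/2} - (dG_i + G_id)$ and read off a degree $-1$ operator. But the paper's written proof contains a genuine error. It claims the identity $(dG_2 + G_2d)\circ(dG_1 + G_1d) = dG_2G_1 + G_2G_1d$, which cannot hold: writing $A = dG_2+G_2d$ and $B = dG_1+G_1d$, the left side $AB$ has degree $0$ while $dG_2G_1 + G_2G_1d$ has degree $-1$, and the paper's proposed shrinker $G := T^{\epsilon}(G_1+G_2) - G_2G_1$ is consequently not even degree-homogeneous. (A two-term example with $de_0=e_1$ and $G_ie_1=e_0$ also shows the two sides differ.) The correct identity is $AB = d(G_2 B) + (G_2 B)d$, and the homotopy this produces, after absorbing the $T^{\epsilon/2}$-shifts, is precisely your $G = T^{\epsilon/2}G_2 - G_1\circ\phi_2$. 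You also handle a second point on which the paper is sloppy: a shrinker for $\frak i_1$ is by definition only available on finitely generated subcomplexes of $C_{2,0}$, and you correctly feed it $\phi_2(W)\subset C_{2,0}$, whereas the paper asks for an $\frak i_1$-shrinker on a subcomplex $W^+$ of $C_{3,0}$ containing $W$ and $G_1W$, which does not match the definition. So your argument is not just equivalent to the paper's; it is the argument the paper should have printed.
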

\begin{proof}
Let $W \subset C_{3,0}$ and $\epsilon > 0$.
We take $\epsilon-W$ shrinker $G_1$ relative to $C_0$ (for $\frak i_2$).
We take $W^+$ which contains $W$, $G_1W$. 
We then take $\epsilon-{W^+}$-shrinker $G_2$
relative to $C_0$ (for $\frak i_1$).
We observe 
$$
(T^{\epsilon} - (dG_2 + G_2d))\circ (T^{\epsilon} - (dG_1 + G_1d))(W)
\subset C_1.
$$
Since
$
(T^{\epsilon} - (dG_2 + G_2d))\circ (T^{\epsilon} - (dG_1 + G_1d))
= 
T^{2\epsilon} - T^{\epsilon}(d (G_1+ G_2) + (G_1+ G_2) d)
+ (dG_2G_1 + G_2G_1d)
$, the map
$G := T^{\epsilon}(G_1+ G_2) - G_2G_1$  is a $2\epsilon-W$ shrinker 
relative to $C_0$ we 
look for.
\end{proof}
\begin{lem}\label{lemma8dayto}
Let $(C_i,C_{i,0})$ $i=1,2,\dots$ and $(C,C_0)$ be completed cochain complexes, 
and the cochain homomorphisms $(C_i,C_{i,0}) \to (C_{i+1},C_{i+1,0})$ completed injections.
We assume
$
C_0  = \bigcup_i  C_{i,0}.
$
We also assume that  $(C_i,C_{i,0}) \to (C_{i+1},C_{i+1,0})$ is an almost cochain homotopy equivalence 
for $i=1,2,\dots$.
\par
Then $(C_1,C_{1,0}) \to (C,C_{0})$ is an almost cochain homotopy equivalence.
\end{lem}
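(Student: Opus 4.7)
The plan is to reduce to finite compositions already handled by Lemma \ref{lem86}. Since $W$ is a finitely generated subcomplex of $C_0 = \bigcup_i C_{i,0}$, each of its finitely many generators lies in some $C_{i,0}$; taking $N$ to be the maximum of these indices one obtains $W \subseteq C_{N,0}$. Thus it suffices to produce an $\epsilon$-$W$ shrinker $G : W \to C_{N,0}$ for the injection $(C_1,C_{1,0}) \to (C_N,C_{N,0})$, because composing with the inclusion $C_{N,0} \subseteq C_0$ then yields a shrinker for $(C_1,C_{1,0}) \to (C,C_0)$: both the codomain condition and the condition that $dG + Gd - T^\epsilon$ land in $C_{1,0}$ concern only the finite parts, which are unaffected by passing to the completion $C$.

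For the finite composition, I would first observe that each plain almost cochain homotopy equivalence $\frak{i}_i : (C_i,C_{i,0}) \to (C_{i+1},C_{i+1,0})$ is, via the inclusion $C_{1,0} \subseteq C_{i,0}$, also an almost cochain homotopy equivalence relative to $(C_1,C_{1,0})$ in the sense preceding Lemma \ref{lem86}: the vanishing condition ``$G = 0$ on $C_{i,0}$'' is stronger than ``$G = 0$ on $C_{1,0}$''. I would then invoke Lemma \ref{lem86} inductively $N-1$ times to conclude that the composition $(C_1,C_{1,0}) \to (C_N,C_{N,0})$ is itself an almost cochain homotopy equivalence relative to $(C_1,C_{1,0})$, which is the same as a plain almost cochain homotopy equivalence.

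The only delicate point is the accumulation of energy loss: the construction in Lemma \ref{lem86} converts two $\epsilon$-shrinkers into a $2\epsilon$-shrinker, so a naive iteration of $N-1$ steps with uniform parameter would produce a $2^{N-1}\epsilon$-shrinker. This is harmless because of the order of quantifiers: the number $N$ is determined by $W$ \emph{before} $\epsilon$ is chosen, so one may simply start from $(\epsilon/2^{N-1})$-shrinkers at each intermediate step and end up with an $\epsilon$-$W$ shrinker for the full composition. The remaining verifications --- that $(C_1,C_{1,0}) \to (C,C_0)$ is a completed injection, i.e.\ that valuations are preserved along the limit map --- are immediate from the corresponding properties of the individual $\frak{i}_i$. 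I do not foresee a serious obstacle; the argument is careful bookkeeping around Lemma \ref{lem86} together with the finite-generation hypothesis on $W$, which is precisely what lets us reduce the infinite inductive system to a single finite composition.
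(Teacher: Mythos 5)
Your argument is correct and is essentially the paper's own (very terse) proof: since $W$ is finitely generated it lands in some $C_{N,0}$, and then one iterates Lemma \ref{lem86} with $C_0 := C_1$, noting that an absolute almost cochain homotopy equivalence $(C_i,C_{i,0}) \to (C_{i+1},C_{i+1,0})$ is automatically one relative to $C_1$. Your worry about the factor $2^{N-1}$ accumulating is addressed by your rescaling, but it is also automatic: the conclusion of Lemma \ref{lem86} is that the composite \emph{is} an almost cochain homotopy equivalence (hence quantified over every $\epsilon$), so each subsequent application can be invoked at whatever fresh $\epsilon$ one wants without tracking an exponent.
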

\begin{proof}
By Lemma \ref{lem86}
the inclusion $C_1 \to C_k$ is an almost cochain homotopy equivalence.
Since any finitely generated subcomplex of $C_0$ is contained 
in $C_{k,0}$ for some $k$, the lemma follows.
\end{proof}
Therefore to prove Theorem \ref{theorem63} 
it suffices to show the next lemma.
\begin{lem}\label{lem8888}
The inclusion 
$\frak S^{m-1}A\mathscr B^{\infty}[-1](a,b) \to \frak S^mA\mathscr B^{\infty}[-1](a,b)$
is an almost cochain homotopy equivalence relative to 
${\mathscr C}^{\infty}(a,b) = \frak S^{1}A\mathscr B^{\infty}[-1](a,b)$.
\end{lem}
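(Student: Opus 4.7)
The plan is to transfer the explicit cochain homotopy constructed in the proof of Theorem~\ref{Theorem71} at each finite level $n$ to the inductive limit $A\mathscr B^{\infty}$, picking up a controlled energy loss from the transition maps. At level $n$, the operator $S$ defined in (\ref{defnS}) on $AB\mathscr C^n$ satisfies the identities (\ref{new7576}) and (\ref{new7676}): on an element $\mathbf x$ of atomic count $M$ (the integer $m$ in (\ref{defnofcirc})), one has $(\hat\Delta S + S\hat\Delta)(\mathbf x) = (M-1)\mathbf x$ and $[\partial_1, S](\mathbf x) = 0$, while each $\partial_b$ with $b \ge 2$ strictly lowers the atomic count. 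Setting
$$
G_0^n := \tfrac{1}{M-1}\, S \text{ on the } M \ge 2 \text{ piece}, \qquad G_0^n := 0 \text{ on the } M = 1 \text{ piece},
$$
produces an energy-preserving operator on $AB\mathscr C^n$ which vanishes on $\frak S^1$ and satisfies
$$
\frak M_1^n G_0^n + G_0^n \frak M_1^n \equiv \mathrm{id} \pmod{\frak S^{m-1} AB\mathscr C^n}
$$
on $\frak S^m AB\mathscr C^n$.

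Given a finitely generated subcomplex $W \subset \frak S^m \overset{\circ}{A\mathscr B^{\infty}}(a,b)$ and $\epsilon > 0$, I choose $n$ large enough that two conditions hold. First, every generator $x_i$ of $W$ lifts to some $\tilde x_i \in \frak S^m \overset{\circ}{AB\mathscr C^n}$ with $A\hat\Phi^{\infty,n}(\tilde x_i) = x_i$, and the finitely many $\Lambda_0$-linear and $\frak M_1$ relations defining $W$ already hold at level $n$. This is arranged by using that any element of $\ker A\hat\Phi^{\infty,n}$ vanishes at some higher finite level, so taking $n$ large enough kills all relations in $W$. Second, $m \epsilon'_n < \epsilon$ where $\epsilon'_n := \sum_{k\ge n}\epsilon_k$. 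Writing $x \mapsto \tilde x$ for the resulting consistent lift, so that in particular $\widetilde{\frak M_1 x} = \frak M_1^n \tilde x$, I define
$$
G(x) := T^\epsilon \cdot A\hat\Phi^{\infty,n}\bigl(G_0^n(\tilde x)\bigr).
$$

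Verification of the shrinker axioms crucially uses that $A\hat\Phi^{\infty,n}$ is a cochain map, being induced by the DFDGC morphism $\hat\Phi^{\infty,n}$ which by definition intertwines the coboundary and the co-composition. Combined with the lift-compatibility, this gives
$$
\frak M_1 G(x) + G(\frak M_1 x) = T^\epsilon \, A\hat\Phi^{\infty,n}\bigl((\frak M_1^n G_0^n + G_0^n \frak M_1^n)(\tilde x)\bigr) \in T^\epsilon x + \frak S^{m-1} \overset{\circ}{A\mathscr B^{\infty}},
$$
since $A\hat\Phi^{\infty,n}$ preserves the number filtration. Energy preservation holds because $G_0^n$ preserves energy, $A\hat\Phi^{\infty,n}$ loses at most $m\epsilon'_n < \epsilon$ on $\frak S^m$, and the factor $T^\epsilon$ restores this. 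The axiom $G \equiv 0$ on $\frak S^1 A\mathscr B^{\infty}$ follows since $G_0^n$ vanishes on atomic count $1$ elements and atomic count is preserved under $A\hat\Phi^{\infty,n}$.

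The main technical obstacle is that the operator $S$ does \emph{not} descend to $A\mathscr B^{\infty}$ directly. The transition maps $\hat\Phi^{n+1,n}$ are coalgebra morphisms but fail to be algebra morphisms with respect to the concatenation $\otimes$ used in defining $S$; the failure is a sum of crossing partition terms involving $\Phi_b$ with $b \ge 2$, which carry energy loss $b\epsilon_n$ and strictly decrease the atomic count. One circumvents this by defining the shrinker only after fixing a finite level $n$ and pushing forward; the $T^\epsilon$ in the shrinker identity arises precisely from the accumulated energy loss $m\epsilon'_n$ of $\hat\Phi^{\infty,n}$ on $\frak S^m$. The summability $\sum \epsilon_k < \infty$ is exactly what allows us to take $n$ large enough for $T^\epsilon$ to be as close to the identity as required.
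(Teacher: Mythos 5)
Your overall strategy matches the paper's: transfer the explicit operator $S$ from the finite level $n$ to the limit $A\mathscr B^{\infty}$ by lifting a finitely generated subcomplex $W$ to a subcomplex $W^n$ at level $n$, applying the rescaled $S$ there, pushing back via $A\widehat\Phi^{\infty,n}$, and compensating the accumulated energy loss by a $T^\epsilon$ prefactor. Your identification of the obstruction (that $S$ does not descend to the limit because $\widehat\Phi^{n+1,n}$ is only a coalgebra morphism, so the definition must be made at a fixed finite level $n$ with a $T^\epsilon$ penalty) is accurate and is the key observation of the paper's argument, which packages the lift in Sublemma~\ref{sublem7878} and then defines $G$ by formula (\ref{form810}) in the same way.

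There is, however, a genuine slip in your verification of shrinker axiom (2), i.e., that $G$ vanishes on $\frak S^1 A\mathscr B^{\infty}(a,b) = \mathscr C^{\infty}(a,b)$. You assert that ``atomic count is preserved under $A\widehat\Phi^{\infty,n}$.'' This is false. $A\widehat\Phi^{\infty,n}$ is induced factorwise from the cohomomorphism $\widehat\Phi^{\infty,n}$ of Bar complexes, and such a cohomomorphism preserves the number \emph{filtration} $\frak S^k$ but not the graded pieces: $\widehat\Phi$ maps $B_2\mathscr C$ into $B_1\mathscr C \oplus B_2\mathscr C$ (the $\Phi_2$-term lowers the number of tensor factors), so $A\widehat\Phi^{\infty,n}$ can strictly decrease atomic count. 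Moreover, even if the pushforward did preserve atomic count, that would say nothing about the \emph{lift} $\tilde x$; what one actually needs is that the lift of $x \in \frak S^1 \cap W$ be chosen inside $\frak S^1 AB\mathscr C^n$, and this is a constraint on the lifting procedure, not a consequence of properties of the forward map. The paper handles exactly this point through Sublemma~\ref{sublem7878}~(4), which is built into the definition of $W^n$: since $A\widehat\Phi^{\infty,n}$ preserves $\frak S^1$, one can choose the lifts $\hat y_i$ compatibly so that $A\widehat\Phi^{\infty,n}$ restricts to a bijection $\frak S^1\cap W^n \to \frak S^1 \cap W$. Once you impose that constraint on your ``consistent lift'' $x\mapsto \tilde x$, the rest of your argument goes through and coincides with the paper's.
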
 
\begin{proof}
We begin with the following sublemma.
\begin{sublem}\label{sublem7878}
Let $W$ be a finitely generated subcomplex of 
$
\frak S^{m}\overset{\circ}A\mathscr B^{\infty}(a,b).
$
Then for any sufficiently large $n$ 
there exists $W^n \subset 
\frak S^{m}\overset{\circ}AB\mathscr C^{n}(a,b)_{\Lambda}$
such that
\begin{enumerate}
\item
$W^n$ is a subcomplex.
\item
The restriction of  $A\widehat{\Phi}^{\infty,n}$
defines an isomorphism $W^n \to W$.
\item
For $w \in W^n$ we have
\begin{equation}\label{shikinormno}
\vert \frak v(w)- \frak v(A\widehat{\Phi}^{\infty,n}(w))
\vert \le \epsilon_n
\end{equation}
with $\lim_{m\to \infty}\epsilon_n = 0$.
\item
$$
A\widehat{\Phi}^{\infty,n}\left(
\frak S^{0}\overset{\circ}AB\mathscr C^{n}[-1](a,b) \cap W_n  
\right)
=
\frak S^{0}\overset{\circ}A\mathscr B^{\infty}[-1](a,b)\cap W.
$$
\end{enumerate}
\end{sublem}
\begin{proof}
Let $y_1,\dots,y_k$ be a basis of $W_{\Lambda}$
such that $\frak v(y_i) = 0$. 
(Since $W$ is zero energy generated there exists such a basis.)
Then taking $n_0$ large there exists 
$\hat y_i \in   \frak S^{m}\overset{\circ}AB\mathscr C^{n_0}(a,b)_{\Lambda}$
such that $A\widehat{\Phi}^{\infty,n_0}(\hat y_i) = y_i$
and 
$
\vert \frak v(\hat y_i)- \frak v(y_i)
\vert \le \epsilon_{n_0}.
$
Since $W$ is a subcomplex there exists $a_{i,j} \in \Lambda_0$
such that
$
\frak M_1(y_i) = \sum_j  a_{i,j}y_j.
$
Therefore for sufficiently large $n$, 
$y'_i:= A\widehat{\Phi}^{n,n_0}(\hat y_i)$ satisfies
the equation
$
\frak M_1(y'_i) = \sum_j  a_{i,j}y'_j.
$
The subcomplex $W^n = \frak F^0(\sum \Lambda y'_i)$
has the properties (1)(2)(3).
Since total number filtration is preserved by 
$A\widehat{\Phi}^{\infty,n_0}$ we may 
choose $\hat y_i$ so that (4) is satisfied.
\end{proof}
Now we prove Lemma \ref{lem8888}.
Let $W$ be a finitely generated subcomplex of 
$
\frak S^{m}\overset{\circ}A\mathscr B^{\infty}(a,b).
$
Let $W^n$ be as in Sublemma \ref{sublem7878}.
We put
\begin{equation}\label{form810}
G := \frac{T^{(k+2)\epsilon}}{m-1} A\widehat{\Phi}^{\infty,n} \circ S \circ 
A\widehat{\Phi}^{n,\infty}_W.
\end{equation}
Here $S$ in the right hand side is as in Definition \ref{defnS}
and $A\widehat{\Phi}^{n,\infty}_W$ 
is the inverse of $A\widehat{\Phi}^{\infty,n}: W_n \to W$.
(\ref{new7576}), (\ref{new7676}), (\ref{shikinormno}), (\ref{filter666}) imply that
$$ 
T^{{(k+2)\epsilon}} - (G\circ \frak M_1 + \frak M_1 \circ G)
$$
sends $W^+ \cap \frak S^{m}\overset{\circ}A\mathscr B^{\infty}[-1](a,b)$
to $\frak S^{m-1}\overset{\circ}A\mathscr B^{\infty}[-1](a,b)$.
Note 
$S$ in Definition \ref{defnS} is zero 
on $\frak S^0AB\mathscr C^n(a,b)$. Therefore $G$ in (\ref{form810}) is 
zero on $\frak S^{0}\overset{\circ}A\mathscr B^{\infty}[-1](a,b)\cap W$
by Sublemma \ref{sublem7878} (4).
The fact that $G$ preserves energy filtration follows from 
(\ref{shikinormno}).
The proof of Lemma \ref{lem8888} is complete.
\end{proof}
The proof of Theorem \ref{theorem63} is now complete.
\end{proof}

\section{Reduced Bar-Co-bar resolution.}
\label{sec;reduced}

As we will see later (See Lemma \ref{Anounitality}.) it is hard to study 
unitality of $A\mathscr B^{\infty}$ and $AB\mathscr C$.
By this reason we study its reduced version.
We first consider $AB\mathscr C$.
Supose $\mathscr C$ has strict unit ${\bf e}_c$.
We consider a sub $\Lambda_0$ module $\overset{\circ}{\mathscr I}(\mathscr C)$ of $\overset{\circ}AB\mathscr C$
which is generated by
\begin{equation}\label{defnofcirc2}
{\bf x} = x_1 \odot_1 x_2 \odot_2 \dots \odot_{m-1} x_{m-1} \odot x_m
\end{equation}
such that one of $x_2$,\dots,$x_{m-1}$
is ${\bf e}_c$.
Let ${\mathscr I}(\mathscr C)$ be the closure of $\overset{\circ}{\mathscr I}(\mathscr C)$
in $AB\mathscr C$.
\begin{lem}\label{lemreducu}
$\mathscr I(\mathscr C)$ is an ideal of DG-category $AB\mathscr C$.
$\mathscr I(\mathscr C)$ is preserved by the operators $\hat \Delta$, 
$\partial_k$ and $S$.
\end{lem}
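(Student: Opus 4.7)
The plan is to prove four preservation assertions---that $\mathscr I(\mathscr C)$ is carried into itself by $\hat\Delta$, by each $\partial_k$, by $S$, and by the composition $\frak M_2$ on either side---and then assemble them. Closure under $\frak M_1 = \hat\Delta+\sum_k\partial_k$ is immediate from the first three, and combined with closure under $\frak M_2$ this gives the two-sided DG-ideal property of $\mathscr I(\mathscr C)$ in $AB\mathscr C$. Since $\mathscr I(\mathscr C)$ is by construction the $T$-adic closure of the $\Lambda_0$-module $\overset{\circ}{\mathscr I}(\mathscr C)$ and each of the operators is continuous for the energy filtration, it suffices to check every assertion on the generators of $\overset{\circ}{\mathscr I}(\mathscr C)$.

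First I would handle $\hat\Delta$ and $S$ together. Both operators act only on the separators $\odot_j$: $\hat\Delta$ rewrites a single $\otimes$ as $\boxtimes$ via operation (I) of the previous section, and $S$ rewrites a single $\boxtimes$ as $\otimes$ via (\ref{defnS}). The list of entries $x_1,\dots,x_m$ and the total length $m$ are unaffected, so an interior entry equal to ${\bf e}_c$ stays at an interior position of the resulting tensor, and every generator of $\overset{\circ}{\mathscr I}(\mathscr C)$ is mapped to a $\Lambda_0$-combination of generators.

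Next I would treat each $\partial_k$. The strict-unit axioms $\frak m_1({\bf e}_c)=0$ and $\frak m_k(\dots,{\bf e}_c,\dots)=0$ for $k\ge 3$ dispose of the cases $k=1$ and $k\ge 3$: any $\partial_k$-contribution whose $\otimes$-block contains the ${\bf e}_c$ entry vanishes, and contributions whose block avoids ${\bf e}_c$ leave that entry untouched at its interior position. The case $k=2$ is the delicate one, because when the length-two block acted on is $y\otimes{\bf e}_c$ or ${\bf e}_c\otimes z$ the output $(-1)^{\deg y}y$ or $z$ deletes the ${\bf e}_c$ entry altogether. The essential observation is that for each interior ${\bf e}_c$ the two $\partial_2$-contributions hitting it from its left neighbor and its right neighbor cancel by the sign formula in operation (II): a short computation using $\deg'{\bf e}_c=-1$ together with the identities $\frak m_2(y,{\bf e}_c)=(-1)^{\deg y}y$ and $\frak m_2({\bf e}_c,z)=z$ shows that the two signs differ exactly by $-1$. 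Thus the surviving contributions all come either from $\otimes$-blocks of length $k\ne 2$ or from blocks disjoint from the ${\bf e}_c$ entry, and in either event the result lies in $\mathscr I(\mathscr C)$.

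Finally I would verify closure under $\frak M_2$: by (\ref{frakM2}) this operation is the concatenation of two $\boxtimes$-strings with a sign, so every interior position of either factor remains interior in the concatenation, because the other factor contributes at least one further entry on the opposite side. Assembling the four steps yields the two-sided ideal structure. The main technical obstacle will be the sign bookkeeping in the $\partial_2$ cancellation, since the sign formula in (II) mixes the shifted degrees $\deg'$ of entries with the formal degrees $\deg\otimes=0$ and $\deg\boxtimes=1$ of the separators and must be reconciled with the sign appearing in the unit identity $\frak m_2(x,{\bf e}_c)=(-1)^{\deg x}x$; once this bookkeeping is carried out carefully, the remaining parts of the argument are essentially formal.
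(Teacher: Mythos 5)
Your treatment of $\hat\Delta$, $S$, $\frak M_2$, and $\partial_k$ for $k\ne 2$ is fine, and the sign check for the two competing $\partial_2$-contributions is correct \emph{when both contributions are present}. But there is a genuine gap in the $\partial_2$ case. The definition of $\overset{\circ}{\mathscr I}(\mathscr C)$ only requires ${\bf e}_c$ to occur as some entry $x_i$ with $2\le i\le m-1$ of the concatenated tensor; it places no constraint on the adjacent separators $\odot_{i-1}$, $\odot_i$. If exactly one of them is $\boxtimes$, then only one of the two $\partial_2$-contributions touching ${\bf e}_c$ exists, and the pairwise cancellation you invoke is vacuous. Concretely, $x_1\otimes{\bf e}_c\boxtimes x_3$ is a generator of $\overset{\circ}{\mathscr I}(\mathscr C)$, but $\partial_2(x_1\otimes{\bf e}_c\boxtimes x_3)$ is (up to sign) $x_1\boxtimes x_3$, which has no interior entry at all and so does not lie in $\overset{\circ}{\mathscr I}(\mathscr C)$ unless another unit happens to survive elsewhere in the tensor. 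Your conclusion that ``the surviving contributions all come from blocks disjoint from the ${\bf e}_c$ entry'' is therefore not established for such generators, and the claim that $\partial_2$ preserves $\mathscr I(\mathscr C)$ --- hence also that $\frak M_1$ does --- is not closed by the argument as written.

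To be fair, the paper dispatches this lemma with a single sentence and does not confront this case either, so there is no reference proof to compare against; possibly a slightly different $\mathscr I(\mathscr C)$ is intended, or some further mechanism is at work. But the case in which ${\bf e}_c$ sits at the boundary of its $\otimes$-block while remaining interior in the full tensor is exactly where the pairwise $\partial_2$-cancellation breaks, and it must be addressed explicitly before the proof is complete.
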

We remark that we say $\mathscr I(\mathscr C)$ is an ideal 
if ${\bf x} \in \mathscr I(\mathscr C)(a,b)$
 and ${\bf y} \in AB\mathscr C(b,c)$,
${\bf z} \in AB\mathscr C(d,a)$
then $\frak M_2({\bf x},{\bf y}) \in \mathscr I(\mathscr C)(a,c)$, 
$\frak M_2({\bf z},{\bf x}) \in \mathscr I(\mathscr C)(d,c)$.
The lemma is easy to prove.
\par
\begin{defn}
We denote by $A^{\rm red}B\mathscr C$
the quotient of $AB\mathscr C$ by the ideal $\mathscr I(\mathscr C)$.
\par
Lemma \ref{lemreducu} implies that $A^{\rm red}B\mathscr C$ 
is a completed DG-category with finite part
$
\overset{\circ}AB\mathscr C:= 
{\overset{\circ}AB\mathscr C}/{\overset{\circ}{\mathscr I}(\mathscr C)}
$.
The total number filtration $\frak S^kAB\mathscr C$ of $AB\mathscr C$ 
induces $\frak S^kA^{\rm red}B\mathscr C$ for $A^{\rm red}B\mathscr C$.
The energy filtration $\frak F^{\lambda}A^{\rm red}B\mathscr C$ of 
$A^{\rm red}B\mathscr C$ is also induced from the 
energy filtration of $AB\mathscr C$.
\end{defn}
\begin{lem}
There exists a homotopy equivalence:
$\frak I : \mathscr C \to A^{\rm red}B\mathscr C$.
\end{lem}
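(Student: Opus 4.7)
The plan is to obtain $\frak I : \mathscr C \to A^{\rm red}B\mathscr C$ as the composition of the homotopy equivalence $\frak I^{AB} : \mathscr C \to AB\mathscr C$ constructed in Theorem \ref{Theorem71} with the canonical projection $\pi : AB\mathscr C \to A^{\rm red}B\mathscr C$. By Lemma \ref{lemreducu}, $\mathscr I(\mathscr C)$ is a DG-ideal, so $\pi$ is a DG-functor (hence a filtered $A_\infty$ functor); the composition $\frak I := \pi \circ \frak I^{AB}$ is therefore a filtered $A_\infty$ functor, and gapped whenever $\mathscr C$ is. Explicitly, its components are $\frak I_k(x_1 \oslash \cdots \oslash x_k) = \pi(x_1 \otimes \cdots \otimes x_k)$. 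No new higher components need to be constructed.

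To show that $\frak I$ is a homotopy equivalence it suffices, by the Whitehead-type theorem for filtered $A_\infty$ categories (as in \cite{fooobook}), to verify that its linear part induces an isomorphism on $\frak m_1$-cohomology. The linear part factors as
\[
\mathscr C = \frak S^1 AB\mathscr C \xrightarrow{\,\pi|_{\frak S^1}\,} \frak S^1 A^{\rm red}B\mathscr C \hookrightarrow A^{\rm red}B\mathscr C .
\]
Every generator of $\mathscr I(\mathscr C)$ as listed in (\ref{defnofcirc2}) has $m \ge 3$ and hence total number $\ge 3$, so $\mathscr I(\mathscr C) \cap \frak S^1 AB\mathscr C = 0$ and the first arrow is an isomorphism. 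The remaining task is to show that the inclusion $\frak S^1 A^{\rm red}B\mathscr C \hookrightarrow A^{\rm red}B\mathscr C$ is a quasi-isomorphism.

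For this I repeat the argument of Theorem \ref{Theorem71} in the quotient. By Lemma \ref{lemreducu} the operator $S$ of Definition \ref{defnS} and the operators $\hat\Delta$, $\partial_k$ preserve $\mathscr I(\mathscr C)$, so they descend to $A^{\rm red}B\mathscr C$, and the identities (\ref{new7576}) and (\ref{new7676}) continue to hold in the quotient. Since the ground ring is a field of characteristic zero, this yields
\[
H\!\left(\frac{\frak S^m A^{\rm red}B\mathscr C}{\frak S^{m-1} A^{\rm red}B\mathscr C},\,\frak M_1\right) = 0 \qquad (m>1),
\]
so the inclusion $\frak S^1 \hookrightarrow \frak S^m$ of the finite parts is a quasi-isomorphism for every $m$.

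Passing from the finite part $\overset{\circ}A^{\rm red}B\mathscr C$ to its completion $A^{\rm red}B\mathscr C$ is handled exactly as at the end of the proof of Theorem \ref{Theorem71}, by the same $T$-adic successive-approximation argument used in the proof of Lemma \ref{lemma217}; the descended homotopy $S$ provides the required $\epsilon$-shrinkers. The expected main obstacle is precisely this compatibility of $T$-adic completion with the contracting homotopy in the quotient, but since $S$ preserves $\mathscr I(\mathscr C)$ and the total number filtration, the same successive approximation goes through unchanged. This completes the verification that $\frak I$ has quasi-isomorphic linear part, and hence is a homotopy equivalence of filtered $A_\infty$ categories.
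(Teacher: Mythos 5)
Your proof is correct and follows essentially the same route as the paper: the paper's one-line argument ("Using the fact that $S$ preserves $\mathscr I$, the proof is the same as the proof of Proposition \ref{Theorem71}") amounts exactly to observing that the operators $\hat\Delta$, $\partial_k$, $S$ descend to the quotient (Lemma \ref{lemreducu}) so that the number-filtration contraction argument of Theorem \ref{Theorem71}, including the passage to the completion, goes through verbatim in $A^{\rm red}B\mathscr C$. Phrasing the functor as the composition $\pi\circ\frak I^{AB}$ is a minor organizational difference; the resulting map, with $\frak I_k(x_1\oslash\cdots\oslash x_k)=\pi(x_1\otimes\cdots\otimes x_k)$, and the verification via the descended homotopy $S$ are the same.
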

Using the fact that $S$ preserves $\frak I$, 
the proof is the same as the 
proof of Proposition \ref{Theorem71}.
\par
Let $(\{\mathscr C^n\},\{\Phi^n\})$ be a unital 
inductive system of filtered $A_{\infty}$ categories.
We observe 
$
A\widehat{\Phi}^n(\mathscr I(\mathscr C^n)) \subseteq 
\mathscr I(\mathscr C^{n+1}).
$
This is a consequence of strict unitality of 
$\widehat{\Psi}^n$. Therefore
we define
$
\overset{\circ}{\mathscr I^{\infty}} \subseteq \overset{\circ}A\mathscr B^{\infty}
$
by
$
\bigcup_{k} \bigcup_n A\widehat{\Phi}^{\infty,n}(\frak S^k\overset{\circ}AB\mathcal 
\mathscr C^n)
\cap  \mathscr I(\mathscr C^n))
$
and denote its closure by $\mathscr I^{\infty}$.
\begin{defn}
We denote by $A^{\rm red}\mathscr B^{\infty}$
the quotient of $A\mathscr B^{\infty}$ by the ideal $\mathscr I^{\infty}$.
\par
$A^{\rm red}\mathscr B^{\infty}$ 
is a completed DG-category with finite part
$
\overset{\circ}A\mathscr B^{\infty}:= 
{\overset{\circ}A\mathscr B^{\infty}}/{\overset{\circ}{\frak I}}
$.
The total number filtration 
$\frak S^kA^{\rm red}\mathscr B^{\infty}$ 
and the energy filtration $\frak F^{\lambda}A^{\rm red}\mathscr B^{\infty}$
of $A^{\rm red}\mathscr B^{\infty}$
are induced by those on $A^{\rm red}B\mathscr C^n$.
\end{defn}
\begin{lem}
The completed injection $I: ({\mathscr C}^{\infty}(a,b),\frak m_1) \to (A^{\rm red}\mathscr B^{\infty}(a,b),\frak M_1)
$
is an almost cochain homotopy equivalence.
\end{lem}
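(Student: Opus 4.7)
The plan is to mirror the proof of Theorem \ref{theorem63} verbatim in the reduced setting. Specifically, I would first establish the reduced analog of Lemma \ref{lem8888}, namely that the inclusion
$$
\frak S^{m-1}A^{\rm red}\mathscr B^{\infty}[-1](a,b) \longrightarrow \frak S^mA^{\rm red}\mathscr B^{\infty}[-1](a,b)
$$
is an almost cochain homotopy equivalence relative to $\mathscr C^{\infty}(a,b) = \frak S^1 A^{\rm red}\mathscr B^{\infty}[-1](a,b)$. Since the finite part of $A^{\rm red}\mathscr B^{\infty}(a,b)$ is $\bigcup_m \frak S^m\overset{\circ}A^{\rm red}\mathscr B^{\infty}(a,b)$ by construction, Lemma \ref{lem86} gives that the inclusion $\mathscr C^\infty(a,b) \to \frak S^m A^{\rm red}\mathscr B^{\infty}(a,b)$ is an almost cochain homotopy equivalence for each $m$, and Lemma \ref{lemma8dayto} then yields the desired conclusion.

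The crucial structural point is that every ingredient of the proof of Lemma \ref{lem8888} descends to the reduced quotient. By Lemma \ref{lemreducu}, the operators $\hat\Delta$, $\partial_k$, and $S$ preserve the ideal $\mathscr I(\mathscr C^n)$, so they pass to $A^{\rm red}B\mathscr C^n$. Because the functors $\Phi^n$ are assumed unital, the induced DFDGC morphisms $A\widehat\Phi^n$ send $\mathscr I(\mathscr C^n)$ into $\mathscr I(\mathscr C^{n+1})$, so in the limit $A\widehat\Phi^{\infty,n}$ descends to maps into $A^{\rm red}\mathscr B^{\infty}$ which still satisfy the filtration estimate (\ref{filter666}). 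The pointwise identities (\ref{new7576}) and the filtration shift (\ref{new7676}) are then inherited by the quotient.

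Next I would adapt Sublemma \ref{sublem7878}: given a finitely generated subcomplex $W \subset \frak S^m\overset{\circ}A^{\rm red}\mathscr B^{\infty}(a,b)$, pick a basis $y_1,\dots,y_k$ of $W_\Lambda$ with $\frak v(y_i) = 0$, lift each $y_i$ to an element $\hat y_i \in \frak S^m\overset{\circ}A^{\rm red}B\mathscr C^{n_0}(a,b)_\Lambda$ for $n_0$ large (using the explicit description of the inductive limit in Definition \ref{indlimDFDF} applied to the reduced DFDGCs), and then copy the argument of Sublemma \ref{sublem7878} word for word — the linear relations $\frak M_1(y_i) = \sum_j a_{i,j} y_j$ lift to the same relations on $y'_i := A\widehat\Phi^{n,n_0}(\hat y_i)$ for $n$ large, producing the required $W^n \subset \frak S^m\overset{\circ}A^{\rm red}B\mathscr C^n$. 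With this in hand, the $\epsilon$-$W$ shrinker $G$ relative to $\mathscr C^\infty(a,b)$ is defined by the same formula (\ref{form810}), and the verification that $T^{(k+2)\epsilon} - (G\frak M_1 + \frak M_1 G)$ carries $W^+ \cap \frak S^m$ into $\frak S^{m-1}$, and vanishes on $\frak S^1$, is formally identical to the argument given there (since $S$ still vanishes on $\frak S^0$-elements of $A^{\rm red}B\mathscr C^n$).

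The main obstacle, such as it is, is purely one of bookkeeping: one must check that the ideal $\mathscr I^{\infty}$ is compatible with the total number filtration, so that finitely generated subcomplexes of $\frak S^m\overset{\circ}A^{\rm red}\mathscr B^{\infty}(a,b)$ can indeed be lifted to $\frak S^m\overset{\circ}A^{\rm red}B\mathscr C^n$ at the same level $m$. This follows directly from the definition of $\mathscr I^{\infty}$ as the closure of $\bigcup_n A\widehat\Phi^{\infty,n}(\mathscr I(\mathscr C^n))$ together with the fact that $\mathscr I(\mathscr C^n)$ is generated by elements with a specified tensor length, so intersection with each $\frak S^m$ is preserved under the quotient. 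Once this is noted, no further argument beyond what is already in Sections \ref{sec;speclo} and \ref{sec;reduced} is needed.
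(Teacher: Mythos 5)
Your proposal is correct and takes exactly the approach the paper itself indicates: the published proof is the single sentence ``Using Lemma \ref{lemreducu} the proof is the same as the proof of Theorem \ref{theorem63},'' and your argument is a faithful, line-by-line unpacking of that claim --- checking that the operators $\hat\Delta,\partial_k,S$ and the maps $A\widehat\Phi^{\infty,n}$ descend to the reduced quotient, adapting Sublemma \ref{sublem7878} by lifting through the quotient, and reusing formula (\ref{form810}) for the shrinker. The one point you correctly flag as requiring care --- compatibility of $\mathscr I^\infty$ with the total number filtration and the consequent ability to lift finitely generated subcomplexes at the same $\frak S$-level --- is indeed the only substantive check the paper leaves implicit, and your justification of it (the ideal is generated by elements of fixed tensor length, so it intersects each $\frak S^m$ as a direct summand) is the right one.
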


Using Lemma \ref{lemreducu} the proof is the same as the proof
of Theorem \ref{theorem63}.

\section{The triangle inequality of Gromov-Hausdorff distance.}
\label{sec;triangle}

In this section we prove Theorems \ref{triangleGH} and \ref{triangleharder}.
We prove Theorem \ref{triangleGH} first.
\par
Let $\mathscr C_i$ be a gapped and unital filtered $A_{\infty}$ category for $i=1,2,3$.
We assume $d_{\rm GH}(\mathscr C_i,\mathscr C_{i+1})
< \epsilon_i$ for $i=1,2$.
We will prove
$d_{\rm GH}(\mathscr C_1,\mathscr C_{3}) < \epsilon_1+\epsilon_2$.
By definition there exist gapped and unital 
filtered $A_{\infty}$ categories $\mathscr C_{12},  \mathscr C_{23}$
and homotopy equivalences to the images:
$\frak I_{12,1}:  \mathscr C_1 \to \mathscr C_{12}$, 
$\frak I_{12,2}:  \mathscr C_2 \to \mathscr C_{12}$,
$\frak I_{23,2}:  \mathscr C_2 \to \mathscr C_{23}$, 
$\frak I_{23,3}:  \mathscr C_3 \to \mathscr C_{23}$,
such that
$
d_{\rm H}(\frak{OB}(\mathscr C_1),\frak{OB}(\mathscr C_2))
< \epsilon_1$,
$
d_{\rm H}(\frak{OB}(\mathscr C_2),\frak{OB}(\mathscr C_3))
< \epsilon_2.
$
We first replace all the filtered $A_{\infty}$ categories 
with the canonical model.
Namely we use:
\begin{thm}
For any gapped and unital filtered $A_{\infty}$ category $\mathscr C$ 
there exists a gapped filtered and unital $A_{\infty}$ category $\mathscr H$
and a gapped and unital homotopy equivalence $\frak F: \mathscr C \to \mathscr H$
such that the operation $\overline{\frak m}_1$ is zero on $\mathscr H$.
\end{thm}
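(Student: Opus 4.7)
The plan is to construct the canonical (minimal) model by homological perturbation, pair by pair, then transfer the $A_\infty$ structure via the sum-over-trees formula, and finally strictify the unit.

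First I would work pair by pair on objects. For each $a,b \in \frak{OB}(\mathscr C)$, the $R$-complex $(\overline{\mathscr C}(a,b), \overline{\frak m}_1)$ is a cochain complex over a field of characteristic $0$, so I may choose an $R$-linear splitting: a projection $\pi_{a,b}$ onto a subcomplex $\overline{\mathscr H}(a,b) \subseteq \overline{\mathscr C}(a,b)$ representing $H^*(\overline{\mathscr C}(a,b), \overline{\frak m}_1)$, together with an $R$-linear homotopy $h_{a,b}$ of degree $-1$ satisfying
\begin{equation*}
\overline{\frak m}_1 h_{a,b} + h_{a,b} \overline{\frak m}_1 = \operatorname{id} - \pi_{a,b}, \qquad h_{a,b}^2 = 0, \qquad h_{a,b}|_{\overline{\mathscr H}(a,b)} = 0.
\end{equation*}
Set $\mathscr H(a,b) := \overline{\mathscr H}(a,b) \,\widehat{\otimes}_R\, \Lambda_0$ and take the same object set as $\mathscr C$. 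The restriction of $\overline{\frak m}_1$ to $\overline{\mathscr H}(a,b)$ is then automatically zero, as required.

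Next I would transfer the $A_\infty$ structure to $\mathscr H$ by the standard tree-sum (homological perturbation) formula. Writing $\frak m_k^{\mathscr C} = \sum_i T^{\lambda_i}\overline{\frak m}_{k,i}$ in its gapped expansion, the transferred operations $\frak m_k^{\mathscr H}$ are defined as sums over planar rooted trees whose internal vertices carry operations $\overline{\frak m}_{\ell,i}$, whose internal edges carry the homotopy $h$, whose leaves carry the inclusion $\mathscr H \hookrightarrow \mathscr C$, and whose root carries the projection $\pi$. Since the contribution $T^0 h$ from the internal edges is $R$-linear and the operations $\overline{\frak m}_{\ell,i}$ take values in the gap monoid $G$, each tree contributes an element of the form $T^{\mu}(\cdots)$ with $\mu \in G$ and $\mu \to \infty$ with the tree size; hence $\mathscr H$ is gapped with the same monoid $G$. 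The $A_\infty$ relations on $\mathscr H$ follow from those on $\mathscr C$ by the usual telescoping argument, and $\overline{\frak m}^{\mathscr H}_1 = \pi \overline{\frak m}^{\mathscr C}_1|_{\overline{\mathscr H}} = 0$. In parallel, the same trees — but with the root carrying the homotopy $h$ rather than $\pi$, so summed with inclusions as well — define a gapped filtered $A_\infty$ functor $\frak F: \mathscr C \to \mathscr H$ whose linear part is $\pi$, and standard arguments (as in \cite{fooobook}) show $\frak F$ is a homotopy equivalence.

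The main obstacle is \emph{strict} unitality, since the tree construction naturally yields only a homotopy unit $\pi({\bf e}_a) \in \mathscr H(a,a)$. I would handle this in two steps: (i) choose the splitting $(\pi, h)$ at each diagonal pair $(a,a)$ to satisfy $\pi({\bf e}_a) = {\bf e}_a$ and $h({\bf e}_a) = 0$, which is possible because $\overline{\frak m}_1 {\bf e}_a = 0$ and $\frak v({\bf e}_a) = 0$; this already makes $\pi({\bf e}_a)$ a cocycle of valuation $0$. (ii) Apply the standard homotopy unit strictification for gapped filtered $A_\infty$ categories: starting from the homotopy-unital structure on $\mathscr H$ with unit $\pi({\bf e}_a)$, one iteratively modifies the higher $\frak m_k^{\mathscr H}$ by a gapped gauge transformation to kill, order by order in the energy filtration, the defects $\frak m_k^{\mathscr H}(\dots,\pi({\bf e}),\dots)$ for $k\ne 2$ and the defect in the two $\frak m_2$ identities — this is exactly the argument used in \cite[Section 3.3]{fooobook} (the `homotopy unit to strict unit' procedure), and convergence is guaranteed by gapped-ness. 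The same gauge transformation is applied to $\frak F$ to keep it a homotopy equivalence and make it unital. The resulting $(\mathscr H, \frak F)$ satisfies all the claims.
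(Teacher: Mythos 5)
Your proposal reconstructs the standard canonical-model (homological perturbation) argument and is substantially correct, and it takes essentially the same route as the paper, which simply cites \cite[Theorem 5.4.1]{fooobook} for the filtered $A_\infty$ algebra case and observes that the proof transfers verbatim to categories. One small correction to the tree bookkeeping: the formula you write down for $\frak F$ --- internal edges carrying $h$, leaves carrying the inclusion $\mathscr H \hookrightarrow \mathscr C$, root carrying $h$ --- is the formula for the inclusion-side $A_\infty$ functor $\mathscr H \to \mathscr C$ (whose linear part is the inclusion $i$, not $\pi$, and whose output lands in $\mathscr C$, not $\mathscr H$), not a functor $\mathscr C \to \mathscr H$. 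In \cite{fooobook} one constructs the homotopy equivalence $\mathscr H \to \mathscr C$ by this tree formula and then obtains $\frak F:\mathscr C\to\mathscr H$ as its homotopy inverse via the Whitehead-type theorem; alternatively one can write a separate projection-side formula, but it is not obtained by merely replacing $\pi$ by $h$ at the root.
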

This is \cite[Theorem 5.4.1]{fooobook} in the case of 
filtered $A_{\infty}$ algebra. The proof given there can be 
used to prove the category case without any change.

We may assume that $\mathscr C_i$, $i=1,2,3$ and 
$\mathscr C_{12}$, $\mathscr C_{23}$ are canonical, 
that is, $\overline{\frak m}_1$ is zero.
We may also assume that $\frak I_{12,1}$, $\frak I_{12,2}$, 
$\frak I_{23,2}$, $\frak I_{23,3}$ are bijections on object.
\par
We note that a homotopy equivalence $\frak F: \mathscr C \to \mathscr C'$ between canonical and unital filtered $A_{\infty}$
categories which is a bijection on objects has a (strict) inverse $\frak G$.
Namely the compositions $\frak G\circ \frak F$ and $\frak F\circ \frak G$
are {\it equal to} the identity functors.
\par
We next replace filtered $A_{\infty}$ categories by DG-categories.
We can do so by using the composition of Bar and Co-Bar resolutions.
Namely
we consider $AB\mathscr C_i$, $i=1,2,3$ and 
$AB\mathscr C_{12}$, $AB\mathscr C_{23}$.
Then $\frak I_{12,1}$, $\frak I_{12,2}$, 
$\frak I_{23,2}$, $\frak I_{23,3}$ induce 
DG-functors among them. Note that a DG-functor $\Psi$ assigns 
{\it linear} maps $AB\mathscr C_{ij}(a,b) \to AB\mathscr C_{ij}(\Psi_{\rm ob}(a),\Psi_{\rm ob}(b))$
which are compatible with compositions and co-boundary operators.
They induce cochain homotopy equivalences between 
morphism complexes. (However they are {\it not} isomorphisms.)
Therefore to prove  Theorem \ref{triangleGH} it suffices to prove 
the next proposition.
\qed

\begin{prop}\label{prop104}
Let $\mathscr C_i$ ($i=1,2,3$) and $\mathscr C_{12},\mathscr C_{23}$
be gapped DG-categories.
Let 
$\frak I_{12,1}:  \mathscr C_1 \to \mathscr C_{12}$,
$\frak I_{12,2}:  \mathscr C_2 \to \mathscr C_{12}$,
$\frak I_{23,2}:  \mathscr C_2 \to \mathscr C_{23}$,
$\frak I_{23,3}:  \mathscr C_3 \to \mathscr C_{23}$,
be DG-functors such that:
\begin{enumerate}
\item Those are injective on objects.
\item
For any $a,b \in \frak{OB}(\mathscr C_1)$, the functor 
$\frak I_{12,1}$ induces an injective cochain homomorphism
$\mathscr C_1(a,b) \to \mathscr C_{21}((\frak I_{12,1})_{\rm ob}(a),(\frak I_{12,1})_{\rm ob}(b))$.
The same holds for $\frak I_{12,2}$, $\frak I_{23,2}$, $\frak I_{23,3}$.
\item The cochain homomorphisms in (2) induces an isomorphisms on $\overline{\frak m}_1$ homology.
\end{enumerate}
Then
$$
d_{\rm GH}(\mathscr C_1,\mathscr C_3)
\le
d_{\rm H}(\frak{OB}(\mathscr C_1),\frak{OB}(\mathscr C_2))
+ 
d_{\rm H}(\frak{OB}(\mathscr C_2),\frak{OB}(\mathscr C_3)).
$$
\end{prop}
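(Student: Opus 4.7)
My plan is to construct a single gapped unital filtered $A_{\infty}$-category (in fact, a DG-category) $\mathscr C$ whose object set is
$\frak{OB}(\mathscr C_1) \sqcup \frak{OB}(\mathscr C_2) \sqcup \frak{OB}(\mathscr C_3)$
(with the two copies of $\frak{OB}(\mathscr C_2)$ identified via the given object-level bijections) and which satisfies the axioms of Definition \ref{GromovHausdorff}. The natural candidate is the pushout of DG-categories
$\mathscr C := \mathscr C_{12} \sqcup_{\mathscr C_2} \mathscr C_{23}$
formed along the common subcategory $\mathscr C_2$. Concretely, $\mathscr C(x,y)$ is the $T$-adic completion of the $\Lambda_0$-module generated by zigzag words $f_1 \otimes \cdots \otimes f_k$, where each $f_i$ is a morphism in either $\mathscr C_{12}$ or $\mathscr C_{23}$ and the factors are composable through intermediate objects, modulo the relations: (i) any adjacent pair lying entirely in $\mathscr C_{12}$ (resp.\ entirely in $\mathscr C_{23}$) collapses via the given composition there, and (ii) morphisms of $\mathscr C_2$ appearing via $\frak I_{12,2}$ and via $\frak I_{23,2}$ are identified. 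The differential is induced from those of $\mathscr C_{12}$ and $\mathscr C_{23}$. Strict units and the gapped property descend consistently thanks to (ii).

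The key task is to verify that the natural DG-functors $\Psi_i : \mathscr C_i \to \mathscr C$ (for $i=1,3$), obtained by composing $\frak I_{12,i}$ or $\frak I_{23,i}$ with the pushout inclusion, are gapped unital homotopy equivalences onto the full subcategories of $\mathscr C$ they target. Fix $a,b \in \frak{OB}(\mathscr C_1)$ (the $\mathscr C_3$-case is symmetric). The content is that the cochain map $\mathscr C_1(a,b) \to \mathscr C(a,b)$ is a homotopy equivalence. I would establish this by an inductive reduction of zigzag length: any zigzag word containing a factor $g \in \mathscr C_{23}(p,q)$ with $p,q \in \frak{OB}(\mathscr C_2)$ can, by the quasi-isomorphism hypothesis on $\frak I_{23,2}$, be written as $g = g' + \frak m_1 h + h \frak m_1$ with $g' \in \mathscr C_2(p,q)$ and $h \in \mathscr C_{23}(p,q)$. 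Replacing $g$ by $g'$ (which also lies in $\mathscr C_{12}$) merges three consecutive $\mathscr C_{12}$-factors via their composition there, while the $h$-term furnishes an explicit cochain homotopy witnessing the reduction. Iterating collapses every zigzag word modulo coboundaries to a single element of $\mathscr C_{12}(a,b)$, and a final application of the hypothesis on $\frak I_{12,1}$ lands the resulting class in $\mathscr C_1(a,b)$. Assembling these pieces into an actual $A_{\infty}$-homotopy inverse $\Psi_i^{-1}$ (with higher-arity components chosen so as to preserve strict units) produces the required homotopy equivalence.

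Finally, I would estimate the Hausdorff distance in $\mathscr C$. Fix $\eta_{12} > d_{\rm H}(\frak{OB}(\mathscr C_1),\frak{OB}(\mathscr C_2))$ and $\eta_{23} > d_{\rm H}(\frak{OB}(\mathscr C_2),\frak{OB}(\mathscr C_3))$. For each $a \in \frak{OB}(\mathscr C_1)$ select $b \in \frak{OB}(\mathscr C_2)$ with $d_{\rm Hof}(a,b) < \eta_{12}$ in $\mathscr C_{12}$ and $c \in \frak{OB}(\mathscr C_3)$ with $d_{\rm Hof}(b,c) < \eta_{23}$ in $\mathscr C_{23}$. Homotopy equivalence data $(t^{12}_{ab},t^{12}_{ba},s^{12}_{a},s^{12}_{b})$ and $(t^{23}_{bc},t^{23}_{cb},s^{23}_{b},s^{23}_{c})$ in the two given categories concatenate inside $\mathscr C$ exactly as in the proof of Lemma \ref{Hofertriangle} to produce a homotopy equivalence between $a$ and $c$ in $\mathscr C$ of energy loss less than $\eta_{12}+\eta_{23}$; letting $\eta_{12},\eta_{23}$ tend to their respective infima gives the asserted bound on $d_{\rm GH}(\mathscr C_1,\mathscr C_3)$. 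The principal obstacle is the middle step: since the honest pushout of DG-categories need not be a homotopy pushout, the homotopy equivalence property of $\Psi_i$ must be extracted from the hypotheses via the zigzag-reduction scheme outlined above, which demands careful bookkeeping over the finite parts, the zigzag length (a number filtration analogous to $\frak S^k$), and the energy filtration, and may require invoking a resolution-style argument parallel to the Bar-Co-Bar formalism of Sections \ref{sec;bar}--\ref{sec;speclo}.
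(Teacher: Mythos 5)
Your proposal is correct and follows essentially the same route as the paper: construct the amalgam $\mathscr C_{13} = \mathscr C_{12}\sqcup_{\mathscr C_2}\mathscr C_{23}$ whose morphism modules are (completed) zigzag words modulo the relations absorbing $\mathscr C_2$-factors, filter by zigzag length, and use the shrinking homotopy provided by the quasi-isomorphism hypothesis to reduce the filtration level and show the embeddings $\mathscr C_1 \hookrightarrow \mathscr C_{13}$ and $\mathscr C_3 \hookrightarrow \mathscr C_{13}$ are $\overline{\frak m}_1$-quasi-isomorphisms, then concatenate homotopy equivalences to bound the Hausdorff distance. The only cosmetic differences are that the paper collapses the interior $\mathscr C_{12}$-factors (rather than the $\mathscr C_{23}$-factors) via a homotopy $G$ vanishing on $\mathscr C_2$, and it closes the argument with Whitehead's theorem for gapped filtered $A_\infty$ functors rather than assembling an explicit homotopy inverse; neither affects the substance of the argument.
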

\begin{proof}
We may and will assume
$
\frak{OB}(\mathscr C_{ij}) = \frak{OB}(\mathscr C_{i}) \sqcup 
\frak{OB}(\mathscr C_{j})
$
for $(ij) = (12),(23)$.
We will define a DG-category $\mathscr C_{13}$ below.
We first put
$$
\frak{OB}(\mathscr C_{13}) = \frak{OB}(\mathscr C_{1}) \sqcup 
\frak{OB}(\mathscr C_{2})
\sqcup 
\frak{OB}(\mathscr C_{3}).
$$
We will define morphism complexes.
Roughly speaking we do so by adding `composed morphisms' formally 
when it is not defined already.
The detail follows.
Let $a,b \in \frak{OB}(\mathscr C_{2})$.
We have three cochain complexes
$
\mathscr C_{12}(a,b)$, 
$
\mathscr C_{23}(a,b)$, 
$
\mathscr C_{2}(a,b).
$
There are cochain homotopy equivalences
$
\mathscr C_{2}(a,b) \to \mathscr C_{12}(a,b)$, 
$
\mathscr C_{2}(a,b) \to \mathscr C_{23}(a,b)$.
We remark that they are induced by the injective 
map of $R$ vector spaces.
Therefore they are spit injective.
We first define
$$
\Breve{\mathscr C}_{13}(a,b) = \bigoplus_{\text{$\alpha,\beta=(12)$ or $(23)$}} \Breve{\mathscr C}^{\alpha\beta}_{13}(a,b)
$$
where
\begin{equation}\label{form8383}
\aligned
\Breve{\mathscr C}^{(12)(12)}_{13}(a,b)
= \widehat{\bigoplus}\, \mathscr C_{12}(a_0,b_0) \widehat\otimes_{\Lambda_0}
& \mathscr C_{23}(b_0,a_1) 
\\ 
&\widehat\otimes_{\Lambda_0}\dots\widehat\otimes_{\Lambda_0} \mathscr C_{12}(a_k,b_k).
\endaligned
\end{equation}
Here the direct sum is taken over all $k=0,1,2,\dots$ 
and $a_i,b_i \in \frak{OB}(\mathscr C_2)$
such that $a_0 = a$, $b_k =b$ and $\widehat{\bigoplus}$ is the completion of the 
direct sum.
The definition of $\Breve{\mathscr C}^{ij}_{13}(a,b)$ for $i,j=(12)$,or $(21)$, or $(22)$ is similar.
See Figure \ref{Fig222}.
\par
We define its submodule as follows.
Let 
$$
{\bf x} = x_0 \otimes y_1 \otimes  \dots \otimes y_{k} \otimes x_k
$$
be an element of $\Breve{\mathscr C}^{(12)(12)}_{13}(a,b)$.
Suppose $y_i \in \mathscr C_{2}(b_{i-1},a_i)$.
Then we put
$$
{\bf x}'_i 
= x_0 \otimes  \cdots \otimes (x_{i-1}\circ y_i \circ x_{i+1}) \otimes  \dots y_{k} \otimes x_k
\in \Breve{\mathscr C}^{(12)(12)}_{13}(a,b).
$$
where $\circ$ is the composition in $\mathscr C_{12}$.
\par
Suppose $x_i \in \mathscr C_{2}(a_i,b_i)$ for $i=1,\dots,k-1$ then
we put
$$
{\bf x}''_i 
= x_0 \otimes \dots \otimes (y_{i-1}\circ x_i \circ y_{i}) \otimes  \dots y_{k} \otimes x_k\in \Breve{\mathscr C}^{(12)(12)}_{13}(a,b).
$$
where $\circ$ is the composition in $\mathscr C_{21}$.
If $x_0 \in \mathscr C_{2}(a_0,b_0)$
then we put
$$
{\bf x}''_0 
= (x_0\circ y_1) \otimes  \dots \otimes y_{k} \otimes x_k\in \Breve{\mathscr C}^{(21)(12)}_{13}(a,b).
$$
If $x_k \in \mathscr C_{2}(b_k,a_k)$
then we put
$$
{\bf x}''_k 
= x_0 \otimes y_1 \otimes  \dots \otimes 
(y_{k} \circ x_k)\in \Breve{\mathscr C}^{(12)(21)}_{13}(a,b).
$$
For ${\bf x} \in \Breve{\mathscr C}^{\alpha\beta}_{13}(a,b)$
for $(\alpha\beta) \ne ((12),(12))$ we define 
${\bf x}'_i$, ${\bf x}''_i$ in a similar way.
We divide $\Breve{\mathscr C}_{13}(a,b)$
by the closure of the submodules generated 
all of ${\bf x} -{\bf x}'_i$, ${\bf x} -{\bf x}''_i$
and denote the quotient by 
${\mathscr C}_{13}(a,b)$.
\par
The co-boundary operators of $\mathscr C_{12}$ and of $\mathscr C_{21}$
induce co-boundary operators on $\Breve{\mathscr C}_{13}(a,b)$.
It is easy to see that it induces a co-boundary operator 
on ${\mathscr C}_{13}(a,b)$.
\par
For $a \in \frak{OB}(\mathscr C_1)$, $b \in \frak{OB}(\mathscr C_2)$
we define:
$$
\aligned
\Breve{\mathscr C}_{13}(a,b) &= \mathscr C_{12}(a,b) \oplus \widehat{\bigoplus_{c\in \mathscr C_2}}
\mathscr C_{12}(a,c) \widehat\otimes_{\Lambda_0}  
(\Breve{\mathscr C}^{(23)(12)}_{13}(c,b) \oplus \Breve{\mathscr C}^{(23)(23)}_{13}(c,b)), 
\\
\Breve{\mathscr C}_{13}(b,a) &= \mathscr C_{12}(b,a) \oplus \widehat{\bigoplus_{c\in \mathscr C_2}}
 (\Breve{\mathscr C}^{(23)(23)}_{13}(b,c) \oplus \Breve{\mathscr C}^{(12)(23)}_{13}(b,c)) \widehat\otimes_{\Lambda_0}  
\mathscr C_{12}(c,a).
\endaligned
$$
We take a quotient in a similar way to obtain ${\mathscr C}_{13}(a,b)$, 
${\mathscr C}_{13}(b,a)$.
\par
For $a,b \in \frak{OB}(\mathscr C_1)$, we define:
(See Figure \ref{Fig2223}.)
$$
\Breve{\mathscr C}_{13}(a,b)
= \mathscr C_{12}(a,b)
\oplus
\widehat{\bigoplus_{c,c'\in \mathscr C_2}}
\mathscr C_{12}(a,c)\widehat\otimes_{\Lambda_0}
\Breve{\mathscr C}^{(23)(23)}_{13}(c,c') \widehat\otimes_{\Lambda_0} \mathscr C_{12}(c',b).
$$
We take a quotient in a similar way to obtain ${\mathscr C}_{13}(a,b)$.
\par
For $a \in \frak{OB}(\mathscr C_1)$, $b \in \frak{OB}(\mathscr C_3)$
we define: (See Figure \ref{Fig3323}.)
$$
\aligned
\Breve{\mathscr C}_{13}(a,b)
= &\widehat{\bigoplus_{c\in \mathscr C_2}}\mathscr C_{12}(a,c) \widehat\otimes 
\mathscr C_{23}(c,b) \\
&\oplus
\widehat{\bigoplus_{c,c'\in \mathscr C_2}}
\mathscr C_{12}(a,c)\widehat\otimes_{\Lambda_0}
\Breve{\mathscr C}^{(23)(12)}_{13}(c,c') \widehat\otimes_{\Lambda_0} \mathscr C_{23}(c',b).
\endaligned
$$
We take a quotient in a similar way to obtain ${\mathscr C}_{13}(a,b)$.
\par
The other cases are similar.
\begin{center}
\begin{figure}[h]
 \begin{tabular}{cc}
 \begin{minipage}[t]{0.30\hsize}
\centering
\includegraphics[scale=0.3]
{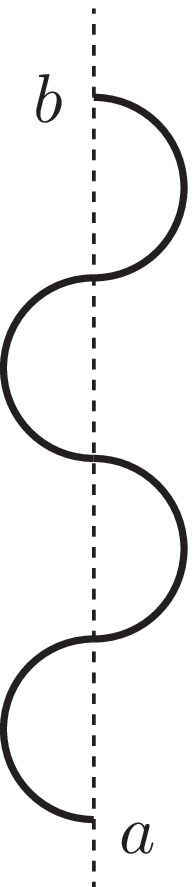}
\caption{$\mathscr C_{13}^{(12)(21)}$}
\label{Fig222}
\end{minipage} &
 \begin{minipage}[t]{0.35\hsize}
\centering
\includegraphics[scale=0.3]
{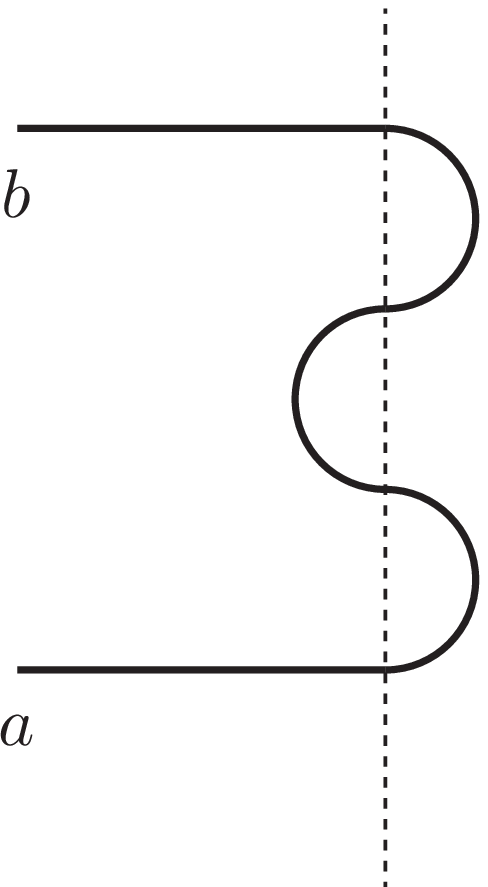}
\caption{From $\mathscr C_1$ to $\mathscr C_1$}
\label{Fig2223}
\end{minipage}
 \begin{minipage}[t]{0.35\hsize}
\centering
\includegraphics[scale=0.3]
{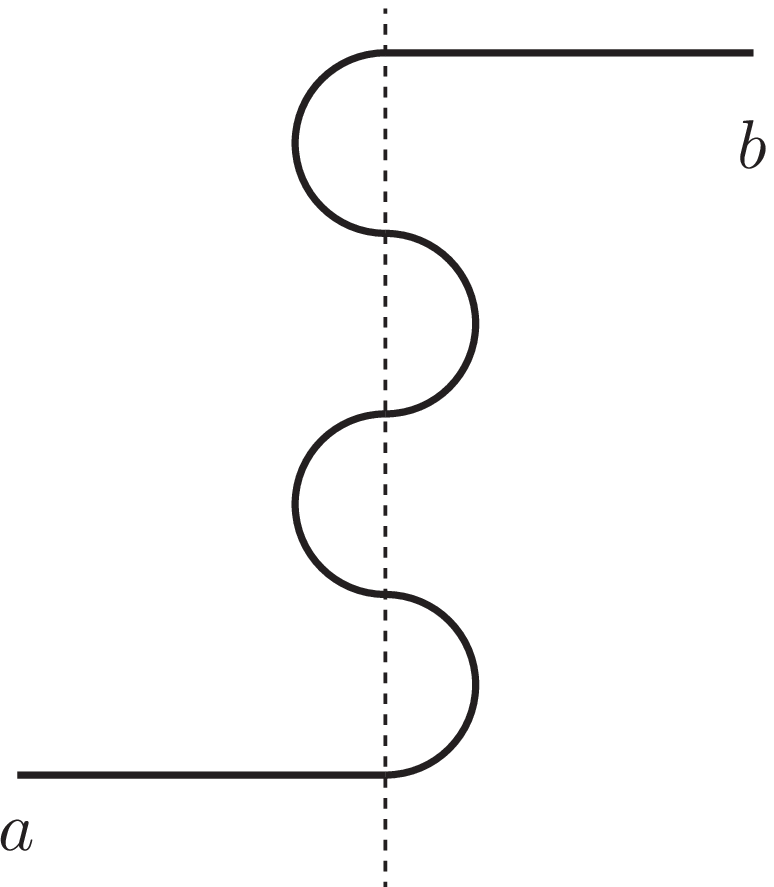}
\caption{From $\mathscr C_1$ to $\mathscr C_3$}
\label{Fig3323}
\end{minipage}
\end{tabular}
\end{figure}
\end{center}
\par
We define compositions in $\mathscr C_{13}$ by first taking 
'formal composition' and then contract if composition is defined 
in ${\mathscr C}_{12}$ or in ${\mathscr C}_{23}$.
For example if
$$
x_1 \otimes \dots \otimes x_k \in \Breve{\mathscr C}^{(12)(12)}_{13}(a,b),
\quad
y_1 \otimes \dots \otimes y_{\ell} \in \Breve{\mathscr C}^{(12)(12)}_{13}(b,c)
$$
with $a,b,c \in \frak{OB}(\mathscr C_2)$ then
$$
(x_1 \otimes \dots \otimes x_k) \circ
(y_1 \otimes \dots \otimes y_{\ell})
:= 
x_1 \otimes \dots \otimes (x_k\circ y_1)
\otimes \dots \otimes y_{\ell},
$$
where $\circ$ in the right hand side is the composition in $\mathscr C_{12}$.
It is easy to see that it induces a composition on $\mathscr C_{13}$.
We thus obtained a filtered DG-category $\mathscr C_{13}$.
\par 
There is an obvious DG-functor $\frak I_1 : \mathscr C_{1} \to \mathscr C_{13}$.
\begin{lem}\label{lem85}
For $a,b \in \frak{OB}(\mathscr C_{1})$ 
the DG-functor $\frak I_1$ induces an isomorphism 
$
H({\mathscr C}_{1}(a,b),\overline{\frak m}_1) \to 
H({\mathscr C}_{13}(a,b),\overline{\frak m}_1)
$
of $\overline{\frak m}_1$ cohomologies.
\end{lem}
\begin{proof}
The isomorphism 
$
H({\mathscr C}_{1}(a,b),\overline{\frak m}_1) \cong
H({\mathscr C}_{12}(a,b),\overline{\frak m}_1)
$
is the assumption.
We define a filtration on $\frak N_m\mathscr C_{13}(a,b)$ 
for $a,b \in \frak{OB}(\mathscr C_2)$ as follows.
We first filter $\Breve{\mathscr C}_{13}(a,b)$
by the number of tensors.
More precisely elements of $\frak N_m\Breve{\mathscr C}_{13}(a,b)$ 
are those written as at most $2m$ tensor products.
Then it induces a  filtration $\frak N_m\mathscr C_{13}(a,b)$ on $\mathscr C_{13}(a,b)$.
\par
Since $\mathscr C_2(a,b) \to \mathscr C_{12}(a,b)$ induces an 
isomorphism in $\overline{\frak m}_1$ cohomology
for $a,b \in \frak{OB}(\mathscr C_2)$,
there exists 
$G: \mathscr C_{12}(a,b) \to \mathscr C_{12}(a,b)$
of degree $+1$ such that 
\begin{enumerate}
\item 
The image of 
$dG + Gd - {\rm id}$ is in $\mathscr C_{2}(a,b)$.
\item
$G = 0$ in $\mathscr C_{2}(a,b)$.
\end{enumerate}
In fact $\overline{\frak m}_1$ is obtained 
from an $R$-linear map on $R$ vector spaces by taking $\hat\otimes_{R}\Lambda_0$
and an injective cochain map over $R$ which induces an isomorphism 
on cohomology has such $G$.
We choose a splitting
$$
\mathscr C_{12}(a,b) = \mathscr C_{2}(a,b) \oplus \mathscr C^*_{12}(a,b),
\quad 
\mathscr C_{23}(a,b) = \mathscr C_{2}(a,b) \oplus \mathscr C^*_{23}(a,b),
$$
as zero energy generated $\Lambda_0$ modules.
Each element of $\frak N_k\mathscr C_{13}(a,b)$
has a representative which is a $T$-adic convergent sum of the elements of the form:
\begin{equation}\nonumber
{\bf x} = u \otimes x_0 \otimes y_1 \otimes \dots \otimes y_k \otimes x_k \otimes v
\end{equation}
where $x_i \in \mathscr C^*_{23}(a_i,b_{i+1})$, 
$y_i \in \mathscr C^*_{23}(b_i,a_{i})$
\par
We define
\begin{equation}\label{defshrink}
\hat G({\bf x}) = \frac{1}{m}\sum_{i=0}^k (-1)^* 
u \otimes x_0 \otimes y_1 \otimes \dots \otimes G(x_i) \otimes \dots \otimes y_k \otimes x_k \otimes v
\end{equation}
on $\mathfrak N_m{\mathscr C}_{13}(a,b)$.
The sign is by Kozule rule.
\par
We claim that
the image of $\hat G \circ d + d \circ \hat G - {\rm id}$
is in $\mathfrak N_{m-1}{\mathscr C}_{13}(a,b)$.
Moreover $\hat G$ is zero on $\mathfrak N_{0}{\mathscr C}_{13}(a,b)$.
\par
Note that $\mathscr C^*_{12}(a,b)$, $\mathscr C^*_{23}(a,b)$
are not invariant by the co-boundary operator.
Nevertheless we can prove this claim as follows.
We need to show that the next two operations cancel 
modulo elements of $\mathfrak N_{m-1}{\mathscr C}_{13}(a,b)$.
\begin{enumerate}
\item[(a)]
First apply co-boundary operator to the factor $y_i$
and then apply $G$ to the factor $x_i$.
\item[(b)]
First  apply $G$ to the factor $x_i$ then 
apply co-boundary operator to a factor $y_i$.
\end{enumerate}
We put $dy_i = z^*_i + z_i$ where $z_i \in \mathscr C_{2}(b_i,a_i)$,
$z^*_i \in \mathscr C^*_{12}(b_i,a_i)$,
$Gx_i = v^*_i + v_i$ where 
$v_i \in \mathscr C_{2}(a_i,b_{i+1})$,
$v^*_i \in \mathscr C^*_{21}(a_i,b_{i+1})$.
We observe both (a) and (b) give
$
\cdots v^*_i \otimes z^*_i \cdots
$
up to sign, modulo elements of $\mathfrak N_{m-1}{\mathscr C}_{13}(a,b)$, 
by definition. Then sign is opposite.
In this way we can find that the terms which appears by the fact 
that $\mathscr C^*_{12}(a,b)$, $\mathscr C^*_{23}(a,b)$
are not preserved by $d$ lies in $\mathfrak N_{m-1}{\mathscr C}_{13}(a,b)$.
We have proved the claim.
\par
It is easy to see that the claim implies the lemma.
\end{proof}
Lemma \ref{lem85} and Whitehead's theorem imply that 
$\frak I_1$ is a homotopy equivalence to the full subcategory.
It is easy to see that
$
d_{\rm H}(\frak{OB}(\mathscr C_1),\frak{OB}(\mathscr C_3))
\le \epsilon_1+\epsilon_2.
$
The proof of Theorem \ref{triangleGH} is now complete.
\end{proof}
We next prove Theorem \ref{triangleharder}.
Actually most of the proof is the same as the proof of 
Proposition \ref{prop104}.
In the situation of Proposition \ref{prop104}
we remove the gapped-ness assumption of DG-categories
but assume they are completed DG-categories.
In (3) we assume the cochain homomorphisms are 
almost homotopy equivalences.
\par
We construct $\mathscr C_{13}$
in a similar way. Its finite part 
is defined  replacing $\widehat{\bigoplus}$
$\widehat{\otimes}$ 
by ${\bigoplus}$
${\otimes}$ 
and $\mathscr C_{23}(b_1,a_1)$ etc. by its finite part 
in Formula 
(\ref{form8383}) etc.
\par
Then we prove the following variant of Lemma \ref{lem85}.
\begin{lem}\label{lem942}
For $a,b \in \frak{OB}(\mathscr C_1)$ the completed injection
$
({\mathscr C}_{1}(a,b),{\frak m}_1) \to 
({\mathscr C}_{13}(a,b),{\frak m}_1)
$
is an almost cochain homotopy equivalence.
\end{lem}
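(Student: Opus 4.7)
The plan is to adapt the proof of Lemma \ref{lem85} to the completed, non-gapped setting, replacing the global contraction $G$ there by local $\epsilon$-$W$ shrinkers supplied by the almost cochain homotopy equivalence hypothesis, and iterating down along the tensor-count filtration $\mathfrak N_m$ introduced in that proof.

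First I would set up the ingredients. By hypothesis the inclusions $\mathscr C_2 \hookrightarrow \mathscr C_{12}$, $\mathscr C_2 \hookrightarrow \mathscr C_{23}$ and $\mathscr C_1 \hookrightarrow \mathscr C_{12}$ of morphism complexes are almost cochain homotopy equivalences, so for each finitely generated subcomplex and each $\delta > 0$ they supply $\delta$-shrinkers that preserve the energy filtration and vanish on the appropriate finite part of the source. I would fix splittings $\mathscr C_{12,0} = \mathscr C_{2,0} \oplus \mathscr C^*_{12,0}$ and $\mathscr C_{23,0} = \mathscr C_{2,0} \oplus \mathscr C^*_{23,0}$ of zero-energy-generated $\Lambda_0$-modules, exactly as in the proof of Lemma \ref{lem85}. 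Given a finitely generated subcomplex $W \subseteq \mathscr C_{13,0}(a,b)$ and $\epsilon > 0$, I would pick $m$ with $W \subseteq \mathfrak N_m \mathscr C_{13,0}(a,b)$ and set $\delta = \epsilon/(m+1)$.

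Next I would express each generator of $W$ and each of its $d$-images as a finite sum of tensor chains in the chosen splittings; the factor-elements appearing in these chains generate finitely generated subcomplexes $W^{12} \subseteq \mathscr C_{12,0}$ and $W^{23} \subseteq \mathscr C_{23,0}$, for which I would take $\delta$-shrinkers $G^{12}, G^{23}$. Then I would define $\hat G_m$ on $W$ by the direct tensor analog of (\ref{defshrink}),
$$
\hat G_m(\mathbf x) = \frac{1}{m}\sum_{i} (-1)^{*}\, u \otimes x_0 \otimes \cdots \otimes G^{\bullet}(z_i) \otimes \cdots \otimes x_k \otimes v,
$$
where $G^{\bullet}$ is $G^{12}$ or $G^{23}$ according to whether the $i$-th factor lies in $\mathscr C^*_{12,0}$ or $\mathscr C^*_{23,0}$. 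The cancellation-and-collapse computation of Lemma \ref{lem85} should then go through to show that $(d\hat G_m + \hat G_m d - T^{\delta})(W) \subseteq \mathfrak N_{m-1}\mathscr C_{13,0}(a,b)$, since each $\mathscr C_{2,0}$-valued residue produced by a shrinker collapses under the composition $\circ$ into a strictly shorter tensor chain, and the terms coming from $d$ failing to preserve $\mathscr C^*_{12,0}, \mathscr C^*_{23,0}$ again land in $\mathfrak N_{m-1}$ for exactly the reason noted there. By construction $\hat G_m$ preserves the energy filtration and vanishes on $\mathscr C_{1,0}(a,b) \cap W$, trivially for $m \geq 1$ because $\mathscr C_{1,0}(a,b) \subseteq \mathfrak N_0$ while the averaging formula acts only on tensor chains of positive length.

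Finally I would iterate: the image $(d\hat G_m + \hat G_m d - T^{\delta})(W)$ lies in a new finitely generated subcomplex of $\mathfrak N_{m-1}\mathscr C_{13,0}(a,b)$, on which I would construct $\hat G_{m-1}$ by the same recipe, and so on down to $\mathfrak N_0 = \mathscr C_{12,0}(a,b)$, where a last $\delta$-shrinker coming from $\mathscr C_1 \hookrightarrow \mathscr C_{12}$ sends the residue into $\mathscr C_{1,0}(a,b)$. Composing these $m+1$ shrinkers via the formula used to prove Lemma \ref{lem86} yields a single $\epsilon$-$W$ shrinker exhibiting $\mathscr C_1(a,b) \hookrightarrow \mathscr C_{13}(a,b)$ as an almost cochain homotopy equivalence. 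The main obstacle will be the combinatorial verification in the third paragraph: checking that the cancellation scheme of Lemma \ref{lem85}, which originally relied on a genuine contraction $G$ satisfying $dG+Gd = \mathrm{id}+(\text{error in }\mathscr C_2)$, still produces a reduction in the tensor-count filtration when $\mathrm{id}$ is replaced by $T^{\delta}\cdot \mathrm{id}$ and the error carries a $T^{\delta}$ deficit; once this is checked, the enlargement of the auxiliary subcomplexes $W^{12}, W^{23}$ at each iteration is harmless, since only finitely many new factor elements are introduced at each stage.
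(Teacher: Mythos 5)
Your proposal follows essentially the same approach as the paper's proof: you reuse the tensor-count filtration $\mathfrak N_m$ from the proof of Lemma \ref{lem85}, the averaged shrinker $\hat G$ built by formula (\ref{defshrink}), and you obtain the full shrinker by composition (you iterate directly across the $m$ levels; the paper packages the same step via Lemma \ref{lemma8dayto}).

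The one organizational difference worth noting: for the given finitely generated $W$, the paper constructs once-and-for-all a nested family of finitely generated subcomplexes $U_k(c,c')$, $U'_k(c,c')$, $k=1,\dots,2m$, satisfying explicit stability properties under the operations $U_{k_1}\circ(U'_{k_2}\cap\mathscr C_2)\circ U_{k_3}\subseteq U_{k_1+k_2+k_3}$ and their variants. This guarantees, before any homotopy is applied, that every factor appearing after applying $d$ or a shrinker (including those produced by an $\mathscr C_2$-valued residue collapsing into a neighboring factor) lies in a fixed domain $U'_m$ where one $\epsilon/m$-shrinker $G$ is chosen; the shrinker for $W^+$ is then just (\ref{defshrink}) scaled by $T^\epsilon$, and the reduction to $\frak N_{m-1}$ plus vanishing on $\mathscr C_{1,0}\cap W^+$ hold at once. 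You instead build the auxiliary subcomplexes $W^{12,\ell}$, $W^{23,\ell}$ fresh at each descent level $\ell$ from the image of the previous step, and compose the resulting $m+1$ level shrinkers \`a la Lemma \ref{lem86}. The two strategies do the same book-keeping, and the Remark that the paper places after the lemma identifies this enlargement of $W$ (so that the term-by-term tensor decomposition of elements of $W$ and their images is closed under the operations) as the delicate point of the proof — which is exactly the point you flag as the main obstacle. Provided you verify, as you indicate, that the image of each $d\hat G_\ell + \hat G_\ell d - T^\delta$ really lands in a finitely generated subcomplex on which the next shrinker is defined (in particular that the $\mathscr C_2$-valued residue contractions are captured), the argument closes.
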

\begin{proof}
Let $a,b \in \frak{OB}(\mathscr C_1)$.
The filtration $\frak N_m \overset{\circ}{\mathscr C}_{13}(a,b)$ is defined in 
the same way as in the proof of Lemma \ref{lem85}.
Using Lemma \ref{lemma8dayto}, it suffices to prove that
$\frak N_{m-1} {\mathscr C}_{13}(a,b) \to \frak N_m {\mathscr C}_{13}(a,b)$
is an almost cochain homotopy equivalence
relative to ${\mathscr C}_{12}(a,b)$.
Let $W$ be a finitely generated subcomplex of 
$\frak N_m \overset{\circ}{\mathscr C}_{13}(a,b)$.
We observe the following.
There exists a finite subset $\frak O$ of $\frak{OB}(\mathscr  C_2)$
and a finitely generated subcomplex $U_k(c,c')$ 
of $\mathscr C_{21}(c,c')$, $U'_k(c,c')$ of $\mathscr C_{12}(c,c')$ for 
$c,c' \in \frak O$, a finitely generated subcomlex 
$U_k(a,c)$ (resp. $U_k(c,b)$) of $\mathscr C_{12}(a,c)$
(resp. $\mathscr C_{12}(c,b)$)
for $k=1,\dots,2m$ 
with the following properties.
\begin{enumerate}
\item
We require $U_k(c,c') \subseteq U_{k+1}(c,c')$, 
$U'_k(c,c') \subseteq U'_{k+1}(c,c')$, 
$U_k(a,c) \subset U_{k+1}(a,c)$,
and 
$U_k(c,b) \subseteq U_{k+1}(c,b)$.
\item They are preserved by the composition
in the following sense:
$$
\aligned
&U_{k_1}(c_1,c_2) \circ (U'_{k_2}(c_2,c_3) \cap \mathscr C_{2}(c_2,c_3))
\circ U_{k_3}(c_3,c_4)
\subseteq 
U_{k_1+k_2+k_3}(c_1,c_4), \\
&U'_{k_1}(c_1,c_2) \circ (U_{k_2}(c_2,c_3) \cap \mathscr C_{2}(c_2,c_3))
\circ U'_{k_3}(c_3,c_4)
\subseteq 
U'_{k_1+k_2+k_3}(c_1,c_4),\\
&U_{k_1}(a,c_1) \circ (U'_{k_2}(c_1,c_2)\cap \mathscr C_{2}(c_1,c_2))
\subseteq 
U_{k_1+k_2}(a,c_2), \\
&(U'_{k_1}(c_1,c_2)\cap \mathscr C_{2}(c_1,c_2))
 \circ U_{k_2}(c_2,b)\subseteq 
U_{k_1+k_2}(c_1,b).
\endaligned
$$
\item
Moreover we require that every element of $W$ is represented by a finite 
sum of  elements of the form
\begin{equation}\label{form92}
{\bf x} = u \otimes x_0 \otimes y_1 \otimes \dots \otimes y_k \otimes x_k \otimes v
\end{equation}
where 
$u \in U_1(a,c)$,  $x_i \in U'_1(c,c')$, 
$y_i \in U_1(c,c')$, $v \in U_1(c,b)$ with $c,c' \in \frak O$, $k\le m$.
\end{enumerate}
Let $W^+$ be the submodule generated by elements of the form 
(\ref{form92}) 
such that
$u \in U_{n_1}(a,a_0)$, 
$x_i \in U_{k_i}(a_i,b_{i+1})$,
$y_i \in U_{\ell_i}(b_i,a_i)$,
$v \in U_{n_2}(a_k,b)$
with $a_i,b_i \in  \frak O$ and $n_1 + n_2 + \sum k_i + \sum \ell_i
\le 2m$.
\par
We take complement 
$U_k^*(c,c')$ such that $U_k(c,c') = (U_k^*(c,c')
\cap \mathscr C_{2}(c,c')) \oplus U_k^*(c,c')$ 
and $U_k^*(c,c') \subseteq U_{k+1}^*(c,c')$.
We take $U_k^{\prime *}(c,c')$, $U_k^*(a,c)$, $U_k^*(c,b)$
in the same way.
\par
Now we take $\epsilon/m$-$U'_m(c,c')$ shrinker $G$ for each $c,c' \in \frak O$.
For an element ${\bf x} \in W^+$ of the form  (\ref{form92}) 
we assume that $u \in U^*_{n_1}(a,a_0)$, $x_i \in U^*_{k_i}(a_i,b_{i+1})$,  
$y_i \in U^*_{\ell_i}(b_i,a_i)$,
$v \in U^*_{n_2}(a_k,b)$.
Now we can define $\epsilon$-$W^+$ shrinker 
relative to ${\mathscr C}_{1}(a,b)$ by the formula
(\ref{defshrink}) times $T^\epsilon$.
\end{proof}
The proof of Theorem \ref{triangleharder} is complete.
\qed
\begin{rem}
We made rather cumbersome choice of subcomplexes $U_k(c,c')$ etc.
during the proof of Lemma \ref{lem942}.
We assumed that $W$ is invariant by the co-boundary operator.
However when we write an element of $W$ as a sum of 
elements of the form (\ref{form92})  each term 
may not be an element of $W$. 
In such a case the calculation appearing in the proof 
of Lemma \ref{lem85} cannot be carried out.
After modifying $W$ to $W^+$ this becomes possible.
\end{rem}

\section{Homotopical unitality of filtered $A_{\infty}$ functors: Review.}
\label{sec;alhomoequi}

We next discuss the unitality of the inductive limit.
Since the definition of the Gromov-Hausdorff distance uses 
the {\it strict} unit, the unitality of the inductive limit is essential 
to discuss well-defined-ness of the limit in the sense of 
(weak) equivalence. 
We first recall the notion of homotopical unitality,
which is discussed in detail in \cite[Section 3.2]{fooobook}.
There the case of filtered $A_{\infty}$ algebra is discussed. 
However we can adapt the argument to our category case easily.

\begin{defn}
Let $\mathscr C$ be a filtered $A_{\infty}$ category.
Suppose we are given an element ${\bf e}_c$ of 
$\mathscr C(c,c)$ with $\deg {\bf e}_c = 0$, 
$\frak v({\bf e}_c) = 0$  for each $c \in \frak{OB}(\mathscr C)$.
We say that $\mathscr C$ is {\it homotopically unital} and 
$\{{\bf e}_c \mid c \in \frak{OB}(\mathscr C)\}$
is its {\it homotopy unit}, if the following follows.
\par
We use symbols ${\bf e}^+_c$, ${\bf f}_c$ and put
\begin{equation}\label{formula91}
\left\{
\aligned
&\mathscr C^+(c,c):= \mathscr C(c,c) \oplus \Lambda_0 {\bf e}^+_c 
\oplus \Lambda_0 {\bf f}_c, 
\\
&\mathscr C^+(c,c'):= \mathscr C(c,c')
\qquad \text{if $c \ne c'$.}
\endaligned
\right.
\end{equation}
We define $\deg {\bf e}_c^+ = 0$, $\deg {\bf f}_c = -1$,\footnote{These are 
degrees {\it before} shifted. The shifted degrees of ${\bf e}_c^+$ 
and ${\bf f}_c$ are $-1$ and $-2$, respectively.}
and $\frak v({\bf e}_c^+) = \frak v({\bf f}_c) = 0$.
We remark $B\mathscr C(a,b) 
\subseteq B\mathscr C^+(a,b)$.
We require that there exists a filtered $A_{\infty}$ category 
such that:
\begin{enumerate}
\item
$\frak{OB}(\mathscr C^+) = \frak{OB}(\mathscr C)$.
The module of morphisms of $\mathscr C^+$ is as in (\ref{formula91}).
\item
The structure operations of $\mathscr C^+$ coincide with the 
structure operations of $\mathscr C$ on $\mathscr C(a,b)$.
\item
The elements ${\bf e}^+_c$ is a stric unit of the  filtered $A_{\infty}$ category 
$\mathscr C^+$.
\item
$
\frak m^+_1({\bf f}_c) - ({\bf e}^+_c - {\bf e}_c)
\in 
\Lambda_+ \mathscr C(c,c).
$
\item
The image of $\frak m^+$ is in $\mathscr C$ except the 
case described in items (3)(4).
\end{enumerate}
We call $\mathscr C^+$ a {\it unitalization} of $\mathscr C$.
\end{defn}
\begin{rem}
Note that for a given $\mathscr C$ and its homotopy unit ${\bf e}_c$,
the operation $\mathfrak m^+$ which satisfies (1)-(4) above may not 
be unique. When we say homotopy unit we include the 
operator $\mathfrak m^+$
as a part of the structure given. The same remark 
applies to a homotopically unital functor.
\end{rem}

If $\mathscr C$ has a strict unit $\{{\bf e}_c\}$
then it is also a homotopy unit.
In fact, we define
$
\frak m^+_1({\bf f}_c) = {\bf e}^+_c - {\bf e}_c.
$
The operation to the other elements including  ${\bf e}^+_c$,  ${\bf f}_c$
are $0$ except those determined by the strict unitality of ${\bf e}^+_c$.
It is easy to check that this gives a structure of unital 
filtered $A_{\infty}$ category on $\mathscr C^+$.
From now on when $\mathscr C$ is unital we always 
take this unital filtered $A_{\infty}$ structure on $\mathscr C^+$.

We next discuss homotopically unital filtered $A_{\infty}$ functors.

\begin{defn}
Let $\mathscr C_1$, $\mathscr C_2$ be homotopically unital filtered $A_{\infty}$ 
categories. 
A filtered $A_{\infty}$ functor $\Phi: \mathscr C_1\to\mathscr C_2$ with energy loss $\rho$ is said 
to be {\it homotopically unital} if it 
extends to a unital filtered $A_{\infty}$ functor $\Phi^+: \mathscr C^+_1\to\mathscr C^+_2$
with energy loss $\rho$.
We call $\Phi^+$ the {\it unitalization} of $\Phi$
\par
If $\mathscr C_2$ is unital and $\mathscr C_1$ is homotopically unital 
we say a filtered $A_{\infty}$ functor with energy loss $\Phi: \mathscr C_1\to\mathscr C_2$ 
is {\it $\#$-homotopically 
unital} if $\Phi$ extends to a unital filtered $A_{\infty}$ functor with energy loss
$\Phi^{\#}: \mathscr C^+_1\to\mathscr C_2$.
We call $\Phi^{\#}$ the $\#$-{\it unitalization} of $\Phi$.
\end{defn}
We use the next construction in Section \ref{sec;indhomoequi}.
\begin{defnlem}\label{deflem93}
If $\mathscr C$ is a unital filtered $A_{\infty}$ category
then there exists a unital filtered $A_{\infty}$ functor
$
\frak I : \mathscr C \to \mathscr C^+.
$
\end{defnlem}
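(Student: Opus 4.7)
The plan is to exhibit $\frak I$ by an explicit construction using $\mathbf{f}_c$ as a chain-level witness that $[\mathbf{e}_c] = [\mathbf{e}^+_c]$ in cohomology of $\mathscr C^+$. The key structural input is the minimal form of $\mathscr C^+$ in the strictly unital case: apart from the strict-unit action of $\mathbf{e}^+_c$ and the single relation $\frak m^+_1(\mathbf{f}_c) = \mathbf{e}^+_c - \mathbf{e}_c$, all operations of $\frak m^+$ involving $\mathbf{f}_c$ or $\mathbf{e}^+_c$ vanish; in particular $\frak m^+_k(\ldots,\mathbf{f}_c,\ldots) = 0$ for $k\ge 2$, so $\mathbf{f}_c$-contributions to the $A_\infty$ functor equation can only enter through $\frak m^+_1$.

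For each object $c$ I fix a $\Lambda_0$-module splitting $\mathscr C(c,c) = \Lambda_0 \mathbf{e}_c \oplus V_c$ with projection $p : \mathscr C(c,c) \to \Lambda_0$ onto the $\mathbf{e}_c$-factor, extended by zero on $\mathscr C(a,b)$ for $a \ne b$. Set $\frak I_{\rm ob} = \rm id$, and on morphisms define
$$
\frak I_1(\mathbf{e}_c) = \mathbf{e}^+_c, \qquad
\frak I_1(v) = v + p(\frak m_1 v)\,\mathbf{f}_c \quad (v \in V_c),
$$
$$
\frak I_k(x_1, \ldots, x_k) = p(\frak m_k(x_1,\ldots,x_k))\,\mathbf{f}_c \quad (k \ge 2,\ x_i \in V_{c_{i-1}, c_i}),
$$
extended $\Lambda_0$-linearly with the strict-unit convention $\frak I_k(\ldots, \mathbf{e}_c, \ldots) = 0$ for $k\ge 2$. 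These two partial specifications are consistent because they cover disjoint cases. The $\mathbf{f}_c$-correction is nonzero only when its degree matches that of the left-hand side; this forces $\sum \deg x_i = k - 2$, which is exactly the condition under which $p(\frak m_k(x_1,\ldots,x_k))\ne 0$. The energy filtration is preserved because $\frak v(\mathbf{f}_c) = 0$, so $\frak I$ is a filtered $A_\infty$ functor.

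The $A_\infty$ functor equation for $\frak I$ splits, upon separating $\mathscr C$-components and $\mathbf{f}_c$-components on both sides, into two pieces. The $\mathscr C$-component reduces trivially to the identity $\frak m^+_k|_{\mathscr C} = \frak m_k$. The $\mathbf{f}_c$-component, after collecting the contribution of $\frak m^+_1(\mathbf{f}_c) = \mathbf{e}^+_c - \mathbf{e}_c$ at each position, becomes the projection $p$ of the $A_\infty$ associativity relation for $\frak m$ in $\mathscr C$ applied to the given inputs; hence it vanishes identically. The strict unitality of $\frak I$ is immediate from the definition.

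The main obstacle is the sign and bookkeeping required to separate the $\mathscr C$- and $\mathbf{f}_c$-coefficients cleanly in the $A_\infty$ functor equation and to check that extending $\frak I_k$ by the strict-unit rule is compatible with the $A_\infty$ functor equations when some inputs are unit. This reduces to the strict unitality of $\mathbf{e}_c$ in $\mathscr C$, which kills all $\frak m_k$-contributions with a $\mathbf{e}_c$-input for $k\ne 2$ (and identifies $\frak m_2(\mathbf{e}_c,\cdot)$ with the identity), together with the vanishing of $\frak m^+_k$ for $k\ge 2$ on $\mathbf{f}_c$-inputs; both facts cause almost all terms in the functor equation to disappear, leaving only the ones needed to reduce to $A_\infty$ associativity in $\mathscr C$.
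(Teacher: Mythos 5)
Your construction is the same one the paper uses: split $\mathscr C(c,c)=\Lambda_0\mathbf{e}_c\oplus\mathscr C^*(c,c)$, write $\frak m_k=\frak m^0_k\,\mathbf{e}_c+\frak m^*_k$ with $\frak m^0_k=p\circ\frak m_k$, set $\frak I_{\rm ob}=\mathrm{id}$ and $\frak I_1(\mathbf{e}_c)=\mathbf{e}^+_c$, and use $\mathbf{f}_c$ to absorb the $\mathbf{e}_c$-coefficient of $\frak m_k$ for $k\ge 2$. Two small places where you refine what is literally on the page. First, the paper's displayed $\frak I_1$ is $\frak I_1(a\mathbf{e}_c+x)=a\mathbf{e}^+_c+x$ with no $\mathbf{f}_c$-term; yours is $\frak I_1(v)=v+p(\frak m_1 v)\mathbf{f}_c$, i.e.\ the paper's formula $\frak I_k(\mathbf{x})=\frak m^0_k(\mathbf{x})\mathbf{f}_c$ continued down to $k=1$ and added to the identity. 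Your version is the one that closes the $k=1$ functor equation $\frak I_1\circ\frak m_1=\frak m^+_1\circ\frak I_1$ in general: the paper's $\frak I_1$ alone works only when $p\circ\frak m_1$ vanishes on $\mathscr C^*$, that is, when the $\Lambda_0$-module splitting can be chosen $\frak m_1$-invariant, which the paper does not stipulate. Second, you make explicit that $\frak I_k(\ldots,\mathbf{e}_c,\ldots)=0$ for $k\ge 2$; the naive reading of $\frak I_2(\mathbf{x})=\frak m^0_2(\mathbf{x})\mathbf{f}_c$ would give $\frak I_2(\mathbf{e}_c,y)=p(y)\mathbf{f}_c\neq 0$ and break strict unitality of $\frak I$, so the paper must be read with your convention. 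In short: same approach, but your bookkeeping is tighter and is the version that actually verifies.

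One thing you could spell out a bit more is the well-definedness check: the rule \emph{``$p\circ\frak m_k$ times $\mathbf{f}_c$ on $V^{\otimes k}$, and zero if any slot is $\mathbf{e}_c$''} gives $\frak I_k$ a definition component-by-component with respect to the direct sum decomposition. For $k\ge 3$ the two clauses agree automatically because $\frak m_k$ kills $\mathbf{e}_c$-inputs, but for $k=2$ they genuinely differ when one slot is $\mathbf{e}_c$ (since $\frak m_2(\mathbf{e}_c,\cdot)\neq 0$), so the piecewise definition is doing real work there. This is exactly the term that, together with $\frak m^+_1(\mathbf{f}_c)=\mathbf{e}^+_c-\mathbf{e}_c$ acting on the $\frak I_1$-correction and the strict-unit action of $\mathbf{e}^+_c$, makes the $k=2$ functor equation with a unit input close up, and it is the only point where the $\mathbf{f}_c$-coefficients do not reduce directly to the projected $A_\infty$ relation.
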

\begin{proof}
Since $\mathscr C(c,c)$ is the completion of a free $\Lambda_0$ module 
we take a splitting (as a graded $\Lambda_0$ modules) :
\begin{equation}\label{unitsplitting}
\mathscr C(c,c) = \Lambda_0{\bf e}_c \oplus \mathscr C^*(c,c).
\end{equation}
We then split
$
\frak m_k: B_k\mathscr C[1](c,c) \to \mathscr C[1](c,c)
$
as
$
\frak m_k({\bf x}) = \frak m^0_k({\bf x}) {\bf e}_c + \frak m^*_k({\bf x}).
$
Here $\frak m^0_k({\bf x}) \in \Lambda_0$ and 
$\frak m^*_k({\bf x}) \in \mathscr C[1](c,c)$.
We put
$
\frak I_k({\bf x}) = \frak m^0_k({\bf x}) {\bf f}_c
$
for $k\ge 2$ and
$
\frak I_1(a {\bf e}_c + x)
= a {\bf e}^+_c + x,
$
for $x \in \mathscr C^*(c,c)$.
It is easy to check that $\frak I$ is a unital filtered $A_{\infty}$ functor 
(with energy loss $0$.)
\end{proof}
\begin{defn}\label{defn9696666}
When $\Phi^{\#} : \mathscr C^+_1 \to \mathscr C_2$ is a unital filtered $A_{\infty}$
functor with energy loss $\rho$, we define:
$
\Phi^+ = \frak I \circ \Phi^{\#}.
$
\end{defn}
Note that 
$\Phi^+$ and $\Phi$ are different on $B\mathscr C_1$.
We remark however that 
$H(\mathscr C_1(c);\frak m_1)$ is canonically isomorphic
to $H(\mathscr C^+_1(c);\frak m_1)$ and $\Phi^+$ and 
$\Phi$ induces the same map on this cohomology groups.

\section{Homotopical unitality of the inductive limit.}
\label{sec;indhomoequi}

In this section, we prove Theorem \ref{mainalgtheorem2} (4).
Namely we prove Proposition \ref{prop101} below.
Let
$(\{\mathscr C^n\},\{\Phi^n\})$ be a 
unital inductive system of filtered $A_{\infty}$ categories.
We consider $\varinjlim\,{\mathscr C}^{n}: = A\mathscr B^{\infty}$
and its reduced version 
$(\varinjlim\,{\mathscr C}^{n})^{\rm red}: = A^{\rm red}\mathscr B^{\infty}$.

\begin{prop}\label{prop101}
The inductive limits $\varinjlim\,{\mathscr C}^{n}$
and $(\varinjlim\,{\mathscr C}^{n})^{\rm red}$ are homotopically 
unital.
\end{prop}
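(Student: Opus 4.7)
First, I promote the unital inductive system $(\{\mathscr{C}^n\},\{\Phi^n\})$ to an inductive system $(\{(\mathscr{C}^n)^+\},\{(\Phi^n)^+\})$ of unitalizations. Each unital $\mathscr{C}^n$ has its tautological unitalization $(\mathscr{C}^n)^+$ described after Definition-Lemma \ref{deflem93}, in which ${\bf e}^+_c$ is the new strict unit and $\frak m^+_1({\bf f}_c)={\bf e}^+_c-{\bf e}_c$, with every other structure operation involving ${\bf e}^+_c$ or ${\bf f}_c$ vanishing apart from those forced by strict unitality of ${\bf e}^+_c$. Since $\Phi^n$ is unital, I extend it to a unital filtered $A_{\infty}$ functor $(\Phi^n)^+ : (\mathscr{C}^n)^+ \to (\mathscr{C}^{n+1})^+$ of the same energy loss by setting $(\Phi^n)^+_1({\bf e}^+_c)={\bf e}^+_{\Phi^n_{\rm ob}(c)}$, $(\Phi^n)^+_1({\bf f}_c)={\bf f}_{\Phi^n_{\rm ob}(c)}$, and $(\Phi^n)^+_k=0$ on every tensor of length $k\ge 2$ containing either new symbol. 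A direct inspection shows that every $A_\infty$-functor equation in which a new symbol appears reduces either to an equation satisfied by $\Phi^n$ or to $0=0$, using the unitality of $\Phi^n$ together with the strict-unit absorption rules on both sides.

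Next, applying the Bar-Cobar machinery of Sections \ref{sec;bar}--\ref{sec;back} and Section \ref{sec;reduced} to this enlarged unital system produces completed DG-categories $A\mathscr{B}^{+,\infty}$ and $A^{\rm red}\mathscr{B}^{+,\infty}$, together with injective completed DG-functors $A\mathscr{B}^\infty \hookrightarrow A\mathscr{B}^{+,\infty}$ and $A^{\rm red}\mathscr{B}^\infty \hookrightarrow A^{\rm red}\mathscr{B}^{+,\infty}$ induced by the embeddings $\mathscr{C}^n\hookrightarrow(\mathscr{C}^n)^+$. For each limit object $c^\infty=[c^n]$ the families $\{{\bf e}_{c^n}\}$, $\{{\bf e}^+_{c^n}\}$, $\{{\bf f}_{c^n}\}$ are compatible under $(\Phi^n)^+$, so they define elements ${\bf e}_{c^\infty},{\bf e}^+_{c^\infty},{\bf f}_{c^\infty}$ lying in the image of $B_1$ inside $A\mathscr{B}^{+,\infty}(c^\infty,c^\infty)$; since the Co-Bar differential $\frak M_1=\partial+\hat\Delta$ restricts to $\frak m_1$ on $B_1$, the identity $\frak M_1({\bf f}_{c^\infty})={\bf e}^+_{c^\infty}-{\bf e}_{c^\infty}$ holds exactly, and likewise in the reduced category. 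Moreover the calculation $\frak M_1({\bf e}_c\otimes y)=\pm{\bf e}_c\otimes \frak m_1 y \pm y \pm {\bf e}_c\boxtimes y$ inside each $AB\mathscr{C}^n$, in which the term $\pm y$ is produced by the strict-unit identity $\frak m_2({\bf e}_c,y)=y$ via the Bar differential on $B_2$, descends to $A\mathscr{B}^\infty$ and exhibits $\frak M_2({\bf e}_{c^\infty}\oslash y)=\pm{\bf e}_{c^\infty}\boxtimes y$ as $\frak M_1$-cohomologous to $\pm y$ on cycles; hence ${\bf e}_{c^\infty}$ is a cohomology unit of $A\mathscr{B}^\infty$, and by the same argument of $A^{\rm red}\mathscr{B}^\infty$.

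Finally, the unitalization required by the definition of homotopy unitality---a filtered $A_\infty$ structure $(A\mathscr{B}^\infty)^+$ that enlarges only the diagonal morphism complex by $\Lambda_0{\bf e}^+_{c^\infty}\oplus\Lambda_0{\bf f}_{c^\infty}$, with ${\bf e}^+_{c^\infty}$ declared a strict unit and $\frak m^+_1({\bf f}_{c^\infty}):={\bf e}^+_{c^\infty}-{\bf e}_{c^\infty}$---will be built from this cohomology-unit structure by the standard obstruction-theoretic induction of \cite[Section 3.2]{fooobook}, in which the higher operations $\frak m^+_k(\dots,{\bf f}_{c^\infty},\dots)$ successively kill the failures of ${\bf e}_{c^\infty}$ to be a strict unit; the reduced case is handled in exactly the same way. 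The hard part will be to verify that this inductive construction is compatible with the two filtrations on the completed DG-category, namely the energy filtration and the finite-part structure of Definition \ref{degb219}: the bounding cochains produced at each stage must lie in the finite part with controlled valuations. I expect this to work by drawing the corrections from $A\mathscr{B}^\infty$ itself rather than from the larger $A\mathscr{B}^{+,\infty}$, with $T$-adic completeness ensuring convergence of the resulting inductive tower.
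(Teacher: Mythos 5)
Your proof hits the right preparatory observations --- the compatibility of $\{{\bf e}_{c^n}\}$ under the strictly unital $\Phi^n$, the well-definedness of ${\bf e}_{c^\infty}$, and the identification of ${\bf e}_{c^\infty}$ as a cohomology unit --- but the crucial final step is a genuine gap, and it is precisely the one the paper singles out as unavailable. You write that the unitalization ``will be built from this cohomology-unit structure by the standard obstruction-theoretic induction of \cite[Section 3.2]{fooobook}.'' That upgrade from a cohomology unit to a homotopy unit is a theorem over a \emph{field} (Lef\`evre-Hasegawa, Seidel), where one can transfer the structure to the $\frak m_1$-cohomology. The Remark at the start of Section \ref{Hofinfhomoto} explains that this argument does not run over $\Lambda_0$: the cohomology has torsion, so there is no reduction to the cohomology level, and hence no general obstruction-theoretic principle to invoke. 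The closing caveat in your proposal (``I expect this to work'') is flagging exactly the step that fails, and it is not a filtration bookkeeping issue --- the argument you want to inductively run simply does not exist in this setting.

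The paper's proof sidesteps the issue entirely by writing down the unitalization in closed form and checking the axioms by hand. The essential point is that the Co-Bar differential makes the desired bounding cochains \emph{visible}: because strict unitality of $\Phi^n$ gives well-defined elements ${\bf e}_{c_\infty}\otimes{\bf x}_\infty$ and ${\bf x}_\infty\otimes{\bf e}_{c_\infty}$ in $\mathscr B^\infty$, the paper can \emph{define}
\[
\frak M_2({\bf f}_{c_\infty},{\bf x}_\infty) := -\,{\bf e}_{c_\infty}\otimes{\bf x}_\infty,
\qquad
\frak M_2({\bf x}_\infty,{\bf f}_{c_\infty}) := -(-1)^{\deg'{\bf x}_\infty}\,{\bf x}_\infty\otimes{\bf e}_{c_\infty},
\]
together with $\frak M_2({\bf f},{\bf f})=0$ and $\frak M_1({\bf f}_{c_\infty})={\bf e}^+_{c_\infty}-{\bf e}_{c_\infty}$, and then verify the DG-category relations by a short direct computation. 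No induction, no obstructions, no appeal to the cohomology level. Your detour through $A\mathscr B^{+,\infty}$ also does not produce the unitalization in the required sense: that category enlarges each diagonal morphism space by infinitely many tensors involving ${\bf e}^+$ and ${\bf f}$, not just the rank-two free module $\Lambda_0{\bf e}^+_{c_\infty}\oplus\Lambda_0{\bf f}_{c_\infty}$ demanded by the definition of homotopy unit, so showing that $A\mathscr B^{+,\infty}$ contains the answer would again require a comparison argument that reduces to the same missing obstruction theory. The way to repair the proposal is to replace the third paragraph with the explicit formulas above and verify the $A_\infty$ relations directly, which is what the paper does.
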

\begin{proof}
Let $c_n \in \frak{OB}(\mathscr C^n)$. We denote its unit by ${\bf e}_{c_n}$.
By definition of unitality $\Phi^n_1({\bf e}_{c_n}) = {\bf e}_{\Phi^n_{\rm ob}(c_n)}$.
Suppose $c_{\infty} \in \frak{OB}(\mathscr C^{\infty}) = \varinjlim\,\frak{OB}(\mathscr C^{n})$.
We take its representative $(c_n,\dots)$.
Then by the above remark $\Phi^{\infty,n}({\bf e}_{c_n})$ is independent of $n$.
We denote this element by ${\bf e}_{c_{\infty}}$.
Note that ${\bf e}_{c_{\infty}} \in \mathscr C^{\infty}(c_{\infty},c_{\infty}) 
=(\varinjlim\,{\mathscr C}^{n})(c_{\infty},c_{\infty})
\subset A\mathscr B^{\infty}(c_{\infty},c_{\infty})$.
It is contained in the finite part.
We will prove that this element is a homotopy unit.
Note that ${\bf e}_{c_{\infty}}$ is {\it not} a strict unit. 
In fact for $x \in \mathscr C^{\infty}(c_{\infty},c_{\infty})$ we have
$
\frak M_2({\bf e}_{c_{\infty}},x) = {\bf e}_{c_{\infty}} \boxtimes  x \ne x.
$
Here we use $+1$ in the sign of (\ref{frakM2}).
Let ${\bf x} \in \mathscr B^{\infty}(c_{\infty},c_{\infty}')$.
We take $c_n,c'_n \in \frak{OB}(\mathscr C^n)$ and 
${\bf x}_n \in B\mathscr C^n(c_{n},c_{n}')$
such that $\lim_{n\to\infty} c_n = c_{\infty}$, 
$\lim_{n\to\infty} c'_n = c'_{\infty}$,
$\lim_{n\to\infty} {\bf x}_n = {\bf x}_{\infty}$.
By  strict unitality of ${\Phi^n}$ and $\widehat{\Phi^n}({\bf x}_n) = {\bf x}_{n+1}$ we find that
$$
\widehat{\Phi}^n({\bf e}_{c_n} \otimes {\bf x}_n) = {\bf e}_{c_{n+1}} \otimes {\bf x}_{n+1},
\qquad
\widehat{\Phi}^n({\bf x}_n \otimes {\bf e}_{c_n}) =  {\bf x}_{n+1} \otimes {\bf e}_{c_{n+1}}.
$$
Therefore $\Phi^{\infty,n}({\bf e}_{c_n} \otimes {\bf x}_n)$
and $\Phi^{\infty,n}({\bf x}_n \otimes {\bf e}_{c_n})$
are independent of sufficiently large $n$.
We denote them by ${\bf e}_{c_{\infty}} \otimes {\bf x}_{\infty}$
and ${\bf x}_{\infty} \otimes {\bf e}_{c_{\infty}}$, respectively.
Note that $\frak v({\bf x}_{\infty})=\frak v({\bf e}_{c_{\infty}} \otimes {\bf x}_{\infty})
=\frak v({\bf x}_{\infty} \otimes {\bf e}_{c_{\infty}})$.
Now we define:
\begin{equation}\label{form1010101}
\left\{
\aligned
\frak M_2({\bf f}_{c_{\infty}},{\bf x}_{\infty})
&= -{\bf e}_{c_{\infty}} \otimes {\bf x}_{\infty},
\\
\frak M_2({\bf x}_{\infty},{\bf f}_{c_{\infty}})
&= -(-1)^{\deg'{\bf x}_{\infty}} {\bf x}_{\infty} \otimes {\bf e}_{c_{\infty}},\\
\frak M_2({\bf f}_{c_{\infty}},{\bf f}_{c_{\infty}}) &= 0,
\\
\frak M_1({\bf f}_{c_{\infty}}) &= {\bf e}^+_{c_{\infty}} - {\bf e}_{c_{\infty}}.
\endaligned
\right.
\end{equation}
In the other cases the operations $\frak M_1$, $\frak M_2$ are 
determined automatically from the requirement that ${\bf e}^+_{c_{\infty}}$ 
is a strict unit 
and the operations coincide with given one on 
$A\mathscr B^{\infty}$. 
It is easy to check that they define a structure of unital DG-category on 
$(A\mathscr B^{\infty})^+$. For example
$$
\aligned
&\frak M_1(\frak M_2({\bf f},x))
= - \frak M_1({\bf e} \otimes x)
= -x + {\bf e} \boxtimes x + {\bf e} \otimes \frak m_1(x),
\\
&\frak M_2(\widehat{\frak M}_1({\bf f},x))
= \frak M_2({\bf e}^+ - {\bf e},x) + \frak M_2({\bf f},\frak m_1(x))
= x - {\bf e} \boxtimes x - {\bf e} \otimes \frak m_1(x).
\endaligned
$$
The proof of the case of $A^{\rm red}\mathscr B^{\infty}$
is the same.
\end{proof}
\begin{defn}
For a unital filtered $A_{\infty}$ category $\mathscr C$
we define homotopy unit of $AB\mathscr C$ 
and of $A^{\rm red}B\mathscr C$ by the same formula  (\ref{form1010101}).
\par
We then obtain a unital filtered $A_{\infty}$ functor with energy loss
$
A\Phi^{\infty,n}: (AB\mathscr C^n)^+ \to (A\mathscr B^{\infty})^+
$,
$
A^{\rm red}\Phi^{\infty,n}: (A^{\rm red}B\mathscr C^n)^+ \to (A^{\rm red}\mathscr B^{\infty})^+.
$
\end{defn}

The situation which actually appears in the proof of Theorem \ref{mainalgtheorem2} (4) is slightly different 
and is as follows.
\begin{defn}
Let  $(\{\mathscr C^n\},\{\Phi^n\})$ be an 
inductive system of filtered $A_{\infty}$ categories.
We call it to be  {\it $\#$-homotopically unital} if
$\mathscr C^n$ is unital and $\Phi^n$ is $\#$-homotopically unital for $n=1,2,\dots$.
\par
If $(\{\mathscr C^n\},\{\Phi^n\})$ is $\#$-homotopically unital
then by Definition \ref{defn9696666} we obtain a unital 
filtered $A_{\infty}$ functor $(\Phi^n)^+ : (\mathscr C^n)^+ \to (\mathscr C^{n+1})^+$.
Then by Proposition \ref{prop101}, we obtain a unital completed
DG-category
$(\varinjlim\,(\mathscr C^n)^{\rm red})^+$.
\par
We call $(A\varinjlim\,(\mathscr C^n)^{\rm red})^+$ the {\it inductive limit of 
the $\#$-homotopically unital inductive system} 
$(\{\mathscr C^n\},\{\Phi^n\})$.
We denote it by
$$
\varinjlim\, (\mathscr C^n)^+:= ((A\varinjlim\,(\mathscr C^n)^+)^{\rm red})^+.
$$
\end{defn}
The reason we use reduced version here is the following.
\begin{lem}\label{Anounitality}
If $\mathscr C$ is a gapped unital filtered $A_{\infty}$
category, the  homotopy equivalence 
$\frak I: \mathscr C \to A^{\rm red}B\mathscr C$ is homotopically unital.
\end{lem}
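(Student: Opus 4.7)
The plan is to construct an explicit unitalization $\frak I^+ : \mathscr C^+ \to (A^{\rm red}B\mathscr C)^+$ of $\frak I$ and to verify the filtered $A_\infty$ functor axioms. On the subcomplex $B\mathscr C \subseteq B\mathscr C^+$ I take $\frak I^+_k$ to agree with the formula $I_k(x_1 \oslash \cdots \oslash x_k) = x_1 \otimes \cdots \otimes x_k$ from Theorem \ref{Theorem71} (composed with the quotient map to $A^{\rm red}B\mathscr C$). On the new generators I set $\frak I^+_1({\bf e}^+_c) = {\bf e}^+_c$ and $\frak I^+_1({\bf f}_c) = {\bf f}_c$, and I set $\frak I^+_k$ equal to zero on every tensor of length $k \ge 2$ that contains either ${\bf e}^+_c$ or ${\bf f}_c$ in any slot. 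Since these new generators have $\frak v = 0$, the energy filtration is preserved.

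I would then verify the $A_\infty$ functor equation input-by-input. When all arguments lie in $\mathscr C$ the equation reduces to the one already established in Theorem \ref{Theorem71}. When ${\bf e}^+_c$ appears, the strict unitality of ${\bf e}^+_c$ in $\mathscr C^+$ (source) and in $(A^{\rm red}B\mathscr C)^+$ (target, via formula (\ref{form1010101})) forces both sides of the equation to match trivially. The substantive case is inputs containing ${\bf f}_c$.

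Consider an input of length $k \ge 2$ with a single ${\bf f}_c$ in slot $i$. All source-side contributions from $\frak m^+_n$ with $n \ge 2$ vanish, because $\frak m^+_n$ is zero on any argument containing ${\bf f}_c$. The only possibly surviving source contribution comes from substituting $\frak m^+_1({\bf f}_c) = {\bf e}^+_c - {\bf e}_c$ in slot $i$: its ${\bf e}^+_c$ part is killed by our definition of $\frak I^+$, while its $-{\bf e}_c$ part produces $-x_1 \otimes \cdots \otimes {\bf e}_c \otimes \cdots \otimes x_k$. When $i$ is an interior position this lies in the ideal $\mathscr I(\mathscr C)$ and therefore vanishes in $A^{\rm red}B\mathscr C$; this is the crucial use of reducedness and is the reason the analogous statement fails for the unreduced $AB\mathscr C$. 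When $i = 1$ or $i = k$, the substitution produces a boundary term of the form $\pm {\bf e}_c \otimes x_2 \otimes \cdots \otimes x_k$ or $\pm x_1 \otimes \cdots \otimes x_{k-1} \otimes {\bf e}_c$ that does survive in $A^{\rm red}B\mathscr C$; on the target side the term $\frak M_2(\frak I^+_1({\bf f}_c), \frak I^+_{k-1}(x_2,\dots,x_k))$ or $\frak M_2(\frak I^+_{k-1}(x_1,\dots,x_{k-1}), \frak I^+_1({\bf f}_c))$ produces exactly the matching boundary contribution by the definitional formulas $\frak M_2({\bf f}_c, \cdot) = -{\bf e}_c \otimes (\cdot)$ and $\frak M_2(\cdot, {\bf f}_c) = -(-1)^{\deg'(\cdot)}(\cdot) \otimes {\bf e}_c$ in (\ref{form1010101}). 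The two sides of the $A_\infty$ equation then agree.

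The main obstacle is the sign bookkeeping required to verify these cancellations uniformly for all positions $i$ and all tensor lengths $k$, together with handling inputs containing several copies of ${\bf f}_c$ or a mixture of ${\bf f}_c$ and ${\bf e}^+_c$ (where both sides vanish for definitional reasons). Once this verification is complete, $\frak I^+$ is the required unital extension, establishing that $\frak I$ is homotopically unital.
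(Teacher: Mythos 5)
Your proposal is correct and takes essentially the same approach as the paper: you define $\frak I^+$ by the identical extension (identity on $\mathbf e^+_c$ and $\mathbf f_c$, zero on all higher-arity inputs containing the new generators) and verify the $A_\infty$ functor equation directly. Your explicit treatment of $\mathbf f_c$ in an interior slot, where the boundary term $x_1\otimes\cdots\otimes\mathbf e_c\otimes\cdots\otimes x_k$ falls into the ideal $\mathscr I(\mathscr C)$, is precisely the phenomenon the paper singles out in the remark following the lemma as the reason the unreduced $AB\mathscr C$ version fails.
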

\begin{proof}
We define 
$I_1^+({\bf e}^+_c)= {\bf e}^+_c$, $I_1^+({\bf f}^+_c)= {\bf f}^+_c$
and put all the other operations including ${\bf e}^+_c$,${\bf f}^+_c$   to be zero.
We can then check the fact that it defines a unital 
filtered $A_{\infty}$ functor (with energy loss $0$) easily.
For example
$$
\aligned
(I_*^+\circ \hat d)({\bf f}_c,x_1,\dots,x_k)
&= -{\bf e}_c \otimes x_1 \otimes \dots \otimes x_k,
\\
(\frak M_*\circ \widehat I^+)({\bf f}_c,x_1,\dots,x_k)
&= \frak M_2({\bf f}_c,x_1 \otimes \dots \otimes x_k)
\\
&= - {\bf e}_c \otimes x_1 \otimes \dots \otimes x_k.
\endaligned
$$
\end{proof}
Since the obvious quotient map  $(AB\mathscr C)^+ \to (A^{\rm red}B\mathscr C)^+$
which is unital induces an isomorphism on $\overline{\frak m}_1$ cohomology
it has a unital homotopy inverse ($A_{\infty}$ functor).
So $\mathscr C^+$ is unitally homotopy equivalent to $AB\mathscr C$.
However the author does not know 
an explicit formula of the homotopy inverse to 
the functor $(AB\mathscr C)^+ \to (A^{\rm red}B\mathscr C)^+$.
The author does not know whether  
$\frak I: \mathscr C \to AB\mathscr C$ is homotopically unital or not.
If we define $\frak I^+$ by the same formula as above then
$
(I_*^+\circ \hat d)(x,{\bf f}_c,y)
= (-1)^{\deg x} x \otimes {\bf e}_c \otimes y,
$
which is different from $(\frak M_*\circ \widehat I^+)(x,{\bf f}_c,y) = 0$.
\begin{lem}\label{lem1130}
Let $(\{\mathscr C^n\},\{\Phi^n\})$
be a unital inductive system of filtered $A_{\infty}$ categories.
Let $a_{\infty},b_{\infty} \in \frak{OB}(\varinjlim\, \mathscr C^n)$.
Take $a_n,b_n \in \frak{OB}(\mathscr C^n)$ such that $\lim_{n\to\infty} a_n = a_{\infty}$,
$\lim_{n\to\infty} b_n = b_{\infty}$. Then
\begin{equation}\label{ineq10222}
d_{\rm Hof}(a_{\infty},b_{\infty}) 
\le \liminf_{n\to \infty}d_{\rm Hof}(a_{n},b_{n}).
\end{equation}
The same holds for a $\#$-homotopically unital inductive system.
\end{lem}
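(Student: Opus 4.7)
The plan is to push a homotopy equivalence at finite stage $n$ forward to the inductive limit via the functor $\Upsilon^{\infty,n}$ of Theorem \ref{mainalgtheorem2}(2), and then invoke Lemma \ref{lem11300}. The only subtlety is that Lemma \ref{lem11300} requires a \emph{strictly} unital functor, whereas the limit category $\mathscr C^\infty$ is, in general, only homotopically unital. So the bookkeeping involves promoting everything to strictly unital data by passing through the unitalization construction.

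First, I would fix $n$ and pick any $\delta > d_{\rm Hof}(a_n,b_n)$, so that there exists a homotopy equivalence $(t_{12},t_{21},s_1,s_2)$ in $\mathscr C^n$ between $a_n$ and $b_n$ of energy loss $\delta$. By Theorem \ref{mainalgtheorem2}(4), $\Upsilon^{\infty,n}\colon\mathscr C^n\to\mathscr C^\infty$ is homotopically unital, so it admits a unitalization $(\Upsilon^{\infty,n})^+\colon (\mathscr C^n)^+\to(\mathscr C^\infty)^+$ which is a \emph{strictly} unital filtered $A_\infty$ functor of the same energy loss $\epsilon'_n=\sum_{m\ge n}\epsilon_m$. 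Composing with $\frak I\colon\mathscr C^n\to(\mathscr C^n)^+$ from Lemma-Definition \ref{deflem93}, which is strictly unital of energy loss $0$, yields a strictly unital filtered $A_\infty$ functor $\Psi^n\colon\mathscr C^n\to(\mathscr C^\infty)^+$ of energy loss $\epsilon'_n$, sending $a_n\mapsto a_\infty$ and $b_n\mapsto b_\infty$.

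Since the Hofer distance $d_{\rm Hof}(a_\infty,b_\infty)$ is by convention computed in the strict unitalization $(\mathscr C^\infty)^+$ (because $\mathscr C^\infty$ is only homotopically unital), applying Lemma \ref{lem11300} to $\Psi^n$ immediately gives
\begin{equation*}
d_{\rm Hof}(a_\infty,b_\infty)\;=\;d_{\rm Hof}(\Psi^n_{\rm ob}(a_n),\Psi^n_{\rm ob}(b_n))\;\le\;d_{\rm Hof}(a_n,b_n)+2\epsilon'_n.
\end{equation*}
Letting $\delta\searrow d_{\rm Hof}(a_n,b_n)$ (which is built into the preceding inequality), and then taking $\liminf_{n\to\infty}$ and using $\epsilon'_n\to 0$, we obtain (\ref{ineq10222}). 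For the $\#$-homotopically unital case, the argument is identical once we replace $(\Phi^n)^+=\frak I\circ(\Phi^n)^\#$ and take the inductive limit $\varinjlim(\mathscr C^n)^+$ as defined just before the lemma; the resulting limit functor $(\mathscr C^n)^+\to\varinjlim(\mathscr C^m)^+$ is again strictly unital of energy loss $\epsilon'_n$, and Lemma \ref{lem11300} applies in the same way.

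The main point requiring care — and the only place where anything beyond formal functoriality is used — is verifying that a homotopy equivalence in $\mathscr C^n$ genuinely determines one in the unitalization. This is essentially automatic: because $\frak m_1^+({\bf f}_c)={\bf e}^+_c-{\bf e}_c$ in the canonical unitalization of a strictly unital category, the tuple $(t_{12},t_{21},s_1+{\bf f}_{a_n},s_2+{\bf f}_{b_n})$ (with $\frak v({\bf f}_c)=0$) is a homotopy equivalence in $(\mathscr C^n)^+$ of the same energy loss $\delta$ that is implicitly used by $\Psi^n=\frak I\circ(\Upsilon^{\infty,n})^+$, so no additional energy is lost in the passage to unitalizations. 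Once this is observed, the entire argument is just one application of Lemma \ref{lem11300} followed by taking a $\liminf$.
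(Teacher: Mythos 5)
Your proof is correct and takes essentially the same route as the paper's: fix a homotopy equivalence in $\mathscr C^n$ of energy loss close to $d_{\rm Hof}(a_n,b_n)$, promote it to a homotopy equivalence in $(\mathscr C^n)^+$ via $s\mapsto s+{\bf f}$ (the step you flag at the end, which the paper performs explicitly), push it forward through the strictly unital functor into $(\mathscr C^\infty)^+$, and take $\liminf$ using that the energy loss tends to $0$. The paper inlines the push-forward formulas from the proof of Lemma \ref{lem11300} (in two stages, through $\frak I^+$ and then through $A^{\rm red}\Phi^{\infty,n}$) instead of citing the lemma and Theorem \ref{mainalgtheorem2}(2)(4) as you do, but the content is the same.
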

\begin{proof}
Let $\rho = \lim_{n\to \infty}d_{\rm Hof}(a_{n},b_{n})$
and $\epsilon$ be a positive number. 
For a sufficiently large $n$, we have $d_{\rm Hof}(a_{n},b_{n})< \rho +\epsilon$.
We take a homotopy equivalence
$t_{a,b;n} \in \mathscr C^n_{\Lambda}(a^n,b^n)$,
$t_{b,a;n} \in \mathscr C_{\Lambda}(b^n,a^n)$,
$s_{a;n} \in \mathscr C_{\Lambda}(a^n,a^n)$,
$s_{b;n} \in \mathscr C_{\Lambda}(b^n,b^n)$
that has energy loss 
$\rho + \epsilon$.
\par
We replace 
$s_{a,n}$ by  $s_{a,n} + {\bf f}_a$
and $s_{b,n} + {\bf f}_b$.
Then they define homlotopy equivalence
 in the unitalization $(\mathscr C^n)^+$.
By Lemma \ref{Anounitality}
it induces 4-tuple of
elements 
$
\hat t_{a,b;n} \in A^{\rm red}B\mathscr C^n_{\Lambda}(a^n,b^n)$,
$
\hat t_{b,a;n} \in A^{\rm red}B\mathscr C^n_{\Lambda}(b^n,a^n)$,
$
\hat s_{a;n} \in A^{\rm red}B\mathscr C^n_{\Lambda}(a^n,a^n)$,
$
\hat s_{b;n} \in A^{\rm red}B\mathscr C^n_{\Lambda}(b^n,b^n)$
which is a homotopy equivalence
in $(A^{\rm red}B\mathscr C^n)^+$.
We put
$$
\left\{
\aligned
&t_{a,b;\infty} = A^{\rm red}\Phi_1^{\infty,n}(\hat t_{a,b;n}), 
\qquad t_{b,a;\infty} = A^{\rm red}\Phi_1^{\infty,n}(\hat t_{b,a;n}),
\\
&s_{a:\infty} = A^{\rm red}\Phi_1^{\infty,n}(\hat s_{a:n})
+ A^{\rm red}\Phi_2^{\infty,n}(\hat t_{a,b;n},\hat t_{b,a;n}), \\
& s_{b;\infty} = A^{\rm red}\Phi_1^{\infty,n}(\hat s_{b;n})
+ A^{\rm red}\Phi_2^{\infty,n}(\hat t_{b,a:n},\hat t_{a,b;n}).
\endaligned
\right.
$$
They 
become homotopy equivalence
in $(A^{\rm red}\mathscr B^{\infty})^+$.
\par
Since the energy loss of $\widehat{\Phi}^{\infty,n}$ is arbitrary small
on $\mathfrak S^2A^{\rm red}/B\mathscr C^n$, 
by taking $n$ large, $d_{\rm Hof}(a_{\infty},b_{\infty}) < \rho +2\epsilon$.
Since $\epsilon$ is arbitrary small we have proved (\ref{ineq10222}).
The proof of the other case is similar and so is omitted.
\end{proof}
We will use the next lemma to show the uniqueness of the limit up to 
weak equivalence.
\begin{lem}\label{1044lm}
Let $(\{\mathscr C^n_{(i)}\},\{\Phi^n_{(i)}\})$
be a $\#$-homotopically unital inductive system of filtered $A_{\infty}$ categories, for $i = 0,1,2$.
\par
Suppose there exist fully faithful embeddings of unital filtered $A_{\infty}$ categories
$
\Psi_{i}^n : \mathscr C_{(i)}^n \to \mathscr C_{(0)}^n
$
(with energy loss $0$)
such that:
\begin{equation}\label{equality103}
\Phi_{(0)}^n\circ  \Psi_{i}^n = \Psi_{i}^{n+1} \circ \Phi_{(i)}^n
\end{equation}
for $i=1,2$.
Then we have: 
\begin{equation}\label{form1010103333344}
d_{\rm GH}(\varinjlim\, (\mathscr C_{(1)}^n)^+, 
\varinjlim\, (\mathscr C_{(2)}^n)^+)
\le
\liminf_{n\to \infty}d_{\rm H}(\frak{OB}(\mathscr C_{(1)}^n),\frak{OB}(\mathscr C_{(2)}^n)).
\end{equation}
Here $d_H$ in the right hand side is the Hausdorff distance of subsets 
in $\frak{OB}(\mathscr C_{(0)}^n)$ with respect to the Hofer distance.
If the right hand side of (\ref{form1010103333344}) 
is zero then $\varinjlim\, (\mathscr C_{(1)}^n)^+$ 
is weakly equivalent to $\varinjlim\, (\mathscr C_{(2)}^n)^+$.
\end{lem}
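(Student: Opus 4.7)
The plan is to take $\mathscr C := \varinjlim(\mathscr D^n)^+$ as the bridge, where $\mathscr D^n$ is a formal disjointification of the full subcategory of $\mathscr C_{(0)}^n$ on $\Psi_1^n(\frak{OB}(\mathscr C_{(1)}^n)) \sqcup \Psi_2^n(\frak{OB}(\mathscr C_{(2)}^n))$. By (\ref{equality103}), $\Phi_{(0)}^n$ restricts to a $\#$-homotopically unital inductive system $(\mathscr D^n, \Phi_{\mathscr D}^n)$, and the inclusions $\Psi_i^n : \mathscr C_{(i)}^n \hookrightarrow \mathscr D^n$ are fully faithful, unital, of energy loss $0$, and commute with the transition functors. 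They therefore induce unital DG-functors $\widehat\Psi_i : \varinjlim (\mathscr C_{(i)}^n)^+ \to \mathscr C$ by functoriality of the Bar--Co-Bar--reduction--unitalization construction. Disjointness of object sets is automatic, yielding Definition \ref{GromovHausdorff}(1).

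Next I would verify that each $\widehat\Psi_i$ is an almost homotopy equivalent injection. Condition (2)'(a) of Definition \ref{weakGH} is immediate from full faithfulness. For (2)'(b), the essential observation is that $\widehat\Psi_i$ induces an isomorphism on $\frak S^1$-parts of the morphism complexes: at level $n$, $\mathscr D^n(\Psi_i^n a, \Psi_i^n b) = \mathscr C_{(0)}^n(\Psi_i^n a, \Psi_i^n b) = \mathscr C_{(i)}^n(a, b)$ by the full-subcategory construction and full faithfulness of $\Psi_i^n$, and this identification persists in the inductive limit. Combined with the fact that the embedding $\frak S^1 \hookrightarrow A^{\rm red}\mathscr B^\infty$ is an almost cochain homotopy equivalence (the reduced variant of Theorem \ref{theorem63}), the shrinker $S$ of Definition \ref{defnS} --- which preserves the sub-$\Lambda_0$-module of tensors whose intermediate objects lie in the image of $\Psi_i^n$ --- yields the required shrinker for $\widehat\Psi_i$, after the geometric-series iteration in the proof of Theorem \ref{theorem63}. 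The main technical obstacle is ensuring the shrinker vanishes on the image of $\widehat\Psi_i$ (not merely on its $\frak S^1$-part); this will be handled by observing that the target-shrinker restricted to the image of $\widehat\Psi_i$ coincides with the corresponding shrinker for the source inductive limit, so subtracting the image-shrinker (pushed forward by $\widehat\Psi_i$) gives a modified shrinker satisfying all three conditions of an almost cochain homotopy equivalence.

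For the Hausdorff bound, let $\rho > \liminf_n d_H$. For infinitely many $n$ one has $d_H(\frak{OB}(\mathscr C_{(1)}^n), \frak{OB}(\mathscr C_{(2)}^n)) < \rho$ in $\mathscr D^n$. Given $a_\infty \in \varinjlim(\mathscr C_{(1)}^n)^+$ with representative $a_n$ at such an $n$, pick $b_n \in \frak{OB}(\mathscr C_{(2)}^n)$ with $d_{\rm Hof}(\Psi_1^n(a_n), \Psi_2^n(b_n)) < \rho$ in $\mathscr D^n$, and set $b_\infty := \lim_m \Phi_{(2)}^{m,n}(b_n) \in \varinjlim(\mathscr C_{(2)}^n)^+$. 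By (\ref{equality103}) and Lemma \ref{lem11300}, at level $m \ge n$ the Hofer distance in $\mathscr D^m$ is bounded by $\rho + 2\sum_{k=n}^{m-1}\epsilon_k^{(0)}$; Lemma \ref{lem1130} then gives $d_{\rm Hof}(\widehat\Psi_1(a_\infty), \widehat\Psi_2(b_\infty)) \le \rho$ in $\mathscr C$. A symmetric argument handles the reverse direction, and letting $\rho \to \liminf_n d_H$ yields the bound. For the weak-equivalence claim, the same construction with $\rho \to 0$ gives Hausdorff distance $0$ in $\mathscr C$.
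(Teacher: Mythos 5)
Your plan mirrors the paper's own proof: bridge via the inductive limit of (a subcategory of) $\mathscr C^n_{(0)}$, show that the two natural inclusions into the bridge are almost homotopy equivalent injections (this is exactly the paper's Lemma \ref{lem12555}), and then combine Lemma \ref{lem1130} with the definitions to conclude. Your extra step of formally disjointifying the objects of the bridging subcategory is reasonable, since Definition \ref{GromovHausdorff}(1) demands a disjoint union and the paper is silent on this point; your treatment of the Hausdorff estimate is likewise fine.

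The gap is precisely in the step you flag as ``the main technical obstacle''. The ordinary operator $S$ of Definition \ref{defnS} replaces a single $\boxtimes$ by $\otimes$, so the shrinker $G$ of (\ref{form810}) vanishes only on the bottom level $\frak S^1 A\mathscr B^\infty_{(0)} = \mathscr C^\infty_{(0)}$, \emph{not} on the larger subcomplex $A\mathscr B^\infty_{(1)}$, whose elements may contain arbitrarily many $\boxtimes$'s. Your proposed repair --- subtracting the ``image-shrinker pushed forward by $\widehat\Psi_i$'' --- does not yield an operator defined on a finitely generated subcomplex $W$ of the ambient complex: the pushed-forward source shrinker is a priori defined only on $W\cap A\mathscr B^\infty_{(1)}$, so to subtract it from $G$ you would need an energy-filtration-preserving retraction of $W$ onto $W\cap A\mathscr B^\infty_{(1)}$, and you would further need $W\cap A\mathscr B^\infty_{(1)}$ to be a finitely generated subcomplex so that a shrinker for it exists; you supply neither, and neither is automatic (over $\Lambda_0$ a submodule of a finitely generated module need not be finitely generated). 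The paper's Lemma \ref{lem12555} sidesteps all of this: it distinguishes ``inner'' tensors (both adjacent factors lie in $\mathscr C^n_{(i)}$) from ``outer'' ones, introduces the relative number filtration $\frak S^{\rm out}_k$ by counting outer tensors --- so that $\frak S^{\rm out}_0 = A\mathscr B^\infty_{(1)}$ --- and replaces $S$ by the operator $S^{\rm out}$ acting only on outer $\boxtimes$'s. This operator vanishes identically on the image by construction, the relative identities (\ref{new75762}) and (\ref{new76762}) replace (\ref{new7576}) and (\ref{new7676}), and the argument of Lemma \ref{lem8888} and Lemma \ref{lemma8dayto} then applies unchanged with $\frak S^{\rm out}_k$ in place of $\frak S^k$. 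That relative-filtration device is the ingredient your sketch is missing.
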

\begin{proof}
We consider three inductive limits $\varinjlim\, (\mathscr C_{(1)}^n)^+$, 
$\varinjlim\, (\mathscr C_{(2)}^n)^+$, $\varinjlim\, (\mathscr C_{(0)}^n)^+$.
Since (\ref{equality103}) is the exact equality it is obvious from the construction 
that the sequence $\Psi_{i}^n$, $n=1,2,3,\dots$ induces a unital filtered $A_{\infty}$
functors
\begin{equation}
\Psi_{i}^{\infty} : \varinjlim\, (\mathscr C_{(i)}^n)^+ \to \varinjlim\, (\mathscr C_{(0)}^n)^+,
\end{equation}
for $i=1,2$.
\begin{lem}\label{lem12555}
$\Psi_{i}^{\infty}$ is an almost homotopy equivalent injection. 
The same holds for reduced version.
\end{lem}
We prove Lemma \ref{lem12555} later.
Using Lemma \ref{lem12555}  we obtain:
$$
d_{\rm GH}(\varinjlim\, (\mathscr C_{(1)}^n)^+, 
\varinjlim\, (\mathscr C_{(2)}^n)^+)
\le
d_{\rm H}(\frak{OB}(\varinjlim\, \mathscr C_{(1)}^n)^+), \frak{OB}(\varinjlim\, \mathscr C_{(2)}^n)^+))
$$
by the definition of Gromov-Hausdorff distance.
Lemma \ref{lem1130} implies that
$$
d_{\rm H}(\frak{OB}(\varinjlim\, (\mathscr C_{(1)}^n)^+), \frak{OB}(\varinjlim\, \mathscr C_{(2)}^n)^+))
\le 
\lim_{n\to \infty}d_{\rm H}(\frak{OB}(\mathscr C_{(1)}^n),\frak{OB}(\mathscr C_{(2)}^n)).
$$
Thus to prove (\ref{form1010103333344}) it remains to prove Lemma \ref{lem12555}.
\end{proof}
\begin{proof}[Proof of Lemma \ref{lem12555}]
The proof is a relative version of 
the proof of Theorem \ref{theorem63}.
We first consider 
$I_n: AB\mathscr C^n_{(1)} \to AB\mathscr C^n_{(0)}$.
Let $a,b \in \frak{OB}(\mathscr C^n_{(1)})$.
An element of $AB\mathscr C^n_{(0)}(a,b)$ 
is a linear combination of elements of the form
\begin{equation}\label{defnofcirc2}
{\bf x} = x_1 \odot_1 x_2 \odot_2 \dots \odot_{m-1} x_{m-1} \odot x_m.
\end{equation}
Here the notations are the same as in (\ref{defnofcirc}).
$\odot_i$ is either $\otimes$ or $\boxtimes$.
We say that $\odot_i$ is an {\it inner tensor}
if $x_i,x_{i+1}$ are both in $\mathscr C^n_{(1)}$.
Otherwise it is an {\it outer tensor}.
We define relative total number filtration $\frak S_{\rm out}^k
(AB\mathscr C^n_{(0)})$
such that 
${\bf x} \in  \frak S_{\rm out}^k
(AB\mathscr C^n_{(0)})$
if and only if the number of outer tensors appearing in 
(\ref{defnofcirc2}) in not greater than $k$.
We remark that
$
\frak S^{\rm out}_0
(AB\mathscr C^n_{(0)})
=
AB\mathscr C^n_{(1)}.
$
\par
We write $\otimes^{\rm out}$ or $\boxtimes^{\rm out}$
in place of $\otimes$ or $\boxtimes$
if they are outer tensors.
We then define
the operator $S^{\rm out}$ as the sum of the following operations
\begin{equation}\label{defnSout}
\aligned
&x_1 \odot_1 \dots \odot_{k-1} x_k \boxtimes^{\rm out} x_{k+1} \odot_{k+2} \dots  \odot_m x_m \\
&\mapsto 
(-1)^*  x_1 \odot_1 \dots \odot_{k-1} x_k \otimes^{\rm out} x_{k+1} \odot_{k+2} \dots \odot_m x_m.
\endaligned
\end{equation}
Namely we replace one of $\boxtimes^{\rm out}$ to $\otimes^{\rm out}$.
The sign is $* = \deg'x_1 + \deg \odot_1 + \dots + \deg\odot_{i}$.
The following formula can be proved in the 
same way as (\ref{new7576}), (\ref{new7676})
\begin{equation}\label{new75762}
\left\{
\aligned
&(\hat\Delta \circ S^{\rm out} + S^{\rm out} \circ \hat\Delta)({\bf x}) = k, \\
&\partial_1 \circ S^{\rm out} + S^{\rm out} \circ \partial_1 = 0.
\endaligned
\right.
\end{equation}
Here $k$ is the number of outer tensors 
appearing in (\ref{defnofcirc2}). Moreover we have
\begin{equation}\label{new76762}
(\partial_k\circ S^{\rm out} + S^{\rm out}\circ \partial_k)(\frak S^{\rm out}_k AB\mathscr C)
\subseteq \frak S^{\rm out}_{k-1} AB\mathscr C
\end{equation}
for $k\ge 2$.
\par
We next observe that the relative total number filtration
is preserved by $A\widehat{\Phi}^{\infty,n}: AB\mathscr C^n_{(0)} \to AB\mathscr C^{n+1}_{(0)}$.
Therefore it induces a relative total number filtration
on $A\mathscr B^{\infty}_{(0)}[-1]$, which we denote by
$\frak S^{\rm out}_k
(A\mathscr B^{\infty}_{(0)}[-1])(a,b)$.
It is easy to see
$$
\frak S^{\rm out}_0
(A\mathscr B^{\infty}_{(0)}[-1])(a,b)
=
(A\mathscr B^{\infty}_{(1)}[-1])(a,b).
$$
Therefore by Lemma \ref{lemma8dayto}
it suffices to show that 
$$
\frak S^{\rm out}_{k-1}
(A\mathscr B^{\infty}_{(0)}[-1])(a,b)
\to \frak S^{\rm out}_k
(A\mathscr B^{\infty}_{(0)}[-1])(a,b)
$$
is an almost cochain homotopy equivalence relative 
to $AB\mathscr C^n_{(1)}$.
Using (\ref{new75762}), (\ref{new76762}), we can prove it 
in the same way as Lemma \ref{lem8888}.
The proof of Lemma \ref{lem12555} is complete.
\end{proof}

\begin{rem}
It seems difficult to obtain the same conclusion as 
Lemma \ref{lem113} when we require only 
homotopy commutativity of (\ref{equality103}).
We will discuss this point further in Remark \ref{infhikkuri}.
\end{rem}

\begin{proof}[Proof of Theorem \ref{mainalgtheorem2}]
Let $(\{\mathscr C^n\},\{\Phi^n\})$ be an 
inductive system of filtered $A_{\infty}$ categories.
We defined, in Definition \ref{defnindlim}, the inductive limit
$\varinjlim\, \mathscr C^n$.
Theorem \ref{mainalgtheorem2} (1) is immediate from the construction.
Theorem \ref{mainalgtheorem2} (3) is a consequence of Theorem \ref{theorem63}.
The filtered $A_{\infty}$ functor 
$\Upsilon^{\infty,n}: (\mathscr C^n)^+ \to (\varinjlim\, \mathscr C^n)^+$ 
is obtained 
as the composition of
$$
\frak I^+ : (\mathscr C^n)^+ \to (A^{\rm red}B\mathscr C^n)^+,
\qquad
A^{\rm red}\Phi^{n,\infty}: (A^{\rm red}B\mathscr C^n)^+ \to 
(A^{\rm red}\mathscr B^{\infty})^+
$$
(Note that $(\varinjlim\, \mathscr C^n)^+ = (A^{\rm red}\mathscr B^{\infty})^+$
by definition.)
Lemma \ref{Anounitality} implies that $\frak I^+$ is unital.
The unitality of the $A^{\rm red}\Phi^{n,\infty}$ is obvious.
Theorem \ref{mainalgtheorem2} (2) is proved.
\end{proof}
\par\medskip
Gromov \cite{greenbook} proved that 
the metric space 
(with respect to the Gromov-Hausdorff distance) 
of all isometry classes 
of compact metric spaces\footnote{
More precisely we take a Universe and consider 
only a metric space whose underlying 
set is an element of this Universe.} is complete.
See, for example, \cite[Theorem 1.5]{fhousdorff} for its proof.
\par
Theorem \ref{mainalgtheorem2} is its filtered $A_{\infty}$
category analogue in a certain sense. 
However it does not contain a statement that 
$\mathscr C^n$ converges to the inductive limit 
with respect to the Gromov-Hausdorff distance.
Theorem \ref{thm131111} is a certain 
completeness result.

\section{Hofer infinite distance and homotopical unitality}
\label{Hofinfhomoto}

To apply the results of Section \ref{sec;indhomoequi} 
 to the 
inductive system obtained from a sequence of 
subsets of the given filtered $A_{\infty}$ category by 
Theorem \ref{GrokaraInd},
a difficulty lies in the fact that the filtered $A_{\infty}$ functor with energy loss 
obtained by Theorem \ref{GrokaraInd} is not unital. It seems that it is not homotopically unital 
either, in general.
\begin{rem}
In several references such as \cite{Lef,Se}, the unit in the cohomology level, 
that is, an element of $H(\mathscr C(c,c))$ which behaves as the identity map 
in the cohomology groups, is used instead of a homotopy unit or a strict unit.
It is known (see \cite{Lef,Se}) that the existence of such a homology unit is 
equivalent to homotopical unitality, in the case of $A_{\infty}$ category 
over a field. A method to prove it is to reduce the structure to the 
$\frak m_1$ cohomology group. In the case of a filtered $A_{\infty}$ category we cannot reduce the structure to 
the $\frak m_1$ cohomology group, since it has a torsion.
\par
Note that the functor in Theorem \ref{GrokaraInd} preserves the unit 
on homology level. However it does not seem to imply 
homotopical unitality. 
\end{rem}
We strengthen Hofer distance to Hofer infinite distance to overcome this difficulty.
Let $\mathscr C$ be a unital filtered $A_{\infty}$ category.
For $x \in \mathscr C(c_1,c_2)$ we write 
${\rm soc}(x) = c_1$, ${\rm tar}(x) = c_2$.
\begin{defn}\label{infhomoequiv}
Let $\mathscr C$ be a filtered $A_{\infty}$ category and
$c_1,c_2 \in \frak{OB}(\mathscr C)$. 
An {\it infinite homotopy equivalence} between $c_1$ and $c_2$ 
is a 4-tuple of sequences $(\{t^k_{12}\},\{t^k_{21}\},\{s^k_{1}\},\{s^k_{2}\})$,
$k=1,2,\dots$
with the following properties.
\begin{enumerate}
\item $t^k_{12} \in \mathscr C_{\Lambda}(c_1,c_2)$, $t^k_{21} \in \mathscr C_{\Lambda}(c_2,c_1)$, 
$s^k_{1} \in \mathscr C_{\Lambda}(c_1,c_1)$, $s^k_{2} \in \mathscr C_{\Lambda}(c_2,c_2)$.
\item $\deg' t^k_{12}, \deg' t^k_{12} = 1-2k$, $\deg' s^k_{1}, \deg' s^k_{2} = -2k$.
\item
They satisfy   Maurer-Cartan  equation (\ref{MC11}).
We consider a sequence $\vec u =(u(1),\dots,u({\ell}))$ which 
enjoyes the following properties:
\par\smallskip
\begin{enumerate}
\item The symbol $u(i)$ is one of the elements $t^k_{12}$,  $t^k_{21}$, $s^k_{1}$, $s^k_{2}$ where $k=1,2,\dots$.
\item We require ${\rm tar}(u(i))= {\rm soc}(u(i+1))$.
\end{enumerate}
\par\smallskip
We put $\deg' \vec u = \sum \deg' u(i) = \sum (\deg u(i)-1)$ and ${\rm soc}(\vec u) = {\rm soc}(u(1))$, 
${\rm tar}(\vec u) = {\rm soc}(u(\ell))$.
We also put
$$
\frak m(\vec u) = \frak m_{\ell}(u(1),\dots,u({\ell})).
$$
For $k =1,2,\dots$ let $\frak u(1,2;k)$ be the set of all such $\vec u$ 
with ${\rm soc}(\vec u) = 1$, ${\rm tar}(\vec u) = 2$ and $\deg' \vec u = 1-2k$. We define 
$\frak u(2,1;k)$ in the same way.
For $k =1,2,\dots$ let $\frak u(1;k)$ be the set of all such $\vec u$ 
with ${\rm soc}(\vec u) = 1$, ${\rm tar}(\vec u) = 1$ and $\deg' \vec u = -2k$. 
We define $\frak u(2;k)$ in the same way.\footnote{Even in the case 
when our filtered $A_{\infty}$ category is not $\Z$ graded, we define the 
set $\frak u(1,2;k)$ etc. in exactly the same way as the $\Z$ graded case.}
\par
Now we require the following:
\begin{equation}\label{MC11}
\left\{
\aligned
&\sum_{\vec u \in \frak u(1,2;k)} \frak m(\vec u) = 0, 
\quad
\sum_{\vec u \in \frak u(2,1;k)} \frak m(\vec u) = 0, \\
&\sum_{\vec u \in \frak u(1;1)} \frak m(\vec u) = {\bf e}_{c_1}, 
\quad
\sum_{\vec u \in \frak u(2;1)} \frak m(\vec u) = {\bf e}_{c_2},
\\
&\sum_{\vec u \in \frak u(1;k)} \frak m(\vec u) = 0, 
\quad
\sum_{\vec u \in \frak u(2;k)} \frak m(\vec u) = 0, \qquad \text{$k\ge 2$}.
\endaligned
\right.
\end{equation}
\end{enumerate}
\end{defn}
\begin{defn}
We define the {\it Hofer infinite distande} $d_{\rm Hof,\infty}^{\rm bi}(c_1,c_2)$
as follows.
The inequality $d_{\rm Hof,\infty}(c_1,c_2) < \epsilon$
holds
if and only if there exists an infinite homotopy equivalence 
$(\{t^k_{12}\},
\{t^k_{21}\},\{s^k_{1}\},\{s^k_{2}\})$ such that
\begin{equation}\label{equation1122222}
\aligned
\frak v(s_{12}^k) \ge -\epsilon_1 - (k-1)\epsilon,
\qquad
&\frak v(s_{21}^k) \ge -\epsilon_2 - (k-1)\epsilon,
\\
\frak v(s_{1}^k) \ge -k\epsilon,
\qquad
&\frak v(s_{2}^k) \ge -k\epsilon,
\endaligned
\end{equation}
and $\epsilon = \epsilon_1+\epsilon_2$.
We define the {\it energy loss of an infinite homotopy equivalence} $(\{t^k_{12}\},
\{t^k_{21}\},\{s^k_{1}\},\{s^k_{2}\})$
is the pair of positive numbers $(\epsilon_1,\epsilon_2)$ such that (\ref{equation1122222})
is satisfied.
\par
In the same way as Remark \ref{Rem23232}, we can modify the infinite homotopy 
as $t^k_{12} \mapsto T^{\delta}t^k_{12}$, 
$t^k_{21} \mapsto T^{-\delta}t^k_{21}$ so that 
$\epsilon_1 = \epsilon_2 = \epsilon/2$. In such a case we say 
energy loss is $\epsilon$.
\end{defn}

Let us demonstrate a few  cases of the equation (\ref{MC11}).
The set $\frak u(1,2;1)$ consists of a single element $t^{1}_{12}$.
Therefore the first equation is
\begin{equation}\label{form112}
\frak m_1(t^{1}_{12}) = 0.
\end{equation}
The set $\frak u(1;1)$ consists of 2 elements $(t^{1}_{12},t^{1}_{21})$
and $s^1_{1}$. Therefore the equation is
\begin{equation}\label{form113}
\frak m_2(t^{1}_{12},t^{1}_{21}) + \frak m_1(s^{1}_{1}) = {\bf e}_{c_{1}}.
\end{equation}
Formulas (\ref{form112}) and (\ref{form113}) coincide with 
Definition \ref{defhoferdist} (1).
We remark also that the inequality (\ref{equation1122222}) coincides with 
Definition \ref{defhoferdist} (2).
\par
The set $\frak u(1,2;2)$ consists of four elements,
$(t^{1}_{12},s^{1}_{2})$, $(s^{1}_{1},t^{1}_{12})$, $(t^{2}_{12})$,
and $(t^{1}_{12},t^{1}_{21},t^{1}_{12})$.
Therefore the equation is:
$$
\frak m_2(t^{1}_{12},s^{1}_{2}) + \frak m_2(s^{1}_{1},t^{1}_{12}) +
\frak m_1(t^{2}_{12}) + \frak m_3(t^{1}_{12},t^{1}_{21},t^{1}_{12}) = 0.
$$
If $\mathscr C$ is a DG-category then the fourth term drops.
It then means that $t^1$ and $s^1$  `homotopicaly commutes'.
(Here we put homotopically commutes in the quote since $s^1$ is not a cocycle.)
\begin{rem}
Let us consider the case of a DG-category.
The cohomology class of the cocycle 
$\frak m_2(t^{1}_{12},s^{1}_{2}) + \frak m_2(s^{1}_{1},t^{1}_{12})$ 
could be an obstruction to the existence of infinite 
homotopy equivalence.
Note that we can change $s^{1}_{2}$ and $s^{1}_{1}$ 
by adding cocycles so that this cohomology class vanishes.
However then the cohomology class of 
$\frak m_2(t^{1}_{21},s^{1}_{1}) + \frak m_2(s^{1}_{2},t^{1}_{21})$ 
changes also. Therefore it seems impossible to eliminate 
this obstruction by changing $s^{1}_{2}$ and $s^{1}_{1}$ by cocycles.
\end{rem}
\begin{lem}\label{lem113}
If $\Phi : \mathscr C_1 \to \mathscr C_2$ is a unital filtered $A_{\infty}$ functor 
with energy loss $\rho$, then 
$$
d_{{\rm Hof},\infty}(\Phi_{\rm ob}(c_1),\Phi_{\rm ob}(c_2)) 
\le d_{{\rm Hof},\infty}(c_1,c_2) + 2\rho.
$$
\end{lem}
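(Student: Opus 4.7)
The plan is to push the infinite homotopy equivalence forward along $\widehat{\Phi}$, mimicking the formula used in Lemma \ref{lem11300} uniformly in $k$. Given an infinite homotopy equivalence $(\{t^k_{12}\},\{t^k_{21}\},\{s^k_{1}\},\{s^k_{2}\})$ between $c_1$ and $c_2$ with energy loss $(\epsilon_1,\epsilon_2)$, set $\epsilon=\epsilon_1+\epsilon_2$ and define
$$
(t')^k_{12}:=\sum_{\vec u\in \frak u(1,2;k)}\Phi_{|\vec u|}(\vec u),\qquad
(t')^k_{21}:=\sum_{\vec u\in \frak u(2,1;k)}\Phi_{|\vec u|}(\vec u),
$$
$$
(s')^k_{1}:=\sum_{\vec u\in \frak u(1;k)}\Phi_{|\vec u|}(\vec u),\qquad
(s')^k_{2}:=\sum_{\vec u\in \frak u(2;k)}\Phi_{|\vec u|}(\vec u).
$$
I claim these form an infinite homotopy equivalence between $\Phi_{\rm ob}(c_1)$ and $\Phi_{\rm ob}(c_2)$ of energy loss $(\epsilon_1+\rho,\epsilon_2+\rho)$; summing then gives the desired inequality.

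First, each sum is finite: for $\vec u\in\frak u(1,2;k)$ with $a$ factors of type $t_{12}^{*}$, $b$ of type $t_{21}^{*}$, $c$ of type $s_{1}^{*}$, $d$ of type $s_{2}^{*}$, and total index sum $K$, the equation $\deg'\vec u=1-2k$ reads $a+b-2K=1-2k$, and since each index is $\ge 1$ we have $K\ge |\vec u|=a+b+c+d$, forcing $|\vec u|\le 2k-1$. The analogous bound $|\vec u|\le 2k$ holds for $\vec u\in \frak u(1;k)$ or $\frak u(2;k)$.

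The Maurer-Cartan equations (\ref{MC11}) for the primed data are obtained by summing the $A_\infty$ functor identity $\widehat{\Phi}\circ \hat d=\hat d\circ\widehat{\Phi}$ (Definition \ref{filfuncenergy}(3)) over all sequences of prescribed source, target, and $\deg'$: each partition on the functor side contributes a term of $\frak m^{\mathscr C_2}$ applied to primed elements, while each internal sub-multiplication corresponds to an original MC equation that vanishes. The $k=1$ equations with units on the right-hand side are preserved by the strict unitality of $\Phi$, namely $\Phi_1({\bf e}_{c_i})={\bf e}_{\Phi_{\rm ob}(c_i)}$ and $\Phi_k(\dots,{\bf e}_{c_i},\dots)=0$ for $k\ge 2$.

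For the energy estimate, (\ref{equation1122222}) yields $\frak v(\vec u)\ge a\epsilon_2+b\epsilon_1-\epsilon K$. For $\vec u\in \frak u(1,2;k)$ the path constraint forces $a-b=1$, and $K=(a+b+2k-1)/2$ gives $\frak v(\vec u)\ge \epsilon_2-k\epsilon$; together with $|\vec u|\le 2k-1$ and the energy loss $\rho$ of $\Phi$,
$$
\frak v((t')^k_{12})\ge \epsilon_2-k\epsilon-(2k-1)\rho=-(\epsilon_1+\rho)-(k-1)(\epsilon+2\rho),
$$
which is exactly the bound required. For $\vec u\in \frak u(1;k)$ instead $a=b$ and $K=a+k$ give $\frak v(\vec u)\ge -k\epsilon$, hence $\frak v((s')^k_1)\ge -k(\epsilon+2\rho)$; the other two cases are parallel. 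The main obstacle is the combinatorial verification of the MC equations in the third paragraph: conceptually this is the standard fact that a coalgebra map between bar resolutions preserves Maurer-Cartan elements, but the infinite family of variables $t^k,s^k$ of escalating degree and the attendant Koszul signs require careful bookkeeping.
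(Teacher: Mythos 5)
Your proof is correct and takes essentially the same approach as the paper: define $(t')^k_{12}=\sum_{\vec u\in\frak u(1,2;k)}\Phi_*(\vec u)$ and similarly for the other three families, then check that these form an infinite homotopy equivalence with energy loss $\epsilon+2\rho$. The paper states this briefly ("It is easy to check"), while you spell out the combinatorics (finiteness via $|\vec u|\le 2k-1$ resp.\ $2k$, the walk constraint $a-b=1$ resp.\ $a=b$, and the exact arithmetic giving $\frak v((t')^k_{12})\ge -(\epsilon_1+\rho)-(k-1)(\epsilon+2\rho)$ and $\frak v((s')^k_i)\ge -k(\epsilon+2\rho)$) and honestly flag that the Maurer--Cartan verification via $\widehat\Phi\circ\hat d=\hat d\circ\widehat\Phi$ together with strict unitality is the remaining bookkeeping; all of this is accurate and fills in exactly the step the paper leaves to the reader.
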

\begin{proof}
Let $c'_i = \Phi_{\rm ob}(c_i)$ for $i=1,2$ 
and $(\{t^k_{12}\},
\{t^k_{21}\},\{s^k_{1}\},\{s^k_{2}\})$ the infinite homotopy 
equivalence between them with energy loss $
\epsilon$. Using the notation in Definition \ref{infhomoequiv} (3)
we put
$$
(t^{k}_{12})' = \sum_{\vec u \in \frak u(12;k)} \Phi_*(\vec u)
\in \mathscr C_{\Lambda}(c'_1,c'_2), \qquad
(s^{k}_{1})' = \sum_{\vec u \in \frak u(1;k)} \Phi_*(\vec u)
\in \mathscr C_{\Lambda}(c'_1,c'_1).
$$
We define $(t^{k}_{21})'$ and $(s^{k}_{2})'$ in the same way.
It is easy to check that they become an infinite homotopy equivalence between $c'_1$ and 
$c'_2$
with  energy loss $\epsilon+2\rho$.
\end{proof}
Lemma \ref{lem113} immediately implies the following:
\begin{cor}\label{invhoferinfdist}
If $\Phi$ is a unital homotopy equivalence between two unital  filtered $A_{\infty}$ categories 
$\mathscr C_1$, $\mathscr C_2$, then
$\Phi_{\rm ob}$ preserves Hofer infinite distance.
\end{cor}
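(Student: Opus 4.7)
The plan is to apply Lemma \ref{lem113} in both directions, to $\Phi$ itself and to a unital homotopy inverse $\Psi$, and then close the loop using the fact that $\Psi \circ \Phi$ is unitally homotopic to $\mathrm{id}_{\mathscr C_1}$ (and similarly $\Phi \circ \Psi$ to $\mathrm{id}_{\mathscr C_2}$).

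First I would fix a unital homotopy inverse $\Psi: \mathscr C_2 \to \mathscr C_1$; such an inverse exists for any unital homotopy equivalence of unital filtered $A_{\infty}$ categories. Both $\Phi$ and $\Psi$ are unital filtered $A_{\infty}$ functors with energy loss $\rho = 0$, so Lemma \ref{lem113} specializes to the two inequalities
$$
d_{\rm Hof,\infty}(\Phi_{\rm ob}(c_1), \Phi_{\rm ob}(c_2)) \le d_{\rm Hof,\infty}(c_1, c_2),
$$
$$
d_{\rm Hof,\infty}(\Psi_{\rm ob}(\Phi_{\rm ob}(c_1)), \Psi_{\rm ob}(\Phi_{\rm ob}(c_2))) \le d_{\rm Hof,\infty}(\Phi_{\rm ob}(c_1), \Phi_{\rm ob}(c_2)).
$$
The first is exactly one direction of the corollary; the second brings us within reach of the reverse direction provided we can relate $\Psi_{\rm ob}(\Phi_{\rm ob}(c_i))$ to $c_i$ inside the metric.

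To finish I would establish two auxiliary facts: (i) $d_{\rm Hof,\infty}$ satisfies the triangle inequality, and (ii) $d_{\rm Hof,\infty}(c, \Psi_{\rm ob}(\Phi_{\rm ob}(c))) = 0$ for every object $c$ of $\mathscr C_1$. The triangle inequality follows by the same bar-composition computation as in Lemma \ref{Hofertriangle}, now applied to the full Maurer--Cartan system (\ref{MC11}); given two infinite homotopy equivalences of energy losses $\epsilon$ and $\epsilon'$ one assembles the concatenated data $t^k, s^k$ at each arity and checks that the sums on the right of (\ref{MC11}) still vanish (or yield the requisite unit) while the valuations (\ref{equation1122222}) add up to $\epsilon+\epsilon'$. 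For (ii), the unital $A_{\infty}$ homotopy from $\Psi \circ \Phi$ to $\mathrm{id}_{\mathscr C_1}$ supplies, on each object $c$, a natural equivalence together with all its higher-arity companions; repackaging these components as the 4-tuple of sequences $(\{t^k_{12}\}, \{t^k_{21}\}, \{s^k_1\}, \{s^k_2\})$ in the respective morphism spaces of $\mathscr C_1(c,\Psi_{\rm ob}\Phi_{\rm ob}(c))$ and $\mathscr C_1(\Psi_{\rm ob}\Phi_{\rm ob}(c),c)$ produces an infinite homotopy equivalence, and the energy-loss-zero property of the $A_{\infty}$ homotopy forces the energy loss to be zero.

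Chaining (i) and (ii) with the second inequality above gives
$d_{\rm Hof,\infty}(c_1, c_2) \le d_{\rm Hof,\infty}(\Phi_{\rm ob}(c_1), \Phi_{\rm ob}(c_2))$,
and combined with the first inequality yields equality. The main obstacle is (ii): one must verify inductively that the Maurer--Cartan system (\ref{MC11}) can be solved by components extracted from a unital $A_{\infty}$ homotopy of functors, keeping track of degrees, signs, and energy valuations at every arity. Once this bookkeeping is carried out, everything else---the existence of $\Psi$, the triangle inequality, and the two applications of Lemma \ref{lem113}---slots in routinely, which is presumably why the author labels the corollary as immediate.
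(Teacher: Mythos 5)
Your outline is more elaborate than the paper's intended argument, and as situated in the paper's logical order it runs into a circularity.

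The paper's proof is genuinely ``immediate'' from Lemma \ref{lem113} and consists of exactly two applications of it: once to $\Phi$ (energy loss $0$) to get
$d_{\rm Hof,\infty}(\Phi_{\rm ob}(c_1),\Phi_{\rm ob}(c_2)) \le d_{\rm Hof,\infty}(c_1,c_2)$, and once to a unital homotopy inverse $\Psi$ to get the opposite inequality. The point you are not exploiting is that the homotopy inverse can be taken so that $\Psi_{\rm ob}\circ\Phi_{\rm ob}=\mathrm{id}$ on objects; in every place this corollary is invoked in the paper (the reduction to DG-categories in the proof of Theorem \ref{cong106}, the passage to canonical models in Proposition \ref{thm129}, and the Yoneda embedding in Section \ref{sec;hoferinf1}) the homotopy equivalence is a bijection on objects, or the inclusion of a full subcategory, and the inverse is chosen to invert $\Phi_{\rm ob}$ literally. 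With that, $\Psi_{\rm ob}(\Phi_{\rm ob}(c_i))=c_i$ and the second application of Lemma \ref{lem113} closes the loop with no further ingredients.

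Your route is circular where you invoke the triangle inequality for $d_{\rm Hof,\infty}$. In the paper that is Theorem \ref{cong106}, which is stated and proved \emph{after} Corollary \ref{invhoferinfdist}, and whose proof opens with ``By Corollary \ref{invhoferinfdist} it suffices to consider the case of DG-category.'' So the proof you propose for (i) cannot be imported from the paper without breaking the logical order. You suggest re-deriving the triangle inequality directly ``by the same bar-composition computation as in Lemma \ref{Hofertriangle}, now applied to the full Maurer--Cartan system (\ref{MC11})''; that is exactly the content of Theorem \ref{cong106}, which the paper treats as nontrivial (several pages of figures for the DG case alone), so labelling it as routine overstates the matter. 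Step (ii), extracting an infinite homotopy equivalence $(\{t^k_{12}\},\{t^k_{21}\},\{s^k_1\},\{s^k_2\})$ in the sense of Definition \ref{infhomoequiv} from a unital $A_{\infty}$ homotopy between $\Psi\circ\Phi$ and $\mathrm{id}_{\mathscr C_1}$, is also not automatic: a natural transformation gives you the $k=1$ components directly, but producing the higher $t^k,s^k$ satisfying all of (\ref{MC11}) with energy loss $0$ is a nontrivial assertion (the paper proves a closely related statement as Theorem \ref{idometric}, with real work). If the corollary did require (i) and (ii), it would not be a corollary but a theorem with a proof comparable to Theorem \ref{idometric}.
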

\begin{thm}\label{cong106}
$
d_{\rm Hof,\infty}(c_1,c_3) \le
d_{\rm Hof,\infty}(c_1,c_2) 
+ d_{\rm Hof,\infty}(c_2,c_3).
$
\end{thm}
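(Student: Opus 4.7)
The strategy is to extend the explicit composition formulas in the proof of Lemma \ref{Hofertriangle} to entire towers. Suppose $d_{\rm Hof,\infty}(c_1,c_2)<\epsilon_{12}$ and $d_{\rm Hof,\infty}(c_2,c_3)<\epsilon_{23}$, witnessed respectively by infinite homotopy equivalences
$E_{12}=(\{t^k_{12}\},\{t^k_{21}\},\{s^k_1\},\{s^k_2\})$ and
$E_{23}=(\{u^k_{23}\},\{u^k_{32}\},\{r^k_2\},\{r^k_3\})$.
The goal is to manufacture an infinite homotopy equivalence $E_{13}=(\{T^k_{13}\},\{T^k_{31}\},\{\Sigma^k_1\},\{\Sigma^k_3\})$ with energy loss $\epsilon_{12}+\epsilon_{23}$, and then let $\epsilon_{12},\epsilon_{23}$ tend to their infima.

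The key conceptual point is that the system (\ref{MC11}) encodes a single generalized Maurer--Cartan equation on (the restriction to $\{c_1,c_2,c_3\}$ of) $\mathscr C$. Pool the two given towers into a single formal element
$$
B \;=\; \sum_{k\geq 1}\bigl(t^k_{12}+t^k_{21}+s^k_1+s^k_2\bigr)+\sum_{k\geq 1}\bigl(u^k_{23}+u^k_{32}+r^k_2+r^k_3\bigr),
$$
graded by the tower index $k$ and living formally in $\bigoplus_{i,j\in\{1,2,3\}}\mathscr C_{\Lambda}(c_i,c_j)$. Then the conjunction of (\ref{MC11}) for $E_{12}$ and $E_{23}$ asserts that $\widehat{\mathfrak m}(e^B)$ reduces to a prescribed combination of the units ${\bf e}_{c_i}$ in the appropriate tower degrees and vanishes identically in the bridge components $\mathscr C(c_1,c_3)$ and $\mathscr C(c_3,c_1)$ (because $B$ has no entries there). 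This reframes the triangle inequality as a ``push forward'' of the Maurer--Cartan datum from the enlarged subcategory to the single pair $(c_1,c_3)$.

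Concretely, define
$$
T^k_{13} \;:=\;\sum_{\vec{w}\in\mathfrak u^{\mathrm{mix}}(1,3;k)} \mathfrak m(\vec{w}),\qquad
\Sigma^k_1 \;:=\; s^k_1 + \sum_{\vec{w}\in\mathfrak u^{\mathrm{mix}}(1;k)}\mathfrak m(\vec w),
$$
and analogously $T^k_{31}$, $\Sigma^k_3$, where $\mathfrak u^{\mathrm{mix}}(\cdot,\cdot\,;k)$ denotes sequences whose entries are drawn from the pooled list $E_{12}\sqcup E_{23}$ with compatible source/target, total shifted degree matching (\ref{MC11}), and containing at least one entry from each of $E_{12}$ and $E_{23}$. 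For $k=1$ this recovers the formulas $T^1_{13}=\mathfrak m_2(t^1_{12},u^1_{23})$ and the expression for $s_1'$ written in the proof of Lemma \ref{Hofertriangle}. The Maurer--Cartan identities (\ref{MC11}) for $E_{13}$ then fall out formally from the $A_\infty$ relations: expanding $\widehat{\mathfrak m}(e^B)$ at target $(c_1,c_3)$ or $(c_1,c_1)$ and isolating the contribution of tower degree $k$, the terms decompose into those that vanish by (\ref{MC11}) applied to $E_{12}$ or $E_{23}$ in isolation (the ``purely $c_2$-side'' and ``purely $c_1,c_2$-side'' contributions), and the remaining bridging terms which reassemble into precisely the MC expression for $E_{13}$.

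The main obstacle is the simultaneous verification of $T$-adic convergence of the infinite sums defining $T^k_{13},\Sigma^k_1,\dots$ and of the sharp energy bounds (\ref{equation1122222}). A sequence $\vec w$ that uses entries of tower indices summing to $K_1$ from $E_{12}$ and $K_2$ from $E_{23}$ produces $\mathfrak v(\mathfrak m(\vec w))\geq -K_1\epsilon_{12}-K_2\epsilon_{23}$, while the degree constraint $\deg' \vec w = 1-2k$ (resp.\ $-2k$) must be shown to force $K_1\epsilon_{12}+K_2\epsilon_{23}\leq k(\epsilon_{12}+\epsilon_{23})/2$ after the standard rescaling of Remark \ref{Rem23232}; this is a combinatorial accounting of how the tower index $k$ is consumed under iterated $A_\infty$ composition. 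Once this bound is in place, convergence of the sum follows from the filtration-preserving property of $\mathfrak m$ (only finitely many $\vec w$ contribute below any fixed energy level), the MC equations go through termwise, and the triangle inequality $d_{\rm Hof,\infty}(c_1,c_3)\leq\epsilon_{12}+\epsilon_{23}$ is then immediate.
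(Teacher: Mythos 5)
Your high-level strategy (pool the two infinite homotopy equivalences, compose them through $c_2$, verify the Maurer--Cartan system, track energy loss) is the right one, but the explicit formulas and the claim that the Maurer--Cartan identities ``fall out formally'' both contain genuine errors, and these are exactly the points where the paper does real work.

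The concrete issue is the definition $T^k_{13}=\sum_{\vec w}\frak m(\vec w)$ and $\Sigma^k_1=s^k_1+\sum_{\vec w}\frak m(\vec w)$ with a \emph{single} application of $\frak m$ to each mixed sequence $\vec w$. Compare with the paper's proof: after reducing to the DG-category case via Corollary \ref{invhoferinfdist} (a step you omit), it defines $t^k_{c_1,c_3}$ and $s^{k,13}_{c_1}$ as sums of \emph{iterated} binary compositions $t^{m_0}_{c_1,c_2}\circ s^{m_1}_{c_2}\circ\cdots\circ s^{m_\ell}_{c_2}\circ t^{m_{\ell+1}}_{c_2,c_3}$, with an alternating pattern between the $s$'s coming from $E_{12}$ and those from $E_{23}$. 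An iterated $\circ$ of length $\geq 3$ is a nested $\frak m_2$ expression, not a single $\frak m_{\ell+2}$; in a DG-category, where $\frak m_n=0$ for $n\ge 3$, your sum $\sum\frak m(\vec w)$ retains only length-$1$ and length-$2$ sequences and misses every term with $\ell\geq 1$. You can already see the defect at $k=1$: in $\mathscr C(c_1,c_1)$, there is no mixed sequence of the required degree, so your formula yields $\Sigma^1_1=s^1_1$, but the (degree-$0$) component of the Maurer--Cartan system is then $\frak m_1(s^1_1)+\frak m_2(T^1_{13},T^1_{31})$, which equals ${\bf e}_{c_1}-\frak m_2(t^1_{12},t^1_{21})+\frak m_2(\frak m_2(t^1_{12},u^1_{23}),\frak m_2(u^1_{32},t^1_{21}))$ and is \emph{not} ${\bf e}_{c_1}$: you still need the correction $t^1_{12}\circ r^1_2\circ t^1_{21}$ (and, for higher $k$, the full alternating tower) to absorb the $r^1_2$ that pops out when you contract $u^1_{23}\circ u^1_{32}$. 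This mismatch is visible even in the DG setting, so it is not an artifact of failing to reduce.

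The other gap is the assertion that ``$\widehat{\frak m}(e^B)$ vanishes identically in the bridge components $\mathscr C(c_1,c_3)$ and $\mathscr C(c_3,c_1)$ because $B$ has no entries there.'' This confuses input with output: $B$ has no $\mathscr C(c_1,c_3)$ component, but $\widehat{\frak m}(e^B)$ certainly does, since mixed sequences such as $(t^1_{12},u^1_{23})$ contribute. More to the point, the conjunction of (\ref{MC11}) for $E_{12}$ and for $E_{23}$ controls only the \emph{unmixed} sequences; it says nothing about mixed ones, so the pooled $B$ is not a Maurer--Cartan element on $\{c_1,c_2,c_3\}$ and there is no formal push-forward to invoke. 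The paper does not dodge this: the entire content of its proof of Theorem \ref{cong106} is the diagrammatic cancellation (Figures \ref{tnoeee}--\ref{hitthes12}) showing how, after choosing the alternating composition formula, the various $d$-applied-to-a-factor terms regroup into $\sum S^{k_1}\circ T^{k_2}$ and $\sum T^{k_1}\circ S^{k_2}$ while the cross terms pair off and vanish. You would need to supply an argument of that nature; it does not come for free from the $A_\infty$ relations.

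\end{document}
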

We will prove Theorem \ref{cong106} later in this section.
\par
We can prove that 
Hofer infinite metric is equal to Hofer metric
in certain cases.
\begin{prop}\label{thm129}
We assume that ${\mathscr C}$ is gapped.
If degree $-1$ cohomology $H^{-1}(\overline{\mathscr C}(c,c),\overline{\frak m}_1)$  
of the $R$ reduction of $\mathscr C(c,c)$ vanishes for an arbitrary object $c \in \frak{OB}(\mathscr C)$ then 
Hofer infinite distance coincides with  Hofer distance.
\end{prop}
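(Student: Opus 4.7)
The direction $d_{\rm Hof}(c_1,c_2)\leq d_{\rm Hof,\infty}(c_1,c_2)$ is immediate from the definitions: the $k=1$ equations in (\ref{MC11}) are precisely the conditions of Definition \ref{defhoferdist}(1), and the $k=1$ valuation bounds in (\ref{equation1122222}) coincide with those of Definition \ref{defhoferdist}(2). Hence the initial quadruple $(t^1_{12},t^1_{21},s^1_1,s^1_2)$ of any infinite homotopy equivalence with energy loss $\epsilon$ is already a homotopy equivalence with energy loss $\epsilon$.

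For the reverse inequality, fix $\epsilon>\epsilon'>d_{\rm Hof}(c_1,c_2)$ and choose a homotopy equivalence $(t^1_{12},t^1_{21},s^1_1,s^1_2)$ of energy loss $\epsilon'$. The plan is to construct $(t^k_{12},t^k_{21},s^k_1,s^k_2)$ inductively in $k\geq 2$ so as to satisfy the full system (\ref{MC11}) and the valuation bounds (\ref{equation1122222}) for the slightly enlarged $\epsilon$. At level $k$ each of the four equations isolates a single new unknown $u^k\in\{t^k_{12},t^k_{21},s^k_1,s^k_2\}$ in the form $\frak m_1(u^k)=-O^k$, where $O^k$ is a prescribed sum of $\frak m(\vec u)$ built entirely from data at levels $<k$: no $\vec u\neq(u^k)$ in the relevant $\frak u(\cdot;k)$ can contain $u^k$ as a factor on degree grounds. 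A routine manipulation using the $A_\infty$ relations together with the lower-level equations then yields $\frak m_1(O^k)=0$.

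To solve $\frak m_1(u^k)=-O^k$ with the prescribed valuation, the plan is first to pass to the canonical model of $\mathscr C$: a gapped unital filtered $A_\infty$ category $\mathscr H$ equipped with a gapped unital homotopy equivalence $\mathscr C\to\mathscr H$ and with $\overline{\frak m}_1=0$. By Corollary \ref{Hofdishomotopyinv} and Corollary \ref{invhoferinfdist} both $d_{\rm Hof}$ and $d_{\rm Hof,\infty}$ are invariant under this equivalence, and the hypothesis transports since $\overline{\mathscr H}(c,c)=H(\overline{\mathscr C}(c,c),\overline{\frak m}_1)$. In $\mathscr H$ the operator $\frak m_1$ is strictly $T$-positive, so gapped homological perturbation produces, for any cocycle whose class vanishes in the reduced cohomology at the starting page of the $T$-adic spectral sequence, a primitive whose valuation is within any prescribed $\delta>0$ of the cocycle's valuation.

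The core of the argument, and the principal obstacle, is to show that $[\overline{O^k}]$ vanishes in reduced cohomology for every $k$. Transport by the level-$1$ equivalence identifies $H^*(\overline{\mathscr H}(c_1,c_2))$ with $H^*(\overline{\mathscr H}(c,c))$ for $c\in\{c_1,c_2\}$, so the case $k=2$ (where $\overline{O^k}$ has degree $-1$) is exactly the hypothesis. For $k\geq 3$ the naive degree $3-2k\leq -3$ is outside the reach of the hypothesis. The plan here is to repackage the entire family $\{t^k,s^k\}_{k\geq 1}$ as a single Maurer--Cartan-type element in an auxiliary shifted $L_\infty$-algebra built from $\mathscr C(c_1\oplus c_2,c_1\oplus c_2)$, introducing formal parameters of shifted degree $+2$ and $+1$ to homogenize the shifted degrees of $t^k$ and $s^k$ across all $k$. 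After this regrading every infinitesimal obstruction lands in one and the same cohomology group, which is identified with $H^{-1}(\overline{\mathscr C}(c,c))$ and hence killed uniformly by the hypothesis. Carrying out this MC reformulation compatibly with the gapped energy filtration, so that the excess $\epsilon-\epsilon'$ is absorbed level by level and the bounds (\ref{equation1122222}) are met, is the main technical work of the proof.
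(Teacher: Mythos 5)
The paper's proof is a one-liner that you miss. Pass to the canonical model $\mathscr C'$, where the morphism space is \emph{literally} $\mathscr C'(a,b)\cong H(\overline{\mathscr C}(a,b))\otimes_R\Lambda_0$ and $\overline{\frak m}_1=0$. Under the hypothesis, the degree $-1$ component of $\mathscr C'(c,c)$ is zero, so any ordinary homotopy equivalence $(t_{12},t_{21},s_1,s_2)$ automatically has $s_1=s_2=0$. The paper then declares $t^1_{12}=t_{12}$, $t^1_{21}=t_{21}$ and every higher $t^k,s^k$ equal to zero to be the required infinite homotopy equivalence. The entire use of the hypothesis is to force the homotopy in the level-one datum to vanish; no obstruction theory is needed.

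Your first paragraph (that $d_{\rm Hof}\le d_{\rm Hof,\infty}$) agrees with the paper, but the rest takes a genuinely different route and contains a genuine gap. You set up the full inductive obstruction problem $\frak m_1(u^k)=-O^k$, correctly note that the reduced class $[\overline{O^k}]$ must vanish, and correctly observe that for $k\geq 3$ the naive degree $3-2k\leq -3$ falls outside the scope of the hypothesis; but your proposed fix — repackaging $\{t^k,s^k\}$ as a Maurer--Cartan element in a shifted $L_\infty$ algebra with formal parameters of shifted degrees $+1$ and $+2$ — is not worked out (you say so yourself: ``the main technical work of the proof''), and it is not clear it can be. Introducing a formal parameter of a fixed degree shifts every level's degree by the \emph{same} amount per power; it cannot simultaneously map the $t^k$-obstructions (sitting in degree $3-2k$) and the $s^k$-obstructions (sitting in degree $2-2k$) into the single group $H^{-1}(\overline{\mathscr C}(c,c))$ for all $k$, because the required offsets $2k-4$ and $2k-1$ grow with $k$ at incompatible parities. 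Absent a concrete construction, this step is a gap.

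There is a secondary issue worth flagging as well: you assert that the level-$1$ equivalence identifies $H^*(\overline{\mathscr H}(c_1,c_2))$ with $H^*(\overline{\mathscr H}(c,c))$ and invoke the hypothesis for the off-diagonal morphism spaces. But the data $t_{12},t_{21}$ in Definition \ref{defhoferdist} live in $\mathscr C_\Lambda$ and may have negative valuation, so the chain homotopy equivalence between $\mathscr C_\Lambda(c_1,c_2)$ and $\mathscr C_\Lambda(c_i,c_i)$ does not automatically descend to the $R$-reductions $\overline{\mathscr C}(\cdot,\cdot)$. The paper avoids this by never leaving the diagonal morphism spaces: the hypothesis is applied only to $\mathscr C'(c_i,c_i)$, where it kills $s_1,s_2$ outright.
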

\begin{proof}
We can find a unital and gapped filtered $A_{\infty}$ category 
$\mathscr C'$
which is unitally homotopy equivalent to 
$\mathscr C$ and whose morphism spaces $\mathscr C'(a,b)$ are 
\begin{equation}
\mathscr C'(a,b) \cong 
H(\overline{\mathscr C}(a,b),\overline{\frak m}_1) \otimes_R \Lambda_0,
\end{equation}
as $\Lambda_0$ modules.
In the case when we replace $\Lambda_0$ by $R$ this is a classical result 
going back to Kadai\v sibili \cite{kadeisivili}.
In the filtered case this fact is proved in \cite[Theorem 5.4.2]{fooobook} for a 
filtered $A_{\infty}$
algebra. We can prove the category case in exactly the same way.
\par
By Corollary \ref{invhoferinfdist} it suffices to prove the case of 
$\mathscr C'$.
Let $c_1,c_2 \in \frak{OB}(\mathscr C')$ and $(t_{12},t_{21},s_{1},s_{2})$ is 
an infinite homotopy 
equivalence between them.
We observe that the degree of $s_{1}$, $s_{2}$ are $-1$ and so $s_{1}$, $s_{2}$ are $0$ by 
assumption.
Therefore by putting $t^1_{12} = t_{12}$,  $t^1_{21} = t_{21}$ and 
all other $t^k_{12}$,  $t^k_{21}$, $s^k_{1}$, $s^k_{2}$ to be $0$
we obtain an infinite homotopy equivalence.
It implies that the Hofer infinite distance is not greater than 
Hofer distance. The inequality of the other direction is obvious.
\end{proof}

We define a Hausdorff distance between subsets of $(\frak{OB}(\mathscr C),d_{\rm Hof.\infty})$ using Hofer infinite distance.
We denote it by $d_{\rm H,\infty}$.

\begin{defn}\label{GHinftygapped}
Let $\mathscr C_1$, $\mathscr C_2$ be unital and gapped filtered $A_{\infty}$ categories.
We define the {\it Gromov-Hausdorff infinite distance} $d_{\rm GH,\infty}(\mathscr C_1,\mathscr C_2)$
as follows. The inequality $d_{\rm GH,\infty}(\mathscr C_1,\mathscr C_2) < \epsilon$
holds if and only if  
there exists a unital and gapped filtered $A_{\infty}$ category $\mathscr C$ such that
(1)(2) of Definition \ref{GromovHausdorff} hold and
\begin{enumerate}
\item[(3)']  
$d_{\rm H,\infty}(\frak{OB}(\mathscr C_1),\frak{OB}(\mathscr C_2) < \epsilon.$
\end{enumerate}
Two unital and gapped filtered $A_{\infty}$ categories $\mathscr C_1$, $\mathscr C_2$ 
are said to be {\it  equivalent} 
if there exists
a unital and gapped filtered $A_{\infty}$ category $\mathscr C$
such that (1)(2) of Definition \ref{GromovHausdorff} hold 
and 
\begin{enumerate}
\item[(3)'']  $d_{\rm H,\infty}(\frak{OB}(\mathscr C_1),\frak{OB}(\mathscr C_2) = 0$.
\end{enumerate}
\end{defn}
In the situation when we do not assume gapped-ness we define:
\begin{defn}\label{inftywaekGH}
Let $\mathscr C_1$, $\mathscr C_2$ be completed unital 
 DG-categories.
We define the {\it Gromov-Hausdorff infinite distance} $d_{\rm GH,\infty}(\mathscr C_1,\mathscr C_2)$
between them as follows. The inequality $d_{\rm GH}(\mathscr C_1,\mathscr C_2) < \epsilon$
holds if and only if there exists a unital completed DG-category 
$\mathscr C$ such that
(1) of Definition \ref{GromovHausdorff} 
(2)' of Definition \ref{weakGH} and (3)' of Definition \ref{GHinftygapped}
 hold.
Two unital 
and completed DG categories $\mathscr C_1$, $\mathscr C_2$
are said to be {\it  equivalent} if 
there exists $\mathscr C_0$ such that (1) of Definition \ref{GromovHausdorff} 
and (2)' above hold and the Hausdorff distance
$
d_{\rm H,\infty}(\frak{OB}(\mathscr C_1),\frak{OB}(\mathscr C_2))$
is $0$.
\end{defn}
\begin{thm}
Gromov-Hausdorff infinite distance in Definition \ref{GHinftygapped}
satisfies the triangle inequality. 
Equivalence in Definition \ref{GHinftygapped}
is an equivalence relation.
\end{thm}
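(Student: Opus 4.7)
The plan is to follow the proof of Theorem \ref{triangleGH} in Section \ref{sec;triangle} almost verbatim, substituting the Hofer infinite distance for the ordinary Hofer distance at each step, with Theorem \ref{cong106} (the triangle inequality for Hofer infinite distance within a single filtered $A_{\infty}$ category) as the essential new input.

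Given gapped unital filtered $A_{\infty}$ categories $\mathscr C_1, \mathscr C_2, \mathscr C_3$ with $d_{\rm GH,\infty}(\mathscr C_i,\mathscr C_{i+1}) < \epsilon_i$ for $i=1,2$, first extract the witness categories $\mathscr C_{12}$ and $\mathscr C_{23}$ from Definition \ref{GHinftygapped}. Replace each by its canonical model and pass through the Bar--Co-Bar resolution, exactly as in the opening of the proof of Theorem \ref{triangleGH}; by Corollary \ref{invhoferinfdist} these reductions preserve Hofer infinite distance of objects. Now construct the amalgamated DG-category $\mathscr C_{13}$ with $\frak{OB}(\mathscr C_{13}) = \frak{OB}(\mathscr C_1) \sqcup \frak{OB}(\mathscr C_2) \sqcup \frak{OB}(\mathscr C_3)$ exactly as in the proof of Proposition \ref{prop104}. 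The unital DG-embeddings $\mathscr C_i \to \mathscr C_{13}$ for $i=1,2,3$, and the fact that the induced maps on morphism complexes are quasi-isomorphisms, are already provided by Lemma \ref{lem85}; nothing new is needed there.

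What remains is to estimate the Hofer-infinite Hausdorff distance $d_{\rm H,\infty}\bigl(\frak{OB}(\mathscr C_1),\frak{OB}(\mathscr C_3)\bigr)$ measured inside $\frak{OB}(\mathscr C_{13})$. Given $c_1 \in \frak{OB}(\mathscr C_1)$, pick $c_2 \in \frak{OB}(\mathscr C_2)$ with Hofer infinite distance less than $\epsilon_1$ in $\mathscr C_{12}$, and then $c_3 \in \frak{OB}(\mathscr C_3)$ with Hofer infinite distance less than $\epsilon_2$ in $\mathscr C_{23}$. The natural unital DG-functors $\mathscr C_{12}\to\mathscr C_{13}$ and $\mathscr C_{23}\to\mathscr C_{13}$, coming from the $k=0$ summands in the construction of $\Breve{\mathscr C}_{13}$ of Proposition \ref{prop104}, have zero energy loss; by Lemma \ref{lem113} they transport the two infinite homotopy equivalences into $\mathscr C_{13}$ with the same energy losses. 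Applying Theorem \ref{cong106} inside $\mathscr C_{13}$ then gives $d_{\rm Hof,\infty}^{\mathscr C_{13}}(c_1,c_3) < \epsilon_1+\epsilon_2$. The symmetric argument, starting from an arbitrary $c_3\in\frak{OB}(\mathscr C_3)$, furnishes the other direction of the Hausdorff bound, so $d_{\rm GH,\infty}(\mathscr C_1,\mathscr C_3) < \epsilon_1+\epsilon_2$.

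For the equivalence relation claim: reflexivity (take $\mathscr C = \mathscr C_1$) and symmetry are immediate from Definition \ref{GHinftygapped}, and transitivity is the limiting $\epsilon_1=\epsilon_2=0$ case of the argument above. Indeed, given $c_1\in\frak{OB}(\mathscr C_1)$, the hypothesis $d_{\rm H,\infty}(\frak{OB}(\mathscr C_1),\frak{OB}(\mathscr C_2))=0$ furnishes $c_2\in\frak{OB}(\mathscr C_2)$ with $d_{\rm Hof,\infty}^{\mathscr C_{12}}(c_1,c_2)=0$, and likewise a $c_3$; Theorem \ref{cong106} then gives $d_{\rm Hof,\infty}^{\mathscr C_{13}}(c_1,c_3)=0$, and similarly in the reverse direction, so $d_{\rm H,\infty}(\frak{OB}(\mathscr C_1),\frak{OB}(\mathscr C_3))=0$ in $\mathscr C_{13}$. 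The main obstacle is purely bookkeeping: one must verify that the embeddings $\mathscr C_{12}\to\mathscr C_{13}$ and $\mathscr C_{23}\to\mathscr C_{13}$ are genuinely unital DG-functors (in particular that the unit ${\bf e}_{c_2}$ on the $\mathscr C_2$-side sits compatibly in both amalgamations and descends correctly through the quotient by the relations ${\bf x}-{\bf x}'_i,\,{\bf x}-{\bf x}''_i$) so that the infinite homotopy equivalence identities (\ref{MC11}) remain valid after transport. An entirely parallel argument, based on the completed DG-category variant of $\mathscr C_{13}$ used in the proof of Theorem \ref{triangleharder} together with Lemma \ref{lem942}, yields the analogous statement for Definition \ref{inftywaekGH}.
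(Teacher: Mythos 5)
Your proposal is correct and follows essentially the same route as the paper, whose own proof is the one-line reduction ``Using Theorem \ref{cong106} the proof is the same as the proofs of Theorems \ref{triangleGH} and \ref{triangleharder}''; you have simply unpacked that reduction, correctly identifying Theorem \ref{cong106} as the new input, Lemma \ref{lem113} as the transport mechanism into the amalgamated category $\mathscr C_{13}$, and the proof of Lemma \ref{lem942} for the completed DG-category variant. One small imprecision worth fixing: in the transitivity-of-equivalence argument, $d_{\rm H,\infty}(\frak{OB}(\mathscr C_1),\frak{OB}(\mathscr C_2))=0$ does not give, for each $c_1$, an object $c_2$ with $d_{\rm Hof,\infty}(c_1,c_2)$ \emph{equal} to $0$, only ones making this distance arbitrarily small; the conclusion still follows because $\delta>0$ in the resulting bound $d_{\rm Hof,\infty}^{\mathscr C_{13}}(c_1,c_3)<\delta$ is arbitrary.
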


Using Theorem \ref{cong106} the proof is the same as the proofs of Theorems 
\ref{triangleGH} and \ref{triangleharder}.

\begin{thm}\label{idometric}
If $\mathscr C \to \mathscr C'$ 
is an almost homotopy equivalent injection 
of completed DG-categories, then $\frak{OB}(\mathscr C) \to \frak{OB}(\mathscr C')$
is an isometric embedding with respect to Hofer infinite distance.
\end{thm}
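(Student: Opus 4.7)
The inequality $d_{\rm Hof,\infty}^{\mathscr C'}(\iota(c_1),\iota(c_2)) \le d_{\rm Hof,\infty}^{\mathscr C}(c_1,c_2)$ is clear: any infinite homotopy equivalence in $\mathscr C$ is automatically one in $\mathscr C'$ with the same energy loss. The content of the theorem is the reverse inequality. Given an infinite homotopy equivalence $(t^k_{12},t^k_{21},s^k_1,s^k_2)_{k\ge 1}$ in $\mathscr C'$ with energy loss $\epsilon$ and any $\delta>0$, I plan to construct $(\tilde t^k_{12},\tilde t^k_{21},\tilde s^k_1,\tilde s^k_2)_{k\ge 1}$ in $\mathscr C$ with energy loss at most $\epsilon+\delta$.

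The key input, extracted from the proof of Lemma~\ref{lemma217} together with a $T$-rescaling argument, is a pair of valuation-controlled lifting statements for the inclusion $\mathscr C(a,b)\hookrightarrow \mathscr C'(a,b)$. First, the \emph{cocycle lift}: for any cocycle $\gamma\in \mathscr C'_\Lambda(a,b)$ and any $\eta>0$ there exists a cocycle $\tilde\gamma\in \mathscr C_\Lambda(a,b)$ with $[\tilde\gamma]=[\gamma]$ in $H(\mathscr C'_\Lambda(a,b))$ and $\frak v(\tilde\gamma)\ge \frak v(\gamma)-\eta$. Second, the \emph{coboundary lift}: for any $\alpha\in \mathscr C_\Lambda(a,b)$ of the form $\alpha=\frak m_1(\beta)$ with $\beta\in \mathscr C'_\Lambda(a,b)$ and any $\eta>0$, there exists $\tilde\beta\in \mathscr C_\Lambda(a,b)$ with $\alpha=\frak m_1(\tilde\beta)$ and $\frak v(\tilde\beta)\ge \frak v(\beta)-\eta$. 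For a cocycle $\gamma\in \mathscr C'_\Lambda$ of valuation $\lambda$, writing $\gamma=T^\lambda\gamma_0$ with $\gamma_0\in \mathscr C'$ of valuation $0$, one applies the $\eta$-$W$ shrinker and runs the Cauchy scheme of Lemma~\ref{lemma217} to produce $\tilde\gamma_0\in \mathscr C_\Lambda$ cohomologous to $T^\eta\gamma_0$, and then sets $\tilde\gamma:=T^{\lambda-\eta}\tilde\gamma_0$; the coboundary lift is analogous.

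Choose positive numbers $\eta_k$ with $\sum_k \eta_k<\delta$. The construction of $(\tilde t^k,\tilde s^k)$ proceeds by induction on $k$, maintaining the invariant that the Maurer-Cartan equations of Definition~\ref{infhomoequiv} hold exactly at all levels strictly less than $k$. At $k=1$: apply the cocycle lift to produce $\tilde t^1_{12},\tilde t^1_{21}$ in $\mathscr C_\Lambda$ with the same $\Lambda$-cohomology classes as $t^1_{12},t^1_{21}$ and valuation loss less than $\eta_1/4$. Then $\frak m_2(\tilde t^1_{12},\tilde t^1_{21})-{\bf e}_{c_1}$ is a cocycle in $\mathscr C_\Lambda(c_1,c_1)$; applying Leibniz to the identities $\tilde t^1_{12}-T^{\eta_1/4}t^1_{12}=\frak m_1(u)$ etc. produces an explicit witness $\xi_1\in \mathscr C'_\Lambda(c_1,c_1)$ of the coboundary property with $\frak v(\xi_1)>-\epsilon-\eta_1/4$, to which the coboundary lift applies to give $\tilde s^1_1\in\mathscr C_\Lambda$ with $\frak v(\tilde s^1_1)>-\epsilon-\eta_1/2$; the construction of $\tilde s^1_2$ is symmetric.

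For $k\ge 2$: the level-$k$ equation in Definition~\ref{infhomoequiv}, in a DG-category where $\frak m_j=0$ for $j\ge 3$, reduces to $\frak m_1(\tilde t^k_{12})=-\Sigma_k$ where $\Sigma_k$ is a finite sum of $\frak m_2$-products of $\tilde t^j,\tilde s^j$ with $j<k$, and analogously for $\tilde t^k_{21},\tilde s^k_i$. By the standard Bianchi calculation using Leibniz and the lower-level Maurer-Cartan equations (which hold exactly by the inductive hypothesis), $\Sigma_k$ is automatically a cocycle in $\mathscr C_\Lambda$. Its image under the corresponding identity in the original variables equals $-\frak m_1(t^k_{12})$; combining this with Leibniz correction terms from the lower-level lifts yields an explicit witness that $\Sigma_k$ is a coboundary in $\mathscr C'_\Lambda$ whose bounding element has valuation close to $\frak v(t^k_{12})>-\epsilon/2-(k-1)\epsilon$. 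The coboundary lift then produces $\tilde t^k_{12}$ satisfying the required valuation bound, and $\tilde t^k_{21},\tilde s^k_i$ are obtained identically. The main obstacle of the argument is the careful tracking of the Leibniz correction terms across the induction so that at each stage an explicit, valuation-controlled coboundary witness is available for the lifting lemma; the cumulative valuation losses are controlled by the geometric series $\sum_k\eta_k<\delta$, giving the desired energy bound $\epsilon+\delta$.
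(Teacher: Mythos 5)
Your easy direction is correct, and extracting valuation-controlled cocycle and coboundary lifting statements from the proof of Lemma~\ref{lemma217} is a reasonable starting point. However, the core inductive step has a genuine gap. At level $k\ge 2$ you must show that $\Sigma_k(\tilde t,\tilde s)$, built from your already-lifted data, is a coboundary in $\mathscr C'_\Lambda$ with valuation control; your plan is to relate it by Leibniz to $-\frak m_1(t^k)$. But your coboundary lift only constrains $\frak m_1(\tilde s^j)$, not $\tilde s^j$ itself: it leaves $\tilde s^j - s^j$ determined only up to an undetermined \emph{cocycle} $z^j\in\mathscr C'_\Lambda$ beyond the explicit Leibniz witness. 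When you expand $\Sigma_k(\tilde t,\tilde s)-\Sigma_k(t,s)$, terms of the form $\frak m_2(z^j,\,\cdot\,)$ appear; these are cocycles but not a priori coboundaries, so your coboundary lift is inapplicable to the resulting obstruction. To kill them one must \emph{choose} each $\tilde s^j$ so that $\tilde s^j - s^j$ is itself a valuation-controlled coboundary in $\mathscr C'_\Lambda$, and that strengthened invariant must be carried through the induction; your lifting lemma as stated does not supply it, and your proposal does not record it. (There is also a smaller mismatch at $k=1$: matching $T^{\eta_1/4}t^1$ rather than $t^1$ in the cocycle lift produces $T^{\eta_1/2}{\bf e}_{c_1}$ upon expansion, not the unrescaled ${\bf e}_{c_1}$ in the Maurer--Cartan equation.)

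The paper avoids this bookkeeping by lifting the entire solution into a cylinder object: a ``model of $\mathscr C'\times[0,1]$'', i.e.\ a completed DG-category $\mathfrak C$ with surjective evaluation cochain homotopy equivalences ${\rm Eval}_{s=0},{\rm Eval}_{s=1}:\mathfrak C\to\mathscr C'$. One constructs $\hat T^k,\hat S^k$ in $\mathfrak C$ whose $s=1$ evaluation equals the given (suitably $T$-rescaled) solution and whose $s=0$ evaluation lies in $\mathscr C$; the cylinder is exactly what records the interpolating homotopy you are implicitly invoking. The inductive step uses \emph{both} halves of the almost cochain homotopy equivalence in a structured way: almost injectivity shows the obstruction class bounds in $\mathscr C$, almost surjectivity then corrects the $s=0$ evaluation by a cycle in $\mathscr C$, and surjectivity of $({\rm Eval}_{s=0})_1\oplus({\rm Eval}_{s=1})_1$ lifts the resulting coboundary witness back to $\mathfrak C$ with ${\rm Eval}_{s=1}=0$. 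The paper also replaces ${\bf e}_c$ by $T^{\epsilon}{\bf e}_c$ in the rescaled Maurer--Cartan system (\ref{form1282}), (\ref{form1292}), which makes the ``raising the energy level arbitrary small amount'' step coherent across all terms including the unit.
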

Theorem \ref{idometric} is an enhancement of Lemma \ref{isoembHFH}.
We will prove it at the end of this section.
Using Theorem \ref{idometric}, we will prove the next result 
in Section \ref{sec;spedim}.
\begin{thm}\label{them123333}
Suppose a unital complete DG-category $\mathscr C_1$ 
is equivalent to $\mathscr C_2$. 
We assume $\frak{OB}(\mathscr C_i)$ is complete and Haudorff for $i=1,2$.
Then 
there exists an isometry $I: \frak{OB}(\mathscr C_1)
\to \frak{OB}(\mathscr C_2)$ such that for $a,b \in \frak{OB}(\mathscr C_1)$
the cohomology group $H(\mathscr C_1(a,b),\frak m_1)$
is almost isomorphic to $H(\mathscr C_2(I(a),I(b)),\frak m_1)$.
\end{thm}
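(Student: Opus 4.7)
The plan is to realize both $\frak{OB}(\mathscr C_i)$ as isometric subspaces of the auxiliary DG-category witnessing the equivalence, define $I$ as a limit-point map using zero Hausdorff distance and completeness, and then transport cohomology by means of infinite homotopy equivalences of arbitrarily small energy loss.

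First I unpack Definition \ref{inftywaekGH}: there is a unital completed DG-category $\mathscr C_0$ with $\frak{OB}(\mathscr C_0) = \frak{OB}(\mathscr C_1) \sqcup \frak{OB}(\mathscr C_2)$, DG-functors $\mathscr C_i \to \mathscr C_0$ that are almost homotopy equivalent injections for $i=1,2$, and $d_{\rm H,\infty}(\frak{OB}(\mathscr C_1), \frak{OB}(\mathscr C_2)) = 0$ measured inside $(\frak{OB}(\mathscr C_0), d_{\rm Hof,\infty})$. Theorem \ref{idometric} says each inclusion $\frak{OB}(\mathscr C_i) \hookrightarrow \frak{OB}(\mathscr C_0)$ is isometric for Hofer infinite distance, so I identify $\frak{OB}(\mathscr C_i)$ with its image. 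For $a \in \frak{OB}(\mathscr C_1)$, vanishing Hausdorff distance produces $\{b_n\} \subset \frak{OB}(\mathscr C_2)$ with $d_{\rm Hof,\infty}(a,b_n) \to 0$; Theorem \ref{cong106} makes $\{b_n\}$ Cauchy inside $\frak{OB}(\mathscr C_2)$, completeness yields a limit, and one checks via the triangle inequality that $d_{\rm Hof,\infty}(a, I(a)) = 0$ in $\mathscr C_0$. Hausdorffness forces uniqueness, defining $I(a)$. The symmetric construction produces $J : \frak{OB}(\mathscr C_2) \to \frak{OB}(\mathscr C_1)$, and two further applications of the triangle inequality combined with Hausdorffness give $J \circ I = \mathrm{id}$, $I \circ J = \mathrm{id}$, and $d_{\rm Hof,\infty}(a,a') = d_{\rm Hof,\infty}(I(a), I(a'))$, so $I$ is the required isometry.

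For the cohomological assertion, Lemma \ref{lemma217} applied to the almost homotopy equivalent injections $\mathscr C_i \hookrightarrow \mathscr C_0$ reduces the problem to showing that $H(\mathscr C_0(a,b),\frak m_1)$ is almost isomorphic to $H(\mathscr C_0(I(a), I(b)), \frak m_1)$. Given $\epsilon>0$, pick, using $d_{\rm Hof,\infty}(a, I(a)) = 0 = d_{\rm Hof,\infty}(b, I(b))$, infinite homotopy equivalences between $a$ and $I(a)$ and between $b$ and $I(b)$ of energy loss $\epsilon$; let $t^1_{a \to I(a)}$, $t^1_{I(a) \to a}$, $s^1_a$, $s^1_{I(a)}$ denote their first-order components (and analogously for $b$, $I(b)$). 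The assignments
\begin{equation*}
\phi_\epsilon(x) = (-1)^{\ast}\frak m_2\bigl(\frak m_2(t^1_{I(a)\to a}, x), t^1_{b\to I(b)}\bigr), \quad
\psi_\epsilon(y) = (-1)^{\ast}\frak m_2\bigl(\frak m_2(t^1_{a\to I(a)}, y), t^1_{I(b)\to b}\bigr),
\end{equation*}
define $\Lambda$-linear cochain maps between $\mathscr C_0(a,b)_\Lambda$ and $\mathscr C_0(I(a),I(b))_\Lambda$ of energy loss at most $\epsilon$, since each $t^1$ has valuation $\ge -\epsilon/2$. Using associativity of $\frak m_2$ together with the defining relations (\ref{form112})-(\ref{form113}) for the first-order pieces, namely $\frak m_2(t^1_{a\to I(a)}, t^1_{I(a)\to a}) = {\bf e}_a - \frak m_1(s^1_a)$ and its partners, a direct computation presents $\psi_\epsilon \circ \phi_\epsilon - \mathrm{id}$ and $\phi_\epsilon \circ \psi_\epsilon - \mathrm{id}$ as coboundaries of explicit homotopies built from the $s^1$'s, with energy loss $O(\epsilon)$.

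The main obstacle is upgrading these $\Lambda$-linear near-inverses to an almost isomorphism of $\Lambda_0$-cohomologies in the sense of Definition \ref{defn113}. On cohomology the maps $\phi_\epsilon, \psi_\epsilon$ become mutually inverse $\Lambda$-linear isomorphisms between $H(\mathscr C_0(a,b))_\Lambda$ and $H(\mathscr C_0(I(a),I(b)))_\Lambda$ that shift the energy filtration by at most $\epsilon$ in each direction. For any $\lambda > \epsilon > 0$ and any finitely generated $\Lambda_0$-submodule $W \subseteq \frak F^\lambda H(\mathscr C_0(a,b))$, the image $\phi_\epsilon(W)$ is a $\Lambda_0$-submodule of $\frak F^{\lambda-\epsilon} H(\mathscr C_0(I(a),I(b)))$ (the filtered piece coincides with the $\Lambda$-extension's filtered piece for $\lambda > \epsilon$) and the injectivity of $\phi_\epsilon$ forces the minimal number of generators to be preserved; invoking divisibility of the cohomology to clear denominators gives $f(\lambda; H(\mathscr C_0(a,b))) \le f(\lambda-\epsilon; H(\mathscr C_0(I(a),I(b))))$. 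The symmetric inequality follows from $\psi_\epsilon$. Letting $\epsilon \to 0^+$ yields the two-sided inequalities of Definition \ref{defn113} relating left- and right-limits of the spectral dimension functions, completing the proof.
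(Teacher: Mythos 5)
Your proof is correct and follows essentially the same approach as the paper: the paper also builds the isometry $I$ from Theorem \ref{idometric} together with completeness and Hausdorffness, and then invokes Theorem \ref{thm161} for the cohomological comparison. The only difference is that you re-derive the content of Theorem \ref{thm161} by hand (the explicit $\phi_\epsilon,\psi_\epsilon$ are the linear parts $x\mapsto t\circ x\circ t'$ of Lemma \ref{Lemma3434}, and your final spectral-dimension estimate is Lemma \ref{spectre2lemma} combined with Lemma \ref{lem244}) where the paper simply cites it.
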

We can find $\mathscr C'$ which is
equivalent to $\mathscr C$ such that 
$\frak{OB}(\mathscr C')$ is complete and Hausdorff
(Proposition \ref{completion}).

We can improve Theorem \ref{estimateHofLag} as follows.
\begin{thm}\label{estimateHofLaginf2}
In the situation of Theorem \ref{estimateHofLag} we have:
$$
d_{\rm Hof,\infty}((L,b),(\varphi(L),\varphi_*(b)) \le E({H}).
$$
\end{thm}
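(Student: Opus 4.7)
The plan is to upgrade the moduli-space construction that proves Theorem \ref{estimateHofLag} (reviewed from \cite[Section 5.3]{fooobook} and \cite{fooo:bidisk} in Section \ref{sec;hoferinf1}) so that it yields the full Maurer--Cartan data $(\{t^k_{12}\},\{t^k_{21}\},\{s^k_1\},\{s^k_2\})$ of Definition \ref{infhomoequiv}, not merely its $k=1$ part. Recall that at first order one counts pseudo-holomorphic strips (and one-parameter families of strips) whose boundaries lie on $L$ and $\varphi(L)$, with the transition between them mediated by an interpolation through $L_\tau=\varphi^\tau_H(L)$; each interpolation costs at most $E(H)$ in energy, yielding $\frak v(t^1_{12}),\frak v(t^1_{21})\ge -E(H)/2$ and $\frak v(s^1_1),\frak v(s^1_2)\ge -E(H)$.

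First, for each $k\ge 1$ I would introduce moduli spaces $\mathcal M^k_\bullet$, $\bullet\in\{12,21,1,2\}$, of pseudo-holomorphic disks whose Lagrangian boundary labels alternate between $L$ and $\varphi(L)$ across $2k-1$ arcs (for $\bullet=12,21$) or $2k$ arcs (for $\bullet=1,2$), each a copy of the Hamiltonian interpolation, with one time parameter per interpolation arc. Summing with bounding-cochain insertions $b$ on the $L$-arcs and $\varphi_*(b)$ on the $\varphi(L)$-arcs produces $t^k_{12},t^k_{21},s^k_1,s^k_2\in\mathscr C_\Lambda$. A direct energy count shows that each additional interpolation costs at most $E(H)$, yielding exactly the valuation bounds (\ref{equation1122222}) with $\epsilon=E(H)$ after the rescaling of Remark \ref{Rem23232}.

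Second, the Maurer--Cartan identities (\ref{MC11}) are obtained from codimension-one boundary analysis of $\mathcal M^k_\bullet$. Two types of codimension-one strata appear: strip/disk breaking, which contributes $\frak m_\ell$ of tuples of the already-constructed $t^{k'},s^{k'}$; and degeneration of the time parameters, which either collapses an interpolation arc to a point and produces the strict unit ${\bf e}_{c_i}$ (responsible for the two middle equations of (\ref{MC11}) at $k=1$) or splits $\mathcal M^k_\bullet$ along a concatenation into products $\mathcal M^{k'}_{\bullet'}\times\mathcal M^{k-k'}_{\bullet''}$ that reassemble into the concatenation-type summands indexed by $\vec u\in\frak u(\cdot;k)$. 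Matching the strata to the sets $\frak u(1,2;k),\frak u(2,1;k),\frak u(1;k),\frak u(2;k)$ becomes a combinatorial exercise about admissible boundary labelings.

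The main technical obstacle is the coherent construction of CF-perturbations on the whole tower $\{\mathcal M^k_\bullet\}_{k\ge 1}$ so that the boundary strata factorize exactly as prescribed by Definition \ref{infhomoequiv}, and so that the strict unit ${\bf e}_{c_i}$ (and not merely a cohomologous cocycle) appears on the right-hand side of (\ref{MC11}) for $k=1$. This is a Kuranishi-structure/virtual-fundamental-chain argument of the type carried out in \cite{fooobook,FuFu6}, done by induction on $k$ using the collar structure at the boundary; the energy estimates are preserved at each inductive step because the perturbations can be taken supported away from the region where the holomorphic energy concentrates. With the corrected tower $\{t^k,s^k\}$ in hand, the estimate $d_{\rm Hof,\infty}((L,b),(\varphi(L),\varphi_*(b)))\le E(H)$ follows by definition upon taking the infimum over choices of $H$.
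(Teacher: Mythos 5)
Your proposal bypasses the Yoneda embedding and attempts to build the infinite homotopy equivalence $(\{t^k\},\{s^k\})$ directly as Floer cochains in $\frak F(\mathbb L)$ itself, via moduli of disks with boundary alternating between $L$ and $\varphi(L)$. The paper does not do this. It first pushes forward under $\frak{YON}:\frak F(\mathbb L)\to\frak{Rep}(\frak F(\mathbb L)^{\rm op},\mathcal{CH})$ (and invokes Corollary \ref{invhoferinfdist} to transfer $d_{\rm Hof,\infty}$ back) and then constructs $t^k_{c,c'}=\{\phi^k_{c,c';m}\}$, $s^k_c=\{\psi^k_{c;m}\}$ as \emph{pre-natural transformations} --- families of multilinear maps $CF(L,M_0)\otimes\dots\otimes CF(M_{m-1},M_m)\to CF(\varphi(L),M_m)$ --- not as elements of $CF(L,\varphi(L))$. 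This matters: in the $\Z$-graded case $CF(L,\varphi(L))$ is free over $\Lambda_0$ on finitely many intersection points in a bounded range of degrees, so a cochain of degree $1-2k$ (the required degree of $t^k_{12}$) need not exist for large $k$; your tower would be forced to vanish and the Maurer--Cartan relation (\ref{MC11}) would then become an unverified constraint on the surviving low terms. The module-category morphism complexes have elements in all degrees, so the paper's tower can always be propagated, and the existence of an infinite homotopy equivalence in $\frak F(\mathbb L)$ itself is then a consequence, not a hypothesis, of the construction.

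The deeper gap is that you identify the hard point --- coherent CF-perturbations making the strict unit ${\bf e}_{c_i}$ (not a cohomologous cocycle) appear, and making the boundary strata close up --- but offer only ``a Kuranishi/virtual-chain argument by induction'' in its place. The paper's actual mechanism is the cell complexes of piecewise-linear reparametrization functions (Figures \ref{Figuret2}--\ref{fillhole}) together with the \emph{phantom cell} argument: after assembling the cell complex $C_0$ from compositions of lower $t^{k'},s^{k'}$, its boundary is nonempty; one caps it with phantom cells (flat portions of the graphs, where the Cauchy--Riemann data is insensitive to the parameter) so that their pushforward vanishes at chain level, and it is precisely this vanishing that produces the exact strict unit at the $R\to 0$ end and kills the spurious strata. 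Your ``degeneration of time parameters collapses an arc and produces ${\bf e}_{c_i}$'' is a placeholder for this; as stated it is a corner-stratum phenomenon (several parameters must collapse simultaneously) rather than a codimension-one boundary, and without the phantom-cell bookkeeping there is no reason the remaining boundary terms cancel. The energy bookkeeping you give (each interpolation costs at most $E(H)$, matching (\ref{1612form})) is correct, so the outer shape of the argument is sound, but the inductive construction of the tower --- both the passage to the module category and the piecewise-linear/phantom-cell machinery --- is where the real content of the proof lies, and neither is present in the proposal.
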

We will prove Theorem \ref{estimateHofLaginf2} in Section \ref{sec;hoferinf1}.
The next corollary then follows.
\begin{cor}\label{corghchikachik2a}
Let $\varphi_i: X \to X$ be Hamiltonian diffeomorphisms 
such that $d_{\rm H}(\varphi_i,{\rm id}) < \epsilon$ for $i=1,\dots,k$
and $L_1,\dots,L_k$ be relatively spin Lagrangian submanifolds.
Then
$$
d_{\rm GH,\infty}(\frak F(L_1,\dots,L_k), \frak F(\varphi_1(L_1),\dots,
\varphi_k(L_k))
< \epsilon.
$$
\end{cor}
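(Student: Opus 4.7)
The plan is to mirror the proof of Corollary \ref{corghchikachika} almost verbatim, but feeding in Theorem \ref{estimateHofLaginf2} in place of Theorem \ref{estimateHofLag} so that the estimate is in the Hofer \emph{infinite} distance rather than the ordinary Hofer distance. The conceptual organisation is: build a single ambient filtered $A_{\infty}$ category containing both sides as full subcategories, then show that the corresponding subsets of its set of objects are $\epsilon$-close in $d_{\rm Hof,\infty}$.

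More concretely, first I would fix a small auxiliary $\delta>0$ and replace each $\varphi_i$ by a Hamiltonian perturbation $\tilde\varphi_i$ with $d_{\rm Hof}(\tilde\varphi_i,\varphi_i)<\delta$ and $d_{\rm Hof}(\tilde\varphi_i,{\rm id})<\epsilon$, chosen so that the combined collection $\mathbb L = \{L_1,\dots,L_k,\tilde\varphi_1(L_1),\dots,\tilde\varphi_k(L_k)\}$ consists of mutually transversal relatively spin Lagrangian submanifolds. Then $\frak F(\mathbb L)$ is a unital and gapped filtered $A_{\infty}$ category by Gromov compactness, and by construction $\frak F(L_1,\dots,L_k)$ and $\frak F(\tilde\varphi_1(L_1),\dots,\tilde\varphi_k(L_k))$ sit inside $\frak F(\mathbb L)$ as full subcategories in the strict sense required by condition~(2) of Definition~\ref{GromovHausdorff}, in particular via the identity on objects, which trivially is a unital gapped homotopy equivalence onto the image. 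Hence items (1) and (2) of Definition~\ref{GHinftygapped} are met with $\mathscr C = \frak F(\mathbb L)$.

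Second, to establish condition~(3)$'$ of Definition~\ref{GHinftygapped}, I would pushforward bounding cochains: for every object $(L_i,b_i)$ of $\frak F(L_1,\dots,L_k)$, the pair $(\tilde\varphi_i(L_i),(\tilde\varphi_i)_*(b_i))$ is an object of $\frak F(\tilde\varphi_1(L_1),\dots,\tilde\varphi_k(L_k))$, and this assignment is a bijection (with inverse $(\tilde\varphi_i^{-1})_*$) since $\tilde\varphi_i$ is a symplectomorphism and so preserves the Maurer--Cartan locus. Theorem~\ref{estimateHofLaginf2} applied to any Hamiltonian $\tilde H_i$ generating $\tilde\varphi_i$ gives
\[
d_{\rm Hof,\infty}\bigl((L_i,b_i),(\tilde\varphi_i(L_i),(\tilde\varphi_i)_*(b_i))\bigr) \le E(\tilde H_i),
\]
computed inside $\frak F(\mathbb L)$, and infimizing over $\tilde H_i$ bounds this by $d_{\rm Hof}(\tilde\varphi_i,{\rm id})<\epsilon$. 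Doing this for each $i$ gives $d_{\rm H,\infty}(\frak{OB}(\frak F(L_1,\dots,L_k)),\frak{OB}(\frak F(\tilde\varphi_1(L_1),\dots,\tilde\varphi_k(L_k))))<\epsilon$, whence $d_{\rm GH,\infty}(\frak F(L_1,\dots,L_k),\frak F(\tilde\varphi_1(L_1),\dots,\tilde\varphi_k(L_k)))<\epsilon$.

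Finally, to remove the perturbation I would let $\delta\to 0$ and use the triangle inequality for $d_{\rm GH,\infty}$ together with Corollary~\ref{corghchikachik2a} applied to the pair $\tilde\varphi_i,\varphi_i$ (whose relative Hofer distance is $<\delta$); this absorbs the perturbation error and yields the desired bound for the original $\varphi_i$. The main obstacle I anticipate is the transversality perturbation in the first step: one must simultaneously achieve mutual transversality of all $2k$ Lagrangian submanifolds in $\mathbb L$ while keeping each $d_{\rm Hof}(\tilde\varphi_i,\varphi_i)$ arbitrarily small, and must argue that the pushforward bounding cochains $(\tilde\varphi_i)_*(b_i)$ give rise to well-defined objects of the perturbed Fukaya category (which requires the standard invariance of Lagrangian Floer theory under Hamiltonian pushforward of the input data). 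This is routine but worth checking carefully so that the Hofer infinite estimate from Theorem~\ref{estimateHofLaginf2} truly survives the limit $\delta\to 0$.
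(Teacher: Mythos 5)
Your core argument is the intended one: form the ambient filtered $A_{\infty}$ category $\frak F(\mathbb L)$ with $\mathbb L = \{L_1,\dots,L_k,\varphi_1(L_1),\dots,\varphi_k(L_k)\}$, note that $\frak F(L_1,\dots,L_k)$ and $\frak F(\varphi_1(L_1),\dots,\varphi_k(L_k))$ sit inside as full subcategories (so that items (1)(2) of Definition \ref{GromovHausdorff} hold with the identity on objects), push forward bounding cochains by $(\varphi_i)_*$, and apply Theorem \ref{estimateHofLaginf2} to bound $d_{\rm Hof,\infty}\bigl((L_i,b_i),(\varphi_i(L_i),(\varphi_i)_*b_i)\bigr)$ by $E(H_i)$, which after infimizing over $H_i$ is $< \epsilon$. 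That gives condition (3)' of Definition \ref{GHinftygapped} and hence the conclusion. This is exactly why the paper calls the corollary an immediate consequence of Theorem \ref{estimateHofLaginf2}, precisely as Corollary \ref{corghchikachika} is immediate from Theorem \ref{estimateHofLag}.

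The perturbation-and-removal scheme you wrap around this, however, contains a genuine circularity. In your final step you propose to "use the triangle inequality for $d_{\rm GH,\infty}$ together with Corollary \ref{corghchikachik2a} applied to the pair $\tilde\varphi_i,\varphi_i$" to absorb the $\delta$-error. But that is precisely the statement being proved, invoked with $L_i$ replaced by $\tilde\varphi_i(L_i)$ and $\varphi_i$ replaced by $\varphi_i\circ\tilde\varphi_i^{-1}$; you cannot use it to prove itself. Worse, if mutual transversality of $\varphi_1(L_1),\dots,\varphi_k(L_k)$ does fail, then $\frak F(\varphi_1(L_1),\dots,\varphi_k(L_k))$ in the conclusion is not even defined at this point in the paper — its definition in the non-transversal case is the content of Corollary \ref{maintheorem1}, proved much later in Section \ref{sec;compgeo} \emph{using} the present corollary as an input — so the attempt to extend the statement beyond the transversal case cannot close without further ideas.

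The correct reading is that the corollary, like Theorems \ref{estimateHofLag} and \ref{estimateHofLaginf2} on which it rests, implicitly carries the transversality hypothesis under which the Fukaya categories in its statement and the ambient $\frak F(\mathbb L)$ are all defined (namely $L_i \pitchfork L_j$, $\varphi_i(L_i) \pitchfork \varphi_j(L_j)$, and $L_i \pitchfork \varphi_j(L_j)$ for all relevant pairs). With that reading, your perturbation step is unnecessary, the circular reference disappears, and the rest of your argument is correct and matches the paper's proof.
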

Now we have the following improvement of Theorem \ref{GrokaraInd}:

\begin{thm}\label{GrokaraIndunit}
Suppose $\mathscr C_1$ and $\mathscr C_2$ are 
gapped and unital filtered $A_{\infty}$ categories with 
$d_{\rm GH,\infty}(\mathscr C_1,\mathscr C_2) < \epsilon$.
Then there exists a gapped and $\#$-homotopy unital filtered $A_{\infty}$
functor $\Phi: \mathscr C_1 \to \mathscr C_2$ 
with energy loss $\epsilon$.
\par
Its linear part $\Phi_1 : \mathscr C_1(a,b) \to \mathscr C_2(\Phi_{\rm ob}(a),\Phi_{\rm ob}(b))$
is a cochain homotopy equivalence with energy loss $2\epsilon$.
\par
The same holds for completed DG-category (which is not necessary gapped).
\end{thm}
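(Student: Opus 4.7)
The plan is to enhance the construction given in Lemma \ref{Lemma3434} so as to produce a $\#$-homotopically unital functor, using the additional data carried by an infinite homotopy equivalence. As in the proof of Theorem \ref{GrokaraInd}, after passing to canonical models (which preserves gapped-ness and unitality), it suffices to prove a relative version: given a unital filtered $A_\infty$ category (or completed DG-category) $\mathscr C$ and subsets $\frak C_{\mathcal I}=\{c_i\}$, $\frak C'_{\mathcal I}=\{c'_i\}$ with $d_{\rm Hof,\infty}(c_i,c'_i)<\epsilon$ for each $i$, produce a $\#$-homotopically unital filtered $A_\infty$ functor with energy loss $\epsilon$ between the corresponding full subcategories. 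By hypothesis, each pair $(c_i,c'_i)$ is equipped not merely with a homotopy equivalence $(t,t',s,s')$ but with an \emph{infinite} homotopy equivalence $\bigl(\{t^k_{i}\},\{t^{\prime k}_{i}\},\{s^k_{i}\},\{s^{\prime k}_{i}\}\bigr)$ satisfying the Maurer–Cartan equations (\ref{MC11}).

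Next I would write down an explicit formula for $\Phi^{\#}$ on $\mathscr C^+(\frak C_{\mathcal I})$, generalizing (\ref{nhomotoyp}). For inputs $x_1,\dots,x_k\in\mathscr C(\frak C_{\mathcal I})$ the definition takes the form
\[
\Phi^{\#}_k(x_1,\dots,x_k)\;=\;\sum_{\vec\alpha}\;\pm\,
u^{\alpha_0}\circ x_1\circ v^{\alpha_1}\circ x_2\circ\cdots\circ v^{\alpha_{k-1}}\circ x_k\circ u^{\prime\alpha_k},
\]
where $\vec\alpha=(\alpha_0,\dots,\alpha_k)$ ranges over multi-indices, each $u^{\alpha_0}$ is a $t^{\alpha_0}$-type element, each $u^{\prime\alpha_k}$ is a $t^{\prime\alpha_k}$-type element, and each $v^{\alpha_j}$ is an $s^{\alpha_j}$-type element (choosing source/target side according to the positions of the $c_i$, $c'_i$). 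On the formal generators we set $\Phi^{\#}_1({\bf e}^+_{c_i})={\bf e}_{c'_i}$ and $\Phi^{\#}_1({\bf f}_{c_i})=-s^{\prime 1}_{i}$ (or its counterpart), and demand $\Phi^{\#}$ vanish whenever ${\bf e}^+_{c_i}$ appears among $x_2,\dots,x_{k-1}$. Inputs involving ${\bf f}_{c_i}$ at intermediate slots are handled by replacing the neighboring $v^{\alpha_j}$-insertions by higher $s^k_i$'s.

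The verification of the $A_\infty$-functor relations for $\Phi^{\#}$ reduces, term by term, to the Maurer–Cartan equations (\ref{MC11}) for the infinite homotopy equivalence. The first equation in (\ref{MC11}) (i.e.\ $\sum_{\vec u\in\frak u(1,2;k)}\frak m(\vec u)=0$) is precisely what is needed so that the ends of the sandwiched word behave correctly under the derivation; the second equation (involving ${\bf e}_{c_i}$) is precisely what guarantees strict unitality of $\Phi^{\#}$ at ${\bf e}^+_{c_i}$; and the higher $k\ge 2$ equations provide the coherences that let us take $\Phi^{\#}_k$ to vanish on ${\bf e}^+_{c_i}$ in the intermediate positions. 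The energy-loss estimate (\ref{equation1122222}) gives exactly $\epsilon/2$ per endpoint insertion and $\epsilon$ per interior insertion, which combines to the required energy loss $\epsilon$ for the functor. Finally, the global version (with $\mathscr C^0_i$ and the auxiliary map $J$) is obtained, as in the proof of Theorem \ref{GrokaraInd}, by composing with the $\#$-unital version of the homotopy equivalence from $\mathscr C_1$ to $\mathscr C_1^0$ (Lemma \ref{energylosscup}), and the linear part is a cochain homotopy equivalence with energy loss $2\epsilon$ by the same calculation as before, using $t^1,t^{\prime 1}$ and $s^1$.

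The main obstacle is the combinatorial bookkeeping of signs and the exact choice of coefficients in the sum defining $\Phi^{\#}_k$: one must organize the sandwich formula so that each Maurer–Cartan identity from (\ref{MC11}) is applied at the right word-position, and so that the various cancellations required by the $A_\infty$-functor relations and by the $\#$-unitality (vanishing on $\mathbf{e}^+$ in interior slots, compatibility with $\frak m_1({\bf f})={\bf e}^+-{\bf e}$) occur simultaneously. In the completed-DG case (no gapped-ness) the same formula is used, but one additionally checks that the sum converges in the $T$-adic topology term-by-term using the energy filtration shifts in (\ref{equation1122222}) — this is automatic since the contribution from $\vec\alpha$ with total index $N$ carries valuation $\ge -\epsilon - (N-1)\epsilon/2$ times the valuations of the $x_j$'s, so the completed tensor products are preserved and $\Phi^{\#}$ extends continuously.
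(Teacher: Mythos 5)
Your overall plan — reduce to a sub-lemma analogous to Lemma \ref{Lemma3434}, reduce to the DG-case, and exploit the infinite homotopy equivalence with the formal generators ${\bf f}_{c_i}$ keeping track of the levels of $s^k,t^k$ — matches the paper's strategy (Lemma \ref{Lemma3434unit}). But the explicit formula you propose is different in a way that would break the proof. You write
$\Phi^{\#}_k(x_1,\dots,x_k)=\sum_{\vec\alpha}\pm\, u^{\alpha_0}\circ x_1\circ v^{\alpha_1}\circ\cdots\circ x_k\circ u^{\prime\alpha_k}$
as a genuine \emph{sum over multi-indices} on plain inputs $x_j\in\mathscr C$, and only afterwards modify it by ${\bf f}$-insertions. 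The paper's formula (\ref{form126}) is not a sum: on plain inputs (no ${\bf f}$'s) the output is the single term $t^1\circ x_1\circ s^1\circ\cdots\circ s^1\circ x_k\circ t^1$, i.e. the level-$1$ data only, and the higher $t^k,s^k$ enter in a \emph{deterministic} way, with the level $\ell_i+1$ at slot $i$ being exactly one more than the number $\ell_i$ of consecutive ${\bf f}$'s inserted at that slot. This is not bookkeeping one can defer: with an honest sum over independently ranging multi-indices the $A_\infty$-functor identity fails already at the linear level. Indeed $d\Phi^\#_1(x)$ would involve $d(t^k)$ for $k\ge 2$, and by (\ref{form128}) these produce terms $s^\ell\circ t^{k-\ell}\circ x\circ t'$ which have nothing to cancel against in $\Phi^\#_1(dx)$, because your formula for $\Phi^\#_1$ has no interior $s$-insertions. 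In the paper these terms only ever appear when they can be matched (in the computation (\ref{hajimenohhoo6eisi})$=$(\ref{owarinohoutei})) against the contributions of the coderivation $\hat d$ acting on ${\bf f}$'s via $\frak m_1^+({\bf f})={\bf e}^+-{\bf e}$, which is precisely why the levels must be tied to ${\bf f}$-multiplicities rather than summed over freely.

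Secondary points: $\Psi^\#_k({\bf f}_c^{\otimes k})$ is $s_c^k$, not $-s^{\prime 1}$ at $k=1$; the vanishing on interior ${\bf e}^+$-slots is not a condition you impose on top of the formula but a consequence of the formula together with the Maurer--Cartan equations (\ref{form129}) at $k=1$ (which is where the unit ${\bf e}_c$ appears on the right-hand side), and this is exactly the role that equation plays in the $(11.17)=(11.18)$ computation; and the energy-loss count works out because with $N=k+\sum\ell_i$ inputs the single term carries valuation drop $\epsilon_1+\epsilon_2+(k-1+\sum\ell_i)\epsilon=N\epsilon$, matching the definition of a functor with energy loss $\epsilon$, rather than via the $\epsilon/2$ vs.\ $\epsilon$ split you describe.
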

\begin{proof}
In the same way as the proof of Theorem \ref{GrokaraInd} it suffices to prove the next lemma.
\begin{lem}\label{Lemma3434unit}
Let $\mathscr C$ be a unital filtered $A_{\infty}$
category and $\frak C_{\mathcal I} = \{c_i \mid i \in \mathcal I\}$, $
\frak C'_{\mathcal I} = \{c'_i \mid i \in \mathcal I\}$ 
are sets of objects. 
Here $\mathcal I$ is a certain index set.
We assume that  
$d_{\rm Hof}(c_i,c'_i) < \epsilon$ for $i \in \mathcal I$.
Then there exists a $\#$-homotopy unital filtered $A_{\infty}$ functor 
with energy loss $\epsilon$, 
$
\Phi: {\mathscr C}(\frak C_{\mathcal I})
\to 
{\mathscr C}(\frak C_{\mathcal I}')
$
between full subcategories.
\par
Its linear part is a cochain homotopy equivalence with energy loss $2\epsilon$.
\par
The same holds for completed DG-category (which is not necessary gapped).
\end{lem}
\begin{proof}
It suffices to show that the filtered $A_{\infty}$ functor $\Psi$ with 
energy loss $\epsilon$
given by (\ref{nhomotoyp}) extends to a 
unital filtered $A_{\infty}$ functor 
$\Psi^{\#}: \mathscr C(\frak C_{\mathcal K})^+
\to {\mathscr C}(\frak C_{\mathcal K}')$ with 
energy loss $\epsilon$.
We put
\begin{equation}\label{form126}
\aligned
&\Psi^{\#}_{k+\sum \ell_i}({\bf f}_{c_0}^{\otimes\ell_0},x_1,{\bf f}^{\otimes\ell_1}_{c_1},\dots,
{\bf f}_{c_{k-1}}^{\otimes\ell_{k-1}},x_k,{\bf f}_{c_{k}}^{\otimes\ell_{k}})
\\
&:= t^{\ell_0+1}_{c'_0,c_0} \circ x_1 \circ s_{c_1}^{\ell_1+1} \circ x_2 \circ \dots \circ x_{k-1}\circ  s_{c_{k-1}}^{\ell_{k-1}+1}  
\circ x_{k} \circ t_{c_k,c'_k}^{\ell_k+1}
\endaligned
\end{equation}
where $\ell_i = 0,1,\dots$
and
$
\Psi^{\#}_k({\bf f}_c^{\otimes k}) := s_c^{k},
$
for $k =1,2,\dots$.
The explaination of the notations in (\ref{form126})
is in order.
We assume $x_i \in \mathscr C(c_{i-1},c_i)$ for $i=1,\dots,k$.
We remark that 
$
\mathscr C^+(c_{i},c_i) = \mathscr C^+(c_{i},c_i) \oplus
\Lambda_0 {\bf f}_{c_i} \oplus \Lambda_0 {\bf e}^+_{c_i}.
$
The element ${\bf f}_{c_i}^{\otimes\ell_i}$
appearing in (\ref{form126}) is defined by
$
{\bf f}_{c_i}^{\otimes\ell_i}:=
\underbrace{{\bf f}_{c_i} \otimes \dots \otimes {\bf f}_{c_i}}_{\ell_i}.
$
By the assumption, there exists an
infinite homotopy equivalence $(\{t^k_{c_i,c'_i}\}, 
\{t^k_{c'_i,c_i}\}\{s^k_{c_i}\},\{s^k_{c'_i}\})$
of energy loss $\epsilon$.
They appear in (\ref{form126}).
Note that (\ref{nhomotoyp}) is the case of DG-category.
The composition $\circ$ is obtained from $\frak m_2$ by 
putting the sign as in (\ref{signchangetocirc}).
\par
In the case of a DG-category, the Maurer-Cartan equation 
(\ref{MC11}) becomes the following equations.
\begin{equation}\label{form128}
\left\{
\aligned
&d(t^k_{c,c'}) - 
\sum_{\ell=1}^{k-1} s^{\ell}_{c} \circ t^{k-\ell}_{c,c'}
+ \sum_{\ell=1}^{k-1} t^{k-\ell}_{c,c'} \circ  s^{\ell}_{c'} = 0 \\
&d(t^k_{c',c}) - 
\sum_{\ell=1}^{k-1} s^{\ell}_{c'} \circ t^{k-\ell}_{c',c}
+ \sum_{\ell=1}^{k-1}  t^{k-\ell}_{c',c} \circ s^{\ell}_{c} = 0
\endaligned
\right.
\end{equation}
\begin{equation}\label{form129}
\left\{
\aligned
&d(s^k_{c})  -
\sum_{\ell=1}^{k-1} s^{\ell}_{c} \circ s^{k-\ell}_{c}
+ \sum_{\ell=1}^{k} t^{1+k-\ell}_{c,c'} \circ t^{\ell}_{c',c} = 
\begin{cases} 0 &\text{$k\ne 1$}\\
{\bf e}_{c} &\text{$k =  1$}
\end{cases}
\\
&d(s^k_{c'}) - 
\sum_{\ell=1}^{k-1} s^{\ell}_{c'} \circ s^{k-\ell}_{c'}
+ \sum_{\ell=1}^{k} t^{1+k-\ell}_{c',c} \circ t^{\ell}_{c,c'} = 
\begin{cases} 0 &\text{$k\ne 1$}\\
{\bf e}_{c'} &\text{$k = 1$}
\end{cases}
\endaligned
\right.
\end{equation}
Using (\ref{form128}) and (\ref{form129}) it is straightforward 
to check that $\Psi^{\#}$ is a unital filtered $A_{\infty}$ functor with 
energy loss $\epsilon$ as follows.
Let $\pi_1 : B\mathscr C[1](c_0,c_k) \to \mathscr C[1](c_0,c_k)$
be the projection to the direct summand.
We calculate, for $k>0$:
\begin{equation}\label{hajimenohhoo6eisi}
\aligned
&(\pi_1 \circ \hat d\circ \widehat{\Phi^{\#}})({\bf f}_{c'_0}^{\otimes\ell_0},x_1,{\bf f}^{\otimes\ell_1}_{c'_1},\dots,
{\bf f}_{c'_{k-1}}^{\otimes\ell_{k-1}},x_k,{\bf f}_{c'_{k}}^{\otimes\ell_{k}}) 
\\
&= (\pi_1 \circ \hat d_1)(t^{\ell_0+1}_{c'_0,c_0} \circ x_1 \circ s_{c_1}^{\ell_1+1} \circ \dots \circ   s_{c_{k-1}}^{\ell_{k-1}+1}  
\circ x_{k} \circ t_{c_k,c'_k}^{\ell_k+1}) \\
&\qquad+
\sum_{i=1}^{k-1}\sum_{j=1}^{\ell_i} (-1)^{\deg x_1 + \dots + \deg'x_i}
(t^{\ell_0+1}_{c'_0,c_0} \circ x_1 \circ s_{c_1}^{\ell_1+1} \circ \dots 
\\
&\qquad\qquad\qquad\qquad\qquad\qquad\circ  s_{c_{i-1}}^{\ell_{j-1}+1}  
\circ x_{i} \circ t_{c_i,c'_i}^{j+1})
\\
&\qquad\qquad\circ
(t^{\ell_i-j+1}_{c'_i,c_i} \circ x_{i+1} \circ \dots \circ  s_{c_{k-1}}^{\ell_{k-1}+1}  
\circ x_{k} \circ t_{c_k,c'_k}^{\ell_k+1})\\
&\qquad- \sum_{j=1}^{\ell_0}
s^{j}_{c'_0,c'_0} \circ
(t^{\ell_0-j+1}_{c'_0,c_0} \circ x_1 \circ s_{c_1}^{\ell_1+1} \circ \dots \circ  s_{c_{k-1}}^{\ell_{k-1}+1}  
\circ x_{k} \circ t_{c_k,c'_k}^{\ell_k+1})\\
&\qquad+ \sum_{j=1}^{\ell_k}(-1)^{\deg x_1 + \dots + \deg'x_k+1}
(t^{\ell_0+1}_{c'_0,c_0} \circ x_1 
\\
&\qquad\qquad\qquad\qquad\circ s_{c_1}^{\ell_1+1} \dots \circ  s_{c_{k-1}}^{\ell_{k-1}+1}  
\circ x_{k} \circ t_{c_k,c'_k}^{j+1})\circ s_{c'_k,c'_k}^{\ell_k - j+1},
\endaligned
\end{equation}
Here $\hat d_1$ is co-derivation on the Bar-complex which is induced from the coboundary operator $d$ 
of $\mathscr C(c_i,c_{i+1})$.
We next calculate:
\begin{equation}\label{owarinohoutei}
\aligned
&(\pi_1 \circ \widehat{\Phi^{\#}} \circ \hat d )({\bf f}_{c'_0}^{\otimes\ell_0},x_1,{\bf f}^{\otimes\ell_1}_{c'_1},\dots,
{\bf f}_{c'_{k-1}}^{\otimes\ell_{k-1}},x_k,{\bf f}_{c'_{k}}^{\otimes\ell_{k}}) \\
&= \sum_{i=1}^k (-1)^{\deg x_1 + \dots + \deg'x_{i-1}+1}
t^{\ell_0+1}_{c'_0,c_0} \circ x_1 \circ s_{c_1}^{\ell_1+1} \circ  \dots 
\\
&\qquad\qquad\qquad\qquad\qquad\qquad\circ dx_i \circ \dots \circ  s_{c_{k-1}}^{\ell_{k-1}+1}  
\circ x_{k} \circ t_{c_k,c'_k}^{\ell_k+1}\\
&\quad+
\sum_{i=1}^{k-1}\sum_{j=1}^{\ell_i}(-1)^{\deg x_1 + \dots + \deg'x_i}
t^{\ell_0+1}_{c'_0,c_0} \circ x_1  \circ \dots 
\\
&\qquad\qquad\qquad\qquad\qquad\qquad \circ x_{i} \circ s^{j}_{c_i} \circ {\bf e}_{c_i} \circ {s}^{\ell_i-j-1}_{c_i} \circ  \dots  
\circ x_{k} \circ t_{c_k,c'_k}^{\ell_k+1}\\
&\quad-\sum_{j=1}^{\ell_0}t^{j+1}_{c'_0,c'_0} \circ {\bf e}_{c_0} \circ s^{\ell_0 - j +1}_{c'_0,c_0}\circ
x_1 \circ s_{c_1}^{\ell_1+1} \circ  \dots 
 \circ  s_{c_{k-1}}^{\ell_{k-1}+1}  
\circ x_{k} \circ t_{c_k,c'_k}^{\ell_k+1}
\\
&\quad+\sum_{j=1}^{\ell_k}(-1)^{\deg x_1 + \dots + \deg'x_k}
t^{\ell_0 +1}_{c'_0,c_0}\circ
x_1 \circ s_{c_1}^{\ell_1+1} \circ  \dots 
\\
&\qquad\qquad\qquad\qquad\qquad\qquad\circ  s_{c_{k-1}}^{\ell_{k-1}+1}  
\circ x_{k} \circ s_{c_k,c'_k}^{j+1} \circ {\bf e}_{c'_k} \circ t_{c'_k,c'_k}^{\ell_k-j+1}
\\
&\quad+\sum_{i; \ell_i = 0}(-1)^{\deg x_1 + \dots + \deg'x_{i-1}}
t^{\ell_0+1}_{c'_0,c_0} \circ x_1 \circ s_{c_1}^{\ell_1+1} \circ  \dots 
\\
&\qquad\qquad\qquad\qquad\qquad\qquad \circ x_i \circ x_{i+1} \circ \dots \circ  s_{c_{k-1}}^{\ell_{k-1}+1}  
\circ x_{k} \circ t_{c_k,c'_k}^{\ell_k+1}
\endaligned
\end{equation}
(\ref{form128}) and (\ref{form129}) implies (\ref{hajimenohhoo6eisi})
= (\ref{owarinohoutei}).
The case when $k=0$ is similar.
The proof of Lemma \ref{Lemma3434unit} is complete.
\end{proof}
The proof of Theorem \ref{GrokaraIndunit} is now complete.
\end{proof}
\begin{proof}[Proof of Theorem \ref{cong106}]
By Corollary \ref{invhoferinfdist} it suffices to consider 
the case of DG-category.
Let $(\{s^{k,12}_{c_1}\},\{s^{k,12}_{c_2}\},\{t^k_{c_1,c_2}\},\{t^k_{c_2,c_1}\})$
be an infinite homotopy equivalence between $c_1$ and $c_2$ 
and 
$(\{s^{k,23}_{c_2}\},\{s^{k,23}_{c_3}\},\{t^k_{c_2,c_3}\},\{t^k_{c_3,c_2}\})$
an infinite homotopy equivalence between $c_2$ and $c_3$.
\par
We will define an 
infinite homotopy equivalence
$(\{s^{k,13}_{c_1}\},\{s^{k,13}_{c_3}\},\{t^k_{c_1,c_3}\},\{t^k_{c_3,c_1}\})$
between $c_1$ and $c_3$.
We define
\begin{equation}
s^{k,13}_{c_1}
= s^{k,12}_{c_1}
+ \sum_{\ell}\sum_{m_0,\dots,m_{\ell+1}} 
t^{m_0}_{c_1,c_2} \circ s^{m_1,21}_{c_2}
\circ s^{m_2,12}_{c_2}
\circ \dots \circ s^{m_\ell,21}_{c_2}
\circ t^{m_{\ell+1}}_{c_2,c_1}.
\end{equation}
Here the sum in the second term is taken over all $\ell = 1,3,5,\dots$
and $m_i$ such that 
$$
-1 - 2k = -2m_0 + \sum_{i=1}^{\ell} (-1-2m_i) - 2m_{\ell+1}.
$$  
In other words, we take the sum for all the choices which give elements 
of correct degree.
The definition of $s^{k,13}_{c_3}$ is similar.
\par
We define
\begin{equation}
t^{k}_{c_1,c_3}
= 
\sum_{\ell}\sum_{m_0,\dots,m_{\ell+1}}
t^{m_0}_{c_1,c_2}  
\circ s^{m_1,21}_{c_2}
\circ \dots \circ s^{m_\ell,12}_{c_2}
\circ t^{m_{\ell+1}}_{c_2,c_3}.
\end{equation}
Here the sum is taken over all $\ell = 0,2,4,\dots$
and $m_i$ such that 
$$
- 2k = -2m_0 + \sum_{i=1}^{\ell} (-1-2m_i) - 2m_{\ell+1}.
$$
The definition of $t^{k}_{c_3,c_1}$ is similar.
We can check that they satisfy Maurer-Cartan equation 
by a straightforward calculation.
Let us explain it by figures.
\begin{figure}[h]
\centering
\includegraphics[scale=0.4]{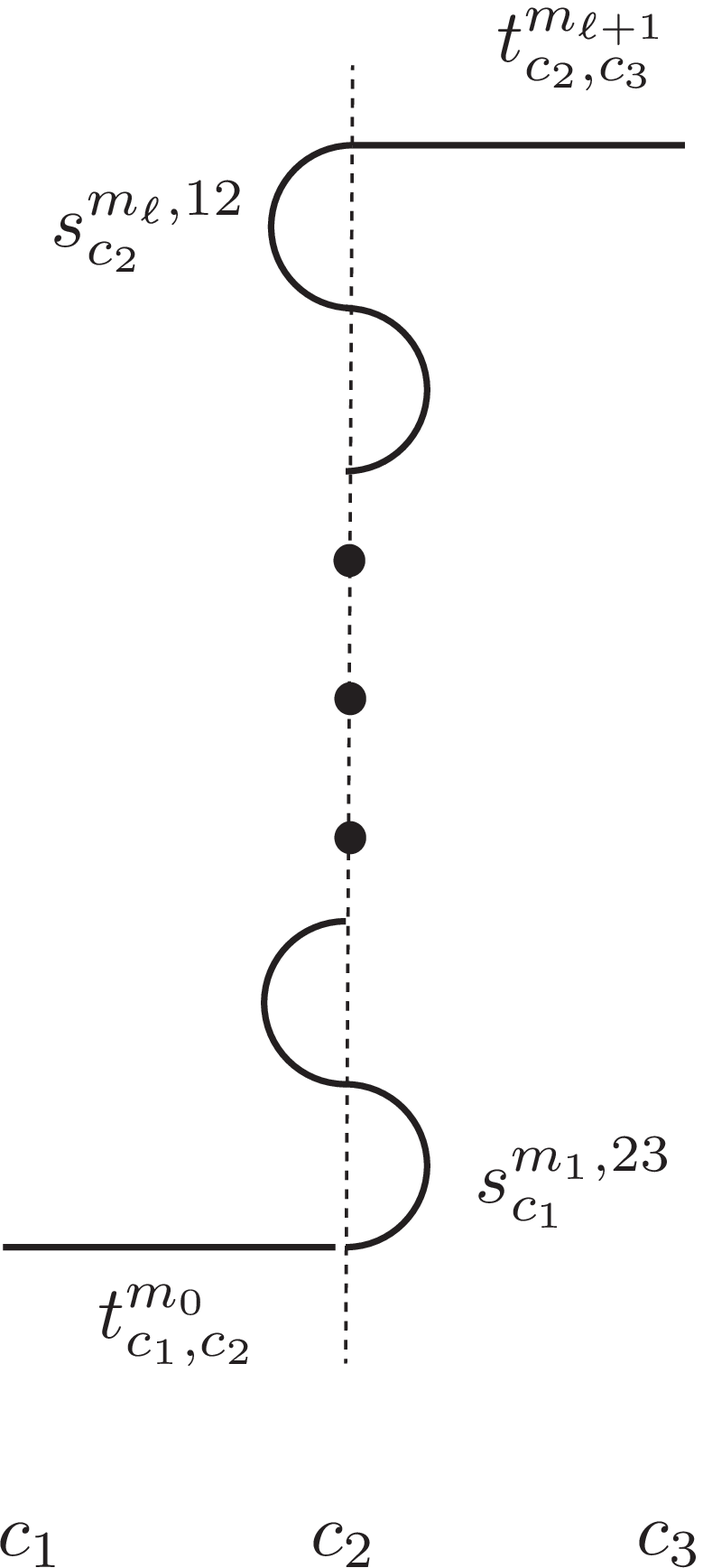}
\caption{$t^{k}_{c_1,c_3}$}
\label{tnoeee}
\end{figure}
Figure \ref{tnoeee} depicts $t^{k}_{c_1,c_3}$.
Its co-boundary $dt^{k}_{c_1,c_3}$ is obtained by applying 
the co-boundary operator $d$ to various factors.
When $d$ is applied to $t^{m_0}_{c_1,c_2}$ it gives
the terms depicted by (a)(b) of Figure \ref{hitthett}.
\begin{figure}[h]
\centering
\includegraphics[scale=0.4]{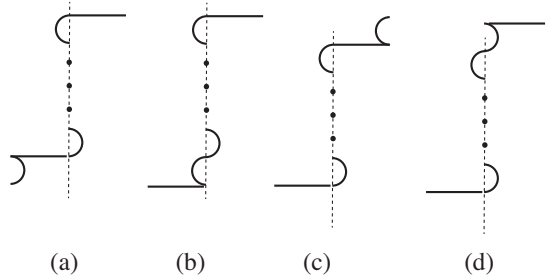}
\caption{$d t^{k}_{c_1,c_2}$ and $d t^{k}_{c_2,c_3}$}
\label{hitthett}
\end{figure}
When $d$ is applied to $t^{m_{\ell+1}}_{c_2,c_3}$ it gives
the terms depicted by (c)(d) of Figure \ref{hitthett}.
When $d$ is applied to $s^{m_1,23}_{c_2}$ it gives
the terms depicted by (e)(f)(g) of Figure \ref{hitthes23}.
Note that (g) appears only when $m_1 =1$.
\begin{figure}[h]
\centering
\includegraphics[scale=0.4]{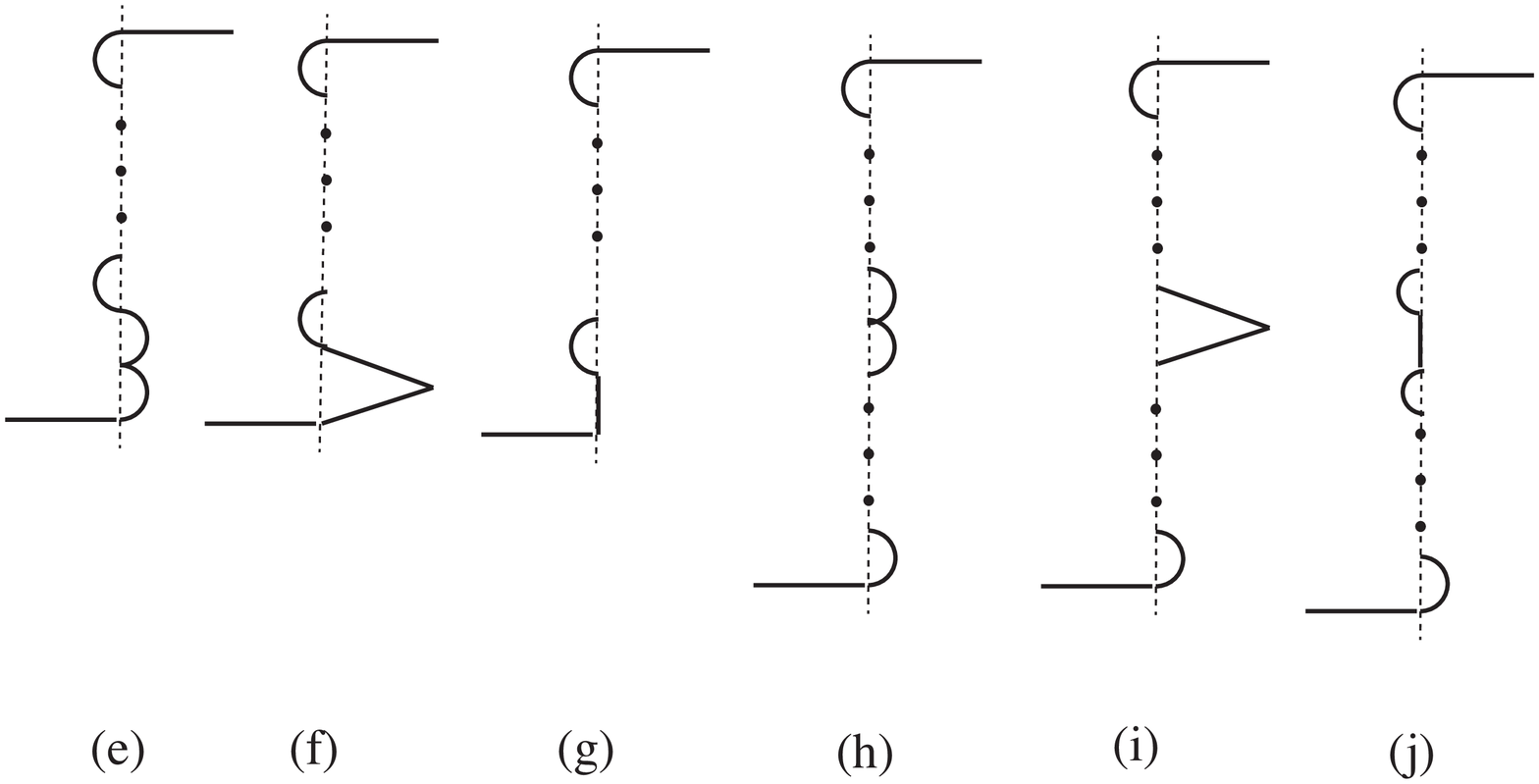}
\caption{$d s^{k,23}_{c_2}$}
\label{hitthes23}
\end{figure}
When $d$ is applied to $s^{m_i,23}_{c_2}$,
$i > 1$, it gives
the terms depicted by (h)(i)(j) of Figure \ref{hitthes23}.
Note that (j) appears only when $m_i =1$.
When $d$ is applied to $s^{m_\ell,12}_{c_2}$ it gives
the terms depicted by (k)(l)(m) of Figure \ref{hitthes12}.
Note that (m) appears only when $m_{\ell} =1$.
\begin{figure}[h]
\centering
\includegraphics[scale=0.4]{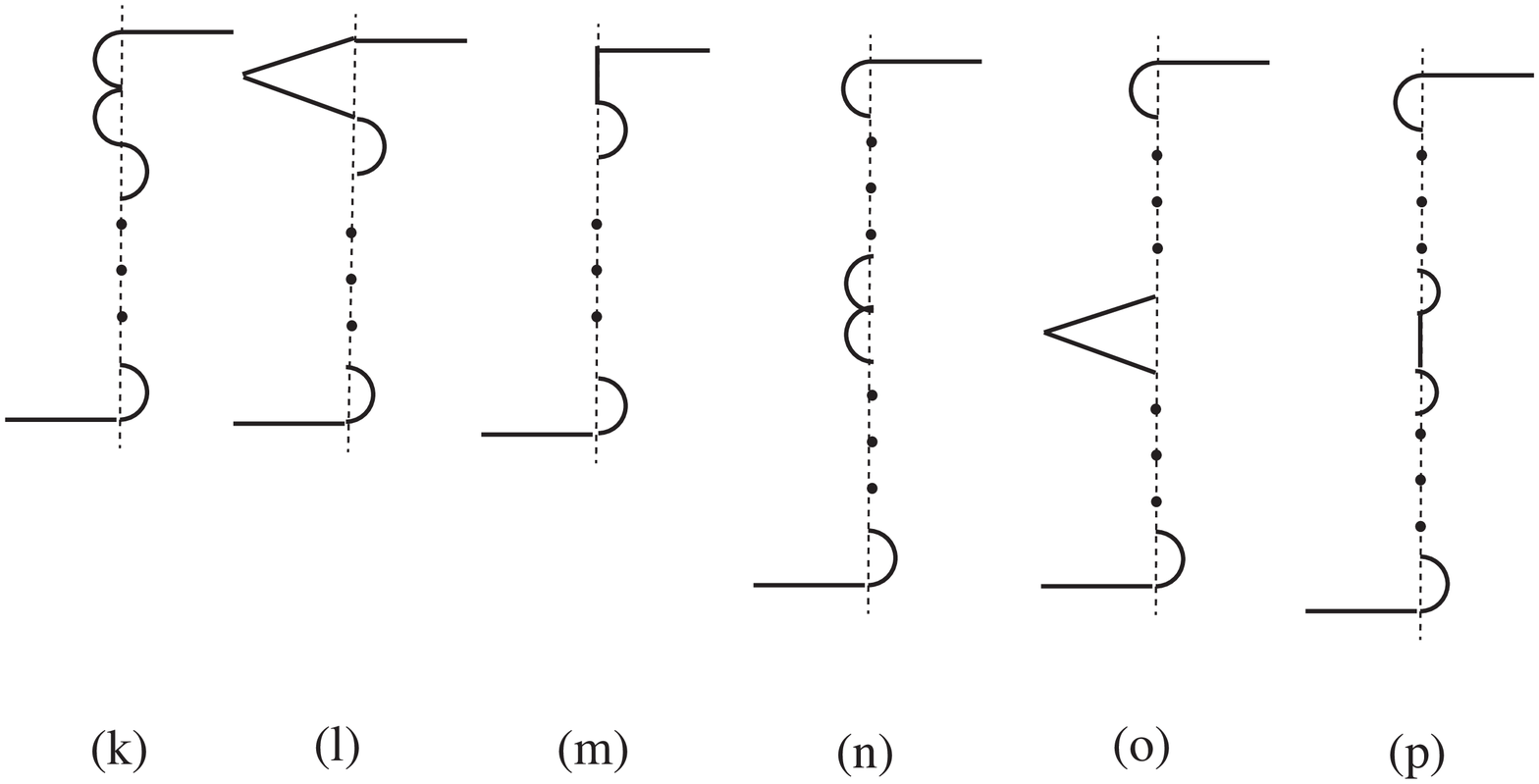}
\caption{$d s^{k,12}_{c_2}$}
\label{hitthes12}
\end{figure}
When $d$ is applied to $s^{m_i,12}_{c_2,c_2}$,
$i < \ell$, 
it gives the terms depicted by (n)(o)(p) of Figure \ref{hitthes23}.
Note that (p) appears only when $m_i =1$.
\par
Now we observe that (b) and (g), (d) and (m), 
(e)+(h) and (p), (k)+(n) and (j) 
cancel each other.
\par
(a) + (l) + (o) becomes 
$
\sum_{k_1+k_2 = k} s^{k_1,13}_{c_1}\circ t^{k_2,13}_{c_1,c_3}
$
and 
(c) + (f) + (i) becomes 
$
\sum_{k_1+k_2 = k} t^{k_1,13}_{c_1,c_3} \circ s^{k_2,13}_{c_3}.
$
We can check the sign easily. We thus verified 
(\ref{form128}).
We can verify (\ref{form129}) in a similar way.
\par
It is easy to calculate the energy loss of 
$(\{t^{k,13}_{c_1,c_3}\},\{t^{k,13}_{c_3,c_1},\{s^{k,13}_{c_1}\},
\{s^{k,13}_{c_3}\})$ and complete the proof of Theorem \ref{cong106}.
\end{proof}
\begin{proof}[Proof of Theorem \ref{idometric}.]
The proof is based on the obstruction theory 
and uses a construction of \cite{fooobook}.
Let $\mathscr C$ be a unital completed DG-category.
\begin{defn}\label{defn122888}
A {\it model of $\mathscr C \times[0,1]$} is a 
unital completed DG-category $\mathfrak{C}$ with the following properties.
(See \cite[Definition 4.2.1]{fooobook}.)
\begin{enumerate}
\item $\frak{OB}(\mathscr C) = \frak{OB}(\mathfrak C)$.
\item
There exist linear and unital morphisms of 
completed DG-categories
${\rm Eval}_{s=0} : \mathfrak C \to \mathscr C$,
${\rm Eval}_{s=1} : \mathfrak C \to \mathscr C$
and 
${\rm Incl} : \mathscr C \to  \mathfrak C$.
They are the identity maps for objects and preserve the finite parts.
\item
${\rm Eval}_{s=0}\circ {\rm Incl} = {\rm Eval}_{s=0}\circ {\rm Incl} 
= {\rm Identity}$.
\item
The linear parts of ${\rm Eval}_{s=0}$ and ${\rm Eval}_{s=1}$
induce cochain  homotopy equivalences 
$
\mathfrak C(c,c') \to \mathscr C(c,c')
$
for $c,c' \in \frak{OB}(\mathscr C)$. It preserves the finite parts.
\item
The map 
$
({\rm Eval}_{s=0})_1 \oplus
({\rm Eval}_{s=1})_1
: \mathfrak(a,b)
\to \mathscr C(c,c') \oplus \mathscr C(c,c')
$
is surjective. The restriction to the finite part is also surjective.
\end{enumerate}
\end{defn}
(Actually we do not use ${\rm Incl}$ in this paper.)
A model of $\mathscr C \times[0,1]$ always exists.
This is proved in the case of filtered $A_{\infty}$
algebra in \cite[Subsection 4.2.1]{fooobook}.
The construction there can be adapted to our case 
of unital completed filtered DG-category 
easily.
\par
We next modify Maurer-Cartan equation a bit.
Let $s^k_{c}$, $s^k_{c'}$, $t^k_{c,c'}$, $t^k_{c',c}$,  be a solution of Maurer-Cartan 
equation (\ref{form128}),  (\ref{form129}) 
with energy loss $(\epsilon_1,\epsilon_2)$.
We take 
$$
S^k_{c} = T^{k\epsilon}s^k_c,
\quad S^k_{c'} = T^{k\epsilon}s^k_{c'}
\qquad
T^k_{c,c'} = T^{\epsilon_1 + (k-1)\epsilon}t^k_{c,c'},
\qquad
T^k_{c',c} = T^{\epsilon_2 + (k-1)\epsilon}t^k_{c',c}.
$$
They satisfy:
\begin{equation}\label{form1282}
\left\{
\aligned
&d(T^k_{c,c'}) - 
\sum_{\ell=1}^{k-1} S^{\ell}_{c} \circ T^{k-\ell}_{c,c'}
+ \sum_{\ell=1}^{k-1} T^{k-\ell}_{c,c'} \circ  S^{\ell}_{c'} = 0 \\
&d(T^k_{ba}) - 
\sum_{\ell=1}^{k-1} S^{\ell}_{c'} \circ T^{k-\ell}_{c',c}
+ \sum_{\ell=1}^{k-1}  T^{k-\ell}_{c',c} \circ S^{\ell}_{c} = 0
\endaligned
\right.
\end{equation}
\begin{equation}\label{form1292}
\left\{
\aligned
&d(S^k_{c})  -
\sum_{\ell=1}^{k-1} S^{\ell}_{c} \circ S^{k-\ell}_{c}
+ \sum_{\ell=1}^{k} T^{1+k-\ell}_{c,c'} \circ T^{\ell}_{c',c} = 
\begin{cases} 0 &\text{$k\ne 1$}\\
T^{\epsilon}{\bf e}_{c} &\text{$k =  1$}
\end{cases}
\\
&d(S^k_{c'}) - 
\sum_{\ell=1}^{k-1} S^{\ell}_{c'} \circ S^{k-\ell}_{c'}
+ \sum_{\ell=1}^{k} T^{1+k-\ell}_{c',c} \circ T^{\ell}_{c,c'} = 
\begin{cases} 0 &\text{$k\ne 1$}\\
T^{\epsilon}{\bf e}_{c'} &\text{$k = 1$}
\end{cases}
\endaligned
\right.
\end{equation}
Moreover $S^k_{c},S^k_{c'}, T^k_{c,c'},T^k_{c',c}$
are contained in $\mathscr C(c,c)$, 
$\mathscr C(c',c')$, 
$\mathscr C(c,c')$, 
$\mathscr C(c',c)$,
respectively.
\par
When $(S^k_{c},S^k_{c'}, T^k_{c,c'},T^k_{c',c})$
is a solution of (\ref{form1282}),(\ref{form1292}) 
then, for $\delta > 0$, 
$(T^{2k\delta}S^k_{c}$, $T^{2k\delta}S^k_{c'}$, $T^{(2k-1)\delta}T^k_{c,c'}$,
$T^{(2k-1)\delta}T^k_{c',c})$
is a solution with $\epsilon$ replaced by $\epsilon + 2\delta$.
In case we can take $\delta$ as small as we want 
we say `raising the energy level arbitrary small amount'
to describe this process.
\par
Now we start the proof.
Let $c,c' \in \frak{OB}(\mathscr C)$.
The inequality 
$
d_{\rm Hof,\infty}^{\mathscr C_1}(c,c') \ge d_{\rm Hof,\infty}^{\mathscr C}(c,c')
$
is obvious. Here $d_{\rm Hof,\infty}^{\mathscr C_1}$,
$d_{\rm Hof,\infty}^{\mathscr C}$ 
(resp. $d_{\rm Hof,\infty}^{\mathscr C}$) is the Hofer infinite distance of 
 $\mathscr C_1$ (resp. $\mathscr C$).
We will prove the opposite inequality.
Suppose there exists an infinite homotopy equivalence between $c$ and $c'$ with 
energy loss
$\epsilon$.
Then we have a solution $S^k_{c},S^k_{c'}, T^k_{c,c'},T^k_{c',c}$
of (\ref{form1282}),(\ref{form1292}).
\par
We will construct a solution 
$\hat S^k_{c},\hat S^k_{c'}, \hat T^k_{c,c'},\hat T^k_{c',c}$ 
in $\mathscr C$ of 
(\ref{form1282}),(\ref{form1292}) with $\epsilon$ replaced by 
$\epsilon + 2\delta_K$  for $k=1,\dots,K$
by induction on $K$. 

\begin{enumerate}
\item[(a)] We require
${\rm Eval}_{s=1}(\hat S^k_{c}) =  T^{2k\delta_K}S^k_{c}$, 
${\rm Eval}_{s=1}(\hat S^k_{c'}) =  T^{2k\delta_K}S^k_{c'}$,
${\rm Eval}_{s=1}(\hat T^k_{c,c'}) =  T^{2(k-1)\delta_K}T^k_{c,c'}$,
and
${\rm Eval}_{s=1}(\hat T^k_{c',c}) =  T^{2(k-1)\delta_K}T^k_{c',c}$.
\item[(b)] We require
${\rm Eval}_{s=0}(\hat S^k_{c}) \in \mathscr C_1(c,c)$, 
${\rm Eval}_{s=0}(\hat S^k_{c'}) \in \mathscr C_1(c',c')$,
${\rm Eval}_{s=0}(\hat T^k_{c,c'}) \in \mathscr C_1(c,c')$,
and
${\rm Ev}_{s=0}(\hat T^k_{c',c}) \in \mathscr C_1(c',c)$.
\end{enumerate}
The construction is by induction on $K$.
There are two inductive steps.
The first is: 
given $\hat S^k_{c},\hat S^k_{c'}, \hat T^k_{c,c'},\hat T^k_{c',c}$ 
for $k<K$ 
we construct  $\hat T^K_{c,c'},\hat T^K_{c',c}$
by raising the energy level arbitrary small amount.
The second is: 
given $\hat S^k_{c},\hat S^k_{c'}$ for $k < K$ 
and $\hat T^k_{c,c'},\hat T^k_{c',c}$  for $k\le K$
we construct $\hat S^K_{c},\hat S^K_{c'}$
by raising the energy level arbitrary small amount.
\par
We describe the first inductive step.
The second is entirely similar.
\par
We put $\frak s^k_c: = {\rm Eval}_{s=0}(\hat S^k_{c})$
etc.
We consider
$$
OB = \sum_{\ell=1}^{K-1} \hat S^{\ell}_{c} \circ \hat T^{K-\ell}_{c,c'}
- \sum_{\ell=1}^{K-1} \hat T^{K-\ell}_{c,c'} \circ  \hat S^{\ell}_{c'}
$$
This is a cocycle and 
${\rm Eval}_{s=1}(OB) = \frak M_1(T^K_{c,c'})$.
Since ${\rm Eval}_{s=1}$ is surjective and 
cochain homotopy equivalence, we can show that 
there exists $\hat T^{K,\prime}_{c,c'} \in \frak C(c,c')$
such that  
$
\frak M_1(\hat T^{K,\prime}_{c,c'}) = OB$,
$
{\rm Eval}_{s=1}(\hat T^{K,\prime}_{c,c'}) =  T^{K,\prime}_{c,c'}.
$
We have
\begin{equation}\label{form1216}
\frak M_1({\rm Eval}_{s=0}(\hat T^{K,\prime}_{c,c'}))
= 
\sum_{\ell=1}^{K-1} \frak s^{\ell}_{c} \circ \frak t^{K-\ell}_{c,c'}
- \sum_{\ell=1}^{K-1} \frak t^{K-\ell}_{c,c'} \circ  \frak s^{\ell}_{c'}.
\end{equation}
We denote the right hand side by $\frak{ob}$.
It is a cycle in $\mathscr C_1(c,c')$. 
(\ref{form1216}) implies that $[\frak{ob}]$ is zero 
in $H(\mathscr C(c,c'))$.
Therefore by Lemma \ref{lemma217}
(the almost injectivity)
we may assume that $[\frak{ob}]$ is zero 
in $H(\mathscr C_1(c,c'))$ by raising the energy level arbitrary small amount.
Therefore we have
$\frak t^{K,\prime}_{c,c'} \in \mathscr C_1(c,c')$
such that 
$
\frak M_1(\frak t^{K,\prime}_{c,c'}) = \frak{ob}.
$
We consider 
$
{\rm Eval}_{s=0}(\hat T^{K,\prime}_{c,c'})
- \frak t^{K,\prime}_{c,c'}.
$
This is a cycle and represents an element 
of $H(\mathscr C(c,c'))$.
Therefore by Lemma \ref{lemma217}
(the almost surjectivity) we may 
change $\frak t^{K,\prime}_{c,c'}$ 
by a cycle in $\mathscr C_1(c,c')$
to $\frak t^{K}_{c,c'}$
such that
$
{\rm Eval}_{s=0}(\hat T^{K,\prime}_{c,c'})
- \frak t^{K}_{c,c'}
$
is a coboundary $\frak M_1({\rm C})$,
by raising the energy level arbitrary small amount.
By Definition \ref{defn122888} (5), 
there exists $\hat{\rm C} \in \mathfrak C_1(c,c')$
such that
$
{\rm Eval}_{s=0}(\hat{\rm C}) = C,
$
$
{\rm Eval}_{s=1}(\hat{\rm C}) = 0.
$
Now we change $\hat T^{K,\prime}_{c,c'}$ 
to
$
\hat T^{K}_{ab} = \hat T^{K,\prime}_{ab} - \frak M_1(\hat{\rm C}). 
$
Then
$\hat T^{K}_{c,c'}$ has the required properties.
\par
Thus by induction we obtain 
a solution $\frak s^k_{c},\frak s^k_{c'}, \frak t^k_{c,c'},\frak t^k_{c',c}$
of (\ref{form1282}),(\ref{form1292}) 
on $\frak C_1$ by raising the energy level arbitrary small amount.
The proof of Theorem  \ref{idometric} is complete.
\end{proof}

\section{Equivalence of inductive limit.}
\label{sec;equilim}

Let $\mathcal K$ be an index set.
We consider two $\mathcal K$ parametrized 
families of sequences 
$\frak c_{k,(i)} = (c^1_{k,(i)},c^2_{k,(i)},\dots)$ of objects of ${\mathscr C}$ for
$k \in \mathcal K$, $i=1,2$
such that
\begin{equation}\label{newform22rev}
d_{\rm Hof,\infty}(c^n_{k,(i)},c^{n+1}_{k,(i)})   <   \epsilon_n,
\qquad
\sum_{n=1}^{\infty} \epsilon_n   <   \infty.
\end{equation}
We put $\frak C_{\mathcal K,(i)} = \{\frak c_{k,(i)} \mid k \in \mathcal K\}$
and $\frak C^n_{\mathcal K,(i)} = \{c^n_{k,(i)} \mid k \in \mathcal K\}$.
By Theorem \ref{GrokaraIndunit} we obtain 
a $\#$-unital inductive system of 
unital filtered $A_{\infty}$ categories
$(\{\mathscr C(\frak C^n_{\mathcal K,(i)})\},\{\Phi^{n}_{(i)}\})$.
By Proposition \ref{prop101} and Definition \ref{defn9696666}, 
we obtain a homotopically unital
completed DG-categories 
as the inductive limits of these inductive systems.
We denote them by ${\mathscr C}^{\infty}(\frak C_{\mathcal K,(i)}):
= \varinjlim\, \mathscr C(\frak C^n_{\mathcal K,(i)})$
and denote the unitalization of 
${\mathscr C}^{\infty}(\frak C_{\mathcal K,(i)})$
by
${\mathscr C}^{\infty}(\frak C_{\mathcal K,(i)})^+ =
(\varinjlim\, \mathscr C(\frak C^n_{\mathcal K,(i)}))^+$.
We use reduced version of Bar-Co-Bar resolution.

\begin{thm}\label{thm131111}
In the above situation we have the following two inequalities.
\begin{equation}
\left\{
\aligned
d_{{\rm Hof},\infty}({\mathscr C}^{\infty}(\frak C_{\mathcal K,(1)})^+ ,
{\mathscr C}^{\infty}(\frak C_{\mathcal K,(2)})^+)
&\le
\liminf_{n\to \infty}
d_{{\rm H},\infty}(\frak C^n_{\mathcal K,(1)},\frak C^n_{\mathcal K,(2)}),
\\
d_{{\rm Hof}}({\mathscr C}^{\infty}(\frak C_{\mathcal K,(1)})^+ ,
{\mathscr C}^{\infty}(\frak C_{\mathcal K,(2)})^+) 
&\le
\liminf_{n\to \infty}
d_{\rm H}(\frak C^n_{\mathcal K,(1)},\frak C^n_{\mathcal K,(2)})).
\endaligned
\right.
\end{equation}
\end{thm}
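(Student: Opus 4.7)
The plan is to reduce the theorem to Lemma \ref{1044lm} together with its $d_{\rm Hof,\infty}$-analogue, by constructing a common ambient inductive system into which both of the given systems strictly embed. This is needed because, as warned in the discussion surrounding Lemma \ref{1044lm}, mere homotopy commutativity of (\ref{equality103}) would be insufficient to transfer Hausdorff-type bounds to the inductive limits.

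First, form the disjoint union $\frak C^n_{\mathcal K,(0)} := \frak C^n_{\mathcal K,(1)} \sqcup \frak C^n_{\mathcal K,(2)}$, indexed by $\mathcal K \sqcup \mathcal K$, and consider the full subcategory $\mathscr C(\frak C^n_{\mathcal K,(0)})$ of $\mathscr C$. For each element of $\frak C^n_{\mathcal K,(0)}$, hypothesis (\ref{newform22rev}) supplies an infinite homotopy equivalence of energy loss $\epsilon_n$ to the corresponding element of $\frak C^{n+1}_{\mathcal K,(0)}$. Choose these data globally so that their restrictions to $\frak C^n_{\mathcal K,(i)}$ coincide with the data already used to define $\Phi^n_{(i)}$ via Lemma \ref{Lemma3434unit}. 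Substituting them into the explicit formula (\ref{form126}) produces a $\#$-homotopy unital filtered $A_{\infty}$ functor
\begin{equation*}
\Phi^n_{(0)} : \mathscr C(\frak C^n_{\mathcal K,(0)}) \longrightarrow \mathscr C(\frak C^{n+1}_{\mathcal K,(0)})
\end{equation*}
of energy loss $\epsilon_n$. Because (\ref{form126}) is built pairwise from the infinite homotopy equivalence data, and because the morphism complexes of $\mathscr C(\frak C^n_{\mathcal K,(i)})$ are preserved by the $A_{\infty}$ operations of $\mathscr C$, the restriction of $\Phi^n_{(0)}$ to the full subcategory $\mathscr C(\frak C^n_{\mathcal K,(i)})$ agrees \emph{on the nose} with $\Phi^n_{(i)}$. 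Denoting by $\Psi^n_i$ the tautological fully faithful strict unital embedding of energy loss $0$, one obtains the strict equality $\Phi^n_{(0)} \circ \Psi^n_i = \Psi^{n+1}_i \circ \Phi^n_{(i)}$ for $i=1,2$, which is precisely hypothesis (\ref{equality103}).

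The second inequality now follows by applying Lemma \ref{1044lm} to the resulting three $\#$-homotopically unital inductive systems, and then invoking Lemma \ref{lem1130} to compare Hofer distances inside the common ambient limit $\varinjlim \mathscr C(\frak C^n_{\mathcal K,(0)})^+$ in which $\varinjlim \mathscr C(\frak C^n_{\mathcal K,(i)})^+$ sit via the $\Psi^{\infty}_i$ of Lemma \ref{lem12555}. For the first inequality, I first establish a $d_{\rm Hof,\infty}$-analogue of Lemma \ref{lem1130}: given an infinite homotopy equivalence in $\mathscr C^n$ of energy loss $\rho+\epsilon$, shift $s^k_\bullet$ by ${\bf f}_\bullet$-terms to land in the unitalization $(\mathscr C^n)^+$, push forward through the $\#$-unital $\frak I^+$ of Lemma \ref{Anounitality} and then through $A^{\rm red}\Phi^{\infty,n}$, using Lemma \ref{lem113} at each step; the total additional energy loss is twice the loss of $A^{\rm red}\Phi^{\infty,n}$ on the finite-degree Maurer-Cartan data, which is arbitrarily small for large $n$. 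Combined with the $\Psi^{\infty}_i$-analogue of Lemma \ref{lem12555} together with Theorem \ref{idometric}, which promotes an almost homotopy equivalent injection to an \emph{isometry} for $d_{\rm Hof,\infty}$, this converts the Hausdorff-type bound into the desired inequality.

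The main obstacle is arranging the strict commutativity $\Phi^n_{(0)} \circ \Psi^n_i = \Psi^{n+1}_i \circ \Phi^n_{(i)}$. The point that saves us is that the construction (\ref{form126}) is purely local in the objects and takes the infinite homotopy equivalence data as input; a uniform choice of those data over $\frak C^n_{\mathcal K,(0)}$ automatically produces strict compatibility. Once this is in place, the remaining work is bookkeeping of energy losses using the machinery already established in Sections \ref{sec;indhomoequi} and \ref{Hofinfhomoto}.
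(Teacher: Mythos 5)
Your proposal is correct and follows essentially the same route as the paper's own proof: form the disjoint-union ambient system $\mathscr C^n_{(0)}$, arrange the infinite homotopy equivalence data globally so that the exact commutativity $\Phi^n_{(0)}\circ\Psi^n_i=\Psi^{n+1}_i\circ\Phi^n_{(i)}$ holds, then invoke Lemma~\ref{1044lm} for the $d_{\rm GH}$ bound and argue via $\Upsilon^{\infty,n}$-type functors together with Lemma~\ref{lem113} for the $d_{\rm GH,\infty}$ bound. The paper phrases the first-inequality step directly through the unital functor $(\Upsilon^{\infty,n})^+$ and Lemma~\ref{lem113} rather than extracting a standalone $d_{\rm Hof,\infty}$-analogue of Lemma~\ref{lem1130}, but the content is the same; your invocation of Theorem~\ref{idometric} is harmless but not strictly necessary for the upper bound.
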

\begin{proof}
We first construct another unital inductive system 
of filtered $A_{\infty}$ categories 
$(\{\mathscr C^n_{(0)})\},\{\Phi^{n}_{(0)}\})$.
Note that to define completed unital 
DG categories ${\mathscr C}^{\infty}(\frak C_{\mathcal K,(1)})^+$,  
${\mathscr C}^{\infty}(\frak C_{\mathcal K,(2)})^+$, 
we defined and used 
$(\{\mathscr C(\frak C^n_{\mathcal K,(i)})\},\{\Phi^{n}_{(i)}\})$
for $i=1,2$.
We made two choices to define it.
\begin{enumerate}
\item[$(*)$]
We take an infinite homotopy equivalence between $c_{k,(i)}^n$ and $c_{k,(i)}^{n+1}$
for each $n$ and $k$. (We used it to obtain $\Phi_{(i)}^n$ by Lemma \ref{Lemma3434unit}.)
\item[$(**)$]
We also take a splitting (\ref{unitsplitting}) to 
obtain a unital filtered $A_{\infty}$ functor from 
a $\#$-homotopical unital filtered $A_{\infty}$ functor.
\end{enumerate}
We fix these two choices for $i=1,2$.
We put $\mathscr C_{(0)}^n = {\mathscr C}^{(n)}(\frak{OB}(\mathscr C_{(1)}^n)\sqcup 
\frak{OB}(\mathscr C_{(2)}^n))$.
Note that we take {\it disjoint} union $\sqcup$ and not the union.
In other words in the case when 
$\frak{OB}(\mathscr C_{(1)}^n) \cap \frak{OB}(\mathscr C_{(2)}^n) \ne \emptyset$
we still regard the elements of the intersection  as two {\it different} elements.
\par
Now the choices $(*)(**)$ made for  $i=1,2$ induce the choices $(*)(**)$
for $i=0$.
Using them we obtain a unital inductive system of filtered $A_{\infty}$
categories 
$(\{\mathscr C^n_{(0)})\},\{\Phi^{n}_{(0)}\})$.
\par
Because of our choice 
we obtain a fully faithful embeddings of unital filtered $A_{\infty}$ categories
$
\Psi_{i}^n : \mathscr C(\frak C^n_{\mathcal K,(i)}) \to \mathscr C_{(0)}^n
$
with energy loss $0$ for $i = 1,2$, such that 
\begin{equation}\label{equality103rev}
\Phi_{(0)}^n\circ  \Psi_{i}^n = \Psi_{i}^{n+1} \circ \Phi_{(i)}^n.
\end{equation}
for $i=1,2$. We emphasis that this is an exact equality.
Therefore the second inequality is a consequence of Lemma \ref{1044lm}.
\par
The proof of the first inequality is mostly the same.
During the proof of Lemma \ref{1044lm} we obtained 
${\mathscr C}^{\infty}(\frak C_{\mathcal K,(0)})^+: = \varinjlim\,\mathscr C_{(0)}^n$
and an almost homotopy equivalent injection
to this completed DG-category from ${\mathscr C}^{\infty}(\frak C_{\mathcal K,(i)})^+$, 
for $i=1,2$ (Lemma \ref{lem12555}).
It suffices to estimate 
$d_{{\rm Hof},\infty}(\frak c_{k,(1)},\frak c_{k,(2)})$.
Here $\frak c_{k,(1)},\frak c_{k,(2)}$ are regarded as 
objects of ${\mathscr C}^{\infty}(\frak C_{\mathcal K,(0)})^+$.
\par
By construction there exists a unital filtered 
$A_{\infty}$ functor $(\Upsilon^{\infty,n})^+ :\mathscr C_{(0)}^n
\to {\mathscr C}^{\infty}(\frak C_{\mathcal K,(0)})^+$
of energy loss $\epsilon'_n$ such that
$\lim_{n\to \infty} \epsilon'_n = 0$.
Let $\rho_n = d_{{\rm H},\infty}(\frak C^n_{\mathcal K,(1)},\frak C^n_{\mathcal K,(2)})$
Note $\Upsilon^{\infty,n}_{\rm ob}(c^n_{\mathcal K,(i)}) = \frak c_{k,(i)}$.
Therefore by Lemma \ref{lem113} we have
$
d_{{\rm Hof},\infty}(\frak c_{k,(1)},\frak c_{k,(2)})
\le
\rho_n + \epsilon'_n.
$
Since $\epsilon'_n$ is arbitrary small the first inequality holds.
\end{proof}

We remark that the above proof implies the next:
\begin{prop}\label{proposi142}
The limit completed DG-category 
${\mathscr C}^{\infty}(\frak C_{\mathcal K,(i)})^+$
is independent of the choices $(*)$, $(**)$ up to equivalence.
\end{prop}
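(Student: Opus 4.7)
The plan is to deduce the statement from Theorem~\ref{thm131111} by applying it to a pair of inductive systems constructed from the \emph{same} underlying sequences of objects but equipped with two a priori different sets of choices. Fix $\frak C_{\mathcal K}$ as in (\ref{newform22rev}) and suppose we are given two choices $(*)_a, (**)_a$, $a = 1,2$, of infinite homotopy equivalences (at every level $n$ and every $k$) and of splittings of the form (\ref{unitsplitting}). By Theorem~\ref{GrokaraIndunit}, each choice produces a $\#$-unital inductive system $(\{\mathscr C(\frak C^n_{\mathcal K})\}, \{\Phi^n_{(a)}\})$; after passing to unitalizations as in Proposition~\ref{prop101} and Definition~\ref{defn9696666} this yields a unital completed DG-category ${\mathscr C}^\infty_{(a)}$. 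I must show that ${\mathscr C}^\infty_{(1)}$ is equivalent to ${\mathscr C}^\infty_{(2)}$ in the sense of Definition~\ref{inftywaekGH}.

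The first step will be to mimic the construction at the beginning of the proof of Theorem~\ref{thm131111}. I form the disjoint-union auxiliary system
\[
\mathscr C^n_{(0)} := \mathscr C^{(n)}\bigl(\frak{OB}(\mathscr C^n_{(1)}) \sqcup \frak{OB}(\mathscr C^n_{(2)})\bigr),
\]
in which the two marked copies of each object $c^n_k$ are kept distinct, and induce choices $(*)_0, (**)_0$ on this disjoint union by using $(*)_a, (**)_a$ on the $a$-th marked copy. As in the proof of Theorem~\ref{thm131111}, this setup produces fully faithful embeddings $\Psi^n_a : \mathscr C(\frak C^n_{\mathcal K}) \to \mathscr C^n_{(0)}$ of energy loss $0$ together with the strict commutativity (\ref{equality103rev}).

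The second step is the key observation: the two marked copies of any given $c^n_k$ are images of one and the same object of $\mathscr C$, so the identity morphism of $c^n_k$ provides an infinite homotopy equivalence between them of energy loss $0$. Consequently $d_{\rm H,\infty}(\frak C^n_{\mathcal K,(1)}, \frak C^n_{\mathcal K,(2)}) = 0$ for every $n$. Applying Theorem~\ref{thm131111} in this situation then gives
\[
d_{\rm Hof,\infty}({\mathscr C}^\infty_{(1)}, {\mathscr C}^\infty_{(2)}) \;\le\; \liminf_{n\to\infty} d_{\rm H,\infty}(\frak C^n_{\mathcal K,(1)}, \frak C^n_{\mathcal K,(2)}) \;=\; 0,
\]
which by Definition~\ref{inftywaekGH} is precisely the equivalence asserted.

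The only point that deserves attention is the construction of $(*)_0, (**)_0$ extending both $(*)_a, (**)_a$ in a manner that yields the \emph{strict} equality (\ref{equality103rev}) rather than mere homotopy commutativity; otherwise the mechanism of Lemma~\ref{1044lm} (invoked inside Theorem~\ref{thm131111}) would not apply, in accordance with the caveat in Remark before Proposition~\ref{proposi142}. However, because the union is disjoint, no compatibility between $(*)_1$ and $(*)_2$ (or between $(**)_1$ and $(**)_2$) is required: on each marked copy one simply installs the corresponding choice, and the exact equality (\ref{equality103rev}) is then automatic. The rest of the argument is a direct invocation of Theorem~\ref{thm131111}.
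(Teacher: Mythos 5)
Your proof is correct and follows the paper's intended line of argument. The paper's own ``proof'' is the single sentence preceding the statement (``the above proof implies the next''), which refers to the proof of Theorem~\ref{thm131111}; your write-up unpacks exactly what that remark means: apply Theorem~\ref{thm131111} with $\frak c_{k,(1)} = \frak c_{k,(2)} = \frak c_k$ but differing choices $(*)_a,(**)_a$, form the disjoint-union system $\mathscr C^n_{(0)}$ on which each marked copy carries its own choice so that the exact equalities (\ref{equality103rev}) hold automatically, and observe that the two marked copies of $c^n_k$ are the same object of $\mathscr C$, so the strict unit gives an infinite homotopy equivalence of energy loss $0$ and hence $d_{\rm H,\infty}(\frak C^n_{\mathcal K,(1)},\frak C^n_{\mathcal K,(2)})=0$ for all $n$. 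Your concluding step --- deducing equivalence from the vanishing of the right-hand side --- is valid, but it is worth noting that this is not a formal consequence of the inequality alone; it uses that the proof of Theorem~\ref{thm131111} (via Lemmas~\ref{1044lm} and~\ref{lem12555}) actually produces the auxiliary category $\varinjlim\mathscr C^n_{(0)}$ with almost homotopy equivalent injections from both limits and with zero Hausdorff distance between their object sets, which is precisely what Definition~\ref{inftywaekGH} requires.
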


\begin{cor}\label{cor143434}
We consider a $\mathcal K$ parametrized 
family of sequences 
$\frak c_{k} = (c^1_{k},c^2_{k},\dots)$ of objects of ${\mathscr C}$ for
$k \in \mathcal K$ such that
$
d_{\rm Hof,\infty}(c^n_{k},c^{n+1}_{k})   <   \epsilon_n,
$
$
\sum_{n=1}^{\infty} \epsilon_n   <   \infty.
$
We put $\frak C^n_{\mathcal K} = \{c^n_{k} \mid k \in \mathcal K\}$.
Then the sequence $\mathscr C(\frak C^n_{\mathcal K})^+$, $n=1,2,\dots$ converges to 
$\varinjlim\,\mathscr C(\frak C^n_{\mathcal K})^+$ in Gromov-Hausdorff infinite
distance.
\end{cor}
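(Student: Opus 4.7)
The plan is to deduce the corollary from Theorem \ref{thm131111} by comparing two carefully chosen $\mathcal K$-parametrized inductive systems built out of the given data. Fix $n$; my goal is to estimate $d_{\rm GH,\infty}\bigl(\mathscr C(\frak C^n_{\mathcal K})^+,\varinjlim_m \mathscr C(\frak C^m_{\mathcal K})^+\bigr)$ by $\sum_{j\ge n}\epsilon_j$, which tends to $0$ and therefore yields the claimed convergence.

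First I would set
$$
\frak c_{k,(1)}:=(c^n_k,c^n_k,c^n_k,\dots),\qquad \frak c_{k,(2)}:=(c^n_k,c^{n+1}_k,c^{n+2}_k,\dots).
$$
The constant family $(1)$ satisfies (\ref{newform22rev}) trivially, taking strict units as the infinite homotopy equivalences between $c^n_k$ and itself. The shifted family $(2)$ inherits its Cauchy data from the given sequence: its $m$-th gap is bounded by $\epsilon_{n+m-1}$, whose total sum is $\sum_{j\ge n}\epsilon_j<\infty$. Hence both inductive limits $\mathscr C^{\infty}(\frak C_{\mathcal K,(i)})^+$ are defined.

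Next I would identify the two limits. For family $(2)$, cofinality of a tail in the full Cauchy sequence together with Proposition \ref{proposi142} gives an equivalence $\mathscr C^{\infty}(\frak C_{\mathcal K,(2)})^+ \sim \varinjlim_m \mathscr C(\frak C^m_{\mathcal K})^+$ in the sense of Definition \ref{inftywaekGH}. For family $(1)$, after the auxiliary choices $(*)$ and $(**)$ of the proof of Theorem \ref{thm131111} are specialized to the trivial ones, the inductive system consists of identity functors; the reduced Bar--Co-Bar inductive limit is then identified with the original category via the homotopy equivalence $\mathscr C(\frak C^n_{\mathcal K}) \to A^{\rm red}B\mathscr C(\frak C^n_{\mathcal K})$ of Theorem \ref{theorem63}, lifted to unitalizations via Lemma \ref{Anounitality}. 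Applying Theorem \ref{thm131111} and the triangle inequality for $d_{\rm Hof,\infty}$ (Theorem \ref{cong106}),
$$
d_{{\rm H},\infty}\bigl(\frak C^m_{\mathcal K,(1)},\frak C^m_{\mathcal K,(2)}\bigr)
\ =\ \sup_k d_{\rm Hof,\infty}(c^n_k,c^{n+m-1}_k)
\ \le\ \sum_{j=n}^{n+m-2}\epsilon_j
\ \le\ \sum_{j=n}^{\infty}\epsilon_j,
$$
so the liminf in Theorem \ref{thm131111} is at most $\sum_{j\ge n}\epsilon_j$, whence the desired bound.

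The main obstacle I expect is the identification of the trivial-inductive-limit with the original category at the level of \emph{unital} completed DG-categories. The splitting (\ref{unitsplitting}) required by the choice $(**)$ (passing from $\#$-homotopical unitality to strict unitality) is not canonical, and one must invoke Proposition \ref{proposi142} to conclude that the equivalence class of $\mathscr C^{\infty}(\frak C_{\mathcal K,(1)})^+$ is nevertheless independent of that choice. The cofinality argument for family $(2)$, by contrast, is essentially formal.
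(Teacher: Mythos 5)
Your proof is essentially correct and follows the same basic strategy as the paper: apply Theorem \ref{thm131111} to the full inductive system and a degenerate modification, then let the cutoff index go to infinity. The paper's implementation is slightly more economical. It sets $c^n_{k,(1)} = c^n_k$ (the unmodified sequence, so its limit is literally $\varinjlim \mathscr C(\frak C^n_{\mathcal K})^+$ with no identification needed) and $c^n_{k,(2)} = c^n_k$ for $n \le m$, $c^n_{k,(2)} = c^m_k$ for $n > m$ (frozen at level $m$, so its limit is equivalent to $\mathscr C(\frak C^m_{\mathcal K})^+$). This yields directly
$$
d_{\rm GH,\infty}\bigl(\varinjlim\,\mathscr C(\frak C^n_{\mathcal K})^+, \mathscr C(\frak C^m_{\mathcal K})^+\bigr) \le \liminf_{n\to\infty} d_{\rm H,\infty}(\frak C^n_{\mathcal K}, \frak C^m_{\mathcal K}) \le \sum_{j\ge m}\epsilon_j \to 0.
$$
Your version truncates at the other end (constant family versus shifted tail), which gives the same bound but requires two identifications instead of one: that the constant system's limit recovers $\mathscr C(\frak C^n_{\mathcal K})^+$, \emph{and} that the tail's limit agrees with the full $\varinjlim$. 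The second (cofinality) is not really a consequence of Proposition \ref{proposi142}: that proposition concerns independence of the choices $(*)(**)$, not of the starting index. It is, however, immediate from the way the DFDGC inductive limit is defined (a colimit over $\mathbb{N}$ is unchanged by passing to a cofinal tail, provided one keeps the same transition functors $\Phi^j$). The identification of the constant system's limit with $\mathscr C(\frak C^n_{\mathcal K})^+$ is the same subtlety the paper glosses over for its frozen system, so it is fair to flag it; the correct references are the lemma in Section \ref{sec;reduced} giving the homotopy equivalence $\mathscr C \to A^{\rm red}B\mathscr C$ (not Theorem \ref{theorem63}, which is the cochain-level statement for the genuinely noncompact limit) together with Lemma \ref{Anounitality} for the lift to unitalizations. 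With these small corrections to the citations, your argument goes through.
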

\begin{proof}
We put $c^n_{k,(2)} = c^m_{k,(2)}$ for $n>m$ 
and 
$
c^n_{k,(2)} = 
c^n_{k,(2)}$  for $n\le m$. 
We also put
$c^n_{k,(1)}  = c^n_{k}$.
Then Theorem \ref{thm131111} implies
\begin{equation}
d_{{\rm Hof}}(\varinjlim\,\mathscr C(\frak C^n_{\mathcal K})^+,
\mathscr C(\frak C^m_{\mathcal K})^+)
\le
\liminf_{n\to \infty}
 d_{H,\infty}(\frak C^n_{\mathcal K},\frak C^m_{\mathcal K}).
\end{equation}
The corollary follows from this inequality.
\end{proof}

\begin{rem}\label{infhikkuri}
Let $(\{\mathscr C^n_{(i)}\},\{\Phi_{(i)}^n\})$
be a unital inductive system of filtered $A_{\infty}$ categories, for $i=1,2$.
The author does not know whether
$$
d_{\rm GH,\infty}(\varinjlim\, \mathscr C^n_{(1)}, 
\varinjlim\, \mathscr C^n_{(2)})
\le
\liminf_{n\to \infty}d_{\rm GH,\infty}
(\mathscr C^n_{(1)},\mathscr C^n_{(2)})
$$
holds or not. 
If one tries to prove it then it seems necessary 
to generalize Lemma \ref{1044lm} 
so that the exact equality (\ref{equality103}) is relaxed to 
a certain `homotopical commutativity'.
\end{rem}

\section{Spectre dimension and Gromov-Hausdorff distance.}
\label{sec;spedim}

In this section we study the relation of spectre dimension 
of the cohomlology group and Gromov-Hausdorff distance.
Let $\mathscr C_1$ and $\mathscr C_2$ be filtered $A_{\infty}$
category (gapped case) or completed DG-category (the case of the limit).
We assume that $d_{\rm GH,\infty}(\mathscr C_1,\mathscr C_2) < \epsilon$
and take $\mathscr C$ as in Definition \ref{inftywaekGH}. 

\begin{thm}\label{thm161}
Let $a,b \in \frak{OB}(\mathscr C_1)$ and $a',b' \in \frak{OB}(\mathscr C_2)$
such that
$$
d_{\rm Hof,\infty}(a,a')  < \epsilon_, \qquad d_{\rm Hof,\infty}(b,b') <  \epsilon.
$$
Let $H_1(a,b) = H(\mathscr C_1(a,b),\frak m_1)$, $H_2(a',b') = H(\mathscr C_2(a',b'),\frak m_1)$.
Then for $2\epsilon \le \lambda$ 
$$
f(\lambda;H_1(a,b)) \le  f(\lambda-2\epsilon;H_2(a',b')),
\qquad
f(\lambda;H_2(a',b')) \le  f(\lambda-2\epsilon;H_1(a,b)).
$$
In particular if 
$
d_{\rm Hof,\infty}(a,a') = d_{\rm Hof,\infty}(b,b') =0
$
then $H(\mathscr C_1(a,b),\frak m_1)$ is almost isomorphic to $H(\mathscr C_2(a',b'),\frak m_1)$.
\end{thm}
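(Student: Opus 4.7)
The plan is to reduce the comparison to a computation inside a single ambient DG-category and then transfer via the almost homotopy equivalent injections. First, by the definition of $d_{\rm GH,\infty}(\mathscr C_1,\mathscr C_2) < \epsilon$, pick a unital completed DG-category $\mathscr C$ together with almost homotopy equivalent injections $\iota_i \colon \mathscr C_i \to \mathscr C$ ($i=1,2$) witnessing $d_{\rm H,\infty}(\frak{OB}(\mathscr C_1),\frak{OB}(\mathscr C_2)) < \epsilon$ inside $\frak{OB}(\mathscr C)$. By Theorem~\ref{idometric}, each $\iota_i$ is isometric for the Hofer infinite distance, so after identifying objects we have $d_{\rm Hof,\infty}(a,a') < \epsilon$ and $d_{\rm Hof,\infty}(b,b') < \epsilon$ inside $\mathscr C$; extracting the first-order part of the infinite homotopy equivalences in particular produces ordinary homotopy equivalences $(t_{a,a'},t_{a',a},s_a,s_{a'})$ and $(t_{b,b'},t_{b',b},s_b,s_{b'})$ in $\mathscr C$, each with total energy loss strictly less than $\epsilon$.

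Second, construct $\Lambda$-linear cochain maps between morphism complexes:
\[
\Phi \colon \mathscr C_\Lambda(a,b) \to \mathscr C_\Lambda(a',b'),\qquad x \mapsto \frak m_2(\frak m_2(t_{a',a},x),t_{b,b'}),
\]
and $\Psi$ with primed and unprimed roles reversed. Since $\frak m_1(t_\bullet)=0$ and $\mathscr C$ is a DG-category, $\Phi$ and $\Psi$ are chain maps with energy loss strictly less than $2\epsilon$. Using $\frak m_2(t_{a,a'},t_{a',a})={\bf e}_a - \frak m_1(s_a)$ and its counterparts, a direct calculation produces cochain homotopies $\Psi\circ\Phi\simeq\mathrm{id}$ and $\Phi\circ\Psi\simeq\mathrm{id}$ built from $s_a,s_{a'},s_b,s_{b'}$ together with $\frak m_2$. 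Consequently $\Phi_*$ is a $\Lambda$-linear isomorphism on cohomology whose restriction satisfies $\Phi_*(\frak F^\lambda)\subseteq \frak F^{\lambda-2\epsilon}$.

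Third, extract the spectral dimension estimate. Set $H:=H(\mathscr C(a,b),\frak m_1)$ and $H':=H(\mathscr C(a',b'),\frak m_1)$. For any finitely generated $\Lambda_0$-submodule $W\subseteq \frak F^\lambda H$ with $k$ generators, the $\Phi_*$-images of these generators generate $\Phi_*(W)\subseteq \frak F^{\lambda-2\epsilon}H'$, so $|\Phi_*(W)|\le k$; taking supremum yields $f(\lambda;H)\le f(\lambda-2\epsilon;H')$. To descend to $H_1(a,b)$ and $H_2(a',b')$, apply Lemma~\ref{lemma217} to the almost cochain homotopy equivalent injections $\mathscr C_i(\cdot,\cdot)\to\mathscr C(\cdot,\cdot)$: the induced maps on cohomology are almost isomorphisms of divisible filtered $\Lambda_0$-modules, hence by Lemma~\ref{lem244} they are almost isomorphisms in the sense of Definition~\ref{defn113}, so their spectral dimensions agree off a discrete set. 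Combining these with $f(\lambda;H)\le f(\lambda-2\epsilon;H')$ yields $f(\lambda;H_1(a,b))\le f(\lambda-2\epsilon;H_2(a',b'))$ at every generic $\lambda\ge 2\epsilon$, and the non-increasing monotonicity of $f$ propagates the inequality to all $\lambda\ge 2\epsilon$. The symmetric inequality follows by exchanging primed and unprimed objects.

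Finally, for the "in particular" assertion, when $d_{\rm Hof,\infty}(a,a')=d_{\rm Hof,\infty}(b,b')=0$ the construction above can be run with every $\epsilon>0$, giving $f(\lambda;H_1(a,b))\le f(\lambda-2\epsilon;H_2(a',b'))$ and its symmetric counterpart for arbitrarily small $\epsilon>0$; passing to the limit $\epsilon\to 0^+$ recovers exactly the two inequalities of Definition~\ref{defn113} characterizing almost isomorphism. I expect the main obstacle to be bookkeeping between the $\Lambda$-linear construction of $\Phi,\Psi$ and the $\Lambda_0$-defined spectral dimension, specifically verifying that $\Phi_*$ restricts coherently to $\Lambda_0$-submodules of each filtration stratum and that the two distinct "almost" approximations (the one coming from the injections $\iota_i$ and the one coming from the $2\epsilon$ energy loss of $\Phi$) compose without inflating the shift parameter, so that the resulting inequality holds cleanly with the $2\epsilon$ stated in the theorem.
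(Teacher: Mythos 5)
Your overall framework matches the paper's: set up the ambient category $\mathscr C$, build $\Lambda$-linear cochain maps $\Phi, \Psi$ between $\mathscr C(a,b)$ and $\mathscr C(a',b')$ by composing with the homotopy-equivalence data (this is exactly the linear part of the functor in Lemma~\ref{Lemma3434}), use Lemma~\ref{lemma217} and Lemma~\ref{lem244} to transport the result from $H(\mathscr C(\cdot,\cdot))$ to $H(\mathscr C_i(\cdot,\cdot))$, and handle the "in particular'' clause by running the construction for all $\epsilon>0$. This is the same decomposition the paper uses, replacing the citation of Theorem~\ref{GrokaraIndunit} by an inline construction of its linear part.

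However, the central counting step is wrong as written. You say: for finitely generated $W\subseteq \frak F^\lambda H$ with $k$ generators, the $\Phi_*$-images of the generators generate $\Phi_*(W)\subseteq\frak F^{\lambda-2\epsilon}H'$, hence $\#\Phi_*(W)\le k$, and "taking supremum yields $f(\lambda;H)\le f(\lambda-2\epsilon;H')$.'' This is a non sequitur: the inequality $\#\Phi_*(W)\le k$ compares the wrong pair of quantities and, if anything, points in the opposite direction from what you want. To get $f(\lambda;H)\le f(\lambda-2\epsilon;H')$ you must show $\#W\le f(\lambda-2\epsilon;H')$ for every such $W$, and that requires using the homotopy inverse $\Psi$ at the counting stage. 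Concretely, as in Lemma~\ref{spectre2lemma}: use divisibility of $H$ to write $W=T^{2\epsilon}W_0$ with $W_0\subseteq\frak F^{\lambda-2\epsilon}H$; then $\Phi_*(W_0)$ (with the filtration-preserving normalization $T^{\epsilon_1}\Phi$, $\epsilon_1+\epsilon_2=2\epsilon$) lands in $\frak F^{\lambda-2\epsilon}H'$, so $\#\Phi_*(W_0)\le f(\lambda-2\epsilon;H')$; and since $\Psi_*\circ\Phi_*=T^{2\epsilon}\cdot\mathrm{id}$ on cohomology, you get $W=T^{2\epsilon}W_0=\Psi_*(\Phi_*(W_0))$, hence $\#W\le\#\Phi_*(W_0)\le f(\lambda-2\epsilon;H')$. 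Note this step also requires the divisibility hypothesis, which your write-up never invokes. You already built $\Psi$ and noted $\Psi\circ\Phi\simeq\mathrm{id}$ in step two, so you have everything needed, but the third step as currently phrased would not compile into a correct proof; it must be replaced by the Lemma~\ref{spectre2lemma} argument.
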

Theorem \ref{thm161}, Proposition \ref{proposi142} and Corollary \ref{cor143434}
imply the next:
\begin{cor}\label{cor15252}
Suppose that we are in the situation of Proposition \ref{proposi142}. 
Then 
$H(\mathscr C^{\infty}(\frak c_{k,(1)},\frak c_{k',(1}),\frak m_1)$
is almost isomorphic to $H(\mathscr C^{\infty}(\frak c_{k,(2)},\frak c_{k',(2)}),\frak m_1)$.
\par
In the situation of Corollary \ref{cor143434}.
We put $H_{\infty} = H(\mathscr C^{\infty}(\frak c_{k},\frak c_{k'}),\frak m_1)$
and $H_n = H(\mathscr C^{n}(c_{k},c_{k'}),\frak m_1)$.
Then, there exists $\epsilon_n$ which converges to $0$ and such that 
$$
f(\lambda;H_{\infty}) \le  f(\lambda-\epsilon_n;H_n),
\qquad
f(\lambda;H_n) \le  f(\lambda-\epsilon_n;H_{\infty}).
$$
\end{cor}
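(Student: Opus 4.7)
The plan is to reduce both assertions to Theorem \ref{thm161} by passing the Gromov-Hausdorff bound to zero, and then use the characterization of almost isomorphism in Definition \ref{defn113} (namely, coincidence of the spectral dimension function $f$ at continuity points) to conclude.

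For the first assertion, Proposition \ref{proposi142} gives that $\mathscr C^{\infty}(\frak C_{\mathcal K,(1)})^+$ and $\mathscr C^{\infty}(\frak C_{\mathcal K,(2)})^+$ are equivalent in the sense of Definition \ref{inftywaekGH}, so in particular their Gromov-Hausdorff infinite distance is zero. Inspecting the proof of Theorem \ref{thm131111} on which Proposition \ref{proposi142} relies, the witnessing ambient completed DG-category comes with embeddings matching $\frak c_{k,(1)}$ and $\frak c_{k,(2)}$ (and $\frak c_{k',(1)}$ and $\frak c_{k',(2)}$) at Hofer infinite distance zero. Hence for every $\epsilon > 0$, Theorem \ref{thm161} applied with $a = \frak c_{k,(1)}$, $a' = \frak c_{k,(2)}$, $b = \frak c_{k',(1)}$, $b' = \frak c_{k',(2)}$ yields
\begin{equation*}
f(\lambda; H_1) \le f(\lambda - 2\epsilon; H_2), \qquad f(\lambda; H_2) \le f(\lambda - 2\epsilon; H_1),
\end{equation*}
where $H_i = H(\mathscr C^{\infty}(\frak c_{k,(i)}, \frak c_{k',(i)}), \frak m_1)$. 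Letting $\epsilon \to 0^+$ gives $f(\lambda; H_1) = f(\lambda; H_2)$ at all continuity points in $(0, \infty)$, which by Definition \ref{defn113} is the criterion for almost isomorphism.

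For the second assertion, Corollary \ref{cor143434} supplies a sequence $\delta_n \to 0$ with $d_{\rm GH,\infty}(\mathscr C(\frak C^n_{\mathcal K})^+, \mathscr C^{\infty}) < \delta_n$. Moreover, the unital filtered $A_{\infty}$ functor $\Upsilon^{\infty,n}$ of Theorem \ref{mainalgtheorem2}(2) sends $c_k^n$ to $\frak c_k$ with energy loss $\epsilon'_n \to 0$, so by Lemma \ref{lem113} the Hofer infinite distance between $c_k^n$ and $\frak c_k$ (and between $c_{k'}^n$ and $\frak c_{k'}$) in the witnessing ambient category is bounded by some $\eta_n \to 0$. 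Applying Theorem \ref{thm161} with $a = c_k^n$, $a' = \frak c_k$, $b = c_{k'}^n$, $b' = \frak c_{k'}$, and $\epsilon = \max(\delta_n, \eta_n)$ produces the displayed inequalities with $\epsilon_n := 2\max(\delta_n, \eta_n) \to 0$.

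The main obstacle I foresee is the object-matching step in both parts: the abstract bound on $d_{\rm GH,\infty}$ by itself only provides \emph{some} pair of objects at small Hofer infinite distance, not the canonical pairing $\frak c_{k,(1)} \leftrightarrow \frak c_{k,(2)}$ or $c_k^n \leftrightarrow \frak c_k$ needed to invoke Theorem \ref{thm161}. Establishing the canonical pairing requires unpacking the constructions of Theorem \ref{thm131111} and Corollary \ref{cor143434}: there the ambient categories are built from disjoint unions of object sets and the relevant embeddings commute \emph{strictly} with the inductive-system maps via (\ref{equality103rev}), so that the canonical object identifications propagate faithfully from each finite stage through to the limit. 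Once this identification is pinned down, the application of Theorem \ref{thm161} and the $\epsilon \to 0$ limit are mechanical.
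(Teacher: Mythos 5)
Your proposal is correct and follows essentially the paper's own route: the paper simply states that Theorem \ref{thm161}, Proposition \ref{proposi142}, and Corollary \ref{cor143434} imply the corollary, and your argument reduces to exactly these three results via the $\epsilon \to 0$ characterization of almost isomorphism in Definition \ref{defn113}. One small imprecision worth flagging: in Part 2, Lemma \ref{lem113} does not directly bound $d_{\rm Hof,\infty}(c_k^n,\frak c_k)$, since these objects a priori live in different categories; the correct mechanism, as you sense when noting the need to ``unpack the constructions,'' is the ambient $\varinjlim(\mathscr C^n_{(0)})^+$ built in the proofs of Lemma \ref{1044lm}/Theorem \ref{thm131111}/Corollary \ref{cor143434}, in which both the honest limit object $\frak c_k = \frak c_{k,(1)}$ and the limit $\frak c_{k,(2)}$ of the eventually constant sequence (a proxy for $c_k^n$) coexist, and there Lemma \ref{lem113} together with the exact commutation (\ref{equality103rev}) gives the Hofer bound $\eta_n \to 0$ you need before invoking Theorem \ref{thm161}.
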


The proof of Theorem \ref{thm161} uses Lemma \ref{lem244}, which we now prove.
\begin{proof}[Proof of Lemma \ref{lem244}]
Let $\varphi: V_1 \to V_2$ be an almost isomorphism 
between divisible filtered $\Lambda_0$ modules.
Let $W \in V_2$ be a finitely generated submodule of $F^{\lambda}V_2$.
Let $x_i \in V_2$, $i=1,\dots,m$ be its generator. 
Note that $T^{-\lambda}x_i \in V_2$ exists by divisibility.
For $\epsilon > 0$,
there exists $y'_i\in V_1$ such that $T^{\epsilon-\lambda}x_i =
\varphi(y'_i)$.
We put $y_i = T^{\lambda-\epsilon}y'_i$.
Let $W'$ be a submodule of $F^{\lambda-\epsilon}V_1$ generated 
by $y_i$, $i=1,\dots,m$.
Then $\varphi(W') = W$.
\par
By definition, $W'$ is generated by $m'$ elements $z_1,\dots,z_{m'}$
with $m' = f(\lambda-\epsilon;V_1)$.
Therefore $W$ is generated by  $\varphi(z_1),\dots,\varphi(z_{m'})$.
We have proved
$
f(\lambda;V_2) \le \lim_{\epsilon>0, \epsilon \to 0} f(\lambda-\epsilon;V_1).
$
\par
We next prove an inequality of opposite direction.
Since $f(\lambda;V_1)$ is an integer valued non-increasing function, 
there exists a discrete set $S$ such that if $\lambda \notin S$ then 
there exists $\lambda' > \lambda$ such that 
$f(\lambda;V_1) = f(\lambda';V_1)$.
We take such $\lambda$, $\lambda'$. We put $\epsilon = \lambda' - \lambda$.
Let $W \subset F^{\lambda'}V_1$ be a finitely generated submodule 
such that the minimal number of generators of $W$ is $m = f(\lambda';V_1)$.
By divisibility there exists $W_1 \subset F^{\lambda}V_1$ 
such that $T^{\epsilon}W_1 = W$ and $W_1$ is finitely generated.
Let $\{x_1,\dots,x_m\}$ be a generator of $W_1$.
Let $W' = \varphi(W_1)$. Then $W'$ is a finitely generated submodule 
of $F^{\lambda}V_2$. 
Therefore it is generated by $m' =  f(\lambda;V_2)$ elements.
Suppose it is generated by $\varphi(y_1),\dots,\varphi(y_{m'})$.
There exists $a_{ij} \in \Lambda_0$
such that
$
\varphi \left(x_i - \sum_j a_{ij}y_j\right) = 0
$
for $i=1,\dots,m$. Since $\varphi$ is almost injective, it implies
$
T^{\epsilon}x_i = \sum_j T^{\epsilon}a_{ij}y_j.
$
Therefore $W$ is generated by $m'$ elements.
We have proved 
$
f(\lambda;V_1) \le f(\lambda;V_2),
$
for $\lambda \notin S$.
The proof of Lemma \ref{lem244} is complete.
\end{proof}
\begin{lem}\label{spectre2lemma}
Let $(C_1,d_1),(C_2,d_2)$ be filtered 
cochain complexes over $\Lambda_0$ and $\varphi: (C_1,d_1) \to (C_2,d_2)$ a 
cochain homotopy equivalence with energy loss $\epsilon$ in the 
sense of Definition \ref{defn4646}.
Then
$$
f(\lambda;H(C_1,d_1)) \le  f(\lambda-\epsilon;H(C_2,d_2)),
\qquad
f(\lambda;H(C_2,d_2)) \le  f(\lambda-\epsilon;H(C_1,d_1)).
$$
for $\lambda \ge \epsilon$.
\end{lem}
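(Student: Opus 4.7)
By symmetry, only the first inequality needs proof. The plan is to reduce the statement to an almost injectivity property of the induced map $\varphi_*\colon H(C_1)\to H(C_2)$ and then adapt the counting argument from the proof of Lemma \ref{lem244}. Write the given cochain homotopy equivalence as $\varphi\colon C_1\to C_2$ of loss $\rho_1$ and $\psi\colon C_2\to C_1$ of loss $\rho_2$ with $\rho_1+\rho_2\le\epsilon$, together with cochain homotopies $H_1,H_2$ of loss $\epsilon$ satisfying $\psi\varphi-\mathrm{id}=d_1 H_1+H_1 d_1$ and $\varphi\psi-\mathrm{id}=d_2 H_2+H_2 d_2$. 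The map $\varphi$ descends to $\varphi_*\colon H(C_1)\to H(C_2)$, which sends $\frak F^\lambda H(C_1)$ into $\frak F^{\lambda-\rho_1}H(C_2)\subseteq\frak F^{\lambda-\epsilon}H(C_2)$.

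The first substantive step will be to prove an \emph{almost injectivity} of $\varphi_*$ with loss $\epsilon$: if $\varphi_*[w]=0$ in $H(C_2)$ then $T^\epsilon[w]=0$ in $H(C_1)$. Represent $[w]$ by a cocycle $w\in C_1$; then $\varphi(w)=d_2 z$ for some $z\in C_2$. The homotopy relation applied to the cocycle $w$ yields $\psi\varphi(w)=w+d_1 H_1(w)$, while also $\psi\varphi(w)=\psi(d_2 z)=d_1\psi(z)$. Hence $w=d_1\bigl(\psi(z)-H_1(w)\bigr)$; since $\psi(z)-H_1(w)\in\frak F^{-\epsilon}C_{1,\Lambda}$, multiplying by $T^\epsilon$ places $T^\epsilon(\psi(z)-H_1(w))\in C_1$, so $T^\epsilon w\in d_1 C_1$ and $T^\epsilon[w]=0$.

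For the counting, fix a finitely generated $W\subseteq\frak F^\lambda H(C_1)$ with $\#W=m$ and minimal generators $w_1,\ldots,w_m$. Set
$$
W':=\sum_{i=1}^m\Lambda_0\cdot\varphi_*(w_i)\;\subseteq\;\frak F^{\lambda-\rho_1}H(C_2)\;\subseteq\;\frak F^{\lambda-\epsilon}H(C_2),
$$
and seek a finitely generated $W''\subseteq\frak F^{\lambda-\epsilon}H(C_2)$ (ideally $W''=W'$) with $\#W''\ge m$, which yields the estimate $f(\lambda;H(C_1))\le f(\lambda-\epsilon;H(C_2))$. Assume for contradiction $\#W'=k<m$, with minimal generators $y_j=\varphi_*(u_j)\in W'$ and expansions $\varphi_*(w_i)=\sum_j a_{ij}y_j=\varphi_*\bigl(\sum_j a_{ij}u_j\bigr)$. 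Almost injectivity then produces the relation $T^\epsilon\bigl(w_i-\sum_j a_{ij}u_j\bigr)=0$ in $W\subseteq H(C_1)$.

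The main obstacle is that this relation only controls $T^\epsilon W$, not $W$ itself, since $T^\epsilon$ may fail to be injective on $W$. The remedy will be to invoke the normal form / structure theorem for finitely generated modules over $\Lambda_0$ (cf.\ \cite{Ba} and \cite[Theorem 6.1.20]{fooobook}) and decompose $W\cong\Lambda_0^r\oplus\bigoplus_{i=1}^s\Lambda_0/T^{b_i}\Lambda_0$ with $r+s=m$. On the free part, where $T^\epsilon$ acts injectively, the relation lifts to an honest equality $w_i=\sum_j a_{ij}u_j$, forcing at least $r$ free generators in $W'$. For each torsion summand with $b_i\le\epsilon$ (on which the naive argument degenerates), the hypothesis $\lambda\ge\epsilon$ together with the symmetric almost surjectivity of $\varphi_*$ (derived dually from the homotopy $\varphi\psi-\mathrm{id}=d_2 H_2+H_2 d_2$) produces, via the $\epsilon$-interleaving supplied by $(\varphi,\psi,H_1,H_2)$, a matching torsion generator in $\frak F^{\lambda-\epsilon}H(C_2)$ which may be adjoined to $W'$ to form the desired $W''$. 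Carrying out this bookkeeping for torsion bars — which is the direct analogue, in the present filtered setting, of the classical stability of persistence barcodes — is the most delicate step of the argument.
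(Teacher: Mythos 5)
The almost injectivity observation is correct, but the counting argument built on it has a genuine gap, which you acknowledge and do not close. The proposed patch is also unlikely to work: the normal form $\Lambda_0^r\oplus\bigoplus\Lambda_0/T^{b_i}\Lambda_0$ (cf.\ (\ref{struFl})) is a theorem about Floer cohomology coming from a finitely generated, zero-energy-generated complex, not a general fact about finitely generated submodules $W\subseteq H(C_1)$. Since $\Lambda_0$ is not Noetherian, such a $W$ need not be finitely presented, and the cyclic decomposition need not exist; and even granting it, the ``$\epsilon$-interleaving produces a matching torsion generator'' step is a program, not a proof.

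The paper sidesteps all of this with one step you did not consider: divisibility of the cohomology module. Given the data of the homotopy equivalence, set $h=(T^{\rho_1}\varphi)_*$ and $g=(T^{\rho_2}\psi)_*$, so both preserve the energy filtration and $g\circ h=T^{\epsilon}$ on $H(C_1)$. Given a finitely generated $W\subseteq\frak F^{\lambda}H(C_1)$, use divisibility (Definition of divisible module, applicable here because $\lambda\ge\epsilon$) to choose a finitely generated $W_0\subseteq\frak F^{\lambda-\epsilon}H(C_1)$ with $T^{\epsilon}W_0=W$. Then $h(W_0)\subseteq\frak F^{\lambda-\epsilon}H(C_2)$, so by the definition of spectral dimension $\#\,h(W_0)\le f(\lambda-\epsilon;H(C_2))$, and since $W=g(h(W_0))$ is the image of $h(W_0)$ under a $\Lambda_0$-linear map, $\#W\le\#\,h(W_0)$. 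Taking the supremum over $W$ gives the inequality; the other follows by swapping $g$ and $h$. In short: instead of pushing $W$ forward via $\varphi_*$ and fighting torsion with almost injectivity, lift $W$ backward by $T^{-\epsilon}$ first. That single move eliminates the need for any structure theory or barcode stability. If you want to salvage your own line of attack, the repair is to replace $W'=\varphi_*(W)$ by $W''=h(W_0)$: then $g(W'')=W$ gives $\#W\le\#W''$ immediately, which is exactly the paper's argument in disguise.
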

\begin{proof}
By definition there exists $\epsilon_1$, $\epsilon_2$ with $\epsilon_1+\epsilon_2
= \epsilon$ and 
$\psi: (C_1,d_1) \to (C_2,d_2)$
such that $T^{\epsilon_1}\varphi$ and $T^{\epsilon_2}\psi$ preserves the
energy filtration
and $T^{\epsilon_2}\psi \circ T^{\epsilon_1}\varphi$ and 
$T^{\epsilon_1}\varphi \circ T^{\epsilon_2}\psi$ are cochain homotopic 
to $T^{\epsilon}$, (by cochain homotopy which preserves filtrations).
Thus there exist $h = (T^{\epsilon_1}\varphi)_* : H(C_1,d_1)
\to H(C_2,d_2)$ and $g = (T^{\epsilon_2}\psi)_* : H(C_2,d_2)
\to H(C_1,d_1)$ such that $h\circ g = T^{\epsilon}$ 
and $g\circ h = T^{\epsilon}$.
\par
Let $W \subset F^{\lambda}H(C_1,d_1)$ be a finitely generated 
submodule. By divisibility, there exists $W_0\subset F^{\lambda-\epsilon}H(C_1,d_1)$ 
such that $W = T^{\epsilon}W_0$. 
Since $h(W_0) \subset F^{\lambda-\epsilon}H(C_2,d_2)$
it is generated by $f(\lambda-\epsilon;H(C_2,d_2))$
elements. Therefore $W = gh(W_0)$ is generated by $f(\lambda-\epsilon;H(C_2,d_2))$
elements.
We have proved 
$f(\lambda;H(C_1,d_1)) \le  f(\lambda-\epsilon;H(C_2,d_2))$.
We can prove the other inequality by exchanging the role of 
$g$ and $h$.
\end{proof}
Theorem \ref{thm161} follows from Theorem \ref{GrokaraInd} 
and Lemmas \ref{lem244} and \ref{spectre2lemma}.

\begin{rem}
The proof of Lemma \ref{spectre2lemma} is mostly 
the same as the proof of \cite[Lemma 6.5.31]{fooobook}
which was used to prove (\ref{Lipshich}).
A similar argument is used in the study of persistence modules.
See for example \cite[Section 2.2]{PRSZ}.
\end{rem}
\begin{proof}[The proof of Theorem \ref{them123333}]
By Theorem \ref{idometric} 
$\frak{OB}(\mathscr C_1)$ and 
$\frak{OB}(\mathscr C_2)$ 
are subspaces of 
$\frak{OB}(\mathscr C)$.
The Hausdorff distance 
between them in $\frak{OB}(\mathscr C)$
is zero.
Using the assumption that 
$\frak{OB}(\mathscr C_i)$
is complete and Hausdorff 
it implies that 
$\frak{OB}(\mathscr C_1)$ is 
isometric to  
$\frak{OB}(\mathscr C_2)$. 
The second half then follows from Theorem \ref{thm161}.
\end{proof}

\section{The proof of Theorem \ref{Lagcatecomplete}.}
\label{sec;compgeo}

In this section we prove Theorem \ref{Lagcatecomplete}.
Let 
$\{\mathbb L^n\}\in \mathfrak{Lag}^K_{\rm trans}$, $n=1,2,\dots$ be a 
Cauchy sequence with respect to  Hofer distance.
We first consider the following situation.
\begin{defn}\label{defn1555}
We say that $\{\mathbb L^n\}$ is a strongly transversal sequence 
if $\mathbb L^n \sqcup \mathbb L^{n+1}
\in \mathfrak{Lag}^{2K}_{\rm trans}$ holds for each $n$.
\end{defn}

\noindent{\bf Step 1}.
Let $\{\mathbb L^n\}$ be a strongly transversal Cauchy sequence.
We first fix a compatible almost complex structure.
We also choose a system of data needed to define the filtered $A_{\infty}$
category $\frak F(\mathbb L^n)$, that is, 
 Kuranishi structures on the moduli spaces of pseudo-holomorphic 
disks, strips and polygons involved and  CF-perturbations to obtain virtual fundamental chains.
We denote this choice by $\Xi^n$.
We have thus obtained $\frak F(\mathbb L^n)$.
\par
By \cite{Fu5} we can take those Kuranishi structures and 
CF-perturbations so that they are compatible with the 
forgetful maps of the boundary marked points (other than $0$-th 
marked point). Then the differential 0-form $1$ of each $L \in \mathbb L^n$ 
becomes a strict unit.\footnote{We do not study cyclic symmetry here.}
We next define $\frak F(\mathbb L^n \sqcup \mathbb L^{n+1})$.
Note that the moduli spaces of pseudo-holomorphic disks, strips and polygons
needed to define this filtered $A_{\infty}$ category
have Kuranishi structures. 
We construct the Kuranishi structures by induction on energy 
and complexity of the source.
Its boundary is a union of the fiber products of 
moduli spaces of pseudo-holomorphic 
disks, strips and polygons
which appear in the definition of $\frak F(\mathbb L^n)$
and $\frak F(\mathbb L^{n+1})$ or those which appeared 
in the earlier steps of induction.
Therefore the Kuranishi structures and CF-perturbations 
at the boundary of the moduli spaces 
are already given by $\Xi^n$, $\Xi^{n+1}$ or 
by induction hypothesis.
By \cite{const1,fooo:const2}
we can obtain Kuranshi structures 
on those moduli spaces of pseudo-holomorphic 
disks, strips and polygons so that they are compatible each other 
at the boundary and corners, 
and also for the part of the moduli space of 
disks, strips and polygons appearing in the construction 
of  $\frak F(\mathbb L^n)$
and $\frak F(\mathbb L^{n+1})$
it coincides with $\Xi^n$ and $\Xi^{n+1}$.
Then by \cite[Chapter 17]{fooo:springerbook}, 
we obtain a system of CF-perturbations 
so that they are  compatible with each other 
and also with the CF-perturbations given in  
$\Xi^n$ and $\Xi^{n+1}$.
We denote this system of Kuranishi structures 
and CF-perturbations by $\Xi^{n,n+1}$.
\par
We thus obtained $\frak F(\mathbb L^n \sqcup \mathbb L^{n+1})$.
By construction 
$\frak F(\mathbb L^n)$ and $\frak F(\mathbb L^{n+1})$ are 
its full subcategories.
In fact, since we extend the given system of Kuranishi structures 
and CF-perturbations to the one we need to 
obtain this filtered $A_{\infty}$ category, 
the structure operations of $\frak F(\mathbb L^n\cup\mathbb L^{n+1})$
coincide with the structure operations of $\frak F(\mathbb L^n)$
or $\frak F(\mathbb L^{n+1})$ as far as the morphisms between the objects
 in the latter categories concern.
\par\medskip
We put $\epsilon_n = d_{\rm Hof}(\mathbb L^n,\mathbb L^{n+1})$.
Then by Corollary \ref{corghchikachik2a}
the Haudorff distance (with respect to the Hofer infinite 
distance) 
of $\frak{OB}(\frak F(\mathbb L^n))$
and  $\frak{OB}(\frak F(\mathbb L^{n+1}))$
in $\frak{OB}(\mathbb L^n \sqcup \mathbb L^{n+1})$
is not greater than $\epsilon_n$.
Therefore by Theorem \ref{GrokaraIndunit} we obtain a $\#$-unital
filtered $A_{\infty}$ functor 
$\Phi^n : \frak{OB}(\frak F(\mathbb L^n)) 
\to \frak{OB}(\frak F(\mathbb L^{n+1}))$
with energy loss $\epsilon_n$.
Thus we obtain a $\#$-unital 
inductive system of filtered $A_{\infty}$
categories $(\{\frak F(\mathbb L^n)\},\{\Phi^n\})$.
Therefore we obtain a unital completed DG-category $\varinjlim\,\frak F(\mathbb L^n)$.
This depends on the choices $\{\Xi^n\},\{\Xi^{n,n+1}\}$
and  $(*)$, $(**)$ appearing in the proof of Theorem \ref{thm131111}.
\begin{defn}
Let $\{\mathbb L^{n,i}\}$ be a strongly transversal sequence 
for $i=1,2$. We say a pair $(\{\mathbb L^{n,1}\},\{\mathbb L^{n,2}\})$
is strongly transversal
if for any $n$ the set of Lagrangian submanifolds
$\mathbb L^{n,1} \cup \mathbb L^{n+1,1}\cup \mathbb L^{n,2} \cup \mathbb L^{n+1,2}$ 
is an element of $\mathfrak{Lag}^{4K}_{\rm trans}$.
\end{defn}
\noindent{\bf Step 2}. Let $(\{\mathbb L^{n,1}\},\{\mathbb L^{n,2}\})$ be a 
strongly transversal pair of strongly transversal Cauchy sequences.
We fix choices of 
$\{\Xi^{n,i}\},\{\Xi^{n,n+1,i}\}$ for $i=1,2$ and 
$(*)$, $(**)$.
We then obtain $\varinjlim\,\frak F(\mathbb L^{n,i})$
for $i=1,2$.
\begin{lem}\label{lem147}
In this situation we have:
\begin{equation}
d_{\rm GH,\infty}(\varinjlim\,\frak F(\mathbb L^{n,1}),
\varinjlim\,\frak F(\mathbb L^{n,2}))
\le \liminf_{n\to \infty} d_{\rm Hof}(\mathbb L^{1},\mathbb L^{2}).
\end{equation}
In the case the right hand side is $0$,
the inductive limit $\varinjlim\,\frak F(\mathbb L^{n,1})$
is equivalent to $\varinjlim\,\frak F(\mathbb L^{n,2})$.
\end{lem}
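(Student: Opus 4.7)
The plan is to reduce this to the algebraic inductive-limit framework of Lemma \ref{1044lm} and Theorem \ref{thm131111}. First, exploiting the strongly transversal pair hypothesis, I would form, for each $n$, the union $\mathbb L^{n,0} := \mathbb L^{n,1}\sqcup \mathbb L^{n,2}\in \mathfrak{Lag}^{2K}_{\rm trans}$, and, for each consecutive pair, the double union $\mathbb L^{n,0}\sqcup \mathbb L^{n+1,0}\in \mathfrak{Lag}^{4K}_{\rm trans}$. Repeating the inductive construction of Kuranishi structures and CF-perturbations from Step 1, but now starting from the previously fixed data $\{\Xi^{n,i}\}$ and $\{\Xi^{n,n+1,i}\}$ ($i=1,2$) as the boundary input, I would extend them to systems $\Xi^{n,0}$, $\Xi^{n,n+1,0}$ on the moduli spaces of pseudo-holomorphic polygons bounded by the $\mathbb L^{n,0}$'s, using \cite{const1,fooo:const2} for the compatible extension of Kuranishi structures and \cite[Chapter 17]{fooo:springerbook} for CF-perturbations. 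The crucial point, which matches Step 1, is that by fiat the structure operations of $\frak F(\mathbb L^{n,0})$ restrict to those of $\frak F(\mathbb L^{n,i})$ on the full subcategories. Thus $\frak F(\mathbb L^{n,i})$ is an embedded full filtered $A_\infty$ subcategory of $\frak F(\mathbb L^{n,0})$, and the inclusion is strictly unital with energy loss $0$.

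Next, via Corollary \ref{corghchikachik2a}, the Hausdorff distance (with respect to $d_{\rm Hof,\infty}$) between $\frak{OB}(\frak F(\mathbb L^{n,0}))$ and $\frak{OB}(\frak F(\mathbb L^{n+1,0}))$ inside $\frak{OB}(\frak F(\mathbb L^{n,0}\sqcup \mathbb L^{n+1,0}))$ is at most $\epsilon_n:=d_{\rm Hof}(\mathbb L^{n,0},\mathbb L^{n+1,0})$. Applying Theorem \ref{GrokaraIndunit} produces $\#$-homotopically unital filtered $A_\infty$ functors $\Phi^n_{(0)}:\frak F(\mathbb L^{n,0})\to \frak F(\mathbb L^{n+1,0})$ with energy loss $\epsilon_n$. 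Here is the step that requires the most care: I must make the choices $(*)$ (the infinite homotopy equivalences $c^n_k\to c^{n+1}_k$ used in Lemma \ref{Lemma3434unit}) and $(**)$ (the splitting \eqref{unitsplitting} used to unitalize) for $\Phi^n_{(0)}$ so that they \emph{restrict} to the choices $(*),(**)$ already fixed for $\Phi^n_1$ and $\Phi^n_2$. Concretely, for an object $c\in\frak{OB}(\frak F(\mathbb L^{n,i}))$, the infinite homotopy equivalence to its image in $\frak F(\mathbb L^{n+1,i})$ can be taken to be the one coming from the ambient category $\frak F(\mathbb L^{n,0}\sqcup\mathbb L^{n+1,0})$ restricted to the sub-$A_\infty$-category $\frak F(\mathbb L^{n,i}\sqcup\mathbb L^{n+1,i})$, which equals the data used to construct $\Phi^n_i$ by the remark of Step 1 that structure operations on the full subcategory agree. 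Analogously the splitting $(**)$ for $\mathscr C^0:=\frak F(\mathbb L^{n,0})$ is chosen to extend that of $\frak F(\mathbb L^{n,i})$. With these compatible choices the embeddings $\Psi_i^n:\frak F(\mathbb L^{n,i})\hookrightarrow \frak F(\mathbb L^{n,0})$ satisfy the strict equality
\begin{equation*}
\Phi^n_{(0)}\circ \Psi_i^n=\Psi_i^{n+1}\circ \Phi^n_i,\qquad i=1,2.
\end{equation*}

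With this exact intertwining in place, Lemma \ref{1044lm} gives
\begin{equation*}
d_{\rm GH}\bigl(\varinjlim (\frak F(\mathbb L^{n,1}))^+,\varinjlim (\frak F(\mathbb L^{n,2}))^+\bigr)\le \liminf_{n\to\infty} d_{\rm H}(\frak{OB}(\frak F(\mathbb L^{n,1})),\frak{OB}(\frak F(\mathbb L^{n,2}))).
\end{equation*}
Replacing Lemma \ref{1044lm} by its Hofer-infinite-distance refinement, exactly as carried out in the first half of the proof of Theorem \ref{thm131111}—i.e.\ applying Lemma \ref{lem113} to the unital functor $(\Upsilon^{\infty,n})^+:\frak F(\mathbb L^{n,0})\to \varinjlim \frak F(\mathbb L^{n,0})^+$ of vanishing energy loss in the limit—upgrades this to
\begin{equation*}
d_{\rm GH,\infty}\bigl(\varinjlim \frak F(\mathbb L^{n,1}),\varinjlim \frak F(\mathbb L^{n,2})\bigr)\le \liminf_{n\to\infty} d_{\rm Hof}(\mathbb L^{n,1},\mathbb L^{n,2}),
\end{equation*}
since the Hausdorff distance in the $n$-th combined category is bounded by $d_{\rm Hof}(\mathbb L^{n,1},\mathbb L^{n,2})$ via Corollary \ref{corghchikachik2a}. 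The equivalence assertion when the right-hand side vanishes is then immediate from Definition \ref{inftywaekGH}, using the ambient completed DG-category $\varinjlim \frak F(\mathbb L^{n,0})$ together with the almost homotopy equivalent injections produced by Lemma \ref{lem12555}.

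The main obstacle, as indicated above, is the careful coherent organization of the geometric choices ($\Xi^{n,0},\Xi^{n,n+1,0}$) and of the algebraic choices $(*),(**)$ so that the square of $A_\infty$ functors commutes on the nose rather than merely up to homotopy; this is precisely what allows the invocation of Lemma \ref{1044lm} (compare Remark \ref{infhikkuri}). Everything else—the geometric input (Corollary \ref{corghchikachik2a}), the passage from energy loss on individual functors to Gromov--Hausdorff estimates on limits (Theorem \ref{thm131111}), and the definition of equivalence—is already in place.
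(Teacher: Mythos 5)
Your proposal is correct and follows essentially the same route as the paper's own proof: form the geometric ambient categories $\frak F(\mathbb L^{n,1}\sqcup\mathbb L^{n,2})$ and $\frak F(\mathbb L^{n,1}\sqcup\mathbb L^{n+1,1}\sqcup\mathbb L^{n,2}\sqcup\mathbb L^{n+1,2})$ by compatibly extending the fixed Kuranishi/CF data, produce $\Phi^{n,0}$ via Corollary \ref{corghchikachik2a} and Theorem \ref{GrokaraIndunit} with choices $(*),(**)$ induced from those already fixed for $i=1,2$ so that the intertwining square commutes on the nose, and then invoke Lemma \ref{1044lm} and the $d_{\rm Hof,\infty}$-refinement of Theorem \ref{thm131111}. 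The paper compresses the final steps into ``the rest of the proof is the same as the proof of Lemma \ref{1044lm} and Theorem \ref{thm131111}''; your write-up makes explicit the compatibility requirement on the choices $(*),(**)$ that is only implicit there, which is a genuine clarification but not a different argument.
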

\begin{proof}
We first construct a filtered $A_{\infty}$ category
$\frak F(\mathbb L^{n,1} \sqcup \mathbb L^{n,2})$.
The construction is the same as the case of 
$\frak F(\mathbb L^{n}\sqcup\mathbb L^{n+1})$.
Namely we can extend the given system of 
Kuranishi structures and CF-perturbations 
$\Xi^{n,1}$, $\Xi^{n,2}$ to $\Xi^{n,1,2}$.
Therefore $\frak F(\mathbb L^{n,1})$ and 
$\frak F(\mathbb L^{n,2})$ are full subcategory 
of $\frak F(\mathbb L^{n,1} \sqcup \mathbb L^{n,2})$.
We next perform the same construction 
to obtain a system of 
Kuranishi structures and CF-perturbations
$\Xi^{n,n+1,1,2}$ and define 
$\frak F(\mathbb L^{n,1} \sqcup \mathbb L^{n+1,1} 
\sqcup \mathbb L^{n,2} \sqcup \mathbb L^{n+1,2})$.
The four filtered $A_{\infty}$ categories 
$\frak F(\mathbb L^{n,1} \sqcup \mathbb L^{n,2})$,
$\frak F(\mathbb L^{n+1,1} \sqcup \mathbb L^{n+1,2})$,
$\frak F(\mathbb L^{n,1} \sqcup \mathbb L^{n+1,1})$,
$\frak F(\mathbb L^{n,2} \sqcup \mathbb L^{n+1,2})$,
are full subcategories of 
$\frak F(\mathbb L^{n,1} \sqcup \mathbb L^{n+1,1} 
\sqcup \mathbb L^{n,2} \sqcup \mathbb L^{n+1,2})$.
Using again Corollary \ref{corghchikachik2a} and 
Theorem \ref{GrokaraIndunit} we obtain a $\#$-unital 
filtered $A_{\infty}$ functor 
$$
\Phi^{n,0} : 
\frak F(\mathbb L^{n,1} \sqcup \mathbb L^{n,2})
\to 
\frak F(\mathbb L^{n+1,1} \sqcup \mathbb L^{n+1,2})
$$
with energy loss $\epsilon_n$.
Here $\epsilon_n = \max (d_{\rm Hof}(\mathbb L^{n,1},\mathbb L^{n+1,1}),
d_{\rm Hof}(\mathbb L^{n,2},\mathbb L^{n+1,2}))$.
\par
Let $\Psi_i^{n}: \frak F(\mathbb L^{n,i})
\to \frak F(\mathbb L^{n,1} \sqcup \mathbb L^{n,2})$
be canonical embedding as full subcategories, for $i=1,2$.
By construction we have 
\begin{equation}\label{equality103rev2}
\Phi^{n,0}\circ  \Psi_{i}^n = \Psi_{i}^{n+1} \circ \Phi^{n,i}.
\end{equation}
This is an exact equality.
The rest of the proof is the same as the proof of Lemma \ref{1044lm}
and Theorem \ref{thm131111}.
\end{proof}
\par
\noindent{\bf Step 3}. 
Let $\mathbb L$ be an element of the completion $\overline{\mathfrak{Lag}^K}$
of $\mathfrak{Lag}^K$.
We first observe that there exists a strongly 
transversal Cauchy sequence $\{\mathbb L^{n}\}$ which 
converges to $\mathbb L$.
We next observe that for 
two strongly transversal  Cauchy sequences $\{\mathbb L^{n,i}\}$,
$i=1,2$ converging to $\mathbb L$, there 
exists a third strongly transversal  Cauchy sequence $\{\mathbb L^{n,3}\}$
converging to $\mathbb L$,
such that the pairs $(\{\mathbb L^{n,1}\},\{\mathbb L^{n,3}\})$
and $(\{\mathbb L^{n,2}\},\{\mathbb L^{n,3}\})$ are both 
strongly transversal.
Using Lemma \ref{lem147} twice we conclude that
$\varinjlim\,\frak F(\mathbb L^{n,1})$ is equivalent to 
$\varinjlim\,\frak F(\mathbb L^{n,2})$.
Thus by using the limit we obtain 
$\frak F(\mathbb L)$ which is well-defined up to equivalence.
\par\smallskip
\noindent{\bf Step 4}.
Let $\mathbb L_m \in \overline{\mathfrak{Lag}^K}$ and $\mathbb L_m$, $m=1,2,\dots$ 
be a Cauchy sequence.
We will prove that $\frak F(\mathbb L_m)$ converges with respect to the 
Gromov-Hausdorff infinite distance.
We put $\epsilon'_m = \sum_{\ell\ge m} d_{\rm Hof}(\mathbb L_{\ell},\mathbb L_{\ell+1})$.
We observe that there exists a 
$n,m \in \Z_{+}$ parametrized family of objects $\mathbb L^n_m 
\in \mathfrak{Lag}^K_{\rm trans}$ such that:
\begin{enumerate}
\item
$d_{\rm Hof}(\mathbb L_m,\mathbb L^n_m) < 1/n$,
\item
For any positive integers $m,m',m''$ we have
$\mathbb L^{n}_{m} \cup \mathbb L^{n+1}_{m}\sqcup \mathbb L^{n}_{m'} \sqcup \mathbb L^{n+1}_{m'}
\sqcup \mathbb L^{n}_{m''} \sqcup \mathbb L^{n+1}_{m''}
\in \mathfrak{Lag}^{6K}_{\rm trans}$.
\end{enumerate}
The existence of such $\mathbb L^n_m$ is easy to prove.
\par
We consider the sequence $\{\mathbb L^n_n\}$.
It is a Cauchy sequence and is strongly transversal.
Therefore we obtain the limit $\varinjlim\,\frak F(\mathbb L^n_n)$.
We consider another sequence, 
$\{\mathbb L^n_m\}$ for a fixed $m$.
Then by item (2), 
the pair $(\{\mathbb L^n_m\},\{\mathbb L^n_n\})$
is transversal.
Therefore by Lemma \ref{lem147} we have
$
d_{\rm GH,\infty}(\varinjlim\,\frak F(\mathbb L^n_n),\mathbb L_m)
\le \epsilon'_m.
$
Therefore $\frak F(\mathbb L_m)$ converges to $\varinjlim\,\frak F(\mathbb L^n_n)$
by Gromov-Hausdorff infinite distance.
The uniqueness of the limit up to equivalence can
be proved in the same way as Step 3.
\par\smallskip
\noindent{\bf Step 5}.
We take a compatible almost complex structure $J$ at the beginning of the 
construction.
We will prove that $\frak F(\mathbb L)$ is independent of the choice of $J$ up to 
equivalence.
Let $J_0, J_1$ be two compatible almost complex structures.
We take a path $\mathcal J = \{J_t\}$ of compatible almost complex structures joining them.
We may assume that $J_t = J_0$ for sufficiently small $t$ 
and  $J_t = J_1$ if $1- t$ is sufficiently small.
\par
Let $\mathbb L^n \in \mathfrak{Lag}^K_{\rm trans}$.
We take choices $\Xi_{0}^n$ (resp. $\Xi_1^{n}$), that is, Kuranishi structures and CF-perturbations of the moduli spaces 
of pseudo holomorphic disks, strips and polygons, with respect 
to the almost complex structure $J_0$ (resp $J_1$)
and obtain $\frak F(\mathbb L^n;J_0)$, $\frak F(\mathbb L^n;J_1)$.
\par
We consider the disjoint union $(\mathbb L^n \times \{0\})
\sqcup (\mathbb L^n \times \{1\})$ of two copies 
of $\mathbb L^n$.
We put $\widehat{\mathbb L}^{n} = (\mathbb L^n \times \{0\}) 
\sqcup (\mathbb L^n \times \{1\})$.
For an element $L$ of $\mathbb L^n$ we write $L(0)$ (resp. $L(1)$)
when we 
regard it as an object of $\frak F(\mathbb L^n;J_0)$
(resp. $\frak F(\mathbb L^n;J_1)$).
\par
Using $J_t$ and extending the given data 
$\Xi^n$, $\Xi^{\prime n}$ we obtain a pseudo-isotopy between
$\frak F(\mathbb L^n;J_0)$ and $\frak F(\mathbb L^n;J_1)$,
which we write $\frak F(\mathbb L^n;\mathcal J)$.
Roughly speaking this is the following object.
For $L\ne L'$, $L,L' \in \mathbb L^n$,
the morphism complex $CF(L, L';J_j)$ (for $j=0,1$)
between them is a free $\Lambda_0$ module 
$$
CF(L,L';J_j) = \bigoplus_{p \in L\cap L'} \Lambda_0 [p]
$$
generated by the intersection points.
We change it to 
$$
CF(L,L';\mathcal J) = \bigoplus_{p \in L\cap L'}  \Omega([0,1]) [p]
\,\widehat\otimes\,\Lambda_0.
$$
Here $\Omega([0,1])$ is the de Rham complex of the interval.
\par
For $L = L' \in \mathbb L^n$, 
we put
$$
CF(L,L;\mathcal J) =  \Omega(L\times [0,1]) \widehat\otimes
\Lambda_0.
$$
\par
We use the union of the moduli spaces of $J_t$-disks, strips and polygons for $t\in [0,1]$.
On these spaces we can define Kuranishi structures and CF perturbations 
extending $\Xi_0^n$, $\Xi^{n}_1$, 
which we denote by $\Xi_{01}^n$. See \cite{const1,fooo:const2} for the detail of this construction.
\par
This filtered $A_{\infty}$ category is curved.
(The operator $\frak m_0$ may be non-zero.)
Using bounding cochains we can eliminate $\frak m_0$ and obtain a strict filtered $A_{\infty}$
category which we denote by $\frak F(\mathbb L^n;\mathcal J)$.
\par
The pullbacks of differential forms by the inclusions $\{0\} \to [0,1]$, 
$\{1\} \to [0,1]$ define filtered $A_{\infty}$ functors (of energy loss $0$)
$\frak J^n_0: \frak F(\mathbb L^n;\mathcal J) \to \frak F(\mathbb L^n;J_0)$,
$\frak J^n_1: \frak F(\mathbb L^n;\mathcal J) \to \frak F(\mathbb L^n;J_1)$
which are homotopy equivalences.
\par
To use the filtered $A_{\infty}$ category 
$\frak F(\mathbb L^n;\mathcal J)$ to prove that  $\frak F(\mathbb L^n;J_0)$ 
is equivalent to 
$\frak F(\mathbb L^n;J_1)$, we invert the arrows of $\frak J^n_j$ as follows.
We first review the structure operations $\frak M_k$ of $\frak F(\mathbb L^n;\mathcal J)$.
\par
Let $h_i$ be a differential form on $[0,1]\times (L\cap L')$ or $[0,1] \times L$.
Let $t$ be the standard coordinate of $[0,1]$.
We write $h_i$ as 
$
h_i = h_i^0 + dt\wedge h_i^1
$
where $h_i^0$, $h_i^1$ are 
smoothly $t$-dependent family of smooth differential forms on $L$ 
or on $L \cap L'$. (In case 
$L\ne L'$ the latter is a 
finite set.)
Then
\begin{equation}\label{formula153}
\aligned
&\frak M_k(h_1,\dots,h_k)\\
&= 
\frak m^{\sharp}_k(h^0_1,\dots,h^0_k) 
+ dt \wedge \frak c_k(h^0_1,\dots,h^0_k) \\
&\,\,\,\,\,+dt \wedge \sum_{i=1}^k (-1)^{1+ 
\deg' h^0_1 + \dots + \deg' h^0_{i-1}} \frak m^{\sharp}_k(h^0_1,\dots,h^1_i,\dots,h^0_k)
\endaligned
\end{equation}
for $k\ne 1$ and
\begin{equation}\label{formula154}
\aligned
\frak M_1(h)
= 
&\frak m^{\sharp}_1(h^0) - \frac{\partial h^0}{\partial t}
+ dt \wedge \frak c_1(h^0) -dt \wedge  \frak m^{\sharp}_1(h^1)
\endaligned
\end{equation}
where $\frak m^{\sharp}_k$ and $\frak c_k$ are 
smoothly $t$-depedent family of operators 
with smooth Schwartz kernels, 
except $\frak m_1^{\sharp}$ contains the de Rham differential $d$ on $L$
and $\frak m_2^{\sharp}$ contains the cup product.
(They do not have smooth Schwartz kernels but their $t$ derivatives are $0$.)
(See \cite[Section 22.3]{fooo:springerbook}.)
\par
A filtered $A_{\infty}$ functor $\Psi^{n,c}_0: \frak F_c(\mathbb L^n;J_0) \to \frak F_c(\mathbb L^n;\mathcal J)$
we look for is of the form
$$
(\Psi^{n,c}_0)_k(g_1,\dots,g_k)
= (\Psi^{n,c}_0)^0_k(g_1,\dots,g_k) + dt \wedge (\Psi^{n,c}_0)^1_k(g_1,\dots,g_k),
$$
where $g_i$ are differential forms on $L$ or on $L\cap L'$ and 
$(\Psi^{n,c}_0)^0_k$, $(\Psi^{n,c}_0)^1_k$ have smooth Schwartz kernels, 
except $(\Psi^{n,c}_0)^0_1$ contains the identity map.\footnote{The 
Schwartz kernel of the identity map is a distributional form which is 
supported at the diagonal.}
(Note that the right hand side is a differential form on $[0,1] \times L$
or $[0,1] \times (L\cap L')$.) 
\begin{lem}\label{lem158}
There exists uniquely such a filtered $A_{\infty}$ functor 
with the following additional properties:
\begin{enumerate}
\item The composition $\frak J^n_0 \circ \Psi^{n,c}$ is the identity functor.
\item $(\Psi^{n,c}_0)^1_k = 0$.
\end{enumerate}
\end{lem}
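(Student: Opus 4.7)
The plan is to construct $\Psi := \Psi^{n,c}_0$ as the unique solution of an initial value problem in the parameter $t\in[0,1]$. Under the ansatz $(\Psi)^1_k = 0$, write $\Psi_k := (\Psi)^0_k$ for the remaining $t$-smooth components. Substituting this ansatz into the $A_{\infty}$ functor equation $\widehat{\Psi}\circ\widehat{\frak m} = \widehat{\frak M}\circ\widehat{\Psi}$ and using the expansions (\ref{formula153}), (\ref{formula154}), the equation splits according to its $dt$-component. Isolating the $\partial_t$-term arising from the $\frak M_1$ contribution, the $dt$-free part becomes the ODE
\begin{equation}\label{plan:ODE}
\frac{\partial \Psi_n(g_1,\dots,g_n)}{\partial t} \;=\; \sum_{\substack{k\ge 1 \\ \text{partitions}}} (-1)^{*}\, \frak m^{\sharp}_k\bigl(\Psi_{i_1}(\vec g_{I_1}),\dots,\Psi_{i_k}(\vec g_{I_k})\bigr) \;-\; \sum_j (-1)^{**}\, \Psi\bigl(\dots,\frak m_j(\vec g_{\dots}),\dots\bigr),
\end{equation}
while the $dt$-part records the constraint
\begin{equation}\label{plan:constraint}
\sum_{\substack{k\ge 1 \\ \text{partitions}}} (-1)^{***}\, \frak c_k\bigl(\Psi_{i_1}(\vec g_{I_1}),\dots,\Psi_{i_k}(\vec g_{I_k})\bigr) \;=\; 0.
\end{equation}
Condition (1) is equivalent to the initial data $\Psi_1|_{t=0} = \mathrm{id}$ and $\Psi_k|_{t=0} = 0$ for $k\ge 2$.

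I would solve (\ref{plan:ODE}) inductively on the arity $n$ and on the Novikov $T$-adic filtration. For each fixed $n$, the right-hand side is linear in $\Psi_n$ (through $\frak m^{\sharp}_1$ and through the $j=1$ summand), and its nonlinear contributions involve only $\Psi_1,\dots,\Psi_{n-1}$, which have been constructed at earlier inductive stages. The operators $\frak c_k$ and the non-distributional parts of $\frak m^{\sharp}_k$ are smooth in $t$, while the distributional pieces of $\frak m^{\sharp}_1$ and $\frak m^{\sharp}_2$ are $t$-independent by hypothesis; together with gapped-ness, which ensures that only finitely many $T$-modes contribute to each energy level, this reduces (\ref{plan:ODE}) to a standard linear evolution equation mode by mode, with a unique $t$-smooth solution satisfying the prescribed initial condition. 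Because the paper has arranged $J_t = J_0$ on an initial subinterval $[0,\delta)$, the pseudo-isotopy is trivial there, so $\frak m^{\sharp}_k|_{[0,\delta)}$ coincides with the operations of $\frak F(\mathbb L^n;J_0)$ and the initial data directly realize the identity functor on $[0,\delta)$.

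The main obstacle is to verify that the solution of (\ref{plan:ODE}) automatically satisfies the constraint (\ref{plan:constraint}) for all $t\in[0,1]$. Let $F := \widehat{\frak M}\circ\widehat{\Psi} - \widehat{\Psi}\circ\widehat{\frak m}$ be the functor defect, decomposed as $F = F^0 + dt\wedge F^1$. By construction $F^0 \equiv 0$. From the $A_{\infty}$ relations $\widehat{\frak m}\circ\widehat{\frak m} = 0$ and $\widehat{\frak M}\circ\widehat{\frak M} = 0$ one derives the standard linear identity
\[
\widehat{\frak M}\circ F \;=\; F \circ \widehat{\frak m},
\]
in the same spirit as the obstruction calculus for $A_{\infty}$ morphisms in \cite{fooobook}. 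Extracting the $dt$-component of this identity and using $F^0 \equiv 0$ produces an inhomogeneous linear first-order equation of the schematic form $\partial F^1/\partial t = L_t(F^1)$, with $L_t$ built from $\frak m^{\sharp}_k$ and $\frak c_k$. Triviality of the pseudo-isotopy near $t=0$ gives $F^1|_{t=0} = 0$, so uniqueness of solutions to this linear ODE forces $F^1 \equiv 0$ on $[0,1]$. Uniqueness of $\Psi$ itself is then immediate, since conditions (1) and (2) pin down both the initial data and the governing ODE (\ref{plan:ODE}). The principal technical difficulty lies in the careful sign bookkeeping needed to derive the linear equation for $F^1$, which is a category-level enhancement of the pseudo-isotopy arguments used throughout \cite[Chapter 4]{fooobook}.
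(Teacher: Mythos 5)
Your splitting of the $A_\infty$-functor equation into $dt$-free and $dt$-wedge halves has the $\partial_t$-term on the wrong side, and the error runs through the rest of the argument. The $\partial_t$-contribution to $\frak M_1$ lies in the $dt$-\emph{wedge} component of the output: the de Rham differential on $[0,1]\times L$ applied to a $t$-dependent form $h^0(t)$ with no $dt$-factor produces $d_Lh^0 + dt\wedge\partial_th^0$, and only the $d_L$ piece is absorbed into $\frak m^\sharp_1$. One can also see this directly from $\frak M_1\circ\frak M_1 = 0$: if $-\partial_th^0$ sat in the $dt$-free part alongside $\frak m^\sharp_1(h^0)$, the $dt$-free part of $\frak M_1\circ\frak M_1$ would be $(\frak m^\sharp_1-\partial_t)^2$, which does not vanish because of the $\partial_t^2$-term. (The way (\ref{formula154}) is typeset, with $-\partial_th^0$ not visibly wedged with $dt$, is easy to misread, and that is probably what misled you.)

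Once the $\partial_t$ is placed correctly, the two halves of the functor equation come out interchanged relative to your version. Under the ansatz $(\Psi^{n,c}_0)^1_k = 0$, the $dt$-wedge part gives the ODE
\[
\frac{d\Psi_k}{dt}({\bf x}) \;=\; \sum_{\ell\le k}\sum_{c\in I({\bf x},\ell)} \frak c_\ell\bigl(\Psi_*({\bf x}_{c;1}),\dots,\Psi_*({\bf x}_{c;\ell})\bigr),
\]
involving only the $\frak c_\ell$ operators, while the $dt$-free part is the constraint $\widehat\Psi\circ\hat d = \hat d^\sharp\circ\widehat\Psi$, involving $\frak m^\sharp_\ell$ but no $\partial_t$. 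Your proposal instead solves $\partial_t\Psi = \frak m^\sharp\circ\widehat\Psi - \widehat\Psi\circ\frak m$ and verifies $\frak c\circ\widehat\Psi = 0$; these are not the two halves of the functor equation, so even if both were established, $\Psi$ would not be an $A_\infty$ functor. Your ODE is also analytically worse: $\frak m^\sharp_1$ contains the de Rham differential $d_L$, which has no smooth Schwartz kernel, so the equation is a PDE rather than an ODE on Schwartz kernels and the mode-by-mode/Novikov-filtration induction does not apply, whereas the $\frak c_\ell$ all have smooth Schwartz kernels with $\frak c_1\equiv 0 \mod \Lambda_+$, which is exactly what makes the correct ODE uniquely solvable by induction on energy and arity. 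Your overall strategy, namely impose one half as a $t$-evolution with initial data from $\frak J^n_0\circ\Psi = \mathrm{id}$ and derive the other half via the defect identity $\widehat{\frak M}\circ F + F\circ\widehat{\frak m} = 0$ together with $F^1|_{t=0} = 0$, is the paper's strategy and would work once the halves are un-swapped; but as written, the correct $F^0$ equals $\frak m^\sharp\circ\widehat\Psi - \widehat\Psi\circ\frak m$ with no $\partial_t$, so solving your ODE establishes $\partial_t\Psi = F^0$ rather than $F^0 = 0$, and the argument does not close up.
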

\begin{proof}
We consider the condition that $(\Psi^{n,c}_0)^0_k$ and $(\Psi^{n,c}_0)^1_k = 0$
define a filtered $A_{\infty}$ functor.
Hereafter during the proof of Lemma \ref{lem158}, we write $\Psi_k$
in place of $(\Psi^{n,c}_0)^0_k$.
The condition consists of two systems of equations: one concerns the part 
which contains $dt$, the other  concerns the part 
which does not contain $dt$. 
In view of Formulas (\ref{formula153}) and (\ref{formula154}), 
the first one is:
\begin{equation}\label{eq15555}
0 = -\frac{d\Psi_k}{dt}({\bf x}) +
\sum_{\ell\le k}
\sum_{c\in I({\bf x},\ell)}\frak c_{\ell}
\left(
\Psi_{*}({\bf x}_{c;1}),\dots,
\Psi_{*}({\bf x}_{c;\ell})\right).
\end{equation}
The explanation of the second term of the right hand side is in order.
We put 
$$
\Delta^{\ell}: = (\Delta \otimes 
\underbrace{{\rm id} \otimes \dots \otimes {\rm id}}_{\ell-1})
\circ \dots \circ \Delta.
$$
This is a $\Lambda_0$ homomorphism from $B\mathscr C(a,b)$ to 
$B_{\ell}(B\mathscr C)(a,b)$.
($\mathscr C = \frak F(\mathbb L^n;J_0)$ here.)
We then write
\begin{equation}\label{Swedler}
\Delta^{\ell}({\bf x})
= \sum_{c\in I({\bf x},\ell)}{\bf x}_{c;1} \otimes \dots \otimes {\bf x}_{c;\ell},
\end{equation}
where $I({\bf x},\ell)$ is a certain index set depending on ${\bf x}$ and $\ell$.
In case ${\bf x}_{c;i} \in B_{m_i}\mathscr C$
$\Psi_{*}({\bf x}_{c;i})$ is $\Psi_{m_i}({\bf x}_{c;i})$ by definition.
\par
Note that $\Psi_k$ can be regarded as a $t$-parametrized family of operations
$$
\Psi_k(t) : B_k\mathscr C(a,b) \to \mathscr C(a,b)
$$
which is smooth in $t$ variables. Thus the first term makes sense.
The condition (1) gives an initial condition
(\ref{eq15555}), that is,
\begin{equation}\label{inicond1}
\Psi_1(0) = {\rm identity}, \qquad \Psi_k(0) = 0, \quad k > 1.
\end{equation}
We observe that $\frak c_1 \equiv 0 \mod \Lambda_+$.
Note that we can regard (\ref{eq15555}) 
as an ordinary differential equation on the Schwartz kernels.
Using these facts we can solve (\ref{eq15555}) uniquely with 
initial condition (\ref{inicond1}) by induction 
on energy and number filtrations.
(We use gapped-ness here.)
\par
To complete the proof of Lemma \ref{lem158}
it suffices to check that $\{\Psi_k\}$ defines a filtered $A_{\infty}$
functor.
The part of this condition which contains $dt$ is (\ref{eq15555}).
The other part can be written as follows.
\begin{equation}\label{formula157}
\aligned
&\sum_{\ell}\sum_{c\in I({\bf x},\ell)}
\sum_{i=1}^{\ell}(-1)^{*}\Psi_{\ell} 
({\bf x}_{c;1} \dots \frak m_*({\bf x}_{c;i})\dots {\bf x}_{c;\ell}) \\
&=\sum_{\ell}\sum_{c\in I({\bf x},\ell)}
\frak m^{\sharp}_{\ell}
(\Psi_{*}({\bf x}_{c;1}), \dots \Psi_{*}({\bf x}_{c;\ell})),
\endaligned
\end{equation}
where the sign $*$ is $* = \deg'{\bf x}_{c;1} + \dots + \deg'{\bf x}_{c;i-1}$.
We will prove this equality by induction on energy and number 
filtrations.
We remark that (\ref{formula157}) is satisfied at 
$t=0$ by (\ref{inicond1}).
\par
The $t$ derivative of the left hand side of (\ref{formula157}) becomes:
\begin{equation}\label{form15888}
\frak c\circ \widehat{\Psi} \circ \hat d = 
\frak c\circ \hat d^{\sharp} \circ \widehat{\Psi},
\end{equation}
where $\widehat{\Psi}$ is the co-homomorphism 
induced from $\{\Psi_k\}$, $\hat d$ is the coderivation induced from $\{\frak m_k\}$
and $\hat d^{\sharp}$ is the coderivation induced from $\{\frak m^{\sharp}_k\}$.
Using the fact that $\frak c_k$ strictly increases energy 
filtration we can use induction hypothesis to prove the equality in 
(\ref{form15888}).\footnote{We use the gapped-ness of our filtered 
$A_{\infty}$ structure here.}
\par
The $t$-derivative of the right hand side of (\ref{formula157}) 
is
\begin{equation}\label{form159999}
\frac{d\frak m^{\sharp}_*}{dt}(\widehat{\Psi}({\bf x}))
+\sum_{c\in I({\bf x},3)}
\frak m^{\sharp}_*\left(\widehat{\Psi}({\bf x}_{c;1})
\otimes \frak c_*(\widehat{\Psi}({\bf x}_{c;2}))\otimes
\widehat{\Psi}({\bf x}_{c;3}) \right)
\end{equation}
Here we use (\ref{eq15555}).
\par
For ${\bf y} \in B\frak F_c(\mathbb L^n;\mathcal J)$,
the $A_{\infty}$ relations of the structure operations 
induce the following formula:
\begin{equation}\label{form15333}
- \frac{d\frak m^{\sharp}_*}{dt}({\bf y})
+ (\frak c_* \circ \widehat d^{\sharp})({\bf y})
- 
\sum_{c\in I({\bf y},3)}
\frak m^{\sharp}_*\left({\bf y}_{c;1}
\otimes \frak c_*({\bf y}_{c;2})\otimes
{\bf y}_{c;3} \right)
= 0.
\end{equation}
We apply (\ref{form15333}) to ${\bf y} = \widehat{\Psi}({\bf x})$
to obtain (\ref{form15888}) = (\ref{form159999}).
\par
Thus (\ref{formula157}) is proved by induction.
The proof of Lemma \ref{lem158} is complete.
\end{proof}
In the same way we obtain 
a filtered $A_{\infty}$ functor $\Psi^{n,c}_1: \frak F_c(\mathbb L^n;J_1) \to \frak F_c(\mathbb L^n;\mathcal J)$.
\par
Using the bounding cochains, 
the maps $\Psi^{n,c}_0$,$\Psi^{n,c}_1$ induce 
filtered $A_{\infty}$ functor
$
\frak F(\mathbb L^n;J_j) \to \frak F(\mathbb L^n;\mathcal J)
$
for $j=0,1$, which we denote by $\Psi^n_j$.
\par
Let $b(0)$ be a bounding cochain of $L \in \mathbb L^n$
with respect to the filtered $A_{\infty}$ structure 
induced by an almost complex structure $J_0$.
We then obtain 
$
b(1): = (\frak J^n_1 \circ \Psi^n_0)_*(b(0)).
$
Here $b(1)$ is a bounding cochain of $L \in \mathbb L$
with respect to the filtered $A_{\infty}$ structure 
induced by an almost complex structure $J_1$.
Inspecting the definition 
we have
$b(t)$ which solves
$
\frac{d b(t)}{dt} = \sum_{\ell}
\frak c_{\ell}(b(t),\dots,b(t)),
$
so that at $t=0,1$ it becomes $b(0)$ and $b(1)$.
Using this fact we can find
$
(\Psi^n_0)_{\rm ob}(L,b(0)) = (\Psi^n_1)_{\rm ob}(L,b(1)).
$
This fact and the fact that $\Psi^n_j$ is a homotopy equivalence for $j=0,1$ 
implies 
\begin{equation}\label{eq1512}
d_{\rm H,\infty}(\frak{OB}(\frak F(\mathbb L^n;J_0;\Xi^n_0)),
\frak{OB}(\frak F(\mathbb L^n;J_1;\Xi^n_1))) = 0.
\end{equation}
Here  Hausdorff distance is taken with respect to  
Hofer infinite distance.
\par
We can perform the same construction for 
a pair $n,n+1$. 
Namely we obtain filtered $A_{\infty}$ categories
$\frak F(\mathbb L^n\sqcup \mathbb L^{n+1};\mathcal J)$
and filtered $A_{\infty}$ functors
$$
\Psi^{n,n+1}_j : 
\frak F(\mathbb L^n\sqcup \mathbb L^{n+1};J_j) 
\to \frak F(\mathbb L^n\sqcup \mathbb L^{n+1};\mathcal J).
$$
$\frak F(\mathbb L^n;\mathcal J)$
and $\frak F(\mathbb L^{n+1};\mathcal J)$
are full subcategories of 
$\frak F(\mathbb L^n\sqcup \mathbb L^{n+1};\mathcal J)$.
The next diagram commutes.
\begin{equation}\label{diagram1512}
\begin{CD}
\frak F(\mathbb L^{n};\mathcal J)
@>>>
\frak F(\mathbb L^{n}\sqcup \mathbb L^{n+1};\mathcal J) @ <<<
\frak F(\mathbb L^{n+1};\mathcal J)\\
@ AA{\Psi^{n}_j}A @ AA{\Psi^{n,n+1}_j}A@ AA{\Psi^{n+1}_j}A \\
\frak F(\mathbb L^{n};J_j)
@>>>
\frak F(\mathbb L^n\sqcup \mathbb L^{n+1};J_j) @ <<< \frak F(\mathbb L^{n+1};J_j)
\end{CD}
\end{equation}
Using the first line of Diagram (\ref{diagram1512}) 
we obtain 
$\Phi^n_{\mathcal J}: \frak F(\mathbb L^n;\mathcal J)
\to \frak F(\mathbb L^{n+1};\mathcal J)$.
Thus we obtain an
inductive system $(\{\frak F(\mathbb L^n;\mathcal J),\{\Phi^n_{\mathcal J}\}\})$.
\par
Moreover for  $(\{\frak F(\mathbb L^n;J_0),\{\Phi^n_{J_0}\}\})$
and $(\{\frak F(\mathbb L^n;J_1),\{\Phi^n_{J_1}\}\})$
the commutativity of Diagram (\ref{diagram1512})
implies
$
\Psi^{n+1}_j \circ  \Phi^n_{j}  =  \Phi^n_{\mathcal J} \circ \Psi^{n}_j,
$
for $j=0,1$.
This is an exact equality.
The rest of the proof of independence of $J$ is the same as the proof of Lemma \ref{1044lm}
and Theorem \ref{mainalgtheorem2}.
We use $\varinjlim\,\frak F(\mathbb L^n;\mathcal J)$, 
and (\ref{eq1512})
to conclude that $\varinjlim\,\frak F(\mathbb L^n;J_0;\Xi^n_0)$
is equivalent to $\varinjlim\,\frak F(\mathbb L^n;J_1;\Xi^n_1))$.
\par\smallskip
\noindent{\bf Step 6}.
We finally mention the `homotopy inductive limit argument'.
When we construct a filtered $A_{\infty}$ category $\frak F(\mathbb L)$
for $\mathbb L \in \mathfrak{Lag}^k_{\rm trans}$, we use the moduli spaces of 
pseudo-holomorphic disks, strips, and polygons
and Kuranishi structures and CF-perturbations thereon.
Actually we need to restrict ourselves to finitely many moduli spaces for this purpose.
The reason is explained in detail in \cite[Subsection 7.2.3]{fooobook2}.
The following way to resolve this problem is established in \cite[Section 7]{fooobook2}
and is used repeatedly in various papers by FOOO.
We use the notion of a filtered $A_{m,K}$-structure.
Here the natural number $m$ indicates that the structure operations $\frak m_k$ 
are defined only for $k \le m$
and the positive real number $K$ indicates that the structure operations are 
defined of the form 
$
\sum_{i} T^{\lambda_i} \overline{\frak m}_{k,i}
$
with $\lambda_i \le K$ and $\overline{\frak m}_{k,i}$ 
being $\R$ linear. Moreover the $A_{\infty}$ relation holds only modulo 
$T^{K}$. 
(See \cite[Subsection 7.2.6]{fooobook2} for the detail of the definition of 
filtered $A_{m,K}$ structures.)
We need to study only finitely many
moduli spaces to obtain such a structure.
We thus obtain $\frak F(\mathbb L;n,K)$.
Note that we obtain $A_{m,K}$ structure for any but finite $m,K$.
We next show that for $m\le m'$ and $K \le K'$
the filtered $A_{m',K'}$ category $\frak F(\mathbb L;m',K')$
regarded as a filtered $A_{m,K}$ category 
is homotopy equivalent to $\frak F(\mathbb L;m,K)$.
Then by an obstruction theory the $A_{m,K}$ structure 
on $\frak F(\mathbb L;m,K)$ extends to an $A_{m',K'}$ structure
without changing its underlying $A_{m,K}$ structure,
(in the chain level).
Thus going to $m' \to \infty$, $K'\to \infty$ we 
obtain a filtered $A_{\infty}$ structure.
See \cite[Subsection 7.2]{fooobook2} or 
\cite[Section 22.2]{fooo:springerbook} for detail.
\par
Now we can easily combine this `homotopy inductive limit argument' to our 
discussion to obtain a limit.
Suppose we have a strongly transversal sequence 
$\{\mathbb L^n\}$. 
We take for each $m,K$ a data $\Xi^n_{m,K}$,
that is, a system of Kuranishi structures and CF-perturbations,
on a finitely many moduli spaces, to obtain an $A_{m,K}$
structure. Then we use them for all $m,K$ to 
promote $A_{m,K}$ structure to $A_{\infty}$ structure.
This is the way we obtain $\frak F(\mathbb L^n)$.
\par
Now for $n$ and $n+1$, we obtained 
$\frak F(\mathbb L^n;m,K)$ and $\frak F(\mathbb L^{n+1};m,K)$.
We can extend it to obtain 
$\frak F(\mathbb L^{n}\sqcup \mathbb L^{n+1};m,K)$.
Now we can perform the process of extending $A_{m,K}$ 
structure to $A_{m',K'}$ structure in a way consistent with the 
fully faithful embeddings 
$$
\frak F(\mathbb L^{n};m,K)
\to \frak F(\mathbb L^{n}\sqcup \mathbb L^{n+1};m,K),
\quad 
\frak F(\mathbb L^{n+1};m,K)
\to \frak F(\mathbb L^{n}\sqcup \mathbb L^{n+1};m,K).
$$
Thus we obtain 
$\frak F(\mathbb L^{n}\sqcup \mathbb L^{n+1})$ 
together with fully faithful embeddings:
$$
\frak F(\mathbb L^{n})
\to \frak F(\mathbb L^{n}\sqcup \mathbb L^{n+1}),
\quad 
\frak F(\mathbb L^{n+1})
\to \frak F(\mathbb L^{n}\sqcup \mathbb L^{n+1}).
$$
Thus the construction of Step 1 works.
The construction of the other steps can be combined with the 
`homotopy inductive limit argument' in the same way.
\par
The proof of Theorem \ref{Lagcatecomplete} is now complete.
\qed
\begin{proof}[Proof of Corollary \ref{maintheorem1}]
The completed DG-category $\frak F(\mathbb L)$ is obtained 
in Theorem \ref{Lagcatecomplete}.
(1) is immediate from construction.
(2) is the consequence of Corollary \ref{cor15252}.
\end{proof}

\section{Hofer infinite distance in Lagrangian Floer theory.}
\label{sec;hoferinf1}

In this section, we prove Theorem \ref{estimateHofLaginf2}.
This is an improvement of Theorem \ref{estimateHofLag} which is 
\cite[Theorem 15.5]{FuFu6}. The proof of Theorem  \ref{estimateHofLaginf2} is also 
an enhancement of the proof of \cite[Theorem 15.5]{FuFu6}.
We begin with a review of the proof of \cite[Theorem 15.5]{FuFu6}.
Suppose we have a finite set of relatively spin compact Lagrangian 
submanifolds $\mathbb L$ of $(X,\omega)$.
(We can include the case when elements of $\mathbb L$ is immersed, 
if they are self-clean.)
We then obtain a (strict) filtered $A_{\infty}$ category 
$\frak F(\mathbb L)$ whose object is a pair 
$(L,b)$ where $L \in \mathbb L$ and $b \in \Lambda_+CF^{\rm odd}(L,L)$ is a 
bounding cochain.
Note that $CF(L,L) = \Omega(\tilde L \times_X \tilde L) \widehat{\otimes} \Lambda_0$ and $b$ is assumed to satisfy  Maurer-Cartan 
equation
$
\sum_{k=0}^{\infty} \frak m_k(b,\dots,b) = 0.
$
Let $\varphi : X \to X$ be a Hamiltonian diffeomorphism.
We assume $\varphi(L) \in \frak F(\mathbb L)$.
Theorem \ref{estimateHofLag} = \cite[Theorem 15.5]{FuFu6}
claims
\begin{equation}
d_{\rm Hof}((L,b),(\varphi(L),\varphi_*(b)) \le d_{\rm Hof}({\rm id},\varphi).
\end{equation}
The proof uses Yoneda embedding $\frak{YON}: \frak F(\mathbb L) \to \frak{Rep}(\frak F(\mathbb L)^{\rm op},
\mathcal{CH})$.
The target of this functor can be identified with 
the full sub-category of the filtered DG-category of right filtered $A_{\infty}$ modules,
which is represented by an object of $\frak F(\mathbb L)$.
Using the fact that Yoneda embedding is a homotopy equivalece to the 
image (\cite[Theorem 9.1]{fu4}) Corollary \ref{invhoferinfdist} implies that 
$$
d_{\rm Hof,\infty}(\frak{YON}(L,b),\frak{YON}(\varphi(L),\varphi_*(b)))
=
d_{\rm Hof,\infty}((L,b),(\varphi(L),\varphi_*(b)).
$$
Therefore we estimate the left hand side.
We write $\frak Y(L,b)$
and $\frak Y(\varphi(L),\varphi_*(b))$ instead of 
$\frak{YON}(L,b)$ and $ \frak{YON}(\varphi(L),\varphi_*(b))$
from now on.
\par
We review the definition of those objects.
The description below is the same as \cite{fu4}  except the following 
two points:
We use the notion of a right module rather than a functor to the 
DG-category of chain complexes.  (They are equivalent.)
We work over Novikov ring while the discussion of \cite{fu4}  is 
the case of $A_{\infty}$ category over the ground ring $R$.
There is no difference between them since our 
filtered $A_{\infty}$ category $\frak F(\mathbb L)$ is strict.
\par
A right  filtered $A_{\infty}$ module over $\frak F(\mathbb L)$
assigns a cochain complex 
$D(a)$ to each object $a$ of $\frak F(\mathbb L)$ and structure maps
$$
\frak n_k: D(a) \otimes B_k \frak F(\mathbb L)[1](a,b) 
\to D(b), 
$$
to each $k$ and objects $a,b$.
Note  that $\frak n_0$ is the co-boundary operator of $D(a)$.
We require that they satisfy  $A_{\infty}$ relations
\begin{equation}\label{rightmoddef}
\aligned
\sum_{c\in I({\bf x},2)}  \frak n_*(\frak n_*(y;{\bf x}_{2;1});{\bf x}_{2;2})
&+ \frak n_0(\frak n_k(y;{\bf x}))
\\
&+ \frak n_k(\frak n_0(y);{\bf x})
+ (-1)^{\deg' y}\frak n_*(y;\hat d({\bf x}))
= 0.
\endaligned
\end{equation}
Here we use notation (\ref{Swedler}).
$\hat d$ is the co-derivation induced by  
$A_{\infty}$ operations.
Note that in this paper we do not include $B_0\mathscr C(a,b)$
in $B\mathscr C(a,b)$
so we include the second and the third terms. 
In the convention of several other papers 
they are spacial cases of the first term.
\par
If $c$ is an object of $\frak F(\mathbb L)$
the Yoneda embedding associates a
right $\frak F(\mathbb L)$ module 
$\frak Y(c)$ that is
\begin{equation}\label{yonedadef}
\left\{
\aligned
&(\frak Y(c))(a): = CF(c,a;\Lambda_0) \\
&\frak n_k(y;x_1,\dots,x_k)
:= \frak m_{k+1}(y,x_1,\dots,x_k)
\endaligned
\right.
\end{equation}
(We remark that the operations $\frak m_{k+1}$ in this formula already contains the correction 
by the bounding cochains.)
(\ref{rightmoddef}) is a consequence of the $A_{\infty}$ relations of $\frak m_k$.
\par
Now we assume that $(L,b)$ and $(\varphi(L),\varphi_*(b))$ 
are objects of $\frak F(\mathbb L)$.
We review the construction of a right module homomorphism
from $\frak Y(L,b)$ to $\frak Y(\varphi(L),\varphi_*(b))$
which was given in \cite[Section 15.5]{FuFu6} 
based on \cite[Subsection 5.3.1]{fooobook}.
\par
We first review the construction of the bounding cochain 
$\varphi_*(b)$.
(The discussion below is sketchy since it is a review 
of the construction of \cite[Section 15.5]{FuFu6} 
and \cite[Subsection 4.6.1]{fooobook}.
See those papers for  detail.)
Let $J$ be the almost complex structure we use 
to define our filtered $A_{\infty}$ category $\frak F(\mathbb L)$.
We take time dependent Hamiltonian $H : X \times [0,1] \to \R$.  
Let $\varphi^{t} : X \to X$ be the family 
of Hamiltonian diffeomorphisms such that
$\varphi^0 = {\rm id}$ and $\frac{d}{d{t}}\varphi^{t} = \frak X_{H_{t}}\circ \varphi^{t}$.
Here $\frak X_{H_{t}}$ is the Hamiltonian vector field associated to 
$H_{t}(x): = H(x,t)$.
We assume $\varphi^1 = \varphi$. We may assume that $H_{t}$ is a constant 
function if $t$ is in a small neighborhood of $0$ or of $1$.
\par
We consider a one parameter family of almost complex structures 
$J^{(\rho)}$, such that $J^{(0)} = J$ and $J^{(1)} = (\varphi^{-1})_*J$.
(Later we specify the choice.)
We observe that moduli spaces of $J^{(1)}$-holomorphic disks
bounding $L$ is canonically identified with  moduli spaces of $J$-holomorphic disks
bounding $\varphi(L)$.
\par
We consider the moduli spaces of 
pseudo-holomorphic maps $u$ bounding $L$ from the tree like union $\Sigma$
of disks with $k+1$ boundary marked points 
$(z_0,\dots,z_k)$
together with tree like unions of sphere components 
rooted at  interior points of disk components.  The map $u$ is required to be $J^{(\rho_{\alpha_i})}$ holomorphic 
on a disk component $D^2_{\alpha_i}$ of $\Sigma$ where $\rho_{\alpha_i}$ depends on the 
component. We require $\rho_{\alpha_i} \ge \rho_{\alpha_j}$ if $D^2_{\alpha_i}$ 
is `closer' to the $0$-th marked point $z_0$ than $D^2_{\alpha_j}$.
(See Figure \ref{fig19FOOO}.)
\begin{figure}[h]
\centering
\includegraphics[scale=0.8]{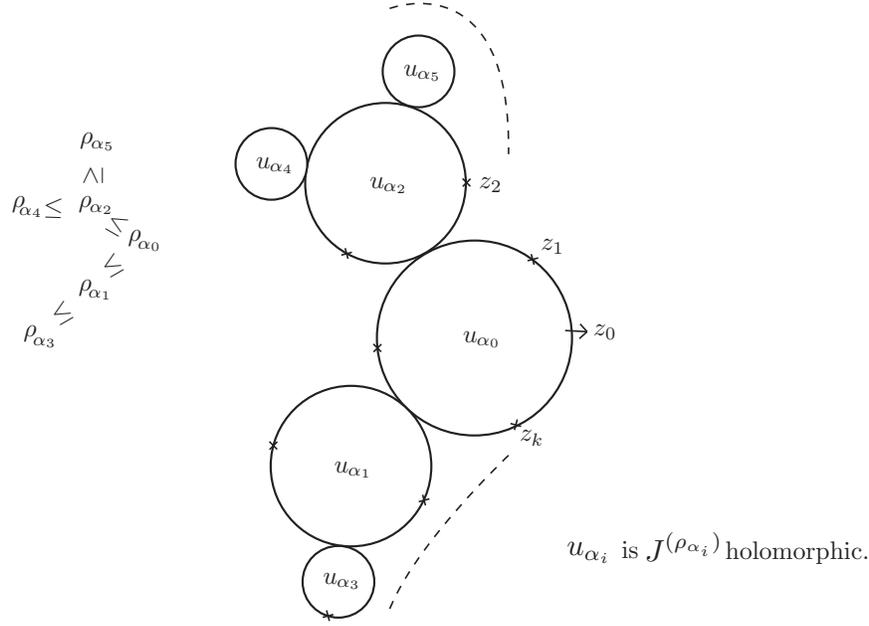}
\caption{Time ordered product moduli space.}
\label{fig19FOOO}
\end{figure}
When there is a tree of sphere components, we require that $u$ is 
$J^{(\rho_{\alpha_i})}$ holomorphic there, if the root of this tree 
of sphere components is at $D^2_{\alpha_i}$.
\par
As we mentioned already the moduli spaces of $J^{(1)}$ holomorphic disks
which bounds $L$ 
is canonically identified with   
the moduli spaces of $J$ holomorphic disks
which bounds $\varphi(L)$.
We use Kuranishi structures and CF-perturbations of the 
latter when we defined  $\frak F(\mathbb L)$.
We use exactly the same  Kuranishi structures and CF-perturbations
for the moduli spaces of $J^{(1)}$ holomorphic disks
which bound $L$. 
The Kuranishi structures and CF-perturbations 
of $J^{(0)}$ (= $J$) holomorphic disks
which bounds $L$ is also given while we defined  $\frak F(\mathbb L)$.
We extend them to  Kuranishi structures and CF-perturbations 
of the moduli space of maps $u$ as in Figure \ref{fig19FOOO}.
It defines a map
$
\frak g_k: B_k CF(L,L) \to CF(\varphi(L),\varphi(L)).
$
By studying its boundary we can find that it is a filtered $A_{\infty}$
homomorphism between two (curved) filtered $A_{\infty}$ algebras 
$CF(L,L)$ and $CF(\varphi(L),\varphi(L))$.
(Their curved filtered $A_{\infty}$ structures are defined 
during the construction of $\frak F(\mathbb L)$ using the almost 
complex structure $J$.)
Now we put
$
\varphi_*(b) := \sum_{k=0}^{\infty} \frak g_k(b,\dots,b).
$
\par
We next review the construction of 
the right filtered $A_{\infty}$ module homomorphism
$t^1_{c,c'}: \frak Y(L,b)\to \frak Y(\varphi(L),\varphi_*(b))$.
(Here $c = \frak Y(L,b)$ and $c' =\frak Y(\varphi(L),\varphi_*(b))$.)
Let $M_0,\dots,M_k \in \mathbb L$ and $b_i$ be bounding cochain of $M_i$.
We will define:
\begin{equation}
\phi_k : CF(L,M_0) \otimes CF(M_0,M_1) \otimes \dots 
\otimes CF(M_{k-1},M_k) \to CF(\varphi(L),M_k).
\end{equation}
Here and hereafter we write $M_i$ instead of $(M_i,b_i)$.
\par
To define $\phi_k$ we use moduli spaces 
which we describe below. See \cite[Definition 15.26]{FuFu6}
for detail.
Let $\chi: \R \to [0,1]$ be a non-decreasing smooth function 
such that $\chi(\tau) = 0$ for sufficiently small $\tau$ 
and $\chi(\tau) = 1$ for sufficiently large $\tau$.
We take a $(\tau,t) \in \R \times [0,1]$ dependent family 
of almost complex structures $J_{\tau,t}$ with the following 
properties. 
\begin{enumerate}
\item
There exists $A>0$ such that $J_{\tau,t} =
J$
if $\tau< -A$.
\item
\begin{equation}
J_{\tau,t} = (\varphi^{t})^{-1}_* J
\end{equation}
if $\tau > + A$.
\item 
We dentote by $\varphi_{c H}^{\rho}$ the one parameter family 
of Hamilton diffeomorphisms generated by
the time dependent Hamiltonian $cH : X \times [0,1] \to \R$.
Then
\begin{equation}
J_{\tau,1} = (\varphi_{\chi(\tau) H}^{1})^{-1}_* J
\end{equation}
if $\tau > 0$.
\item
$J_{\tau,t} = J$ for any $\tau$ and for $t$ sufficiently close to $0$.
\end{enumerate}
We denote this source dependent (that is, $(\tau,t)$ dependent)  family 
of almost complex structures by $\mathcal J^{1;c,c'}$.
\par
Note that we used a one parameter family of almost complex structures 
$J^{(\rho)}$ to obtain $\varphi_*(b)$. We specify this choice 
as follows. We take an order preserving diffeomorphism
$
\theta :  (0,1) \to \R
$
and put
$
J^{(\rho)} = (\varphi_{\chi(\theta(\rho)) H}^{1})^{-1}_*J.
$
Now for a map $u : \R \times [0,1] \to X$ 
we consider the equation 
\begin{equation}\label{eq1520}
\frac{\partial u}{\partial \tau} + J_{\tau,t}
\left(\frac{\partial u}{\partial t}
- \chi(\tau) \frak X_{H_t}\right) = 0.
\end{equation}
We denote the $(\tau,t)$-dependent Hamilton vector field 
$\chi(\tau) \frak X_{H_t}$ by $\frak X^{1;c,c'}$
and put $\mathcal X^{1;c,c'}_{\tau,t}:  = \chi(\tau) \frak X_{H_t}$.
\par
The boundary condition at $t =1$ is
\begin{equation}
u(\tau,1) \in L.
\end{equation}
The boundary condition at $t=0$ is as follows.
We require that there are $k$ boundary marked points 
$z_{0,1}=(\tau_{0,1},0), \dots, z_{0,k}=(\tau_{0,k},0)$
with $\tau_{0,1} < \dots < \tau_{0,k}$.
We then require
\begin{equation}
u(\tau,t) \in M_i
\end{equation}
if $\tau \in (\tau_{0,i},\tau_{0,i+1})$.
(Here we put $\tau_{0,-1} = -\infty$, $\tau_{0,k+1} = \infty$ 
as convention.)
\par
We consider the configuration of such $u$ and several 
of the configurations as in Figure \ref{fig19FOOO} 
attached at $t=1$. 
If a disk $D^2_{\alpha}$ is attached at $(\tau,1)$ 
the map $u_{\alpha}$ on $D^2_{\alpha}$ 
is required to be $J^{(\rho)}$ holomorphic 
with $\theta(\rho) \le \tau$. See Figure \ref{Figure15-23} below.
\begin{figure}[h]
\centering
\includegraphics[scale=0.48]{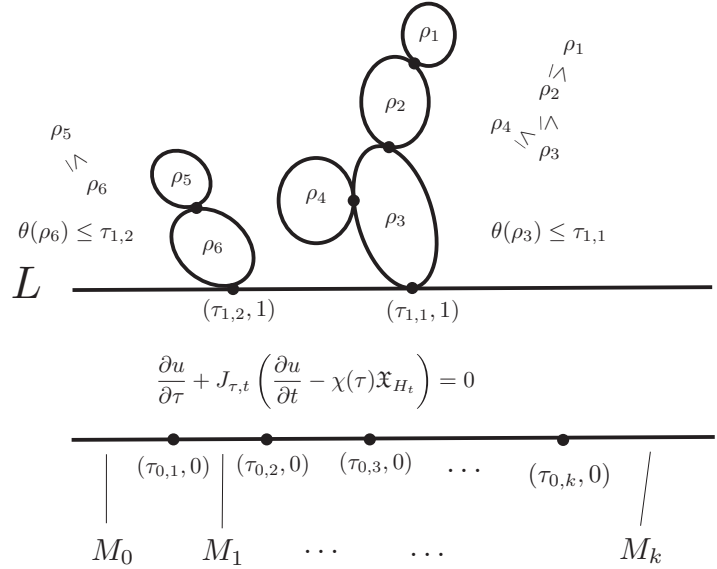}
\caption{An element of the moduli space used to define $\phi_k$.}
\label{Figure15-23}
\end{figure}
We use the compactifications of the moduli spaces 
of such configurations and their CF-perturbations to 
define $\phi_k$. 
Actually we put more marked points on $t=0$ and $t=1$ (and 
the boundaries of trees of disks attached to $t=1$)
and put bounding cochains (of $M_i$ or of $L$) there.
\par
The condition that $\{\phi_k\}$ defines a right filtered $A_{\infty}$
module homomorphism is written as the next formula.
\begin{equation}\label{form1611}
\aligned
&(-1)^{\deg'y}\phi_*(y;\hat d({\bf x}))
+ \sum_{i \in I({\bf x},2)}\phi_*(
\frak n(y;{\bf x}_{i;1}),{\bf x}_{i;2}) \\
&= \sum_{i \in I({\bf x},2)}\frak n_*(\phi_*(y;{\bf x}_{i;1});{\bf x}_{i;2}).
\endaligned
\end{equation}
Here we use notation (\ref{Swedler}).
As usual, this formula can be checked by studying the 
boundary of our moduli spaces (and using Stokes' formula 
\cite[Theorem 9.28]{fooo:springerbook} and  composition formula \cite[Theorem 10.21]{fooo:springerbook}).
The first term of the left hand side corresponds 
to the boundary where a polygon bubbles at the line $t=0$.
The second term of the left hand side corresponds
to the boundary where a strip escapes to $\tau \to -\infty$.
The right hand side corresponds
to the boundary where a strip escapes to $\tau \to +\infty$.
All the contributions from other boundaries cancel out 
by using bounding cochains attached.
See \cite[Figure 91]{FuFu6}.
\par
The estimate of the energy loss of $\phi_k$ 
follows from the inequality:
\begin{equation}\label{1612form}
\int_{\R \times [0,1]} u^* \omega
+ \lim_{\tau\to+\infty}\int_{[0,1]} H(t,u(\tau,t)) dt
\ge -\int_{0}^1 \inf(H_t) dt
\end{equation}
for an element $u$ of our moduli space.
This is an estimate which goes back to 
Chekanov \cite{chekanov} and was used in \cite{fooobook,fooo:bidisk}.
See \cite[Lemma 15.29]{FuFu6} for its proof.
\par
The upshot of this construction is 
that we can use $H_t$ to 
obtain $\mathcal J^{1;c,c'}$, $\frak X^{1;c,c'}_{\tau,t}$ 
and construct $t^1_{c',c}= \{\phi_k\}$.
Hereafter we write $\phi^1_{c,c';k}$ in place of $\phi_k$.
\par
The right filtered $A_{\infty}$ homomorphism
$t^1_{c,c'}: \frak Y(\varphi(L),\varphi_*(b))\to \frak Y(L,b)$
can be constructed by using $H'(x,t) = -H(x,1-t)$.
Namely 
$J^{1;c',c}_{\tau,t} = J^{1;c',c}_{-\tau,t}$
and $\frak X^{1;c',c}_{\tau,t} = - \frak X^{1;c',c}_{\tau,1-t}$.
\begin{rem}
There is a slight asymmetry between $L$ and $\varphi(L)$.
If we want them to be completely symmetric we take 
$L'$ and put $L = \varphi_0(L')$ and $\varphi(L) = \varphi_1(L')$.
\end{rem}
\par
We next discuss the construction of the cochain homotopy $s^1_{c}$
from $t^1_{c,c'}\circ t^1_{c',c}$ to the identity.
\par
We consider $A>0$ such that $J_{\tau,t} =
J$
if $\tau< -A$.
For $R > A$, we define a one parameter family of 
source dependent almost complex structures 
$J^{1;c}_{R;\tau,t}$
by
\begin{equation}\label{famiJeq}
J^{1;c}_{R;\tau,t}
= 
\begin{cases}
\aligned
J^{1;c,c'}_{R+\tau,t} \qquad &\tau <A-R \\
J^{1;c',c}_{-R+\tau,t} \qquad &\tau >R-A \\
(\varphi^{t})^{-1}_* J \qquad &A-R \le \tau \le R-A,
\endaligned
\end{cases}
\end{equation}
and a one parameter family of 
source dependent Hamiltonian vector fields 
$\frak X^{1;c}_{R;\tau,t}$ by
\begin{equation}\label{famiXeq}
\frak X^{1;c}_{R;\tau,t}
= 
\begin{cases}
\aligned
\frak X^{1;c,c'}_{R+\tau,t} \qquad &\tau <A-R \\
\frak X^{1;c',c}_{-R+\tau,t} \qquad &\tau >R-A \\
\frak X_{H_t}  \qquad & A-R \le \tau \le R-A.
\endaligned
\end{cases}
\end{equation}
We extend the families $J^{1;c}_{R;\tau,t}$ and $\frak X^{1;c}_{R;\tau,t}$
to $R \in [0,A]$ so that 
$J^{1;c}_{0;\tau,t} = J$ and $\frak X^{1;c}_{0;\tau,t} = 0$
and obtain $\mathcal J^{1;c}$ and $\frak X^{1;c}$.
\par
We change  (\ref{eq1520})   to
\begin{equation}\label{eq15202}
\frac{\partial u}{\partial \tau} + J^{1;c}_{R;\tau,t}
\left(\frac{\partial u}{\partial t}
- \frak X^{1;c}_{R,\tau,t}\right) = 0.
\end{equation}
Thus in the same way we obtain a
totality of the pair $(R,u)$ of $R \in [0,\infty)$ and 
a solution $u$ of (\ref{eq15202}) together with 
the configurations as in Figure \ref{fig19FOOO} 
attached at $t=1$.
\par
This system of moduli spaces has a system of Kuranishi structures 
and CF-perturbations which are compatible with 
the previously defined ones at the boundary.
We thus obtain:
\begin{equation}
\aligned
\psi^1_{c;k} : CF(L,M_0) &\otimes CF(M_0,M_1) 
\\
&\otimes \dots 
\otimes CF(M_{k-1},M_k) 
\to CF(L,M_k).
\endaligned
\end{equation}
We put $s^1_{c} = \{\psi^1_{c;k}\}$.
The system of $\Lambda_0$ module homomorphisms $s^1_{c}$
can be regarded as a pre-natural transformation 
between two right filtered $A_{\infty}$ modules (that is identified as 
filtered $A_{\infty}$ functors).
It is not a cocycle, that is, $s^1_{c}$ is not a 
right module homomorphism. 
The boundary $\frak M_1(s^1_{c})$ is defined by the 
next formula.
\begin{equation}
\aligned
\frak M_1(s^1_{c})_k({\bf x})
=
&\sum_{i \in I({\bf x},2)}\psi^1_{c;*}(
\frak n(y;{\bf x}_{i;1}),{\bf x}_{i;2})\\
&- \sum_{i \in I({\bf x},2)}\frak n_*(\psi^1_{c;*}(y;{\bf x}_{i;1});{\bf x}_{i;2})
+ (-1)^{\deg'y}\psi^1_{c,c;*}(y;\hat d({\bf x})).
\endaligned
\end{equation}
Here we use notation (\ref{Swedler}).
As in the case of (\ref{form1611}),
the three terms of the right hand sides 
corresponds to the three types of the boundary 
of the moduli space of the solutions of (\ref{eq15202}).
The first term corresponds
to the boundary where a strip escapes to $\tau \to -\infty$.
The second term corresponds
to the boundary where a strip escapes to $\tau \to +\infty$.
The third term corresponds 
to the boundary where a polygon bubbles at the line $t=0$.
Many of the other boundary components are cancel out 
by using bounding cochains.
It remains  two types of boundary components which correspond
to $R \to \infty$ and to $R \to 0$, respectively.
We thus obtain the required formula
$
\frak M_1(s^1_{c})+ \frak M_2(t^1_{c,c'},t^1_{c',c})
= {\rm id}.
$
The review of the proof of Theorem \ref{estimateHofLag}
is over. We will continue to prove
Theorem \ref{estimateHofLaginf2}.
\par
The next step is to obtain a pre-natural transformation 
$t^2_{c,c'} = \{\phi^2_{c,c';k}\}$, which satisfies:
\begin{equation}\label{eqtobesolved}
\frak M_1(t^2_{c,c'}) + \frak M_2(t^1_{c,c'},s^1_{c'})
+ \frak M_2(s^1_{c},t^1_{c,c'})
= 0.
\end{equation}
(See (\ref{form128}). Here we use the sign convention of 
$A_{\infty}$ structure. In (\ref{form128}) the sign 
convention of DG-categories is used.)
We remark that the second and third terms are 
obtained from a one parameter family of source dependent 
almost complex structures and Hamiltonian vector fields 
which is obtained by `composing' those 
that were used to obtain $t^1_{c,c'},s^1_{c'},s^1_{c},t^1_{c,c'}$.
Those are depicted  by Figure \ref{Figuret2}. 
\begin{figure}[h]
\centering
\includegraphics[scale=0.48]{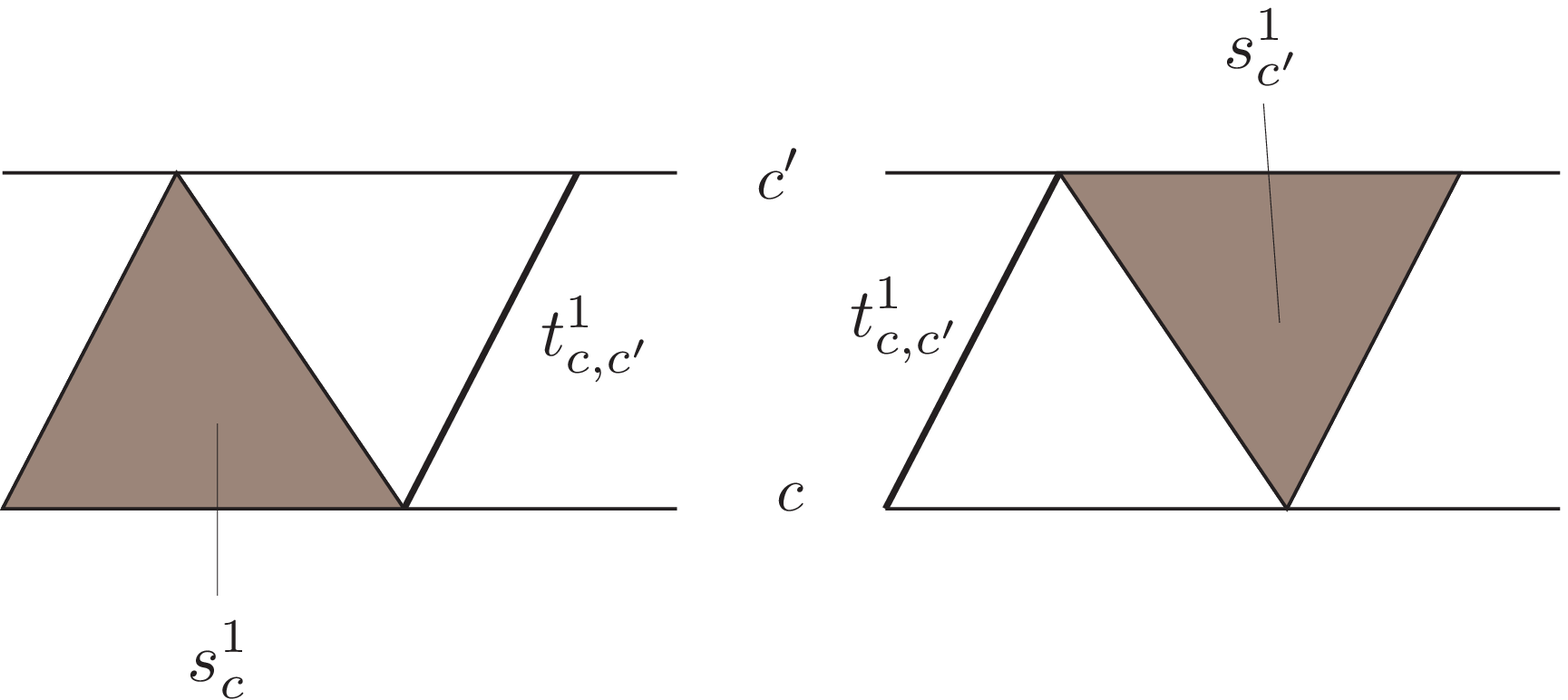}
\caption{Family of piecewise linear functions.}
\label{Figuret2}
\end{figure}
The gray regions of the figure show a one parameter families 
of piecewise linear functions as in Figure \ref{1parafamilypl},
which can be converted to a one parameter 
family of source dependent almost complex structures 
and Hamiltonian vector fields as we did during the construction of $s^1_{c}$.
\begin{figure}[h]
\centering
\includegraphics[scale=0.48]{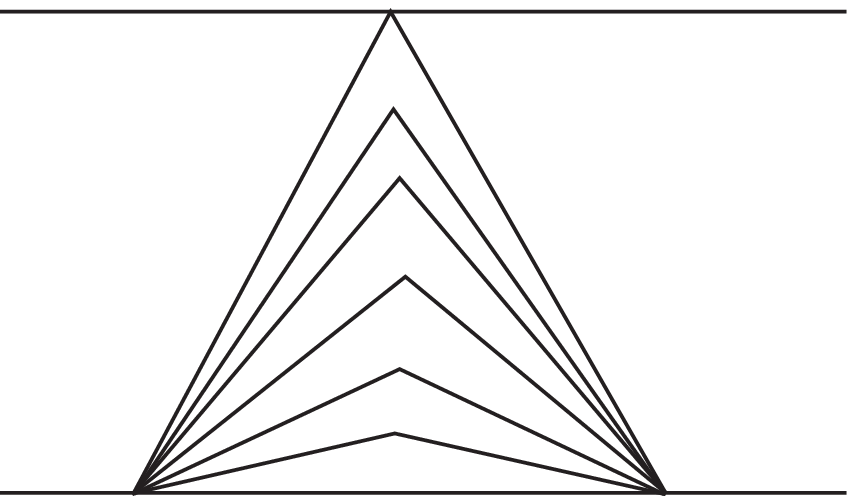}
\caption{Family of piecewise linear functions.}
\label{1parafamilypl}
\end{figure}
We join this family of piecewise linear functions
with an afine function representing $t^1_{c,c'}$.
Thus the third term of (\ref{eqtobesolved}) 
corresponds to the left of Figure 
\ref{Figuret2} and 
the second term of (\ref{eqtobesolved}) 
corresponds to the right of Figure \ref{Figuret2}.
The boundary of the two one parameter families 
intersect at one point which represents 
$t^1_{c,c'} \circ t^1_{c',c} \circ t^1_{c,c'}$.
The other ends correspond 
to ${\bf e}_c \circ t^1_{c,c'}$ 
and $t^1_{c,c'} \circ {\bf e}_{c'}$, respectively.
As (pre)natural transformations they 
coincide with each other, but, as piecewise linear functions 
or source dependent almost complex structures and 
Hamiltoniam vector fileds, they are different.
We join them by a one parameter family of 
piecewise linear functions depicted by Figure \ref{Fantoncell1}.
We call this (and similar) family as a phantom cell.
\begin{figure}[h]
\centering
\includegraphics[scale=0.48]{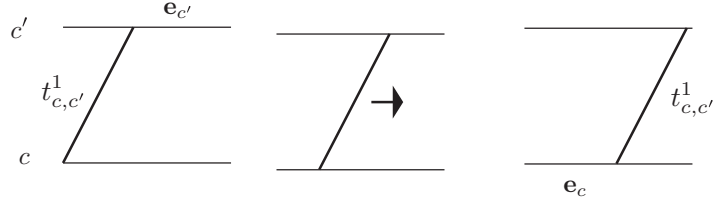}
\caption{A phantom cell.}
\label{Fantoncell1}
\end{figure}
We thus obtain a cell decomposition of $S^1$
into three 1-cells which parametrize piecewise linear functions.
We can find a 2-cell $\Delta^2$ and  a family of piecewise linear functions 
parametrized by $\Delta^2$ so that on the boundary $\partial \Delta^2 = 
S^1$ it becomes the 
above described $S^1$ parametrized family.
In fact we can take $s f + (1-s) f_0$ 
where $f_0$ is the unique affine map so that 
$f_0(0) = 0$ and $f_0(1) = 1$ at $1$, for example.
It gives a map from $\partial \Delta^2 \times [0,1]$ to 
the space of piecewise linear functions.
At $s=1$ the  function is independent  of $\partial \Delta^2$
factor. So it induces a map from the cone of $\partial \Delta^2$ 
to the space of piecewise linear functions.
\par
In the same way as (\ref{famiJeq}) and (\ref{famiXeq}),
from the family of piecewise linear functions
we can obtain a $\Delta^2$ parametrized family of source dependent families of 
almost complex structures and Hamiltonian vector fields.
Then for each point in the interior ${\rm Int}{\Delta^2}$ of $\Delta^2$
we obtain an equation similar to (\ref{eq15202})
where $R$ there is replaced by a point of ${\rm Int}{\Delta^2}$.
At the boundary $\partial{\Delta^2}$ the moduli space of its solutions 
becomes the (fiber) products of the moduli spaces 
which was used to define $t^1_{c,c'}$, $t^1_{c',c}$, $s^1_{c}$,
$s^1_{c'}$.
These moduli spaces have Kuranishi structures and 
CF-perturbations which are compatible with those 
used to define $t^1_{c,c'}$, $t^1_{c',c}$, $s^1_{c}$,
$s^1_{c'}$.
Therefore we  use those 
Kuranishi structures and 
CF-perturbations to define
\begin{equation}\label{mapfor619}
\aligned
\phi^2_{c,c';k} : CF(L,M_0) &\otimes CF(M_0,M_1) \\
&\otimes \dots 
\otimes CF(M_{k-1},M_k) \to CF(\varphi(L),M_k).
\endaligned
\end{equation}
This is the definition of $t^2_{c,c'}$.
As in the case of $s^1_{c}$ the moduli space 
we used to define $t^2_{c,c'}$ has 
4 types of boundaries.
Three of them correspond to $\frak M_1(t^2_{c,c'})$.
The last one is obtained 
from the $S^1 = \partial\Delta^2$ parametrized family.
Two among three 1-cells of  $\partial\Delta^2$ give 
$\frak M_2(t^1_{c,c'},s^1_{c'})$
and  $\frak M_2(s^1_{c},t^1_{c,c'})$,
respectively,
by construction.
We claim that the phantom cells do not contribute 
to the operation.
In fact, the moduli spaces of the solutions of the 
equation do not change if we move on a phantom 
1-cell. We can take our CF-perturbations 
so that this symmetry is preserved.
Then the integration along the fiber is zero 
in the chain level. (This argument is the same 
as the proof of strict unitality of the degree-$0$ form $1$.)
Thus we verified (\ref{eqtobesolved}).
The estimate of the energy loss is the same 
as (\ref{1612form}).
\par
To clarify the construction we will 
discuss one more step of the construction
explicitly.
We will construct $s^2_{c} = \{\psi^2_{c;k}\}$, that satisfies:
\begin{equation}\label{eqtobesolved2}
\frak M_1(s^2_{c}) + \frak M_2(t^2_{c,c'},t^1_{c',c})
+ \frak M_2(t^1_{c,c'},t^2_{c',c}) + \frak M_2(s^1_{c},s^1_{c})
= 0.
\end{equation}
The second, third and fourth terms of the 
left hand side is depicted by Figure \ref{Figures2} which is similar to 
Figure \ref{Figuret2}.

\begin{figure}[h]
\centering
\includegraphics[scale=0.45]{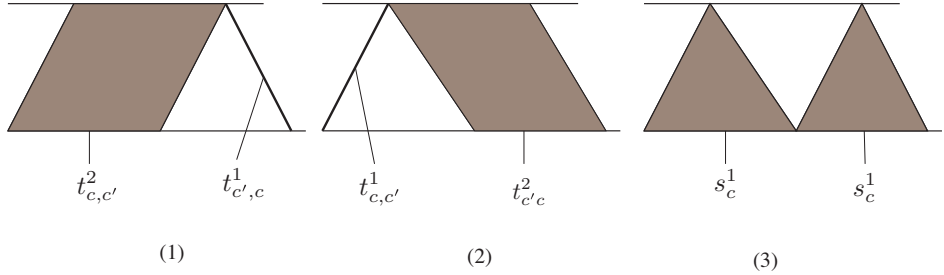}
\caption{2,3 and 4th terms of (\ref{eqtobesolved2}).}
\label{Figures2}
\end{figure}

(1)(2)(3) of Figure \ref{Figures2} correspond 2-cells parametrizing 
piecewise linear functions. They are glued to give a 
two dimensional cell complex depicted by Figure \ref{Figures2poly} below.

\begin{figure}[h]
\centering
\includegraphics[scale=0.45]{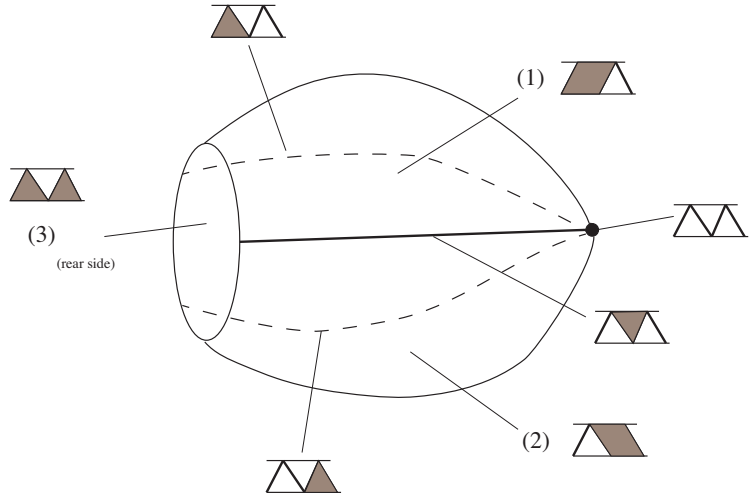}
\caption{A cell complex describing (\ref{eqtobesolved2}).}
\label{Figures2poly}
\end{figure}

The 2-dimensional cell complex in Figure \ref{Figures2poly} 
is homeomorphic to a 2-cell and its boundary is a 
1-dimensional cell complex homeomorphic to a circle 
and its 1-cells are phantom cells. We depict it in Figure \ref{Phantom2}
below.
(The arrows in the figure show how a part of the graph moves 
when we move on the circle in the counter clockwise direction.) 
\begin{figure}[h]
\centering
\includegraphics[scale=0.45]{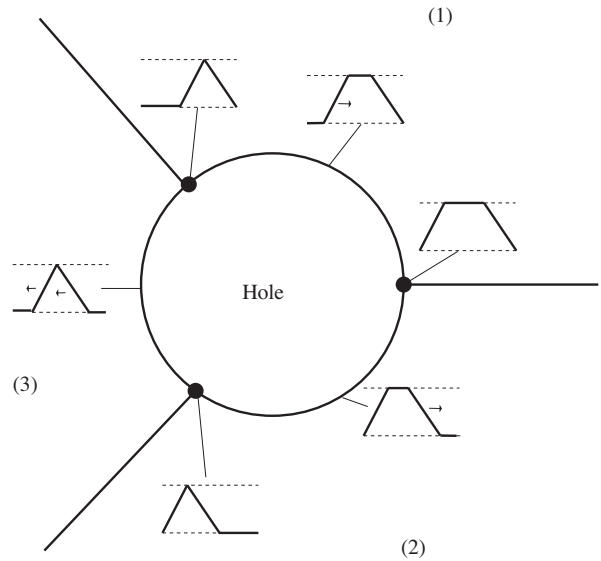}
\caption{A circle of phantom cells.}
\label{Phantom2}
\end{figure}
\par
We can fill this hole and construct a 2-cycle parametrizing a 
piecewise linear functions as follows.
\par
We observe that a piecewise linear function corresponding to a point 
of a phantom cell has a part (a union of intervals) where the function 
is constant, (that is $0$ or $1$). Suppose those intervals are 
$[a_1,b_1]$, \dots, $[a_m,b_m]$ with $b_i < a_{i+1}$.  
(In our particular situation there are at most 
two such intervals. We describe this process 
in such a way that we can use it in more general 
situation also.)
At time $s \in [0,1]$ we shrink the length of each 
such interval by the factor $(1-s)$.
Namely the first interval becomes $[a_1,b_1 - s(b_1-a_1)]$
The second interval becomes $[a_2 - s(b_1-a_1),b_2 - s(b_1-a_1)-
s(b_2-a_2)]$ and etc.
The exception is the case when $b_m = 1$. In that case we do not change 
$b_m$ but change $a_m$ to $a_m - \sum_{i=1}^{m-1} s(b_i-a_i)$.
We move the part where the piecewise linear functions are 
non-constant to the left.
See Figure \ref{fillhole}.
\begin{figure}[h]
\centering
\includegraphics[scale=0.45]{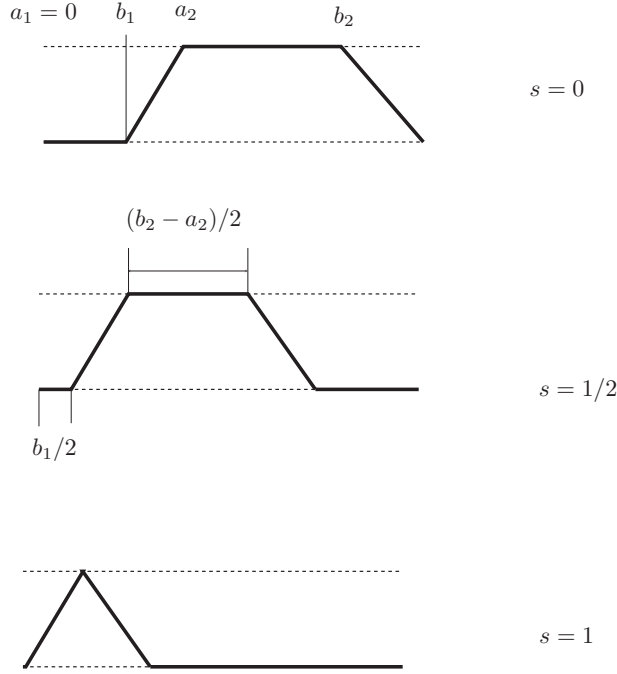}
\caption{Fill the hole.}
\label{fillhole}
\end{figure}
At $s=1$ certain parameters of a phantom cell become trivial 
and so the dimension of the cell complex strictly decreases.
Therefore we obtain a cell complex of dimension 2 (in our case) 
such that its boundary is $S^1$.
Note that the complex we use to fill the hole consists of phantom cells.
\par
We thus obtain a 2-dimensional cell complex which parametrizes  
piecewise linear functions and which is a {\it cycle}.
\par
Therefore we can find a 3-dimensional cell complex $C_3$
which parametrizes piecewise linear functions and whose boundary 
is the above 2 cycle. 
Thus we obtain a $C_{3}$ parametrized family of source dependent 
families of almost complex structures and of Hamiltonian 
vector fields. Then we obtain a moduli space of solutions of
an equation similar to (\ref{eq15202}) where 
$R$ is replaced by an element of $C_3$. 
\par
We extend Kuranishi structures and CF-perturbations 
to these moduli spaces.
Then we obtain  maps $\phi^2_{c;k}$ 
as in (\ref{mapfor619}).
We thus obtained $s^2_{c} = \{\psi^2_{c;k}\}$.
Using the fact that the boundary of $C_3$
is the union of 2-cells as in Figure \ref{Figures2poly}
and phantom cells we can prove (\ref{eqtobesolved2})
by Stokes' theorem (\cite[Theorem 9.28]{fooo:springerbook}), 
composition formula (\cite[Theorem 10.21]{fooo:springerbook})
and the fact that phantom cells do not contribute to the 
map (integration along the fiber) in the chain level.
\par
The construction of $s^k_{c}$, $s^k_{c'}$ 
and $t^k_{c,c'}$, $t^k_{c',c}$
for general $k$ is mostly the same as the case we already 
described. 
Let us discuss the case of $t^k_{c,c'}$.
The equation $t^k_{c,c'}$ to satisfy is
$$
\frak M_1(t^k_{c,c'}) + 
\sum_{\ell=1}^{k-1} \frak M_2(s^{\ell}_{c},t^{k-\ell}_{c,c'})
+ \sum_{\ell=1}^{k-1} \frak M_2(t^{k-\ell}_{c,c'},s^{\ell}_{c'}) = 0
$$
See (\ref{form128}).
By induction hypothesis, we have constructed cells of 
piecewise linear functions, 
corresponding to $s^{\ell}_{c},t^{k-\ell}_{c,c'},t^{k-\ell}_{c,c'},s^{\ell}_{c'}$.
We observe that those cells are glued 
together to give a cell complex $C_0$ whose boundary 
consists of phantom cells.
This is a consequence of 
$$
\frak M_1
\left(
\sum_{\ell=1}^{k-1} \frak M_2(s^{\ell}_{c},t^{k-\ell}_{c,c'})
+ \sum_{\ell=1}^{k-1} \frak M_2(t^{k-\ell}_{c,c'},s^{\ell}_{c'})
\right) = 0
$$
that follows from the induction hypothesis.
Therefore in the same way as we did in Figure \ref{fillhole}
we can shrink the length of the `flat' parts of the 
graph of piecewise linear functions and move the other parts to the 
left to obtain a cell complex $D$ such that 
$D$ consists of phantom cells and $\partial D = \partial C_0$.
Thus we glue $C_0$ and $-D$ along the boundary to obtain 
a cycle
$C_1$. Therefore we obtain a 
cell complex $C$ parametrizing piecewise linear functions 
such that $\partial C = C_1 = C_0 \cup -D.$
Therefore using $C$ parametrized family of source dependent 
almost complex structures and Hamiltonian vector fields 
we define moduli spaces. They have Kuranishi structures and 
CF-perturbations which are compatible with the previously 
given one.
We thus obtain  maps which define $t^k_{c,c'}$.
\par
Here we need to remark the following.
During the above construction we need to extend the CF-perturbation 
given at the boundary to the interior.
Such a theory is developed in detail in \cite[Chapter 17]{fooo:springerbook}.
The case of a Kuranishi structure with corners is studied there.
We defined a moduli space of $C$ parametrized equations.
The parameter space $C$ is a cell complex and is not necessary 
a manifold with corners.
To apply the theory of \cite[Chapter 17]{fooo:springerbook}
to our case we proceed as follows.
From the construction it is obvious that the cell complex $C$ 
we use has a triangulation so that the triangulations 
of various cell complexes (used to define  $s^{k}_{c},t^{k}_{c,c'},t^{k}_{c,c'},s^{k}_{c'}$) are compatible at the boundary.
In other words the triangulation of the boundary of the cell complex 
to define $s^k_{c}$,$t^k_{c,c'}$ etc. is a subdivision of the product of those 
used to define $s^{\ell}_{c}$,$t^{\ell}_{c,c'}$ etc.
We remark that a simplex is a manifold with corners.
So when we restrict our moduli space to the part corresponding to 
a simplex of the triangulation of $C$,
we obtain a Kuranishi structure with corners.
Therefore applying \cite[Chapter 17]{fooo:springerbook} 
to each simplex inductively from those adjoining the boundary 
to those in the interior we can construct 
CF-perturbations on the parts of the moduli spaces 
corresponding to various simplexes so that they coincide at the 
overlapped part.
\par
The last point to mention is `homotopy inductive limit argument'.
Again we need to stop the argument at the stage 
when only a finite number of moduli spaces are involved.
We can resolve this issue in the following way as usual.
Observe that the equation we want to solve is 
Maurer-Cartan equation. Therefore we can
define the notion of a gauge equivalence of infinite homotopy 
equivalences in the same way as the gauge equivalence of 
bounding cochains (See \cite[Section 4.3]{fooobook}).
We can also define the notion of an $(m,K)$-homotopy equivalence.
Namely an $(m,K)$-homotopy equivalence assigns
$t^k_{c,c'}$, $s^k_{c}$ etc. only for $k \le m$
and they satisfy Maurer-Cartan equation 
modulo $T^{K}$.
The argument we have described gives such $(m,K)$-homotopy equivalence
for any but finite $m$ and $K$, which we write  $(\hat t(m,K),\hat s(m,K))$.
Then we can show for $m < m'$, $K < K'$ 
$(\hat t(m,K),\hat s(m,K))$ is gauge equivalent to $(\hat t(m',K'),\hat s(m',K'))$
as $(m,K)$-homotopy equivalences.
Then by a homological algebra we can show that 
$(\hat t(m,K),\hat s(m,K))$ can be extended to an $(m',K')$-homotopy equivalence.
Therefore by an induction argument 
we can construct an infinite homotopy equivalence.
\par
The proof of Theorem \ref{estimateHofLaginf2} is now complete.
\qed

\section{Completions of filtered $A_{\infty}$ categories.}
\label{sec;more}

In Theorem \ref{Lagcatecomplete} we study Cauchy sequences of 
finite sets 
of Lagrangian submanifolds.
Actually the argument of its proof works in more general case, 
which we discuss here.
\begin{defn} 
Let $(X,\omega)$ be a symplectic manifold 
which is compact or tame.
We define a metric space $\overline{\frak L}$ 
as follows.
Its element is an equivalence class of a sequence $\{L^n\}$
such that 
\begin{enumerate}
\item
$L^n$ is a relatively spin Lagrangian submanifold of $X$.
We assume $L^n$ is unobstructed.
\item
$\sum_{n=1}^{\infty} d_{\rm Hof}(L^n,L^{n+1}) < \infty.$
\end{enumerate}
Two sequences $\{L_i^n\}$, $i=1,2$ are said to be equivalent 
if 
$$
\lim_{n\to \infty} d_{\rm Hof}(L_1^n,L_2^{n}) = 0.
$$
We define the Hofer metric $d_{\rm Hof}([\{L^n_1\}],[\{L^n_2\}])$
between two equivalence classes by
$$
d_{\rm Hof}([\{L^n_1\}],[\{L^n_2\}]) = \lim_{n\to \infty} d_{\rm Hof}(L_1^n,L_2^{n}).
$$
Clearly $\overline{\frak L}$ is a complete metric space.
\end{defn}
\begin{thm}\label{sepaobj}
For any separable subset $\mathbb L$ of $\overline{\frak L}$
we can associate a unital completed DG-category 
$\frak F(\mathbb L)$ with the following properties.
\begin{enumerate}
\item The set of objects of $\frak F(\mathbb L)$ is an equivalence classes of pairs 
$([\{L^n\}],[(b^n)])$ where $\{L^n\}$ is a Cauchy sequence converging to an element 
of $\mathbb L$  and $b^n$ is a sequence of bounding cochains of $L^n$ 
which has certain compatibility conditions explained during the proof.
\item $\frak F(\mathbb L)$ is well-defined up to  equivalence.
\item If $\mathbb L \subseteq \mathbb L'$ then 
$\frak F(\mathbb L)$ is  equivalent to the full subcategory of $\frak F(\mathbb L')$.
\item
If $\mathbb L$ is a finite set then $\frak F(\mathbb L)$ is  equivalent to 
one in Theorem \ref{Lagcatecomplete}. 
\end{enumerate}
\end{thm}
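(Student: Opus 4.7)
The plan is to reduce Theorem \ref{sepaobj} to the finite case handled by Theorem \ref{Lagcatecomplete} via a countable dense subset, and then pass to an inductive limit using the algebraic machinery of Sections \ref{sec;bar}--\ref{sec;indhomoequi}. First, since $\mathbb L \subseteq \overline{\frak L}$ is separable, I fix a countable dense subset $\{\alpha_k\}_{k=1}^{\infty} \subseteq \mathbb L$. For each finite $N$, let $\mathbb L_N := \{\alpha_1, \dots, \alpha_N\}$. Applying (an easy extension of) Theorem \ref{Lagcatecomplete} and Corollary \ref{cor144} to the finite tuple $\mathbb L_N \in \overline{\mathfrak{Lag}^N}$, I obtain a unital completed DG-category $\frak F(\mathbb L_N)$, well-defined up to equivalence.

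Next, for each pair $N \le N'$ the natural inclusion of finite tuples $\mathbb L_N \hookrightarrow \mathbb L_{N'}$ induces, after choosing representing Cauchy sequences compatibly (diagonal argument, perturbing each $L^n_k$ slightly so that for every $n$ the tuple $(L^n_1,\dots,L^n_{N'})$ is in $\mathfrak{Lag}^{N'}_{\rm trans}$ and the collection is strongly transversal in the sense of Definition \ref{defn1555} for each adjacent pair), a $\#$-homotopically unital filtered $A_{\infty}$ functor $\frak F(\mathbb L_N) \to \frak F(\mathbb L_{N'})$ with energy loss $0$, built exactly as in Step 1 of Section \ref{sec;compgeo} by extending compatible Kuranishi structures and CF-perturbations. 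This produces a $\#$-homotopically unital inductive system, and I set
$$
\frak F(\mathbb L) := \varinjlim_N \frak F(\mathbb L_N),
$$
using Definition \ref{defnindlim} and the unitalization of Proposition \ref{prop101}. By Theorem \ref{mainalgtheorem2}(4), $\frak F(\mathbb L)$ is a unital completed DG-category. Its objects are precisely the equivalence classes $([\{L^n\}],[(b^n)])$ where $L^n$ converges to some element of $\mathbb L$ and the compatibility condition on the $b^n$ is that, under the infinite homotopy equivalences of Theorem \ref{estimateHofLaginf2} transporting bounding cochains along the Hamiltonian isotopies realizing $L^n \to L^{n+1}$, the resulting sequence of objects $(L^n,b^n)$ forms a Cauchy sequence with respect to Hofer infinite distance in a category containing all $\frak F(\mathbb L_N)$ as full subcategories.

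For item (2), uniqueness up to equivalence follows by the argument of Steps 2 and 3 in Section \ref{sec;compgeo} combined with Lemma \ref{lem147} and Theorem \ref{thm131111}: given two choices of dense subset, transversal approximations, Kuranishi data and CF-perturbations, one constructs a common ambient inductive system containing both, notes that the Hausdorff distance between the two object sets is zero (density), and invokes Lemma \ref{1044lm}/Lemma \ref{lem12555} for equivalence. For item (3), given $\mathbb L \subseteq \mathbb L'$, extend any countable dense subset of $\mathbb L$ to one of $\mathbb L'$; the inclusion of inductive systems then yields a fully faithful unital filtered $A_{\infty}$ functor $\frak F(\mathbb L) \to \frak F(\mathbb L')$ with zero energy loss, and Lemma \ref{lem12555} identifies it as an almost homotopy equivalent injection onto the appropriate full subcategory. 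Item (4) is immediate: when $\mathbb L$ is finite, the inductive system stabilizes after finitely many steps and Theorem \ref{Lagcatecomplete} applies directly.

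The main obstacle is two-fold. First, simultaneously arranging the transversal approximating tuples across the entire inductive system so that Kuranishi data and CF-perturbations can be chosen consistently for all embeddings $\frak F(\mathbb L_N) \to \frak F(\mathbb L_{N'})$; this requires the homotopy inductive limit argument of Step 6 in Section \ref{sec;compgeo}, working inductively with $A_{m,K}$-structures and promoting them via obstruction theory (\cite{fooobook2}), since one cannot fix all moduli-space data at once. Second, formulating the compatibility condition on bounding cochains so that the equivalence relation on pairs $([\{L^n\}],[(b^n)])$ gives a well-defined set of objects and is preserved by the comparison functors; this is essentially the content of Theorem \ref{estimateHofLaginf2} applied along the Cauchy sequence, transporting bounding cochains via the infinite homotopy equivalences of Definition \ref{infhomoequiv}, and then using Theorem \ref{idometric} to show the resulting object class is isometrically embedded into any ambient category.
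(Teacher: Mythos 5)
Your proposal takes a genuinely different route from the paper, and the route as written has a gap at the level of the object set. You first apply Theorem \ref{Lagcatecomplete} to each finite tuple $\mathbb L_N = \{\alpha_1,\dots,\alpha_N\}$ drawn from a countable dense subset of $\mathbb L$, obtaining completed DG-categories $\frak F(\mathbb L_N)$, and then take an outer inductive limit over $N$. But $\frak{OB}(\varinjlim_N \frak F(\mathbb L_N))$ is the set-theoretic colimit $\bigcup_N \frak{OB}(\frak F(\mathbb L_N))$, whose underlying Lagrangian data range only over $\{\alpha_1,\alpha_2,\dots\}$. The equivalence classes $([\{L^n\}],[(b^n)])$ with $\lim L^n$ an arbitrary element of $\mathbb L$ are not obtained this way, so item (1) of the theorem fails for your construction. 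One cannot simply append Proposition \ref{completion} at the end: that proposition replaces the object set by its Hofer-infinite completion, which need not coincide with $\mathbb L$ (it can overshoot into $\overline{\mathbb L}\setminus\mathbb L$) and in any case does not produce the stated description of the objects.

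The paper avoids this by an entirely different decomposition. Rather than a double limit (inner limit over $n$ giving $\frak F(\mathbb L_N)$, outer limit over $N$), it constructs a single diagonal inductive system. It chooses finite sets $\mathbb L^n \subset \mathfrak{Lag}$ of genuine Lagrangian submanifolds satisfying (a)(b)(c) of the proof, and then enlarges each $\frak F(\mathbb L^n)$ to an auxiliary gapped filtered $A_\infty$ category $\widehat{\frak F}(\mathbb L^n)$ whose object set is indexed by \emph{all} of $\mathbb L$ from the start: an object is a pair $([\hat L], b)$ with $[\hat L]\in\mathbb L$ and $b$ a bounding cochain of the $n$-th approximant $L^n$, while the morphism complex between $([\hat L],b)$ and $([\hat L'],b')$ is simply $CF((L^n,b),(L'^{\,n},b'))$. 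This makes every stage a genuine gapped filtered $A_\infty$ category (morphism complexes are the finite-dimensional Floer complexes of transversal smooth Lagrangians) with the correct, possibly uncountable, object set; the functors $\Phi^n$ are then given on objects by transporting $b$ via the chosen Hamiltonian diffeomorphism $\varphi_n$, and Theorem \ref{mainalgtheorem2} applies directly.

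A secondary issue in your argument: Theorem \ref{mainalgtheorem2} and the Bar--Co-Bar machinery behind it (in particular Theorem \ref{Theorem71}) are stated for \emph{gapped} filtered $A_\infty$ categories, but the $\frak F(\mathbb L_N)$ you feed into the outer limit are already completed DG-categories arising as limits and are in general not gapped. In the special situation where all transition functors have energy loss $0$ and are full-subcategory embeddings, one can bypass this by taking a plain union and completing the morphism complexes, but this should be said explicitly; as written you invoke Definition \ref{defnindlim} and Theorem \ref{mainalgtheorem2}(4) outside their stated hypotheses. Once you realize that the fix for the object-set problem is to carry all of $\mathbb L$ through every stage, you are essentially reinventing the paper's $\widehat{\frak F}(\mathbb L^n)$; I recommend building that category directly rather than trying to repair the double-limit scheme.
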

\begin{proof}
The proof is similar to the proof of Theorem \ref{Lagcatecomplete} so we will be 
rather sketchy.
Since $\mathbb L$ is separable we can find a sequence of finite 
sets of Lagrangian submanifolds $\mathbb L^n$ with the following 
properties.
\begin{enumerate} 
\item[(a)]
For any element $L^{\infty}$ of $\mathbb L$ there exists a 
sequence $L^n \in \mathbb L^n$ such that $L^{\infty} = \lim_{n\to \infty} L^n$.
\item[(b)]
For any $L^n \in \mathbb L^n$ there exists 
$L^{n+1} \in \mathbb L^{n+1}$ such that
$
d_{\rm Hof}(L^n,L^{n+1}) < \epsilon_n
$
with $\sum \epsilon_n < \infty$. ($\epsilon_n$ is independent of $L^n$.)
\item[(c)]
The sequence $\mathbb L^n$ is strongly 
transversal in the sense of Definition \ref{defn1555}.
\end{enumerate}
The existence of $\mathbb L^n$ satisfying (a)(b) is easy to show.
We can then replace 
$\mathbb L^n = \{L^n_i \mid i=1,\dots,m_n\}$
by $\mathbb L^{n \prime} = \{L^{n,\prime}_i \mid i=1,\dots, m_n\}$
inductively such that
elements of $\mathbb L^{n-1}$ is transversal to elements of 
$\mathbb L^{n \prime}$ and that
$
d_{\rm Hof}(L^n_i,L^{n,\prime}_i) < \epsilon'_n
$
with $\sum \epsilon'_n < \infty$.
This $\mathbb L^{n \prime}$ has property (c) also.
\par
We take $\epsilon_n$ as above and modify $\frak F(\mathbb L^n)$
to $\widehat{\frak F}(\mathbb L^n)$ as follows.
For each element $[\hat L]$ of $\mathbb L$  we take and fix 
a sequence $\hat L = (L_1,L_{2},\dots)$ 
such that $L_m \in \mathbb L^m$,
\begin{equation}\label{form171}
d_{\rm Hof}(L^m,L^{m+1}) < 2\epsilon_{m}
\end{equation}
and $\hat L$ represents $[\hat L]$.
Moreover we take and fix a Hamiltonian diffeomorphism 
$\varphi_m$ such that  $\varphi_m(L_m) = L_{m+1}$ and 
$d_{\rm Hof}({\rm id},\varphi_m) < 2\epsilon_m$.
We fix such a choice for each element of $\mathbb L$.
For $[\hat L], [\hat L']$ with $(L_1,\dots,L_m) = 
(L'_1,\dots,L'_m)$ we require that 
$
\varphi_i = \varphi'_i
$
holds for $i<m$. Here $[\hat L], [\hat L']$ are represented by
$(L_i)_{i=1}^{\infty}$, $(L'_i)_{i=1}^{\infty}$
respectively and 
$\varphi_i$, $\varphi'_i$ are Hamiltonian
diffeomorphisms which we associate to $[\hat L], [\hat L']$,
respectively.
\par
The set of objects of $\widehat{\frak F}(\mathbb L^n)$
is a pair $([\hat L],b)$ where $[\hat L]$ is an element of $\mathbb L$ 
and $b$ is a bounding cochain of $L^n$.
\par
The morphism module is defined by
$$
CF(((L^n,L^{n+1},\dots),b),((L^{n \prime},L^{n+1 \prime},\dots),b')) := CF((L^n,b),(L^{n \prime},b')),
$$
where the right hand side is the Floer's cochain module.
\par
Using (b)(c) and (\ref{form171}) we can proceed in the same way as  Step 1 of 
the proof of Theorem \ref{Lagcatecomplete} to obtain 
a $\#$-unital filtered $A_{\infty}$ functor $\Phi^n : \widehat{\frak F}(\mathbb L^n)
\to \widehat{\frak F}(\mathbb L^{n+1})$
with energy loss $\epsilon_n$.  
Note that $\Phi^n_{\rm ob}([\hat L],b) = ([\hat L],(\varphi_n)_* b)$,
where $\varphi_n$ is a Hamilton diffeomorphism which we 
fixed above for $[\hat L]$.
\par
By property (a) the sequence $\{L^n\}$ represents elements 
of $\mathbb L$ and any element of $\mathbb L$ is represented by a certain 
sequence $L^n$. The condition $\Phi^n_{\rm ob}(L^n,b^n) = (L^{n+1},b^{n+1})$
is the condition for $b^n$ mentioned in (1). Thus (1) is proved.
\par
The proof of well-defined-ness is the same as the various well-defined-ness 
proof of Theorem \ref{Lagcatecomplete}.
The claims (3)(4) of Theorem \ref{sepaobj} then is immediate from
the construction.
\end{proof}
\begin{rem}
There are many cases where the metric space $\overline{\frak L}$
is not separable.
An easy example is $T^2$. There is a continuum of Lagrangian $S^1$ which are geodesic 
with respect to the flat metric. They are unobstructed and two of such $S^1$ are not 
Hamiltonian isotopic with each other. Therefore the Hofer distance 
between them is 
infinite. So  $\overline{\frak L}$ contains uncountably many connected components.
In view of the study of family Floer homologies (See \cite{Ab,Tu,Ha}.) such 
continuum (of pairs of Lagrangian submanifolds and their bounding cochains) becomes a set of points of a certain rigid analytic 
space. (Note that the topology on this set coming from rigid analytic structure 
is  much different from the Hofer metric. The latter is finer structure 
in the sense that it distinguishes two Hamiltonian isotopic Lagrangian submanifolds).
So we need to combine two stories somehow to obtain a 
filtered $A_{\infty}$ category containing `all' the unobstructed Lagrangian submanifolds.
\par
In the Fano case such as certain toric manifolds there are cases where 
we can take sufficiently large separable set of Lagrangian submanifolds 
which can play a similar role as the set of `all' Lagrangian submanifolds.
For example in the case $X=S^2$ we may take the set of all 
oriented great circles, 
which is homeomorphic to $S^2$.
\end{rem}
\begin{rem}
The difficulty to generalize the proof of 
Theorem \ref{sepaobj} to the case when $\mathbb L$ is not separable, 
is the following.
During the proof we take an $\mathbb N$ parametrized family of finite sets 
of Lagrangian submanifolds and take an inductive limit with respect to the 
$\mathbb N$ parametrized inductive system. 
When we consider non-separable $\mathbb L$ we need to consider the following 
situation: 
$\mathbb L_1$, $\mathbb L_2$, $\mathbb L_3$, $\mathbb L_4$ are 
finite sets of Lagrangian submanifolds 
and $\Phi_{ji} :  \frak F(\mathbb L_i) \to \frak F(\mathbb L_j)$ 
for $(ji) = (21), (31), (42), (43)$ are defined.
In such a situation we can still take an inductive limit if 
$\Phi_{42}\circ \Phi_{21} = \Phi_{43}\circ \Phi_{31}$ holds 
as an {\it exact} equality. 
We can find $\Phi_{ji}$ such that $\Phi_{42}\circ \Phi_{21}$ 
is homotopic (or homotopy equivalent) to $\Phi_{43}\circ \Phi_{31}$.
However the exact equality is hard to achieve.
\end{rem}
\begin{prop}\label{completion}
For a completed DG-category $\mathscr C$,
there exists a completed DG-category 
$\mathscr C'$ equivalent to $\mathscr C$
such that $\frak{OB}(\mathscr C')$
is complete and Hausdorff.
\end{prop}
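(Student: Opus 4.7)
The plan is to enlarge $\mathscr C$ by first adjoining Cauchy limits to achieve completeness, and then passing to a Hausdorff quotient.

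First, let $\mathcal K$ index the set of all Cauchy sequences $\frak c_k = (c^1_k, c^2_k, \ldots)$ in $(\frak{OB}(\mathscr C), d_{\rm Hof,\infty})$ satisfying the summability condition $\sum_n d_{\rm Hof,\infty}(c^n_k, c^{n+1}_k) < \infty$, where each object of $\mathscr C$ appears as a constant sequence. Apply Corollary \ref{cor143434} (really Theorem \ref{thm131111} applied to the $\mathcal K$-family of Cauchy sequences) to obtain a unital completed DG-category $\widetilde{\mathscr C} := \mathscr C^{\infty}(\frak C_{\mathcal K})^+$, equipped with a natural inclusion $\mathscr C \hookrightarrow \widetilde{\mathscr C}$ via constant sequences. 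Since the constant-sequence embedding is a fully faithful unital inclusion whose linear parts are isomorphisms on the relevant morphism modules in each level of the inductive system, it is in particular an almost homotopy equivalent injection; combined with Corollary \ref{cor143434} applied to constant sequences, this shows $\widetilde{\mathscr C}$ is equivalent to $\mathscr C$.

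Next, verify that $(\frak{OB}(\widetilde{\mathscr C}), d_{\rm Hof,\infty})$ is Cauchy complete. Given a Cauchy sequence $(\xi_n)$ in $\widetilde{\mathscr C}$, I would choose for each $n$ an object $c^{m_n}_{k_n} \in \frak{OB}(\mathscr C)$ with $d_{\rm Hof,\infty}(\xi_n, c^{m_n}_{k_n}) < 2^{-n}$ (possible because each $\xi_n$ is itself a limit of a sequence from $\mathscr C$). The diagonal sequence $(c^{m_n}_{k_n})_n$ is then a Cauchy sequence in $\mathscr C$, thus corresponds to some element of $\mathcal K$, and by construction its limit in $\widetilde{\mathscr C}$ coincides with $\lim_n \xi_n$. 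So $\widetilde{\mathscr C}$ is complete.

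To achieve Hausdorff, quotient by the relation $a \sim b \Leftrightarrow d_{\rm Hof,\infty}(a,b) = 0$: pick a system of representatives and let $\mathscr C'$ be the full subcategory of $\widetilde{\mathscr C}$ spanned by this transversal. Then $(\frak{OB}(\mathscr C'), d_{\rm Hof,\infty})$ is Hausdorff by construction. Completeness passes from $\widetilde{\mathscr C}$ to $\mathscr C'$: a Cauchy sequence in $\mathscr C'$ converges to some $\xi \in \widetilde{\mathscr C}$ by the previous step, and its $\sim$-class has a representative $\xi' \in \mathscr C'$ with $d_{\rm Hof,\infty}(\xi, \xi') = 0$, so the sequence converges to $\xi'$ in $\mathscr C'$. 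Finally, the full-subcategory inclusion $\mathscr C' \hookrightarrow \widetilde{\mathscr C}$ is an almost homotopy equivalent injection (it is the identity on its image, hence trivially so) with Hausdorff-distance-zero image, so $\mathscr C'$ is equivalent to $\widetilde{\mathscr C}$, hence to $\mathscr C$.

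The main obstacle is the fourth step: to ensure that distances in $\mathscr C'$ genuinely agree with those inherited from $\widetilde{\mathscr C}$ and that the full-subcategory inclusion $\mathscr C' \hookrightarrow \widetilde{\mathscr C}$ realizes an equivalence in the sense of Definition \ref{inftywaekGH}. The isometric behavior of Hofer infinite distance under almost homotopy equivalent injections (Theorem \ref{idometric}) is what guarantees that the representatives we pick do not disturb the metric structure, and it is also what closes the loop in the completeness argument for $\mathscr C'$. A subtle point to check carefully is the axiom of choice used to select representatives, and (if one is worried about set-theoretic size) that $\mathcal K$ lies in a fixed universe so that $\widetilde{\mathscr C}$ and $\mathscr C'$ remain legitimate objects.
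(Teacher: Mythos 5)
Your construction is built from the same two tools as the paper's (the inductive-limit machinery of Section~\ref{sec;equilim} to adjoin Cauchy limits, and the selection of a transversal for the relation $a\sim b\Leftrightarrow d_{\rm Hof,\infty}(a,b)=0$), only in reversed order: you complete first and then take the Hausdorff quotient, whereas the paper picks representatives first and then completes via the inductive system $\{\mathscr C^n\}$. Both orders work; your argument that completeness of $\widetilde{\mathscr C}$ survives the passage to the transversal is sound, and relying on Theorem~\ref{idometric} to control the metric throughout is exactly what the paper does as well.

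There is one genuine gap you should fix. Corollary~\ref{cor143434} and Theorem~\ref{thm131111} require the approximation rate to be \emph{uniform in $k\in\mathcal K$}: the hypothesis is $d_{\rm Hof,\infty}(c^n_k,c^{n+1}_k)<\epsilon_n$ with a single sequence $\epsilon_n$ (independent of $k$) satisfying $\sum_n\epsilon_n<\infty$. Indexing $\mathcal K$ by \emph{all} Cauchy sequences whose jumps are merely individually summable gives no such uniform $\epsilon_n$, so the corollary does not apply as stated. The repair is small -- index $\mathcal K$ by sequences obeying a fixed rate such as $d_{\rm Hof,\infty}(c^n_k,c^{n+1}_k)<2^{-n}$; every point of the completion is still represented because any Cauchy sequence has a subsequence with that rate -- but it must be said. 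This is precisely the point the paper is careful about when it insists that $\epsilon_n$ in the bound $d_{\rm Hof,\infty}(c^n(\frak c),\frak c)<\epsilon_n$ be independent of $\frak c$. A secondary imprecision: your justification that the constant-sequence inclusion $\mathscr C\hookrightarrow\widetilde{\mathscr C}$ is an almost homotopy equivalent injection appeals to ``linear parts are isomorphisms in each level,'' but the morphism complexes in the inductive limit are the (reduced) cobar-of-bar resolutions, not the original $\mathscr C(a,b)$; passing from level $n$ to the limit requires the almost cochain homotopy equivalence supplied by Theorem~\ref{theorem63}.
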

\begin{proof}
The proof is mostly the same as the proof of Theorem \ref{sepaobj}.
For $c,c' \in \frak{OB}(\mathscr C)$ we write 
$c \sim c'$ if $d_{\rm Hof,\infty}(c,c') = 0$.
This is an equivalence relation.
We pick up one representative from the equivalence classes
to obtain $C$. We replace $\mathscr C$ by its full subcategory 
the set of whose objects is $C$.
We thus may assume that $\frak{OB}(\mathscr C)$ is Hausdorff.
Let $\overline{\frak{OB}(\mathscr C)}$
be the completion of $\frak{OB}(\mathscr C)$
with respect to the Hofer infinite metric.
For each $\frak c \in \overline{\frak{OB}(\mathscr C)}$ we choose a 
sequence of elements $\{c^n(\frak c)\}$ with $c^n(\frak c) \in \frak{OB}(\mathscr C)$
such that 
\begin{equation}\label{Hausdddddd}
d_{\rm Hof,\infty}(c^n(\frak c),\frak c) < \epsilon_n.
\end{equation}
with $\epsilon_n \to 0$. Here $\epsilon_n$ is independent of $\frak c$.
In case $\frak c = c \in \frak{OB}(\mathscr C)$ we take 
$c^n(\frak c) = c$.
\par
We consider the completed DG-category $\mathscr C^n$
the set of whose objects is $\frak{OB}(\mathscr C)$
and for $\frak c,\frak c' \in \frak{OB}(\mathscr C)$
the morphism complex is defined by 
$$
\mathscr C^n(\frak c,\frak c'): = \mathscr C(c^n(\frak c),c^n(\frak c')).
$$
Structure operations are those of $\mathscr C$.
Using (\ref{Hausdddddd}) we can construct 
a $\#$-unital inductive system of completed DG-categories
$(\{\mathscr C^n\},\{\Phi^n\})$. 
It is easy to see that the inductive limit of this inductive
system is the required $\mathscr C'$.
\end{proof}

\section{A concluding remark: SYZ and KAM.}
\label{sec;KAM}

Let $X$ be an algebraic variety over $\C[[T]]$.
We put $\Lambda_0^{(m)} = \C[[T^{1/m}]]$ 
and $\Lambda^{(m)}$ be its field of fractions.
We put $X^{(m)} = X \times_{\C[[T]]}\Lambda_0^{(m)}$
and $X^{(m)}_{\Lambda} = X \times_{\C[[T]]}\Lambda^{(m)}$.
We fix a vector bundle $\mathcal E_{\Lambda^{(m)}}$ on $X^{(m)}_{\Lambda}$.
We consider the set of all vector bundles $\mathcal E$ on 
$X^{(n)}$ such that
$$
\mathcal E \times_{\Lambda_0^{(n)}} \Lambda^{(mnk)} 
\cong \mathcal E_{\Lambda^{(m)}} \times_{\Lambda^{(m)}} \Lambda^{(mnk)}
$$  
for some $k$.
We write $\mathcal E \sim \mathcal F$ 
if they become isomorphic on $\Lambda_0^{(n')}$ for certain $n'$.
Let $Y$ be the set of $\sim$ equivalence classes.
For $[\mathcal E], [\mathcal F] \in Y$ 
we may choose representatives such that $\mathcal E$ and $\mathcal F$ are vector 
bundles on $X^{(m)}$.
We define a metric $d([\mathcal E], [\mathcal F])$
so that $d([\mathcal E], [\mathcal F]) < \epsilon$
if and only if there exists $n$ such that
\begin{enumerate}
\item 
There exists an isomorphism 
$\varphi : \mathcal E \times_{\Lambda^{(m)}} \Lambda^{(mn)}
\to \mathcal F \times_{\Lambda^{(m)}} \Lambda^{(mn)}$.
\item
$T^{k/mn} \varphi$ and $T^{k/mn}\varphi^{-1}$ are defined on 
$X \times_{\rm Spec(\C[[T]])}\Lambda_0^{(nm)}$.
\item
$k/mn < \epsilon$.
\end{enumerate}
Then $(Y,d)$ becomes a metric space.
Let $\overline Y$ be its completion.
We might extend this definition to coherent sheaves or objects 
of its derived category.
It seems likely that such a metric space is 
related to the space of Lagrangian submanifolds and its 
Hofer distance via homological mirror symmetry.
\par
Suppose $\pi: X \to B$ is an SYG-fibration
(\cite{SYZ}).
Let $f: X \to X$ be a symplectic diffeomorphism
which commutes with $\pi$ and 
which is linear on each Lagrangian fiber.
We may expect that such a symplectic diffeomorphism 
is obtained as a monodromy of a maximal degenerate 
family of Calabi-Yau manifolds (\cite{galois}).
\par
We consider its Mirror $\hat X$ which could be a
scheme over $\Lambda_0^{(m)}$.
Homological Mirror symmetry predicts that the 
fibers of $\pi: X \to B$ 
(at a rational point of $B$) correspond to a 
skyscraper sheaf $\mathcal E_p$ on $\hat X\times_{\Lambda_0^{(m)}}\Lambda^{(m)}$.
For this object $\mathcal E_{\Lambda^{(m)}}: = \mathcal E_p$ we carry out the construction 
suggested above 
to obtain a metric space $\overline{Y_p}$.
It is expected that  $f$ via the mirror symmetry
becomes an automorphism (of the derived category of coherent sheaves) 
given by $\mathcal E \mapsto \mathcal E \otimes \mathcal L$ 
on $\overline{Y_p}$ for a certain ample line bundle $\mathcal L$. 
(See \cite{lecture}.)
Note that $\mathcal E_p\otimes \mathcal L$ is $\mathcal E_p$ 
for the skyscraper sheaf $\mathcal E_p$.
The sheaf on  $\hat X\times_{\Lambda_0^{(m)}}\Lambda^{(m)}$ which corresponds to the fiber 
is a fixed point of this automorphism.
\par
Now we take $f_{\epsilon}$ which is sufficiently close to $f$.
The KAM theorem seems to say that the Mirror $\hat f_{\epsilon}$
to $f_{\epsilon}$ 
(that is an isometry and $x \mapsto d(x,f_{\epsilon}(x))$ is bounded)
has a fixed point on  
$Y_p$ for $p$ in a subset of almost full measure.
\par
The space $\overline{Y_p}$ seems to be closely related to 
a space appearing in rigid analytic geometry.
It might be possible to expect that there is a certain 
fixed point theorem which implies the existence 
of a fixed point of $\hat f_{\epsilon}$ on $\overline{Y_p}$.
\par
It would be very interesting if one can relate 
such a fixed point theorem to the KAM theory and 
also relate the existence of KAM tori with 
the existence of a fixed point of $\hat f_{\epsilon}$ on $Y_p$.

\bibliographystyle{amsalpha}

\end{document}